\documentclass[12pt,titlepage,doublesided]{book}
\pdfoutput=1
\usepackage[T1]{fontenc}

\usepackage{amsfonts,upgreek,euscript}
\usepackage{amsmath,amsthm,amssymb,colonequals}
\usepackage{indentfirst}
\usepackage{amscd}
\usepackage{tikz, tikz-cd}
\usepackage{float}
\usepackage{graphicx, subcaption}
\usepackage{wrapfig}
\usepackage{enumitem}
\usepackage[colorlinks]{hyperref}
\usepackage{listings}
\usepackage{mathtools}
\usepackage{xcolor}
\usepackage{lscape}
\usepackage{longtable}
\usepackage{adjustbox}
\usepackage[all]{xy}
\usetikzlibrary{arrows.meta,arrows,decorations.pathmorphing,decorations.pathreplacing,positioning,shapes.geometric,shapes.misc,decorations.markings,decorations.fractals,calc,patterns}

\tikzset{>=stealth',
	cvertex/.style={circle,draw=black,inner sep=1pt,outer sep=3pt},
	vertex/.style={circle,fill=black,inner sep=1pt,outer sep=3pt},
	star/.style={circle,fill=yellow,inner sep=0.75pt,outer sep=0.75pt},
	tvertex/.style={inner sep=1pt,font=\scriptsize},
	gap/.style={inner sep=0.5pt,fill=white}}

\usepackage{rotating}

\setlist[enumerate]{format=\normalfont}

\definecolor{codegreen}{rgb}{0,0.6,0}
\definecolor{codegray}{rgb}{0.5,0.5,0.5}
\definecolor{codepurple}{rgb}{0.58,0,0.82}
\definecolor{backcolour}{rgb}{0.95,0.95,0.92}
\definecolor{cardinal}{rgb}{0.77, 0.12, 0.23}

\lstdefinelanguage{magma}{
	sensitive=true,
	keywords={adj, and, assert, assert2, assert3, assigned, break, by, case, cat, catch, clear, cmpeq, cmpne, continue, declare, default, delete, diff, div, do, elif, else, end, eq, error, eval, exists, exit, false, for, forall, forward, fprint, fprintf, freeze, ge, gt, if, iload, import, in, intrinsic, is, join, le, load, local, lt, meet, mod, ne, not, notadj, notin, notsubset, or, print, printf, procedure, quit, random, read, readi, rep, repeat, require, requirege, requirerange, restore, return, save, sdiff, selecet, subset, time, then, to, true, try, until, vprint, vprintf, vtime, while, when, where, xor},
	otherkeywords={!, \#, \%, \&, *, +, -, ., /, <, >, ?, \, \^, |, :=, `},
	keywords=[2]{function},
	keywordstyle=\color{magenta},
	keywordstyle=[2]\color{codegreen},
	backgroundcolor=\color{backcolour}, 
	commentstyle=\color{blue},
	numberstyle=\tiny\color{codegray},
	stringstyle=\color{yellow},
	basicstyle=\ttfamily\footnotesize,
	breakatwhitespace=false,         
	breaklines=true,                 
	captionpos=b,                    
	keepspaces=true,                 
	numbers=left,                    
	numbersep=5pt,                  
	showspaces=false,                
	showstringspaces=false,
	showtabs=false,                  
	tabsize=2,
	comment=[l]{//},
	morecomment=[s]{/*}{*/},
	morestring=[b]',
	morestring=[b]"
}
\lstset{emph={G, ans},emphstyle=\color{orange}}		
\lstset{literate=
	{0}{{{\color{codepurple}0}}}1
	{1}{{{\color{codepurple}1}}}1
	{2}{{{\color{codepurple}2}}}1
	{3}{{{\color{codepurple}3}}}1
	{4}{{{\color{codepurple}4}}}1
	{5}{{{\color{codepurple}5}}}1
	{6}{{{\color{codepurple}6}}}1
	{7}{{{\color{codepurple}7}}}1
	{8}{{{\color{codepurple}8}}}1
	{9}{{{\color{codepurple}9}}}1
}

\newtheorem{theorem}{Theorem}[section]
\newtheorem*{theorem*}{Theorem}
\newtheorem{prop}[theorem]{Proposition}
\newtheorem{lemma}[theorem]{Lemma}
\newtheorem{cor}[theorem]{Corollary}

\newtheorem{conj}[theorem]{Conjecture}

\theoremstyle{definition}
\newtheorem{definition}[theorem]{Definition}

\newtheorem{example}[theorem]{Example}
\newtheorem{setup}[theorem]{Setup}
\newtheorem{examples}[theorem]{Examples}
\newtheorem{remark}[theorem]{Remark}

\newtheorem{knitting}[theorem]{Knitting Algorithm}

\numberwithin{equation}{section}

\newcommand\N{\mathbb{N}}
\newcommand\Z{\mathbb{Z}}
\newcommand\Q{\mathbb{Q}}

\newcommand\C{\mathbb{C}}
\newcommand{\mfp}{\mathfrak{p}}
\newcommand{\mfm}{\mathfrak{m}}
\newcommand{\mfq}{\mathfrak{q}}


\def\Cl{\mathop{\rm Cl}\nolimits}
\def\op{\mathop{\rm op}\nolimits}

\def\GL{\mathop{\rm GL}\nolimits}
\def\SL{\mathop{\rm SL}\nolimits}
\def\CM{\mathop{\rm CM}\nolimits}
\def\uCM{\mathop{\underline{\rm CM}}\nolimits}
\newcommand{\Kroth}{\mathop{\rm {K}_{0}}}
\def\Div{\mathop{\rm{Div}}\nolimits}
\def\Prin{\mathop{\rm{Prin}}\nolimits}
\renewcommand{\div}{\mathop{\mathrm{div}}}

\renewcommand{\min}{\mathop{\mathrm{min}}}
\def\height{\mathop{\rm{ht}}\nolimits}

\DeclareMathOperator{\ePie}{\mathrm{e_{0}\Pi^{\uplambda}e_{0}}}
\newcommand{\GammaI}[1]{\mathrm {\Gamma}_{\kern -1pt #1}}

\def\depth{\mathop{\rm depth}\nolimits}
\def\mod{\mathop{\rm mod}\nolimits}
\def\modCat{\mathop{\rm mod}\nolimits}

\def\coh{\mathop{\rm coh}\nolimits}
\def\refl{\mathop{\rm ref}\nolimits}

\def\proj{\mathop{\rm proj}\nolimits}

\def\Hom{\mathop{\rm Hom}\nolimits}
\def\End{\mathop{\rm End}\nolimits}

\def\Ext{\mathop{\rm Ext}\nolimits}
\def\add{\mathop{\rm add}\nolimits}
\def\Cok{\mathop{\rm Cok}\nolimits}
\def\Ker{\mathop{\rm Ker}\nolimits}
\def\ker{\mathop{\rm ker}\nolimits}

\def\rk{\mathop{\sf rk}\nolimits}
\def\Im{\mathop{\rm Im}\nolimits}

\def\Spec{\mathop{\rm Spec}\nolimits}

\def\Groth{\mathop{\rm G_{0}}\nolimits}
\def\oGroth{\mathop{\rm \widetilde{G}_{0}}\nolimits}
\def\dim{\mathop{\rm dim}\nolimits}
\def\perf{\mathop{\rm{perf}}\nolimits}
\def\rad{\mathop{\rm rad}\nolimits}

\def\Dsg{\mathop{\rm{D}_{\sf sg}}\nolimits}
\def\Db{\mathop{\rm{D}^b}\nolimits}

\def\Krothsg{\mathop{\rm{K}_{0}^{\sf sg}}\nolimits}

\newcommand{\con}{\mathrm{con}}

\newcommand{\CL}{\mathrm{\Lambda}_{\con}}

\def\ab{\mathop{\rm ab}\nolimits}

\def\inc{\mathop{\rm inc}\nolimits}
\def\aff{\mathop{\rm aff}\nolimits}
\newcommand{\redzero}{\textcolor{cardinal}{0}}
\newcommand{\greenzero}{\textcolor{codegreen}{0}}

\newcommand{\ADE}{\sf{ADE}}
\newcommand{\tA}{\sf{A}}
\newcommand{\tAn}[1]{\tA_{#1}}
\newcommand{\ctAn}{\mathsf{c}\tAn}

\newcommand{\teA}{\sf{\widetilde{A}}}
\newcommand{\teAn}[1]{\teA_{#1}}

\newcommand{\teD}{\sf{\widetilde{D}}}
\newcommand{\teDn}[1]{\teD_{#1}}

\newcommand{\teE}{\sf{\widetilde{E}}}
\newcommand{\teEn}[1]{\teE_{#1}}

\newcommand{\cA}{\mathcal{A}}
\newcommand{\cB}{\mathcal{B}}
\newcommand{\cC}{\mathcal{C}}

\newcommand{\cE}{\mathcal{E}}

\newcommand{\cO}{\mathcal{O}}

\newcommand{\cS}{\mathcal{S}}

\newcommand{\eB}{\EuScript{B}}
\newcommand{\eC}{\EuScript{C}}

\newcommand{\eR}{\EuScript{R}}
\newcommand{\eS}{\EuScript{S}}

\usepackage{cite}  
\usepackage[top=1.8cm, bottom=1.8cm,left=2.75cm,right=2.75cm]{geometry}
\usepackage[onehalfspacing]{setspace}  
\begin{document}
\begin{titlepage}
	\centering
	\vspace*{3cm}  
	\bfseries\Large
	The K-theory of (compound) Du Val singularities\\
	\vspace{3cm}
	\normalfont\large
	Kellan Steele\\
	\vspace{2cm}
	Submitted in fulfilment of the requirements for the\\
	Degree of Doctor of Philosophy\\
	\vspace{2cm}
	School of Mathematics and Statistics\\
	College of Science and Engineering\\
	University of Glasgow\\
	\vspace{1cm}
	\includegraphics[scale=0.125]{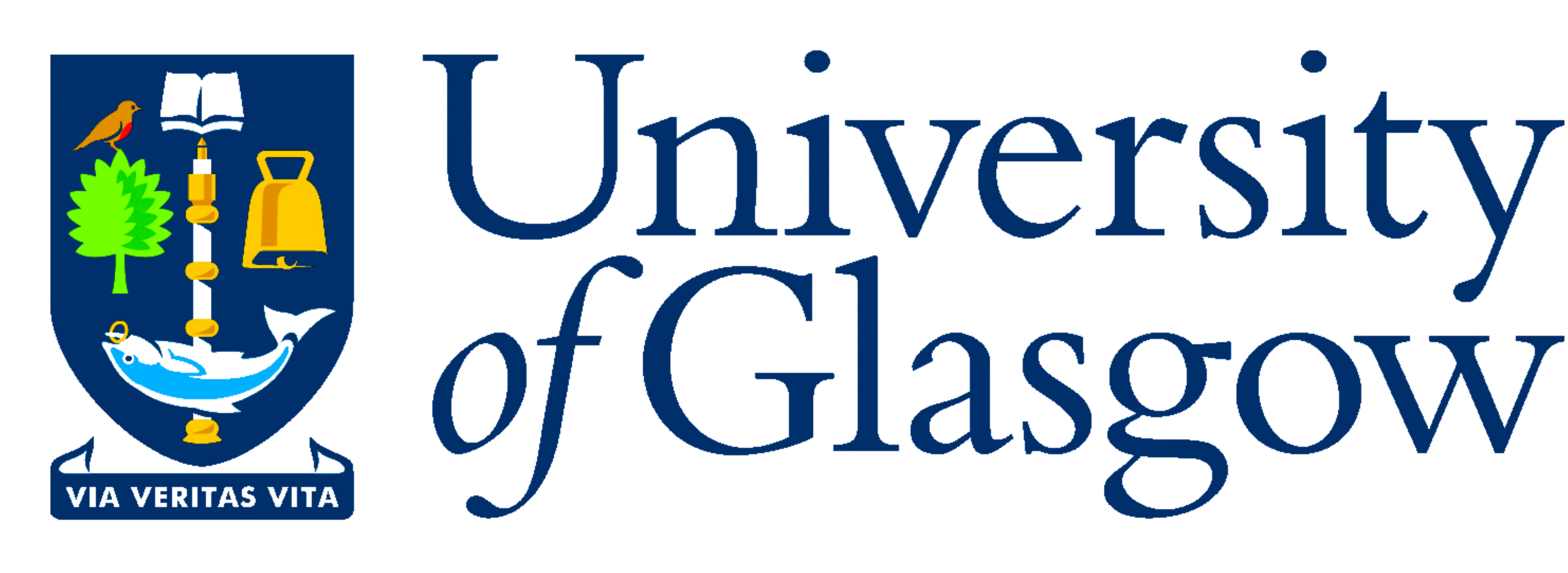}
	\\
	\vspace{1cm}
	September 2020
\end{titlepage}
\frontmatter
\chapter{Abstract}

This thesis gives a complete description of the Grothendieck group and divisor class group for large families of two and three dimensional singularities.
The main results presented throughout, and summarised in Theorem~\ref{thm:conclusion}, give an explicit description of the Grothendieck group  and class group of Kleinian singularities, their deformations, and compound Du Val (cDV) singularities in a variety of settings.
For such rings $R$, the main results assert that there exists an isomorphism $\Groth(R) \cong \Z \oplus \Cl(R)$, and the class group is explicitly presented.

More precisely, we establish these results for 2-dimensional deformations of global type $\tA$ Kleinian singularities, 3-dimensional isolated complete local cDV singularities admitting a noncommutative crepant resolution, any 3-dimensional type $\tA$ complete local cDV singularity, polyhedral quotient singularities (which are non-isolated), and any isolated cDV singularity admitting a minimal model with only type $\ctAn{n}$ singularities.
We also provide an example of a large class of higher dimensional quotient singularities for which this isomorphism does not hold, suggesting that this is a low dimensional phenomenon.

This work requires a range of tools including, but not limited to, Nagata's theorem, knitting techniques, Kn\"{o}rrer periodicity, the singularity category, and the computer-algebra system \texttt{MAGMA}.
Of particular note is the application of knitting techniques which leads to independently interesting results on the symmetry of the quivers underlying the modifying algebras of Kleinian and cDV singularities.
\tableofcontents
\listoftables
\listoffigures
\chapter{Acknowledgements}

This thesis would not have been possible without the support of many people at different times and in different ways.

Many, many thanks to my supervisors, Gwyn Bellamy and Michael Wemyss; thank you for your guidance, patience, positivity, and encouragement, and for teaching me how to squabble about maths properly.
This thesis would not have been possible without your help, and it has been a true pleasure to work with and learn from both of you.

I am grateful to Martin Lorenz for so kindly sharing his code with me.
I am deeply indebted to Julie Beier and Charlie Peck; thank you for your advice and inspiration, and many career-defining chats.

I would like to thank my office mates and friends, Andy, James, Jamie, Jay, Luke, Niall, Okke, Pete, Sarah, and Vitalijs for many mathematical conversations and non-mathematical distractions.
Special thanks go to Ross, for making the office a particularly fun place to be and for carefully proofreading this thesis.
I owe much to Anna and Roxy; your affection and presence in my life has loved, carried, and lifted me throughout my time at Glasgow.

I should especially like to thank my family.
I would not be the person I am today without the courage, enthusiasm, and character of my siblings, Spenser and Bryna.
Carolyn and Riley, thank you for your unfailing support and invaluable humour.
My husband, Flynn; thank you for endlessly encouraging and believing in me and putting up with the unavoidable side effects of doing a PhD in parallel with attempting to have a life.

Finally, my greatest thanks is to my parents.
Your love, encouragement, and support have been a constant source of inspiration throughout my life.
Thank you for never letting me think that anything was beyond my reach; this accomplishment would not have been possible without you.
Thank you.
\chapter{Declaration}

I declare that, except where explicitly stated otherwise in the text, this thesis is the result of my own original work and has not been submitted for any other degree at the University of Glasgow or any other institution.
\mainmatter
\chapter{Introduction}\label{ch:introduction}

Fundamentally, mathematicians are interested in the problem of classification which guides many areas of mathematical research.
Mathematicians have numerous ways of measuring when objects are isomorphic (the same) or not isomorphic (different), and one method commonly used is that of invariants.
An invariant is a property of a mathematical object, or a class of objects, which remains unchanged after the application of operations or transformations.
Often, invariants are not strong enough to determine when two objects are isomorphic; instead, they are useful for determining when two objects are \emph{not} isomorphic.

In algebraic geometry, one uses algebraic techniques to study geometric objects.
In particular, the algebra determines the geometry, and the geometry determines the algebra, allowing for a rich study of both.
The geometric objects of study are called \emph{algebraic varieties} (in our case affine schemes), and traditionally they are viewed as being the set of solutions of a system of polynomial equations over an algebraically closed field.
The crossover between algebra and geometry can be seen through the association to a commutative noetherian ring $R$ the algebraic variety $\Spec R$ given by the set of all prime ideals of $R$.

Suppose that we have two algebraic varieties $X$ and $X'$.
Naturally, one might ask, when are $X$ and $X'$ isomorphic?
Or, more reasonably, when are $X$ and $X'$ not isomorphic?
A good starting point for answering this question is to study the invariants of $X$ and $X'$.

\section{Motivation}

When studying varieties, one quickly learns that singular varieties are much worse behaved than non-singular varieties, but one also learns that this is precisely what makes them interesting.
This thesis is motivated by trying to understand when singular varieties behave like smooth (i.e., non-singular) varieties.
Specifically, we study the invariants \emph{divisor class groups} and \emph{Grothendieck groups} (or K-theory) of certain singularities.
These are both interesting invariants which, as we show, are more intertwined than one might expect.
In general, neither is easy to compute.
Theoretically, one can deduce the class group from the Grothendieck group. The latter is often easier to compute, but we have shifted the difficulty to the equally hard problem of extracting the class group from the Grothendieck group.  

To state our main results, we now swap to using more technical language; the relevant terms are defined in Chapters~\ref{ch:comm_alg_prelims} and~\ref{ch:other_prelims}.
Let $R$ be a 3-dimensional Gorenstein ring with rational singularities.
In favourable settings, $\Spec R$ admits a crepant resolution; however, this does not always happen.
When $\Spec R$ does not admit a crepant resolution, usually one instead studies minimal models $X \to \Spec R$ where $X$ is allowed to be mildly singular.
At its root, this thesis is motivated by understanding how homologically similar minimal models are to crepant resolutions.
One tool for measuring this is the singularity category $\Dsg(X)$ of $X$; see Chapter~\ref{ch:minimal_models}.
The singularity category of $X$ is trivial if and only if $X$ is smooth, so in general for minimal models, $\Dsg(X) \neq 0$.
\begin{conj}\label{conj:motivation}
	Let $R$ be a 3-dimensional Gorenstein ring with rational singularities and $X \to \Spec R$ a minimal model.
	Then $\Kroth(\Dsg(X)) = 0$.
\end{conj}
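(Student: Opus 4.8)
The plan is to localise the problem at the singular points of $X$, to trade $\Kroth(\Dsg(-))$ for a cokernel of $\Groth(-)$ using the Verdier localisation sequence, and then to feed in the description of $\Groth$ from Theorem~\ref{thm:conclusion}. Since $R$ is Gorenstein with rational (hence canonical) singularities and $X\to\Spec R$ is a minimal model, $X$ has $\mathbb{Q}$-factorial terminal Gorenstein singularities, which by Reid's classification are isolated cDV points $x_1,\dots,x_m$. The singularity category is invariant under completion and decomposes over the finite singular locus, so
\[
\Dsg(X)\;\simeq\;\bigoplus_{i=1}^{m}\Dsg\!\bigl(\widehat{\cO}_{X,x_i}\bigr)\;\simeq\;\bigoplus_{i=1}^{m}\uCM(R_i),\qquad R_i:=\widehat{\cO}_{X,x_i},
\]
each $R_i$ a complete local cDV singularity; hence $\Kroth(\Dsg(X))\cong\bigoplus_i\Kroth(\Dsg(R_i))$ and it suffices to treat a single factor. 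In the local case, $\mathbb{Q}$-factoriality of $X$ descends to analytic $\mathbb{Q}$-factoriality of each $R_i$, so $\Cl(R_i)$ is torsion.

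For one factor, since $R_i$ is Gorenstein we have $\Dsg(R_i)\simeq\uCM(R_i)$, and the Verdier quotient $\Dsg(R_i)=\Db(R_i)/\perf(R_i)$ gives an exact sequence $\Kroth(R_i)\to\Groth(R_i)\to\Kroth(\Dsg(R_i))\to 0$; as $R_i$ is local, $\Kroth(R_i)=\mathbb{Z}[R_i]$, so $\Kroth(\Dsg(R_i))\cong\Groth(R_i)/\mathbb{Z}[R_i]$. Theorem~\ref{thm:conclusion}, in the form $\Groth(R_i)=\mathbb{Z}[R_i]\oplus\Cl(R_i)$, then gives $\Kroth(\Dsg(R_i))\cong\Cl(R_i)$. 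To conclude, one uses that a cDV singularity is a one-parameter smoothing of a Du Val surface singularity and that this deformation unwinds the torsion in its class group: the explicit presentations of $\Cl$ produced alongside Theorem~\ref{thm:conclusion} exhibit $\Cl(R_i)$ as torsion-free. A group that is simultaneously torsion and torsion-free is zero, so $\Cl(R_i)=0$, whence $\Kroth(\Dsg(R_i))=0$ and, summing, $\Kroth(\Dsg(X))=0$.

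The difficulty lies in two inputs that go beyond what is presently proved, and this is where I expect the conjecture to be genuinely hard. First, Theorem~\ref{thm:conclusion} establishes $\Groth\cong\mathbb{Z}\oplus\Cl$ only for particular families of cDV singularities — those admitting a noncommutative crepant resolution, those of type $\ctAn{n}$, polyhedral quotients, and so on — whereas the singular points of an arbitrary minimal model may be arbitrary $\mathbb{Q}$-factorial isolated cDV singularities; a uniform proof would need to replace this case analysis by a structural argument, for instance a direct handle on the Chow groups $A_{\le 1}$ of a resolution $\widetilde X\to\Spec R_i$, or an appeal to Kawamata's classification of three-dimensional terminal singularities. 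Second, the descent to \emph{analytic} $\mathbb{Q}$-factoriality used above is transparent only over a local base; over a general affine $\Spec R$ a minimal model need only be $\mathbb{Q}$-factorial in the Zariski sense, and one must still rule out a cDV point of a globally $\mathbb{Q}$-factorial minimal model that carries a non-Cartier analytic Weil divisor (the conifold being the model for this behaviour). Settling this point — or reducing the conjecture to the local setting, where it does not arise — is the crux.
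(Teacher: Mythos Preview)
This statement is a \emph{conjecture} in the paper; the paper does not prove it in general. What is proved is the special case where $X$ has only factorial $\ctAn{n}$ singularities (Lemma~\ref{lem:Krothsg=oplusKroth(uCM(sheaf))}, under Setup~\ref{setup:type_A_min_model}). In that case your localisation strategy is exactly the paper's: idempotent completeness plus isolated singularities give $\Dsg(X)\simeq\bigoplus_j\uCM\widehat{\cO}_{X,p_j}$, and one kills each summand. For factorial $\ctAn{n}$ local rings the paper computes $\Kroth(\uCM)$ via Kn\"orrer periodicity (Corollary~\ref{cor:K_0(uCM(R))_description} with $t=1$, $a_1=1$) and obtains zero.

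Your attempt at the general case correctly identifies its own gaps, but two points deserve sharpening. First, invoking Theorem~\ref{thm:conclusion} to obtain $\Groth(R_i)\cong\Z\oplus\Cl(R_i)$ for an arbitrary factorial isolated cDV $R_i$ is precisely Conjecture~\ref{conj:K_0_uCMR=Cl(R)} (equivalently~\eqref{eq:Groth=Z+Cl_general}) for that ring, which the paper leaves open outside the listed cases; so what you have written is not a proof but a reduction of Conjecture~\ref{conj:motivation} to Conjecture~\ref{conj:K_0_uCMR=Cl(R)} at factorial cDV points --- a reasonable observation, and essentially how the paper frames the relationship between the two conjectures. Second, the claim that the explicit presentations of $\Cl$ exhibit it as torsion-free is incorrect: for general $\ctAn{n}$ cDV, Theorem~\ref{thm:Cl(A)_global} (and its complete-local analogue) give $\Cl\cong\Z^{\oplus t}/(a_1,\dots,a_t)$, which has torsion whenever $\gcd(a_i)>1$. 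The cleaner route is that analytic $\Q$-factoriality (granting your descent concern) gives factoriality for these hypersurfaces, hence $\Cl(R_i)=0$ directly; one then still needs $\Kroth(\uCM R_i)=0$, and that is exactly the unproven step.
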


This turns out to be connected to, and in fact determined by, problems in commutative algebra.
This motivates our focus on two and three dimensional commutative rings.
Now, let $R$ be a normal noetherian integral domain and write $\Cl(R)$ for the class group of $R$ and $\Groth(R)$ for the Grothendieck group of $\modCat R$.
Our goal in this thesis is to gain insight into the following surprising question.
When does
\begin{equation}\label{eq:Groth=Z+Cl_general}
\Groth(R) \cong \Z \oplus \Cl(R)
\end{equation}
hold?
This turns out to be relevant to the smooth versus singular behaviour discussed above, and is surprising because it is not typical behaviour in dimensions three and higher.
In this thesis, we look at when \eqref{eq:Groth=Z+Cl_general} is true, and some situations where it fails.
We prove that this isomorphism holds in a variety of different settings, for example: arbitrary $\ctAn{n}$ singularities, arbitrary isolated cDV singularities with noncommutative crepant resolution(s) (NCCRs), arbitrary cDV singularities with minimal model which has type $\ctAn{n}$ singularities, amongst others.
Our main results are summarised in Theorem~\ref{thm:conclusion}.

\section{Kleinian singularities}\label{sec:intro:ksings}
	A \emph{quotient singularity} has the form $Y/G$, where $Y$ is a smooth variety, and $G$ is a finite\footnote{We do not consider the infinite case in this thesis.} group of automorphisms of $Y$.
	Our main focus is on the quotient singularities $\C^{n}/G$ with $G$ a non-trivial finite subgroup of SL$(n, \C)$.
	These quotient singularities are both Gorenstein \cite{Wat74} and rational \cite{Vie77}.
	We mainly restrict to quotient singularities $\C^{n}/G$ in dimension $2$ and $3$.
	
	Consider the $2$-dimensional case, that is $\C^{2}/G$ where $G$ is a non-trivial finite subgroup of SL$(2, \C)$.
	These groups were classified by Klein: there are two infinite families and three exceptional groups. 
	These quotient singularities are called \emph{Kleinian} or \emph{Du Val singularities}\footnote{In the literature, these have many names such as rational double points or $\ADE$ singularities.}.
	Klein proved that the finite subgroups $G$ of SL$(2, \C)$ are precisely the binary polyhedral groups \cite{klein84}.
	This information is summarised in the three columns on the righthand side of Table~\ref{table:k_sings_dynkin_diagrams}.

	The (coordinate rings of) Kleinian singularities are defined to be the invariant rings $\C[U, V]^{G}$.
	They are generated by three elements, subject to one relation.
	In other words, each ring $R$ is isomorphic to $\C[u, v, x]/(f)$ for some irreducible polynomial $f$.
	As a consequence, each ring $\C[u, v, x]/(f)$ can be seen as functions on a surface in $\C^{3}$ defined by the zero set of $f$.
	Each of these surfaces has a unique singular point at the origin; they are isolated singularities.

\section{The McKay correspondence}\label{sec:intro:mckay_correspondence}
	Consider a finite subgroup $G$ of SL$(2, \C)$ and the natural $2$-dimensional representation $\upomega$ of $G$.
	Let $\upvarrho_{0}, \hdots, \upvarrho_{n}$ be all the irreducible representations of $G$ where $\upvarrho_{0}$ is the trivial representation.
	Furthermore, let $m_{i, j}$ be the multiplicity of $\upvarrho_{i}$ in $\upomega \otimes \upvarrho_{j}$.
	With this setup we give a description, following \cite{mckay80}, of the McKay quiver associated to each finite subgroup $G$ of SL$(2, \C)$.
	\begin{definition}
		The \emph{McKay quiver} of $G$, with respect to $\upomega$, is the quiver (i.e., directed graph) with vertices $0, \hdots, n$ corresponding to the irreducible representations $\upvarrho_{0}, \hdots, \upvarrho_{n},$ and $m_{i, j}$ arrows from vertex $i$ to vertex $j$.
	\end{definition}
	For Kleinian singularities, $\upomega$ is self-dual and so $m_{i, j} = m_{j, i}$ for all $i, j$.
	The McKay correspondence, due to John McKay in \cite{mckay80}, shows that there is a one-to-one correspondence between the McKay quivers of the finite subgroups of SL$(2, \C)$ and the extended Dynkin diagrams.
	This is described in Table~\ref{table:k_sings_dynkin_diagrams}.
	The five (affine) extended Dynkin diagram types, depicted in Figure~\ref{fig:extended_dynkin_diagrams}, are $\teAn{n}$ $(\sf{n} \geq 0)$ and $\teDn{n}$ $(\sf{n} \geq 4)$ with $\sf{n}+1$ vertices, as well as $\teEn{6}$, $\teEn{7}$, and $\teEn{8}$, where each \emph{extended vertex} is shown as an empty circle.
	In each extended Dynkin diagram, the extended vertex corresponds to the trivial representation $\upvarrho_{0}$.
	The number at each vertex is the dimension of the corresponding representation.
	\begin{figure}[H]
		\centering
		$\teAn{n} \quad$
		\begin{tikzpicture}[
		dot/.style = {circle, fill, inner sep=2pt,
			node contents={}},
		every label/.append style = {font=\footnotesize},  
		every edge/.style ={draw, line width=.5pt}
		]
		\path	
		(2,1)	node (v5) [dot, draw=black, fill=white, label=above: $1$]
		(0,0)	node (v0) [dot, label=below: $1$]
		(1,0)	node (v1) [dot, label=below: $1$]
		(2,0)	node (v2) {$\hdots$}
		(3,0)	node (v3) [dot, label=below: $1$]
		(4,0)	node (v4) [dot, label=below: $1$];
		\path   (v0)	edge (v5)
		(v0)	edge (v1)
		(v1)	edge (v2)
		(v2)	edge (v3)
		(v3)	edge (v4)
		(v4)	edge (v5);
		\end{tikzpicture} \qquad \qquad
		$\teDn{n} \quad$
		\begin{tikzpicture}[
		dot/.style = {circle, fill, inner sep=2pt,
			node contents={}},
		every label/.append style = {font=\footnotesize},  
		every edge/.style ={draw, line width=.5pt}
		]
		\path	(0,0)	node (v0) [dot, label=below: $1$]
		(1,0)	node (v1) [dot, label=below: $2$]
		(2,0)	node (v2) {$\hdots$}
		(3,0)	node (v3) [dot, label=below: $2$]
		(4,0)	node (v4) [dot, label=above: $2$]
		(5,0)	node (v5) [dot, label=below: $1$]
		(4,-1)	node (v6) [dot, label=below: $1$]
		(1,1)	node (v7) [dot,draw=black, fill=white, label=above: $1$];
		\path   (v0)	edge (v1)
		(v1)	edge (v2)
		(v1)	edge (v7)
		(v2)	edge (v3)
		(v3)	edge (v4)
		(v4)	edge (v5)
		(v4)	edge (v6);
		\end{tikzpicture}\\
		\qquad \newline
		\qquad \newline
		$\teEn{6} \quad$
		\begin{tikzpicture}[
		dot/.style = {circle, fill, inner sep=2pt,
			node contents={}},
		every label/.append style = {font=\footnotesize},  
		every edge/.style ={draw, line width=.5pt}
		]
		\path	(0,0)	node (v0) [dot, label=below: $1$]
		(1,0)	node (v1) [dot, label=below: $2$]
		(2,0)	node (v2) [dot, label=below: $3$]
		(3,0)	node (v3) [dot, label=below: $2$]
		(4,0)	node (v4) [dot, label=below: $1$]
		(2,1)	node (v5) [dot, label=left: $2$]
		(2,2)	node (v6) [dot,draw=black, fill=white, label=left: $1$];
		\path   (v0)	edge (v1)
		(v1)	edge (v2)
		(v2)	edge (v3)
		(v3)	edge (v4)
		(v2)	edge (v5)
		(v2)	edge (v6);
		\end{tikzpicture} \qquad \qquad
		$\teEn{7} \quad$
		\begin{tikzpicture}[
		dot/.style = {circle, fill, inner sep=2pt,
			node contents={}},
		every label/.append style = {font=\footnotesize},  
		every edge/.style ={draw, line width=.5pt}
		]
		\path	(-1,0)	node (v7) [dot,draw=black, fill=white, label=below: $1$]
		(0,0)	node (v0) [dot, label=below: $2$]
		(1,0)	node (v1) [dot, label=below: $3$]
		(2,0)	node (v2) [dot, label=below: $4$]
		(3,0)	node (v3) [dot, label=below: $3$]
		(4,0)	node (v4) [dot, label=below: $2$]
		(5,0)	node (v5) [dot, label=below: $1$]
		(2,1)	node (v6) [dot, label=above: $2$];
		\path   (v7)	edge (v0)
		(v0)	edge (v1)
		(v1)	edge (v2)
		(v2)	edge (v3)
		(v3)	edge (v4)
		(v4)	edge (v5)
		(v2)	edge (v6);
		\end{tikzpicture}\\
		\qquad \newline
		\qquad \newline
		$\teEn{8} \quad$
		\begin{tikzpicture}[
		dot/.style = {circle, fill, inner sep=2pt,
			node contents={}},
		every label/.append style = {font=\footnotesize},  
		every edge/.style ={draw, line width=.5pt}
		]
		\path	(0,0)	node (v0) [dot, label=below: $2$]
		(1,0)	node (v1) [dot, label=below: $4$]
		(2,0)	node (v2) [dot, label=below: $6$]
		(3,0)	node (v3) [dot, label=below: $5$]
		(4,0)	node (v4) [dot, label=below: $4$]
		(5,0)	node (v5) [dot, label=below: $3$]
		(6,0)	node (v6) [dot, label=below: $2$]
		(7,0)	node (v7) [dot,draw=black, fill=white, label=below: $1$]
		(2,1)	node (v8) [dot, label=above: $3$];
		\path   (v0)	edge (v1)
		(v1)	edge (v2)
		(v2)	edge (v3)
		(v3)	edge (v4)
		(v4)	edge (v5)
		(v5)	edge (v6)
		(v6)	edge (v7)
		(v2)	edge (v8);
		\end{tikzpicture}
		\caption{The extended Dynkin diagrams \cite{eting11}}\label{fig:extended_dynkin_diagrams}
	\end{figure}
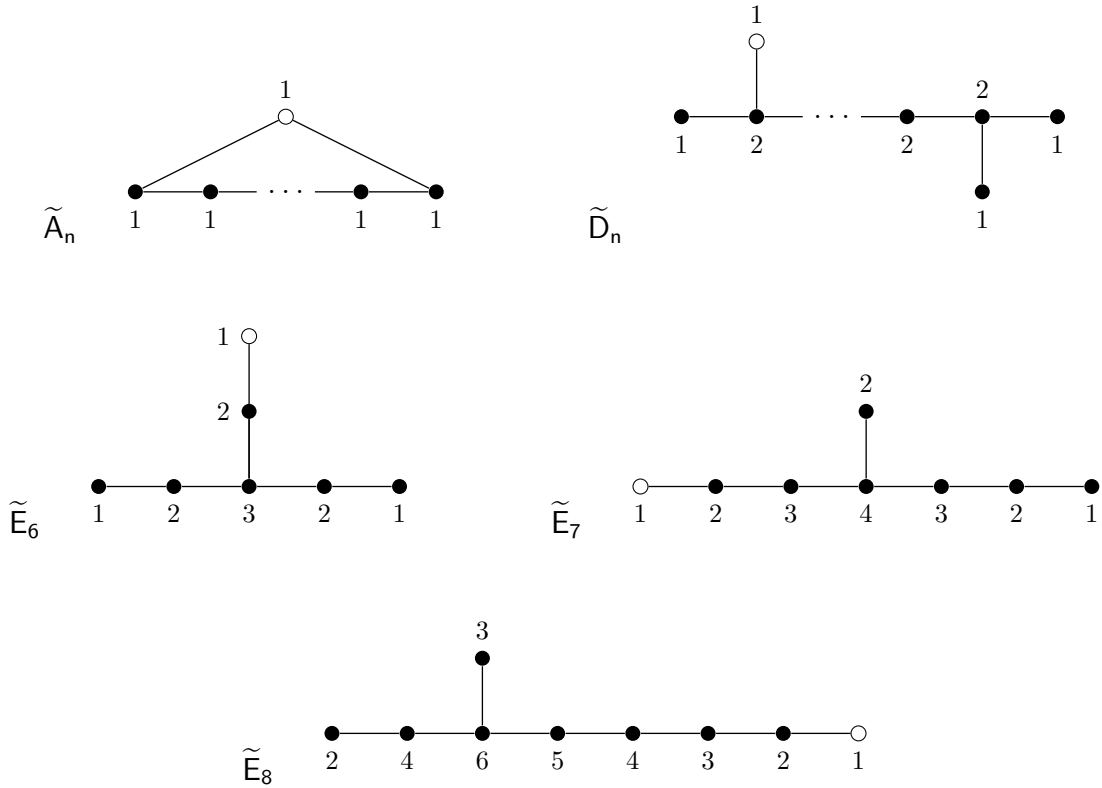
	By removing the extended vertex from an extended Dynkin diagram one obtains a (simply laced) Dynkin diagram.
	A quiver is called \emph{Dynkin} (respectively, \emph{extended Dynkin}) if the underlying graph is of $\ADE$ Dynkin (respectively, extended Dynkin) type.

	From this correspondence it follows that each McKay quiver associated to a Kleinian singularity is, in fact, the double quiver (see \S\ref{sec:deformed_preproj_deformed_ksings}) of an extended $\ADE$ Dynkin diagram \cite{mckay80}.
	The corresponding diagram appears in the first column of Table~\ref{table:k_sings_dynkin_diagrams}.
	\begin{table}[H]
		\centering \doublespacing
		\begin{tabular}{c|c|c|c}
			Type & $G$ & $|G|$  & $f$ \\ \hline
			$\teAn{n}$ & Cyclic $n \geq 1$, $\Z_{n+1}$ & $n+1$ & $x^{n+1} + uv = 0$ \\
			$\teDn{n}$ & Binary dihedral $n \geq 4$, $\mathcal{BD}_{4(n+1)}$ & $4(n-2)$ &  $x(v^{2} - x^{n+1}) + u^{2} = 0$ \\
			$\teEn{6}$ & Binary tetrahedral, $\mathcal{T}$ & $24$ & $x^{4} + v^{3} + u^{2} = 0$ \\
			$\teEn{7}$ & Binary octahedral, $\mathcal{O}$ & $48$ & $x^{3} + xv^{3} + u^{2} = 0$ \\
			$\teEn{8}$ & Binary icosahedral, $\mathcal{I}$ & $120$ & $x^{5} + v^{3} + u^{2} = 0$
		\end{tabular}
		\caption{The Kleinian singularities and extended Dynkin diagrams}
		\label{table:k_sings_dynkin_diagrams}
	\end{table}

	\subsection{The geometric McKay correspondence}\label{subsec:intro:geometric_mckay_correspondence}
		The McKay correspondence can be interpreted from a geometric point of view; an in-depth introduction to this may be found in \cite[Chapter~6]{LeuWie12}.
		As before, let $G$ be a finite non-trivial subgroup of SL$(2, \C)$ and $R \colonequals \C[U, V]^{G}$.
		The variety $\Spec R$ is a Kleinian singularity with an isolated singular point at the origin.
		An important result in algebraic geometry, due to Hironaka \cite{Hi64}, states that every singular variety $\Spec R$ over a field of characteristic $0$ has a \emph{resolution of singularities}.
		In other words, there exists a non-singular variety $X$ and a proper birational map $X \to \Spec R$.
		A \emph{minimal resolution} is a resolution $f \colon X \to \Spec R$ through which any other resolution factors.
		We provide a more in-depth discussion of resolutions of singularities in Chapter~\ref{ch:other_prelims}, with a particular focus on \emph{minimal models} and \emph{noncommutative crepant resolutions (NCCRs)}.

        In general, a minimal resolution of a variety does not exist, but for surfaces, they do, and they are unique.
		As such, it makes sense to talk about \emph{the} minimal resolution of a Kleinian singularity.
		In Figure~\ref{fig:A_1_ksing_illustration}, we illustrate the resolution of an $\tAn{1}$ Kleinian singularity, where the resolution replaces the singular point at the origin by a circle (indicated in red).
		\begin{figure}[H]
			\centering
			\begin{tikzpicture}[bend angle=20, looseness=1,scale=1]
			\node (p1) at (-3,0) {};
			\draw (0,1) ellipse (0.5cm and 0.15cm);
			\draw[gray,densely dotted] (0.5,-1) arc (0:180:0.5cm and 0.15cm);
			\draw (0.5,-1) arc (0:-180:0.5cm and 0.15cm);  
			\draw (-0.5,1) -- (0.5,-1);
			\draw (0.5,1) -- (-0.5,-1);
			\draw (-2.5,1) ellipse (0.5cm and 0.15cm);
			\draw[gray,densely dotted] (-2,-1) arc (0:180:0.5cm and 0.15cm);
			\draw (-2,-1) arc (0:-180:0.5cm and 0.15cm);  
			\draw [bend left] (-3,1) to node (m2a) {} (-3,-1);
			\draw [bend right] (-2,1) to node (m1a){} (-2,-1);
			\coordinate (m1) at (m1a);
			\coordinate (m2) at (m2a);
			\draw[->] (-1.5,0) -- node[above] {} (-0.75,0);
			\draw[densely dotted,red] (m1a) arc (0:180:0.3cm and 0.1cm);
			\draw[red] (m1a) arc (0:-180:0.3cm and 0.1cm); 
			\end{tikzpicture}\caption{Resolution of the cone singularity $\tAn{1}$}\label{fig:A_1_ksing_illustration}
		\end{figure}
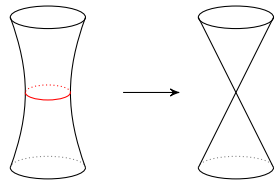
		To the minimal resolution $f \colon X \to \Spec R$ one can associate the \emph{exceptional divisor}.
		The exceptional divisor is a finite chain of curves.
		The arrangement of these curves is encoded in the \emph{dual graph} of the exceptional divisor, in which a vertex replaces each curve, and an edge connects two vertices if the corresponding curves intersect.
		For example, the curve configuration below on the left corresponds to the dual graph on the right.
		\[
		\begin{array}{c}
			\begin{tikzpicture}
			\node at (0,0)
			{\begin{tikzpicture} [bend angle=25, looseness=1,transform shape, rotate=-10]
				\draw[red] (0,0,0) to [bend left=25] (2,0,0);
				\draw[red] (1.8,0,0) to [bend left=25] (3.8,0,0);
				\draw[red] (-1.8,0,0) to [bend left=25] (0.2,0,0);
			\end{tikzpicture} };
			
			\node at (7,0) {
			\begin{tikzpicture} [bend angle=25, looseness=1,transform shape, rotate=-10]
			\filldraw [black!60] (1,0.25,0) circle (2pt);
			\filldraw [black!60] (3,0.25,0) circle (2pt);
			\filldraw [black!60] (-1,0.25,0) circle (2pt);
			\node (1) at (-1,0.25,0) {};
			\node (2) at (1,0.25,0) {};
			\node (3) at (3,0.25,0) {};
			\draw[black!80] (1) -- (2);
			\draw[black!80] (2) -- (3);
			\end{tikzpicture}};
			\draw [->,decorate, 
			decoration={snake,amplitude=.6mm,segment length=3mm,post length=1mm}] 
			(2.75,0.2) -- (3.75,0.2);
			\end{tikzpicture}
		\end{array}
		\]
		It is well known that the resulting dual graph is always an $\ADE$ Dynkin diagram and that any $\ADE$ Dynkin diagram can be realised as the dual graph of the minimal resolution of some Kleinian singularity \cite{duVal34}.
	
\section{Viehweg's setting}\label{sec:intro:viehwegs_setting}
In \cite{Vie77}, Viehweg studies singularities of the form
\[
	A \colonequals \frac{\C[u,v,x_{1}, \hdots , x_{n}]}{(uv - f_{1}^{a_{1}} \cdots  f_{t}^{a_{t}})},
\]
where $f_i\in\C[x_{1}, \hdots , x_{n}]$ are irreducible and pairwise coprime, and each $a_{i} \ge 1$.
Such singularities are a generalisation of type $\tA$ Kleinian singularities and we refer to singularities of this form as being in `Viehweg's setting'.
In Chapter~\ref{ch:divisor_cl_grps}, we focus on singularities in Viehweg's setting, and prove the following results.
\begin{theorem*}
	Let $A$ be the ring defined above, then
	\begin{enumerate}[label=(\arabic*)]
			\item $\Cl(A) \cong \Z^{\oplus t}/ (a_1,\hdots,a_t)$.
			\item If further the Krull dimension of $A$ is two, then \eqref{eq:Groth=Z+Cl_general} holds, namely
			\[ \Groth(A) \cong \Z \oplus \Cl(A). \]
	\end{enumerate}
\end{theorem*}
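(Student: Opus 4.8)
The plan is to prove (1) with Nagata's theorem and (2) with the Quillen localisation sequence in $G$-theory, in both cases localising at the element $g:=f_1^{a_1}\cdots f_t^{a_t}$.

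\textbf{Part (1).} First I would record that $A$ is a normal (hence Krull) domain: $uv-g$ is irreducible, so $A$ is a hypersurface, Cohen--Macaulay, and in particular $S_2$; and the singular locus, cut out by $v,u$ and the $\partial g/\partial x_j$, lies in $\{u=v=0\}\cap V(g)$, which has codimension $\ge 2$, so $A$ is $R_1$ and Serre's criterion applies. The height-one primes of $A$ containing $g$ are exactly $\mfp_i:=(u,f_i)$ and $\mfq_i:=(v,f_i)$ for $1\le i\le t$, since these index the components of $V(u)$ and of $V(v)$. Inverting $g$ makes $u,v$ units, so $A[1/g]\cong\C[x_1,\dots,x_n]_{f_1\cdots f_t}[u^{\pm1}]$ is a localisation of a polynomial ring with a Laurent variable adjoined, hence a unique factorisation domain; thus $\Cl(A[1/g])=0$, and Nagata's theorem says $\Cl(A)$ is generated by $\{[\mfp_i],[\mfq_i]\}$ with relations exactly the principal divisors supported on those primes. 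Such a divisor is the divisor of an element of $\mathrm{Frac}(A)^\times$ with no zeros or poles on $\Spec A[1/g]$, i.e.\ of a unit of $A[1/g]$, and the units of $A[1/g]$ are $\C^\times\cdot\langle u,f_1,\dots,f_t\rangle$. A DVR computation at each $\mfp_i$ shows $f_i$ is a uniformiser at $\mfp_i$ and at $\mfq_i$, so $\div(f_i)=\mfp_i+\mfq_i$, $\div(u)=\sum_i a_i\mfp_i$, and $\div(v)=\sum_i a_i\mfq_i$. Hence
\[
\Cl(A)\;\cong\;\frac{\big(\bigoplus_i\Z\mfp_i\big)\oplus\big(\bigoplus_i\Z\mfq_i\big)}{\big\langle\,\mfp_i+\mfq_i\ (1\le i\le t),\ \textstyle\sum_i a_i\mfp_i\,\big\rangle}\;\cong\;\Z^{\oplus t}/(a_1,\dots,a_t),
\]
using $\mfq_i\equiv-\mfp_i$. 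The only care needed is the claim that no further relations occur, which is the unit computation above.

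\textbf{Part (2).} If $\dim A=2$ then $n=1$, so $f_i=x-c_i$ with the $c_i$ distinct and $g=\prod_i(x-c_i)^{a_i}$. Put $Z:=V(g)\subset\Spec A$ and $U:=\Spec A[1/g]=\Spec A\setminus Z$, and feed these into the localisation sequence
\[
\mathrm{G}_1(U)\xrightarrow{\ \partial\ }\Groth(Z)\xrightarrow{\ i_*\ }\Groth(A)\xrightarrow{\ j^*\ }\Groth(U)\to 0,
\]
computing the three outer terms. First, $A[1/g]=\C[x]_{f_1\cdots f_t}[u^{\pm1}]$ is regular with trivial Picard group, so $\Groth(U)=\Kroth(A[1/g])=\Z$; as $\Z$ is free this splits and $\Groth(A)\cong\Z\oplus\Im(i_*)$. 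Next, $Z_{\mathrm{red}}$ is the disjoint union over $i$ of a coordinate cross $L_i^u\cup L_i^v$, where $L_i^u:=V(u,x-c_i)\cong\mathbb{A}^1$ and $L_i^v:=V(v,x-c_i)\cong\mathbb{A}^1$ meet in a point, and since closed points are rationally trivial on $\mathbb{A}^1$, d\'evissage gives $\Groth(Z)=\bigoplus_i\big(\Z[\cO_{L_i^u}]\oplus\Z[\cO_{L_i^v}]\big)\cong\Z^{2t}$. Finally $\mathrm{G}_1(U)=\mathrm{K}_1(A[1/g])\cong A[1/g]^\times=\C^\times\oplus\langle f_1,\dots,f_t\rangle\oplus\langle u\rangle$ by Bass--Heller--Swan (the relevant $\mathrm{SK}_1$ vanishing). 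The crux is the boundary map: each generator lies in $A$, is a non-zero-divisor, and becomes a unit in $A[1/g]$, so $\partial$ sends such a unit $f$ to the class $[A/fA]\in\Groth(Z)$ of the cokernel of multiplication by $f$. Thus $\partial(c)=0$; $\partial(f_i)=[A/(x-c_i)A]=[\C[u,v]/(uv)]=[\cO_{L_i^u}]+[\cO_{L_i^v}]$; and $\partial(u)=[A/uA]=[\C[v,x]/(g)]=\sum_i a_i[\cO_{L_i^u}]$, the multiplicities $a_i$ coming from a composition series of $\C[x]/(g)$ into its $(x-c_i)$-primary parts, each of length $a_i$. Therefore
\[
\Im(i_*)\;\cong\;\Groth(Z)/\Im(\partial)\;\cong\;\frac{\Z^{2t}}{\big\langle\,[\cO_{L_i^u}]+[\cO_{L_i^v}],\ \textstyle\sum_i a_i[\cO_{L_i^u}]\,\big\rangle}\;\cong\;\Z^{\oplus t}/(a_1,\dots,a_t),
\]
which by Part (1) is $\Cl(A)$; combining with the splitting gives $\Groth(A)\cong\Z\oplus\Cl(A)$.

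\textbf{Main obstacle.} Part (1) is essentially bookkeeping once normality and the unit group of $A[1/g]$ are in hand. The real work is the $G$-theoretic computation in (2): identifying $\mathrm{K}_1(A[1/g])$ via Bass--Heller--Swan and, above all, pinning down the boundary map $\partial$ — in particular making the multiplicities $a_i$ in $\partial(u)$ precise, which forces one to handle the non-reduced scheme structure of $V(u)$. A cleaner-looking but no cheaper route to (2) is to show directly that the classes of finite-length modules vanish in $\Groth(A)$ — every closed point of $\Spec A$ lies on a copy of $\mathbb{A}^1$ or of $\C^\times$ inside $\Spec A$, namely a component of a fibre of the projection $\Spec A\to\mathbb{A}^1_x$, on which point classes are rationally trivial — and then to identify the remaining subgroup generated by prime-divisor classes with $\Cl(A)$; but justifying that last identification for a two-dimensional normal domain is comparable work, so I would favour the localisation-sequence argument above.
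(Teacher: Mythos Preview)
Your proposal is correct, and both parts take genuinely different routes from the paper.

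\textbf{Part (1).} The paper inverts $u$ rather than $g$. This is a small but real simplification: the only height-one primes containing $u$ are the $\mfp_i=(u,f_i)$, and $A[u^{-1}]^{\times}/A^{\times}\cong\Z\langle u\rangle$, so Nagata's sequence gives $\Cl(A)$ directly as $\Z^{t}$ modulo the single relation $\div(u)=(a_1,\dots,a_t)$. Your version, inverting $g$, doubles the primes to $\{\mfp_i,\mfq_i\}$ and introduces $t+1$ relations (from $u,f_1,\dots,f_t$), which you then have to collapse. Both are fine; the paper's choice avoids the bookkeeping.

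\textbf{Part (2).} Here the approaches diverge substantially. The paper does \emph{not} compute a localisation sequence. Instead it proves directly, by elementary manipulation of explicit primes (Proposition~\ref{prop:[A/m]=0}), that $[A/\mfm]=0$ in $\Groth(A)$ for every maximal ideal $\mfm$ --- essentially your parenthetical observation that each closed point lies on an $\mathbb{A}^1$ or a $\C^\times$ inside a fibre of $\Spec A\to\mathbb{A}^1_x$. It then invokes a general fact for normal domains (Proposition~\ref{prop:huneke_oGroth(R)_to_Cl(R)}): the quotient of $\oGroth(R)$ by the subgroup generated by $[R/\mfp]$ with $\height\mfp\ge 2$ is always $\Cl(R)$. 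In dimension two this subgroup is spanned by the $[A/\mfm]$, which have just been shown to vanish, so $\oGroth(A)\cong\Cl(A)$ and \eqref{eq:Groth=Z+Cl_general} follows.

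You describe this alternative at the end and call the final identification ``comparable work'', but in fact the paper's Proposition~\ref{prop:huneke_oGroth(R)_to_Cl(R)} is a short general lemma (construct an explicit inverse $\Cl(R)\to\oGroth(R)/H$ via $\mfp\mapsto[R/\mfp]$), and the whole argument avoids the $\mathrm K_1$ computation, Bass--Heller--Swan, and the boundary map entirely. Your route has the virtue of being more systematic and reusable --- it would adapt to other open/closed decompositions --- but for this specific ring the paper's approach is shorter and more elementary. Your identification of $\partial$ and the multiplicity computation for $\partial(u)$ are correct and are indeed where the content lies in your version.
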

This has applications to deformations of Kleinian singularities, explained in \S\ref{sec:deformed_preproj_deformed_ksings}, and to $\ctAn{n}$ singularities in Chapter~\ref{ch:cDVs}.

\section{Compound Du Val singularities}\label{sec:intro:cDVs}
Until now, we have mainly discussed the $2$-dimensional setting.
But in this thesis, we do far more than just look at dimension $2$.
A natural question to ask is whether part (2) of the above result generalises to higher dimensions, and a natural way to generalise Kleinian singularities is to look at compound Du Val singularities.
Intuitively, these are $3$-dimensional analogues of Kleinian singularities.

For a variety $X$, write $\cO_{X}$ for the sheaf of functions on the space, and for $p \in X$ write $\widehat{\cO}_{X,p}$ for the completion of $\cO_{X,p}$ at the unique maximal ideal.
Formally, $X$ is said to have \emph{compound Du Val (cDV) singularities} if and only if every $\widehat{\cO}_{X,p}$ is of the form
\[
\frac{\C [[ u, v, x,y ]]}{(f(u, v, x) + yh(u, v, x, y))}
\]
where $\C[u, v, x]/(f)$ is a Kleinian singularity and $h$ is arbitrary.
One source of such singularities is $\C^{3}/G$ where $G$ is one of the finite subgroups of $\SL(3, \C)$ described in \ref{cDV_3dim_A} -- \ref{cDV_3dim_E_8} below (see, e.g. \cite{NollaSekiya11FlopsAM}).
\begin{enumerate}[label=(\alph*)]
	\item\label{cDV_3dim_A}
	The cyclic group of order $n$.
	That is,
	\[ \frac{1}{n}(1,n-1,0) \colonequals \left\langle \left( \begin{matrix}
	\epsilon & 0 & 0 \\
	0 & \epsilon^{-1} & 0 \\
	0 & 0 & 1
	\end{matrix} \right) \right\rangle, \text{ where } \upvarepsilon = e^{2 \pi i /(n+1)}. \]
	
	\item\label{cDV_3dim_D}
	The dihedral group $D_{2n}$ of order $2n$.
	The group $D_{2n}$ is generated by the matrices
	\[ \left( \begin{matrix}
	\epsilon & 0 & 0 \\
	0 & \epsilon^{n-1} & 0 \\
	0 & 0 & 1
	\end{matrix} \right) \text{ and } \left( \begin{matrix}
	0 & 1 & 0 \\
	1 & 0 & 0 \\
	0 & 0 & -1
	\end{matrix} \right), \text{ with } \upvarepsilon = e^{2 \pi i /n}. \]
	
	\item\label{cDV_3dim_E_6}
	The trihedral group (also known as the tetrahedral group) $\mathbb{T}$ of order 12.
	Explicitly,
	\[\mathbb{T} = \left\langle \left( \begin{matrix}
	-1 & 0 & 0 \\
	0 & -1 & 0 \\
	0 & 0 & 1
	\end{matrix} \right),
	\left( \begin{matrix}
	0 & 1 & 0 \\
	0 & 0 & 1 \\
	1 & 0 & 0
	\end{matrix} \right) \right\rangle.\]
	
	\item\label{cDV_3dim_E_7}
	The octahedral group $\mathbb{O}$ of order 24.
	Explicitly,
	\[\mathbb{O} = \left\langle \left( \begin{matrix}
	0 & -1 & 0 \\
	1 & 0 & 0 \\
	0 & 0 & 1
	\end{matrix} \right),
	\left( \begin{matrix}
	0 & 1 & 0 \\
	0 & 0 & 1 \\
	1 & 0 & 0
	\end{matrix} \right) \right\rangle.\]
	
	\item\label{cDV_3dim_E_8}
	The icosahedral group $\mathbb{I}$ of order 60.
	Explicitly,
	\[\mathbb{I} = \left\langle \left( \begin{matrix}
	1 & 0 & 0 \\
	0 & \epsilon & 0 \\
	0 & 0 & \epsilon^{4}
	\end{matrix} \right),
	\frac{1}{\sqrt{5}} \left( \begin{matrix}
	1 & 1 & 1 \\
	2 & s & t \\
	2 & t & s
	\end{matrix} \right) \right\rangle\]
	where $\upvarepsilon = e^{2 \pi i/5}, s = \upvarepsilon^{2} + \upvarepsilon^{3} = \frac{-1-\sqrt{5}}{2}$ and $t= \upvarepsilon + \upvarepsilon^{4} = \frac{-1 + \sqrt{5}}{2}.$
\end{enumerate}
By \cite[Proposition~6.1]{BIKR}, another source of cDV singularities are rings of the form $\C[u, v, x, y] / (uv - f(x,y))$, which further justifies our interest in Viehweg's setting of \S\ref{sec:intro:viehwegs_setting}.

As is a common theme in this thesis, typically one studies cDVs by reducing back to dimension $2$ (i.e., to Kleinian singularities).
This is done via slicing by a generic hyperplane section.
Hence, a cDV is a threefold such that slicing by a generic hyperplane section yields a Kleinian singularity.
As discussed previously, such surface singularities have been studied intensely and are well understood.
Just as with Kleinian singularities, cDVs are classified into $\ADE$ Dynkin type.

In Chapter~\ref{ch:cDVs}, we make the following conjecture and prove that both it and the isomorphism \eqref{eq:Groth=Z+Cl_general} hold for cDVs in various settings.
\begin{conj}[{=\ref{conj:K_0_uCMR=Cl(R)}}]
	Let $R$ be a local cDV singularity.
	Then
	\[ 
	\Kroth(\uCM R) \cong \Cl(R).
	\]
\end{conj}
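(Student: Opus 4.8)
The plan is to translate the statement into commutative algebra and then isolate the one genuinely hard input. Since a cDV singularity is a hypersurface, $R$ is Gorenstein, so Buchweitz's theorem gives a triangle equivalence $\uCM R\simeq\Dsg(R)$, where $\Dsg(R)=\Db(\modCat R)/\Kb(\proj R)$. The localization theorem applied to this Verdier quotient yields an exact sequence
\[
\Kroth(\proj R)\longrightarrow\Groth(R)\longrightarrow\Kroth(\Dsg R)\longrightarrow 0,
\]
and since $R$ is local the first term is $\Z[R]$, mapping to $\Groth(R)$ by $[R]\mapsto[R]$; hence $\Kroth(\uCM R)\cong\Groth(R)/\Z[R]$. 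The rank homomorphism $\Groth(R)\to\Z$ sends $[R]$ to $1$ and so splits the inclusion $\Z[R]\hookrightarrow\Groth(R)$, identifying $\Kroth(\uCM R)$ with the rank-zero subgroup $\mathrm{F}^{1}\Groth(R)$, namely the subgroup generated by the classes $[R/\mfp]$ with $\height\mfp\ge 1$.

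Next I would bring in the codimension filtration. A cDV singularity is normal (it is a hypersurface, hence $S_{2}$, with singular locus of codimension at least two), so the filtration $\Groth(R)\supseteq\mathrm{F}^{1}\Groth(R)\supseteq\mathrm{F}^{2}\Groth(R)\supseteq\cdots$ by dimension of support is defined, and the cycle class map in codimension one induces an isomorphism $\mathrm{F}^{1}\Groth(R)/\mathrm{F}^{2}\Groth(R)\cong\Cl(R)$, classical for normal domains. In particular $\Kroth(\uCM R)=\mathrm{F}^{1}\Groth(R)$ surjects onto $\Cl(R)$, and, combined with the first paragraph, the conjecture becomes equivalent to the single vanishing statement
\[
\mathrm{F}^{2}\Groth(R)=0,
\]
that is, that every finitely generated $R$-module whose support has dimension at most one is zero in $\Groth(R)$. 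All of the content lives here: this is precisely the part of the isomorphism $\Groth(R)\cong\Z\oplus\Cl(R)$ that is subtle, and that can fail in higher dimensions.

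For the settings treated in Chapter~\ref{ch:cDVs} I would establish this vanishing geometrically. When $R$ is of type $\ctAn{n}$ it is, after completion, a hypersurface $\C[[u,v,x,y]]/(uv-f(x,y))$ in Viehweg's setting; Kn\"{o}rrer periodicity then collapses $\uCM R$ onto the stable category of the one-dimensional (plane curve) singularity $\C[[x,y]]/(f)$, for which $\Kroth$ is computable directly, and one checks that the answer coincides with $\Cl(R)$ as computed by the theorem of \S\ref{sec:intro:viehwegs_setting}. When $R$ is an isolated cDV singularity admitting an NCCR $\End_R(M)$, it also admits a crepant resolution $X\to\Spec R$, necessarily small by terminality, so that $\Cl(R)=\Cl(X)=\mathrm{Pic}(X)$ is free of rank equal to the number of exceptional curves; I would then show, using the NCCR and the geometry of $X$ near its exceptional fibre, that $\Kroth(\uCM R)$ is free of the same rank, forcing the surjection $\Kroth(\uCM R)\twoheadrightarrow\Cl(R)$ of the second paragraph to be an isomorphism. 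The case of a cDV singularity whose minimal model has only $\ctAn{n}$ singularities would then be bootstrapped from the $\ctAn{n}$ case, using that the minimal model is smooth away from those points.

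The main obstacle is, unsurprisingly, the vanishing $\mathrm{F}^{2}\Groth(R)=0$ when none of these structures is available: without a Kn\"{o}rrer reduction, an NCCR, or a small resolution there is no obvious handle on modules supported on curves and points, which is precisely why the statement is posed as a conjecture and why it sits alongside Conjecture~\ref{conj:motivation}. A smaller preliminary point, needed to write down an inverse map $\Cl(R)\to\Kroth(\uCM R)$ directly rather than through the cycle class map, is that divisorial ideals over a local cDV singularity should be maximal Cohen--Macaulay; this holds once $R/\mfp$ is Cohen--Macaulay for every height-one prime $\mfp$, which I would verify in each of the cases above.
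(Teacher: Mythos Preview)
The statement is a conjecture, and the paper does not prove it in full generality; it establishes the conjecture only in the families listed in Theorem~\ref{thm:conclusion}. Your proposal correctly reads it as such and does not overclaim.

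Your reduction framework is equivalent to the paper's. The paper packages the same content as Lemma~\ref{lem:local_ring_Groth_exact_seq_localID_isomorphism} (the split sequence $0\to\Z[R]\to\Kroth(\CM R)\to\Kroth(\uCM R)\to 0$), Lemma~\ref{lem:K_0(uCM R)=Cl(R)iff} (equivalence with $\Groth(R)\cong\Z\oplus\Cl(R)$), and Proposition~\ref{prop:huneke_oGroth(R)_to_Cl(R)} (the quotient $\oGroth(R)/H\cong\Cl(R)$, where $H$ is your $\mathrm{F}^{2}\Groth(R)$). Your route through Buchweitz's equivalence and the localization sequence for $\Dsg$ is a perfectly good alternative packaging of the same splitting; the paper uses the more elementary $\CM$-version directly.

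For the type $\tA$ case your plan via Kn\"{o}rrer periodicity is exactly \S\ref{sec:arbitrary_type_A_cDV}. The NCCR case is where the approaches genuinely differ. You propose to compute $\Kroth(\uCM R)$ geometrically ``using the NCCR and the geometry of $X$ near its exceptional fibre'' and then match ranks; this step is where the work lies, and your sketch does not say how it goes. The paper's actual mechanism is algebraic: it invokes Navkal's presentation $\Groth(R)\cong\Cok\Upomega$ for the higher AR-matrix (Theorem~\ref{thm:NavkalG_0(R)}) and then proves $\Upomega=0$ by establishing, via the knitting computations of Chapter~\ref{ch:knitting} together with slicing to the Kleinian surface, that the quiver of any modifying algebra of a cDV singularity is symmetric (Theorem~\ref{thm:complete_local_cdv_quiver_symmetric}). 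That symmetry result is the technical heart of the NCCR case and is of independent interest; a purely geometric rank count as you outline might also succeed, but it would bypass this structure. Your final remark about divisorial ideals being CM is not needed in the paper's approach, which computes both sides separately rather than constructing an explicit inverse.
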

In addition, in \S\ref{sec:nonisolated_cDV}, we provide evidence which supports the idea that \eqref{eq:Groth=Z+Cl_general} holds for non-isolated cDVs.
Then, in Chapter~\ref{ch:minimal_models}, we prove that \eqref{eq:Groth=Z+Cl_general} holds for any isolated cDV singularity which admits a minimal model with only type $\ctAn{n}$ singularities.

\section{Organisation of the thesis}\label{sec:intro:organisation_of_thesis}
Chapters~\ref{ch:comm_alg_prelims} and~\ref{ch:other_prelims} contain background material, the first covers the more algebraic notions and the second covers the more geometric material.
In Chapter~\ref{ch:divisor_cl_grps}, we calculate the divisor class group of rings in Viehweg's setting and prove that \eqref{eq:Groth=Z+Cl_general} holds in dimension two in this setting.
These results are an extension of results already known in the local setting to the much more difficult global setting.
In addition, using these results we give a description of the Grothendieck group of the centre of the deformed preprojective algebra and prove that it is isomorphic to the description given in \cite{cbh98}.
In Chapter~\ref{ch:knitting}, we introduce a knitting algorithm and prove independently interesting results on the symmetry of the quivers associated to modifying algebras of all Kleinian and cDV singularities.
We use this symmetry in Chapter~\ref{ch:cDVs}, where we prove results similar to those of Chapter~\ref{ch:divisor_cl_grps} but for 3-dimensional local cDV singularities, generalising the thesis of Navkal \cite{navkal13}.
Specifically, we prove that \eqref{eq:Groth=Z+Cl_general} holds for all isolated local cDV singularities admitting an NCCR.
We also prove \eqref{eq:Groth=Z+Cl_general} in various other settings, and provide evidence for similar results in the non-isolated case.
In Chapter~\ref{ch:minimal_models}, we prove that \eqref{eq:Groth=Z+Cl_general} holds for any isolated cDV singularity which admits a minimal model with only type $\ctAn{n}$ singularities, and verify Conjecture~\ref{conj:motivation} in this case.
Finally, Chapter~\ref{ch:conclusion} ends with a summary theorem and some speculations on symplectic reflection groups.

\section{Notation and conventions}\label{sec:intro:notation_conventions}
Throughout, we will work over the algebraically closed field $\C$.
Unless specified otherwise, all rings will be noetherian.
For a ring $\Lambda$, we write:
\begin{itemize}
	\item $\modCat \Lambda$ for the category of finitely generated left $\Lambda$-modules;
	\item $\proj \Lambda$ for the full subcategory of $\modCat \Lambda$ consisting of left projective $\Lambda$-modules;
	\item $\add M$ for the full subcategory consisting of all direct summands of all finite direct sums of $M \in \modCat \Lambda$;
	\item $(-)^{*} \colonequals \Hom_{\Lambda}(-,\Lambda) \colon \modCat \Lambda \to \modCat \Lambda^{\op}$;
	\item $\Db(\Lambda)$ for the bounded derived category of $\modCat \Lambda$;
	\item $\perf \Lambda$ for the full subcategory of $\Db(\Lambda)$ of perfect complexes;
	\item $\Dsg(\Lambda) \coloneqq \Db(\Lambda)/\perf \Lambda$ for the singularity category of $\modCat \Lambda$;
	\item $\Krothsg(\Lambda) \coloneqq \Kroth(\Dsg(\Lambda))$ for the Grothendieck group of $\Dsg(\Lambda)$.
\end{itemize}
When $R$ is a commutative ring, we further write:
\begin{itemize}
	\item $Q(R)$ for the field of fractions of $R$, where $R$ is a domain;
	\item $\refl R$ for the full subcategory of $\modCat R$ consisting of reflexive $R$-modules;
	\item $\CM R$ for the full subcategory of $\modCat R$ consisting of Cohen-Macaulay $R$-modules;
	\item $\underline{\Hom}_{R}(M, N)$ for the quotient of $\Hom_{R}(M, N)$ by the set of all morphism factoring through $\add R$;
	\item $\uCM R$ for the stable category of $\CM R$, where the objects are the same as those of $\CM R$ but the morphisms are defined as $\underline{\Hom}_{R}(M,N)$;
	\item $\Cl(R)$ for the divisor class group of $R$;
	\item $\Groth(R) \coloneqq \Kroth(\modCat R)$ for the Grothendieck group of $\modCat R$;
	\item $\oGroth(R)$ for the reduced Grothendieck group.
\end{itemize}
\chapter{Commutative algebra preliminaries}\label{ch:comm_alg_prelims}
	This chapter contains a collection of technical tools from commutative algebra required for the results in this thesis.
	Specifically, it includes a brief introduction to Cohen-Macaulay modules and Gorenstein rings, followed by a more thorough introduction to the study of divisor class groups of normal noetherian integral domains and algebraic K-theory with an aim towards defining Grothendieck groups.
	
\section{CM modules and Gorenstein rings}\label{sec:prelims:CM_modules_and_Gor_rings}
Let $(R, \mfm)$ be a local ring and $0 \neq M \in \modCat R$.
Then the \emph{depth of} $M$ is defined as
\[ \depth_{R} M \coloneqq \min\{ i \geq 0 \mid \Ext_{R}^{i}(R/\mfm, M) \neq 0 \}. \]
Maintaining that $(R, \mfm)$ is a local ring, the module $M$ is called \emph{(maximal) Cohen-Macaulay} ($\CM$) if $\depth_{R} M = \dim R$.
This definition easily generalises to the non-local (i.e. global) case.
If $R$ is a commutative noetherian ring, then $M \in \modCat R$ is $\CM$ if $M_{\mfp}$ is $\CM$ for all prime ideals $\mfp$ of $R$.
Furthermore, $R$ is a \emph{$\CM$ ring} if $R$ is a $\CM$ $R$-module.

For $M \in \modCat R$, denote by $\add M$ the full subcategory of all direct summands of all finite direct sums of $M$.
For $M, N \in \modCat R$ let $\underline{\Hom}_{R}(M, N)$ be the quotient of $\Hom_{R}(M, N)$ by the set of all morphisms factoring through $\add R$.
We write $\uCM R$ for the stable category of $\CM R$, where the objects are the same as those of $\CM R$, but the set of morphisms between two objects $M$ and $N$ is defined to be $\underline{\Hom}_{R}(M, N)$.

A commutative noetherian ring $R$ is called \emph{Gorenstein} if the localisation $R_{\mfp}$, at each prime ideal $\mfp$, is a local ring which has finite injective dimension as an $R_{\mfp}$-module.
It is well known that a Gorenstein ring is a CM ring.
\begin{example}
	Some well-known Gorenstein rings:
	\[ \C[x_{1}, \hdots , x_{n}], \quad \C[u,v,x]/(uv-x^{n}), \quad \C[u,v,x,y]/(uv-xy). \]
\end{example}

\begin{definition}\label{def:equicodimensional}
	A commutative ring $R$ is called \emph{equi-codimensional} if all its maximal ideals have the same height.
\end{definition}
In this thesis, it is always the case that our rings are either finitely generated over $\C$ or are localisations and completions at a maximal ideal of such rings.
As such, the rings we study are always equi-codimensional \cite[13.4]{eisenCA95}.
Furthermore, when they are CM, they always admit a canonical module $\upomega_{R}$.

\section{Normal rings and the divisor class group}\label{sec:prelims:class_grp}
An integral domain is called \emph{normal} if it is integrally closed in its field of fractions \cite{matsu00}.
In our applications later, with some exceptions (such as $T$ in Lemma~\ref{lem:Anonlyht0} and $\eS$ in Setup~\ref{setup:isolated_arbtypeA_cDV_without_NCCR}), the rings will be both normal and noetherian.

\begin{example}
	The rings $\C[x_{1}, \hdots , x_{n}]$ (see \cite[Example~1]{matsu00}) and $\C[x,y,z]/(xy-z^2)$ (see \cite[Exercise~6.5]{hart77})
	are both normal rings.
\end{example}

Normal rings have many nice properties; we briefly mention some of them now.
A prime ideal $\mfp$ of an integral domain $R$ is said to have \emph{height one} if it is non-zero and there does not exist any prime ideal $\mfq$ such that $(0) \subsetneq \mfq \subsetneq \mfp$.
We give a method for determining the height of an ideal later in this section.

Let $K$ be a field and write $K^{\times} = K \backslash \{ 0 \}$.
A mapping on $K$ is called a \emph{discrete valuation} $\upnu \colon K^{\times} \to \Z$ if $\upnu$ is a homomorphism $\upnu(xy) = \upnu(x) + \upnu(y)$ and $\upnu(x + y) \ge \min ( \upnu(x), \upnu(y) )$.
Furthermore, the \emph{valuation ring} of $\upnu$ can be formed; it consists of $0$ and all $x \in K^{\times}$ such that $\upnu(x) \ge 0$.

Consider a $1$-dimensional noetherian local integral domain $(S, \mfm)$ where $\mfm$ denotes the unique maximal ideal of $S$.
Let $Q(S)$ be the field of fractions of $S$.
Then the ring $S$ is called a \emph{discrete valuation ring} (DVR) if there exists a discrete valuation $\upnu$ of $Q(S)$ such that $S$ is the valuation ring of $\upnu$.
It is known (see, e.g. \cite{atiy69}) that $\mfm$ is the set of all $x \in Q(S)$ for which $\upnu(x) > 0$.
The following proposition (see \cite[Proposition~9.2]{atiy69}) gives an equivalent way of saying when a ring such as $(S, \mfm)$ is a DVR.
\begin{prop}\label{prop:dvr}
	Let $(S, \mfm)$ be a noetherian local integral domain of dimension one.
	Then $S$ is a discrete valuation ring if and only if there exists an element $\uppi \in S$ such that every non-zero ideal of $S$ is of the form $(\uppi^{k})$, for some $k \geq 0$.
\end{prop}

Now, consider a normal noetherian integral domain $R$ with field of fractions $Q(R)$.
Write $X_{1}(R)$ for the set of all height one prime ideals of $R$.
In the terminology of \cite{foss73}, such a ring $R$ satisfies the conditions necessary to be a Krull domain \cite[III, Proposition~7.13]{bass68}.
As such, for each prime ideal $\mfp \in X_{1}(R)$ the localisation of $R$ at $\mfp$, denoted $R_{\mfp}$, is a DVR.
The unique maximal ideal of $R_{\mfp}$ is $\mfp R_{\mfp}$.
By Proposition~\ref{prop:dvr}, $\mfp R_{\mfp}$ is generated by a single element, which we will denote $\uppi_{\mfp}$.

Let $\upnu_{\mfp}$ denote the discrete valuation on $Q(R)$ associated to $R_{\mfp}$, so $\upnu_{\mfp}\colon Q(R)^{\times} \to \Z$ where $Q(R)^{\times}$ is the group of invertible elements of $Q(R)$.
The above implies that, for any $x \in Q(R)^{\times}$, $x$ has a unique factorisation $x = y {\uppi_{\mfp}}^{\upnu_{\mfp}(x)}$ for some $\upnu_{\mfp}(x) \in \Z$ and $y \in R_{\mfp}^{\times}$.

\begin{theorem}\label{thm:valuation}
	Let $R$ be a normal noetherian integral domain with field of fractions $Q(R)$ and let $x$ be any non-zero element of $Q(R)$.
	Then the following hold:
	\begin{enumerate}[label=(\arabic*)]
		\item\label{thm:valuation 1} $\upnu_{\mfp}(x) = 0$ for all but finitely many $\mfp \in X_{1}(R)$,
		\item\label{thm:valuation 2}  $x$ is an element of $R$ if and only if $\upnu_{\mfp}(x) \geq 0$ for all $\mfp \in X_{1}(R)$,
		\item\label{thm:valuation 3} $x$ is a unit of $R$ if and only if $\upnu_{\mfp}(x) = 0$ for all $\mfp \in X_{1}(R)$,
		\item\label{thm:valuation 4} $x$ is in the unique maximal ideal $\mfp R_{\mfp}$ of $R_{\mfp}$ if and only if $\upnu_{\mfp}(x) > 0$,
		\item\label{thm:valuation 5} $R = \cap_{\mfp \in X_{1}(R)}R_{\mfp}$.
	\end{enumerate}
\end{theorem}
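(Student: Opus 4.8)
These are the standard properties of a Krull domain, so the plan is mainly to organise the bookkeeping around the already-established fact that each $R_{\mfp}$, $\mfp \in X_{1}(R)$, is a DVR with associated valuation $\upnu_{\mfp}$. I would prove the five items in the order (4), (1), (2), and then deduce (5) and (3) from (2), since (4) is essentially the definition of the maximal ideal of a DVR, and the only genuinely substantial point is (2). Throughout I would fix a representation $x = a/b$ with $a, b \in R \setminus \{0\}$, so that $\upnu_{\mfp}(x) = \upnu_{\mfp}(a) - \upnu_{\mfp}(b)$ for every $\mfp \in X_{1}(R)$, reducing statements about $x$ to statements about the integral elements $a$ and $b$.

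For (4) I would simply quote the description recalled before the theorem: the maximal ideal of a DVR with valuation $\upnu$ consists precisely of the elements of the fraction field with $\upnu > 0$; applied to $R_{\mfp}$ and $\upnu_{\mfp}$, this is (4) verbatim. For (1), since $a, b \in R$ all the relevant valuations are $\geq 0$, and $\upnu_{\mfp}(a) > 0$ is equivalent to $a \in \mfp R_{\mfp} \cap R = \mfp$. For a height one prime, $a \in \mfp$ with $a \neq 0$ forces $\mfp$ to be minimal over $aR$ (any prime strictly contained in $\mfp$ is $(0)$, which does not contain $a$), and $aR$ has only finitely many minimal primes because $R$ is noetherian. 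Hence $\upnu_{\mfp}(a) \neq 0$, and likewise $\upnu_{\mfp}(b) \neq 0$, for only finitely many $\mfp \in X_{1}(R)$, which gives (1).

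The heart of the matter is (2). The forward implication is immediate: $x \in R$ gives $x \in R_{\mfp}$, hence $\upnu_{\mfp}(x) \geq 0$, for all $\mfp$. For the converse I would introduce the ideal of denominators $J \colonequals \{\, r \in R \mid rx \in R \,\}$, which is a nonzero ideal (it contains $b$), and note that $x \in R$ is equivalent to $J = R$. Rewriting $rx \in R$ as $ra \in bR$ exhibits $J$ as the kernel of the $R$-module map $R \to R/bR$, $r \mapsto \overline{ra}$, so $R/J$ is isomorphic to a submodule of $R/bR$. The key commutative-algebra input is that, $R$ being normal and noetherian, the principal ideal $bR$ is unmixed of height one, i.e. every associated prime of $R/bR$ has height one — this follows from Krull's principal ideal theorem (minimal primes of $bR$ have height one) together with Serre's condition $(S_2)$ ruling out embedded primes, and is part of the standard theory of Krull domains \cite{foss73}. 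Consequently every associated prime of $R/J$ lies in $X_{1}(R)$. If $J$ were proper, then $R/J \neq 0$ would have an associated prime $\mfp$, necessarily with $\mfp \supseteq \Ann_{R}(R/J) = J$ and $\mfp \in X_{1}(R)$; localising at $\mfp$ gives $J_{\mfp} \subseteq \mfp R_{\mfp}$, so $1 \notin J_{\mfp}$, i.e. $x \notin R_{\mfp}$, i.e. $\upnu_{\mfp}(x) < 0$, contradicting the hypothesis. Hence $J = R$ and $x \in R$.

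Finally, (5) is immediate from (2): the inclusion $R \subseteq \bigcap_{\mfp \in X_{1}(R)} R_{\mfp}$ always holds, and a nonzero element of the intersection has all valuations $\geq 0$, hence lies in $R$. For (3), a unit of $R$ satisfies $\upnu_{\mfp}(x) \geq 0$ and $\upnu_{\mfp}(x^{-1}) = -\upnu_{\mfp}(x) \geq 0$, so $\upnu_{\mfp}(x) = 0$ for all $\mfp$; conversely, if $\upnu_{\mfp}(x) = 0$ for all $\mfp$ then also $\upnu_{\mfp}(x^{-1}) = 0$, and two applications of (2) place both $x$ and $x^{-1}$ in $R$, so $x \in R^{\times}$. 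I expect the only real obstacle to be the unmixedness fact used in (2) — this is the one place where normality (beyond the Krull-domain setup already in place) genuinely enters; if one prefers not to quote it, one would prove it by localising a hypothetical embedded prime $\mfp$ of height $\geq 2$, using that $b$ is a nonzerodivisor to get $\depth R_{\mfp} \leq 1$, and contradicting the $(S_2)$ property of the normal local ring $R_{\mfp}$. Everything else is routine manipulation of valuations.
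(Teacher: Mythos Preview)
Your argument is correct and self-contained. The paper, by contrast, does not prove this theorem at all: it simply cites \cite[Theorem~6.2.1]{ford17} for parts (1)--(4) and \cite[Theorem~11.5]{matsu00} for part (5). So there is no ``same approach'' to compare against; you have supplied what the paper outsources.

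Your organisation --- proving (4) and (1) directly, then concentrating the real work in (2) via the ideal of denominators and the unmixedness of $bR$, and reading off (5) and (3) as corollaries --- is the standard route and is cleanly executed. The one place worth a small remark is your identification $\mfp R_{\mfp} \cap R = \mfp$ in (1), which you use implicitly; it is of course true, but since you are being careful elsewhere you might state it. The unmixedness argument for (2) is exactly the right ingredient, and your sketch of how to recover it from $(S_2)$ if one does not want to quote it is accurate: an embedded prime $\mfp$ of $R/bR$ with $\height(\mfp) \geq 2$ would force $\depth R_{\mfp} = 1$ via $\mfp R_{\mfp} \in \Ass(R_{\mfp}/bR_{\mfp})$, contradicting $(S_2)$.
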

\begin{proof}
	For \ref{thm:valuation 1} - \ref{thm:valuation 4} see \cite[Theorem~6.2.1]{ford17} and for \ref{thm:valuation 5} see \cite[Theorem~11.5]{matsu00}.
\end{proof}

We define class groups of a ring $R$ using Weil divisors.
The following definitions of Weil divisors, principal Weil divisors, and the divisor class group can be found in many places and many formats in the literature (see, e.g. \cite{ford17}, \cite{bass68}, \cite{foss73}).
\begin{definition}\label{def:weilDiv}
	Let $R$ be a normal noetherian integral domain with field of fractions $Q(R)$ and $X_{1}(R)$ as before.
	The \emph{group of Weil divisors} $\Div(R)$ of $R$ is defined as the free abelian group with basis $X_{1}(R)$.
\end{definition}
In other words, it is the free $\Z$-module
\[
\Div(R) = \bigoplus\limits_{\mfp\, \in X_{1}(R)} \Z \cdot \mfp
\]
on $X_{1}(R)$.
By Theorem~\ref{thm:valuation}, there is a homomorphism of groups 
\[ \div \colon  Q(R)^{\times} \to \Div(R), \]
defined by
\begin{equation}\label{eq:def_of_elements_of_Prin(R)}
	\div(x) = \sum\limits_{\mfp\, \in X_{1}(R)} \upnu_{\mfp}(x) \cdot \mfp,
\end{equation}
where $\upnu_{\mfp}$ is the valuation associated with the DVR $R_{\mfp}$.
In particular, Theorem~\ref{thm:valuation}\ref{thm:valuation 3} implies that the kernel of $\div$ is equal to the group $R^{\times}$.
It follows that there is an exact sequence
\begin{equation}\label{seq:cl_grp_es_1}
	0 \to \Ker (\div) = R^{\times} \to Q(R)^{\times} \xrightarrow{\div } \Div(R) \to \Cok (\div) \to 0.
\end{equation}

We have the following result for any element of $R$.
\begin{cor}\label{cor:appearance_of_div(r)_r_in_R}
	For $r \in R$, we have $\div(r) = \sum_{\mfp \in X_{1}(R), r \in \mfp} \upnu_{\mfp}(r)\mfp$ with all such $\upnu_{\mfp}(r) > 0$.
\end{cor}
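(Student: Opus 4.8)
The plan is to reduce the general statement to two easy observations about valuations. First I would recall what the claim says: for a \emph{non-zero} element $r \in R$, the divisor $\div(r)$ is a non-negative combination of the height one primes, and the primes that actually appear (i.e.\ with strictly positive coefficient) are exactly those containing $r$. So the work is to show (a) $\upnu_{\mfp}(r) \geq 0$ for every $\mfp \in X_1(R)$, (b) $\upnu_{\mfp}(r) = 0$ when $r \notin \mfp$, and (c) $\upnu_{\mfp}(r) > 0$ when $r \in \mfp$.

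For (a), since $r \in R \subseteq R_{\mfp}$ and $R_{\mfp}$ is a DVR with valuation $\upnu_{\mfp}$, we have $\upnu_{\mfp}(r) \geq 0$ directly; this is also Theorem~\ref{thm:valuation}\ref{thm:valuation 2}. For (b) and (c) I would argue via the maximal ideal $\mfp R_{\mfp}$ of the DVR $R_{\mfp}$: an element of $R_{\mfp}$ is a unit precisely when it does not lie in $\mfp R_{\mfp}$, and by Proposition~\ref{prop:dvr} together with the description of the valuation, a non-zero element of $R_{\mfp}$ lies in $\mfp R_{\mfp}$ iff its valuation is strictly positive, and is a unit iff its valuation is zero. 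Now $r \in \mfp$ iff $r \in \mfp R_{\mfp}$ (using that $\mfp$ is the contraction of $\mfp R_{\mfp}$, i.e.\ $\mfp = \mfp R_{\mfp} \cap R$, which holds since $\mfp$ is prime), which by Theorem~\ref{thm:valuation}\ref{thm:valuation 4} is equivalent to $\upnu_{\mfp}(r) > 0$. Contrapositively, $r \notin \mfp$ forces $\upnu_{\mfp}(r) = 0$. Combining these three facts with the defining formula \eqref{eq:def_of_elements_of_Prin(R)}, the only surviving terms in $\div(r) = \sum_{\mfp} \upnu_{\mfp}(r)\,\mfp$ are those with $r \in \mfp$, and for each such $\mfp$ the coefficient $\upnu_{\mfp}(r)$ is strictly positive; finiteness of the sum is Theorem~\ref{thm:valuation}\ref{thm:valuation 1}.

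There is essentially no hard part here: the corollary is a direct unwinding of Theorem~\ref{thm:valuation} applied to an element of $R$ rather than an arbitrary element of $Q(R)^{\times}$. The only point requiring a word of care is the equivalence $r \in \mfp \iff r \in \mfp R_{\mfp}$, i.e.\ that localisation does not create or destroy membership in a prime ideal; this is the standard fact that $\mfp R_{\mfp} \cap R = \mfp$ for a prime $\mfp$. Everything else is immediate from parts \ref{thm:valuation 1}--\ref{thm:valuation 4} of Theorem~\ref{thm:valuation}, so the proof will be three or four lines.
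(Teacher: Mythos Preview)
Your proposal is correct and follows the same route as the paper, which simply cites Theorem~\ref{thm:valuation}\ref{thm:valuation 4} together with the defining formula \eqref{eq:def_of_elements_of_Prin(R)}. Your version is just a more careful unpacking of that one-line proof, including the observation $\mfp R_{\mfp} \cap R = \mfp$ that the paper leaves implicit.
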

\begin{proof}
	This follows immediately from Theorem~\ref{thm:valuation}\ref{thm:valuation 4} and \eqref{eq:def_of_elements_of_Prin(R)}
\end{proof}

\begin{definition}\label{def:clgrp}
	The \emph{class group} of $R$ is defined to be the cokernel of the homomorphism $\div$, and is denoted $\Cl(R)$.
\end{definition}

The exact sequence \eqref{seq:cl_grp_es_1} becomes
\begin{equation}\label{seq:cl_grp_es_2}
	0 \to R^{\times} \to Q(R)^{\times} \xrightarrow{\div } \Div(R) \to \Cl(R) \to 0.
\end{equation}The image of $\div$ is called the \emph{group of principal Weil divisors} and is denoted $\Prin(R)$.
Thus, the divisor class group of $R$ is the group of Weil divisors modulo the principal Weil divisors.

The divisor class group measures the failure of a ring $R$ to be a unique factorization domain (UFD).
Specifically,
\begin{prop}\label{prop:ufd=>cl=0}{\cite[Proposition~6.2]{hart77}}
	Let $R$ be a noetherian integral domain.
	Then $R$ is a UFD if and only if $X = \Spec R$ is normal and $\Cl(X) = 0$.
\end{prop}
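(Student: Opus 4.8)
The plan is to prove the two implications separately, in both cases reducing everything to bookkeeping with the divisor homomorphism $\div$ and the exact sequence \eqref{seq:cl_grp_es_2}. Throughout I will use that a normal noetherian domain is a Krull domain (as observed after Definition~\ref{def:weilDiv}), so that Theorem~\ref{thm:valuation} and Corollary~\ref{cor:appearance_of_div(r)_r_in_R} apply. The actual content is the equivalence, for a normal noetherian $R$, between $\Cl(R) = 0$ and the condition that every height one prime of $R$ is principal; granting this, both directions fall out by manipulation of $\div$.

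For the forward implication, suppose $R$ is a UFD. First I would recall the classical fact that a UFD is integrally closed: writing an element of $Q(R)$ that is integral over $R$ as $a/b$ with $a,b$ sharing no irreducible factor, and clearing denominators in its monic equation, forces $b \mid a^{n}$, hence $b \in R^{\times}$; so $X = \Spec R$ is normal. Next, each $\mfp \in X_{1}(R)$ is principal: being nonzero and prime it contains an irreducible element $f$, and then $(f)$ is a nonzero prime contained in the height one prime $\mfp$, so $\mfp = (f)$. Consequently $f$ is a uniformiser of the DVR $R_{\mfp}$, so $\upnu_{\mfp}(f) = 1$, while $\upnu_{\mfq}(f) = 0$ for each other $\mfq \in X_{1}(R)$ (otherwise $\mfp = (f) \subseteq \mfq$, contradicting height one). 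Hence $\div(f) = \mfp$, so $\div$ surjects onto the basis $X_{1}(R)$ of $\Div(R)$, hence onto $\Div(R)$, and \eqref{seq:cl_grp_es_2} gives $\Cl(R) = 0$.

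For the reverse implication, suppose $X$ is normal and $\Cl(X) = 0$. Fix $\mfp \in X_{1}(R)$. Since $\Cl(R) = 0$ the divisor $\mfp \in \Div(R)$ lies in the image of $\div$, say $\mfp = \div(x)$ with $x \in Q(R)^{\times}$; as $\div(x)$ has no negative coefficients, Theorem~\ref{thm:valuation}\ref{thm:valuation 2} gives $x \in R$, and $\upnu_{\mfp}(x) = 1 > 0$ gives $x \in \mfp R_{\mfp}$ by Theorem~\ref{thm:valuation}\ref{thm:valuation 4}, hence $x \in \mfp R_{\mfp} \cap R = \mfp$, so $(x) \subseteq \mfp$. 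Conversely, for $y \in \mfp$ one has $\upnu_{\mfq}(y/x) = \upnu_{\mfq}(y) - \upnu_{\mfq}(x) \geq 0$ for all $\mfq \in X_{1}(R)$, so $y/x \in R$ by Theorem~\ref{thm:valuation}\ref{thm:valuation 2} again, whence $y \in (x)$; therefore $\mfp = (x)$, and since this ideal is prime, $x$ is a prime element. Finally, given a nonzero nonunit $a \in R$, Corollary~\ref{cor:appearance_of_div(r)_r_in_R} (together with Theorem~\ref{thm:valuation}\ref{thm:valuation 1} for finiteness) writes $\div(a) = \sum_{i} n_{i} \mfp_{i}$ with the $n_{i} \geq 1$; choosing prime generators $\mfp_{i} = (\uppi_{i})$ we obtain $\div(a) = \div\bigl(\prod_{i} \uppi_{i}^{n_{i}}\bigr)$, so $a = u \prod_{i} \uppi_{i}^{n_{i}}$ with $u \in R^{\times} = \Ker(\div)$; this is a factorisation into primes, and uniqueness follows in the usual way from primality of the $\uppi_{i}$. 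Hence $R$ is a UFD.

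The step I expect to be the main obstacle is the verification, in the reverse direction, that an element $x$ with $\div(x) = \mfp$ genuinely generates $\mfp$ as an ideal, and not merely that it has the right valuations at every height one prime; this is precisely where one must invoke $R = \bigcap_{\mfp \in X_{1}(R)} R_{\mfp}$ from Theorem~\ref{thm:valuation}\ref{thm:valuation 5} (so that an element of $Q(R)$ with nonnegative valuations everywhere lies in $R$). Everything else is routine computation with $\div$ and the exact sequence \eqref{seq:cl_grp_es_2}.
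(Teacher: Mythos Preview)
Your proof is correct and is essentially the standard argument (the one Hartshorne gives). Note, however, that the paper does not prove this proposition at all: it is stated with a citation to \cite[Proposition~6.2]{hart77} and used as a black box, so there is no ``paper's own proof'' to compare against beyond that reference.
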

The Picard group $\textnormal{Pic}(X)$ can be viewed as line bundles up to isomorphism, which is naturally a subgroup of $\Cl(X)$.
We do not require the language of line bundles in this thesis, for further details on the topic see \cite[p.~66]{GrifHar}.
The following result may also be found in Hartshorne; see \cite[Corollary~6.16]{hart77}.
\begin{cor}
	Let $X$ be a noetherian, integral, separated locally factorial scheme.
	Then there exists a natural isomorphism $\Cl(X) \cong \textnormal{Pic}(X).$
\end{cor}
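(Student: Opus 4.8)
The plan is to factor the claimed isomorphism through the Cartier class group, showing $\Cl(X)\cong\mathrm{CaCl}(X)\cong\textnormal{Pic}(X)$ with every intermediate map natural in $X$. Since $X$ is integral, its sheaf of total quotient rings $\cK_{X}$ is the constant sheaf with stalk the function field $K$, so a Cartier divisor is a family $\{(U_{i},f_{i})\}$ with $f_{i}\in K^{\times}$ and $f_{i}f_{j}^{-1}\in\cO_{X}(U_{i}\cap U_{j})^{\times}$, and $\mathrm{CaCl}(X)$ is the group of such families modulo the principal ones $(X,g)$, $g\in K^{\times}$. Note that ``locally factorial'' forces $X$ to be normal, as a UFD is integrally closed in its fraction field; in particular, for a prime divisor $Y\subseteq X$ with generic point $\eta_{Y}$ the local ring $\cO_{X,\eta_{Y}}$ is the localisation $R_{\mfp_{Y}}$ of an affine coordinate ring $R$ at a height-one prime, hence a DVR with valuation $\upnu_{Y}\colonequals\upnu_{\mfp_{Y}}$ as in the discussion preceding Theorem~\ref{thm:valuation}.

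First I would build the comparison map $\bar\varphi\colon\mathrm{CaCl}(X)\to\Cl(X)$ by sending $\{(U_{i},f_{i})\}$ to $\sum_{Y}\upnu_{Y}(f_{i})\,Y$, where for each prime divisor $Y$ one picks any index $i$ with $\eta_{Y}\in U_{i}$. This is independent of the choice because $f_{i}f_{j}^{-1}$ is a unit at $\eta_{Y}$, and the sum is finite by working on a finite affine cover and applying Theorem~\ref{thm:valuation}\ref{thm:valuation 1}. It is a homomorphism, and principal Cartier divisors map to principal Weil divisors, so it descends to $\mathrm{CaCl}(X)$.

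Next I would prove $\bar\varphi$ is an isomorphism; this is where local factoriality is essential. For surjectivity, given a prime divisor $Y$ and a point $x\in Y$, the prime of $\cO_{X,x}$ defining $Y$ has height one, hence is principal since $\cO_{X,x}$ is a UFD; a generator extends to a regular function on a neighbourhood of $x$ cutting out $Y$ there (taking the function $1$ near points off $Y$), and these assemble into a Cartier divisor mapping to $Y$, with arbitrary Weil divisors following by linearity. For injectivity, suppose $\{(U_{i},f_{i})\}$ maps to $\div(g)$; after replacing $f_{i}$ by $f_{i}g^{-1}$ I may assume it maps to $0$, so $\upnu_{Y}(f_{i})=0$ for every prime divisor $Y$ meeting $U_{i}$. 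On an affine open $\Spec R\subseteq U_{i}$, the prime divisors of $X$ with generic point in $\Spec R$ correspond to the height-one primes of $R$, so $\upnu_{\mfp}(f_{i})=0$ for all $\mfp\in X_{1}(R)$, whence $f_{i}\in R^{\times}$ by Theorem~\ref{thm:valuation}\ref{thm:valuation 3}; covering $U_{i}$ by such affines gives $f_{i}\in\cO_{X}(U_{i})^{\times}$, so the class is trivial. I expect this local-to-global bookkeeping in the injectivity step to be the fiddliest part, though it is entirely routine.

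Finally I would identify $\mathrm{CaCl}(X)$ with $\textnormal{Pic}(X)$, which holds for any integral scheme: a Cartier divisor $\{(U_{i},f_{i})\}$ determines the invertible subsheaf $\cL\subseteq\cK_{X}$ with $\cL|_{U_{i}}=f_{i}^{-1}\cO_{U_{i}}$, yielding an isomorphism from $\mathrm{CaCl}(X)$ onto the group of invertible $\cO_{X}$-submodules of the constant sheaf $\cK_{X}$; and since tensoring any invertible sheaf with $\cK_{X}$ realises it as such a subsheaf, this group is all of $\textnormal{Pic}(X)$. Composing the two isomorphisms gives the natural isomorphism $\Cl(X)\cong\textnormal{Pic}(X)$. (Compare \cite[\S6]{hart77}.)
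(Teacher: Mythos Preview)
Your proof is correct and follows the standard argument: factor through the Cartier class group, use local factoriality to identify $\mathrm{CaCl}(X)$ with $\Cl(X)$, and use integrality to identify $\mathrm{CaCl}(X)$ with $\textnormal{Pic}(X)$. The paper does not supply its own proof of this corollary; it simply quotes the result from Hartshorne \cite[Corollary~6.16]{hart77}, and your argument is exactly the one given there (assembled from Propositions~II.6.11, II.6.13 and II.6.15).
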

The remainder of this section contains other commutative algebra results required for the work in this thesis.
The following is elementary, see, e.g. \cite[Chapter~1]{atiy69}.
\begin{lemma}\label{lem:3isothm}
	Let $R$ be a ring and $I$ an ideal of $R$.
	Then the one-to-one correspondence between ideals of $R$ that contain $I$ and ideals of $R/I$ extends to a one-to-one correspondence of prime ideals.
\end{lemma}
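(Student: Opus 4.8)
The plan is to invoke the standard correspondence theorem (the "fourth isomorphism theorem") and then verify compatibility with primeness via the third isomorphism theorem. Let $\pi \colon R \to R/I$ denote the canonical surjection. First I would recall that the assignment $J \mapsto \pi(J) = J/I$ gives a bijection from the set of ideals of $R$ containing $I$ onto the set of ideals of $R/I$, with inverse $\bar J \mapsto \pi^{-1}(\bar J)$; this is immediate from the facts that $\pi$ is surjective with kernel $I$, that preimages under $\pi$ of ideals are ideals containing $I$, that images under $\pi$ of ideals containing $I$ are ideals of $R/I$, and that the two operations are mutually inverse on these two classes.

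Next I would show this bijection carries prime ideals to prime ideals in both directions. Suppose $J \supseteq I$ is a prime ideal of $R$. By the third isomorphism theorem there is a ring isomorphism $(R/I)/(J/I) \cong R/J$. Since $J$ is prime, $R/J$ is an integral domain, hence so is $(R/I)/(J/I)$, which means $J/I$ is a prime ideal of $R/I$. Conversely, if $\bar J$ is a prime ideal of $R/I$, then $J \colonequals \pi^{-1}(\bar J)$ contains $I = \pi^{-1}(0)$, and $R/J \cong (R/I)/\bar J$ is a domain, so $J$ is prime in $R$. Since these prime-preserving maps are just the restrictions of the mutually inverse bijections above, they too are mutually inverse, yielding the desired one-to-one correspondence of prime ideals.

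The argument is entirely formal, so I do not expect any genuine obstacle; the only points requiring care are the correct invocation of the third isomorphism theorem for rings and the elementary fact that $R/J$ is a domain precisely when $J$ is prime. In keeping with the paper's convention of citing \cite[Chapter~1]{atiy69} for facts of this kind, one could simply reference it, but including the one-line verification above makes the statement self-contained.
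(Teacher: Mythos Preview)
Your argument is correct and complete. The paper does not actually prove this lemma at all; it simply labels it as elementary and cites \cite[Chapter~1]{atiy69}, so your write-up is strictly more detailed than what appears in the paper while remaining entirely standard.
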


The height one prime ideals are crucial in the definition of the divisor class group.
The following theorem (see \cite[Theorem~1.8A]{hart77}) gives a method for determining the height of a prime ideal of a ring $R$.
\begin{theorem}\label{thm:height+dim}
	Let $k$ be a field, and $R$ be an integral domain which is a finitely generated $k$-algebra.
	Then for any prime ideal $\mfp$ in $R$, we have
	\[
	\height(\mfp) + \dim R/\mfp = \dim R.
	\]
\end{theorem}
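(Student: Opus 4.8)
The plan is to reduce, via Noether normalisation together with the going-up and going-down theorems, to the case where $R$ is a polynomial ring, and there to run an induction whose inductive step rests on Krull's principal ideal theorem. It is convenient to work with transcendence degree, since for a finitely generated domain over $k$ the asserted equality is equivalent to the pair of statements
\begin{gather*}
\dim R = \operatorname{trdeg}_{k} Q(R),\\
\height \mfp + \operatorname{trdeg}_{k} Q(R/\mfp) = \operatorname{trdeg}_{k} Q(R),
\end{gather*}
because applying the first to $R$ and to $R/\mfp$ and substituting into the second yields exactly $\height \mfp + \dim R/\mfp = \dim R$.

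For the first statement, Noether normalisation produces a polynomial subring $A = k[y_{1}, \hdots, y_{m}] \subseteq R$ over which $R$ is a finite module; then $\dim R = \dim A = m$ by going-up, while $Q(R)$ is algebraic over $Q(A) = k(y_{1}, \hdots, y_{m})$, so $\operatorname{trdeg}_{k} Q(R) = m$ as well. For the second statement I would first reduce to $A$: set $\mfq \colonequals \mfp \cap A$. Since $A$ is a polynomial ring over a field it is a normal domain, so going-down holds for the integral extension $A \subseteq R$; combined with the going-up and incomparability theorems, this forces $\height \mfp = \height \mfq$. Likewise $R/\mfp$ is finite over the domain $A/\mfq$, so $Q(R/\mfp)$ is algebraic over $Q(A/\mfq)$ and the two residue transcendence degrees agree. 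Hence it suffices to prove $\height \mfq + \operatorname{trdeg}_{k} Q(A/\mfq) = m$ for the polynomial ring $A$.

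I would establish this by induction on $m$, the case $\mfq = (0)$ being immediate. If $\mfq \neq (0)$, pick an irreducible factor $f$ of some nonzero element of $\mfq$, so that $f \in \mfq$; since $A$ is a unique factorisation domain, $(f)$ is prime of height exactly $1$ (at most $1$ by Krull's principal ideal theorem, and at least $1$ because $(f) \neq (0)$). The quotient $\overline{A} \colonequals A/(f)$ is a finitely generated $k$-algebra domain with $\operatorname{trdeg}_{k} Q(\overline{A}) = m - 1$, and $\overline{\mfq} \colonequals \mfq/(f)$ is a prime of $\overline{A}$ satisfying $Q(\overline{A}/\overline{\mfq}) \cong Q(A/\mfq)$. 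Feeding $\overline{A}$ back through the same Noether-normalisation reduction --- which now lands in a polynomial ring of dimension $m - 1$ --- and applying the inductive hypothesis gives $\height \overline{\mfq} + \operatorname{trdeg}_{k} Q(A/\mfq) = m - 1$, so everything reduces to the identity $\height \mfq = \height \overline{\mfq} + 1$.

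This last identity is where I expect the genuine obstacle to lie: one must know that some saturated chain of primes of $A$ descending from $\mfq$ can be chosen to pass through $(f)$, equivalently that polynomial rings over $k$ are catenary and that quotienting by the single hypersurface $(f)$ lowers Krull dimension by exactly one. This is the real content of the dimension theory of finitely generated algebras (the general form of Krull's principal ideal theorem, together with the behaviour of dimension under a principal quotient), and rather than reprove it I would invoke it as a standard result, citing \cite{eisenCA95} or \cite{matsu00}; see also \cite[Theorem~1.8A]{hart77}.
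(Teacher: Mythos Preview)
The paper does not prove this theorem at all: it is stated as a standard fact with the citation \cite[Theorem~1.8A]{hart77} and no argument is given. Your sketch via Noether normalisation, going-up/going-down, and induction using Krull's Hauptidealsatz is a correct and standard route to this result (essentially the argument one finds in the cited references), so there is nothing to compare; you have simply supplied a proof where the paper chose to cite one.
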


Now, we extend Lemma~\ref{lem:3isothm} in such a way that allows us to determine which prime ideals are the height one prime ideals of $R$ containing a non-zero element of $R$.
The proof of this theorem is somewhat more involved than one might expect.
\begin{theorem}\label{thm:htbijection}
	Let $R$ be an integral domain, $s \in R$ a non-zero element and $\upvarphi\colon R \to R/ (s)$ the quotient map.
	Then there exists a bijection
	\[
	\{ \textnormal{height one prime ideals in $R$ containing $s$} \} \xrightarrow{\upvarphi_{*}} 
	\{\textnormal{height zero prime ideals in $R/(s)$} \}.
	\]
\end{theorem}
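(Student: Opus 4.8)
The plan is to combine the ideal-correspondence of Lemma~\ref{lem:3isothm} with Krull's principal ideal theorem. Write $\upvarphi_{*}(\mfp) = \mfp/(s)$. By Lemma~\ref{lem:3isothm}, $\upvarphi_{*}$ is already an inclusion-preserving bijection from the set of primes of $R$ containing $(s)$ --- equivalently, containing $s$ --- onto the set of all primes of $R/(s)$. So the only thing left to verify is that this bijection restricts to one between the height one primes of $R$ containing $s$ and the height zero primes of $R/(s)$; that is, for a prime $\mfp \supseteq (s)$ one must show $\height \mfp = 1$ in $R$ if and only if $\height \mfp/(s) = 0$ in $R/(s)$.

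First I would translate the right-hand condition back into $R$. Since $\upvarphi_{*}$ is an inclusion-preserving bijection, $\mfp/(s)$ is a height zero (equivalently, minimal) prime of $R/(s)$ exactly when there is no prime $\mfq$ of $R$ with $(s) \subseteq \mfq \subsetneq \mfp$, i.e.\ exactly when $\mfp$ is minimal over $(s)$. Thus the theorem reduces to the ring-theoretic claim that the primes of $R$ of height one containing $s$ are precisely the primes of $R$ minimal over $(s)$.

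For the implication ``minimal over $(s)$ $\Rightarrow$ height one'', I would invoke Krull's principal ideal theorem --- legitimate here since all rings are noetherian by our standing conventions --- to conclude that a prime minimal over the principal ideal $(s)$ has height at most one; because $R$ is a domain and $s \neq 0$, the zero ideal does not contain $s$, so such a prime is non-zero and hence of height exactly one. For the converse, given $\mfp$ of height one with $s \in \mfp$, I would pick a prime $\mfq$ minimal over $(s)$ with $\mfq \subseteq \mfp$; then $(0) \subsetneq \mfq \subseteq \mfp$ (again using $s \neq 0$), and since $\height \mfp = 1$ forbids a chain of length two below $\mfp$, necessarily $\mfq = \mfp$, so $\mfp$ is itself minimal over $(s)$. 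Combining the two inclusions, $\upvarphi_{*}$ restricts to the asserted bijection.

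The step I expect to be the main obstacle --- and the reason the proof is ``somewhat more involved than one might expect'' --- is precisely the passage between heights on the two sides: the correspondence theorem says nothing about heights, and bridging that gap genuinely requires Krull's principal ideal theorem together with careful use of the hypotheses that $R$ is a domain and $s \neq 0$ (which is exactly what prevents $(0)$ from appearing as a spurious minimal prime over $(s)$, and which is why the statement can fail for the non-domain exceptions flagged earlier). The remainder --- that an inclusion-preserving bijection carries minimal elements to minimal elements, and that ``height zero'' is synonymous with ``minimal prime'' --- is routine bookkeeping.
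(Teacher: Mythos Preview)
Your proposal is correct and follows essentially the same approach as the paper: both use the correspondence of Lemma~\ref{lem:3isothm}, Krull's Hauptidealsatz, and the hypotheses that $R$ is a domain with $s\neq 0$ to rule out the zero ideal. The only difference is organizational---you first reformulate the claim as ``height one primes containing $s$ are exactly the primes minimal over $(s)$'' and then prove that equivalence, whereas the paper argues the two directions of the bijection directly via contradiction---but the ingredients are identical.
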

\begin{proof}
	By Lemma~\ref{lem:3isothm}, there is a bijection
	\[
	\{ \textnormal{prime ideals in $R$ containing $s$} \}
	\xrightarrow{\upvarphi_{*}} 
	\{\textnormal{prime ideals in $R/(s)$} \}.
	\]	
	To begin, assume that $\mfp\in\Spec R$ with $\height(\mfp)=1$ and $s \in \mfp$.
	By definition, $\upvarphi_{*}(\mfp) = \mfp/(s)$.
	We want to show that $\height(\mfp/(s)) = 0$.
	Assume $\height(\mfp/(s)) \geq 1$.
	That is, that there exists a prime ideal $\mathfrak{q}/(s) \subsetneq \mfp/(s)$.
	This implies that $(s) \subseteq \mathfrak{q} \subsetneq \mfp$ in $R$.
	Since $s$ is non-zero we have $(0) \subsetneq \mathfrak{q} \subsetneq \mfp$, which tells us that $\height(\mfp) \geq 2$.
	This is a contradiction since we assumed that $\height(\mfp) = 1$, and so $\height(\mfp/(s)) = 0$.
	
	Now, assume that $\mathfrak{q}\in\Spec R/(s)$ with $\height(\mfq)=0$.
	We want to show that the preimage of $\mfq$ has height one.
	Write $\mfp\colonequals \upvarphi^{-1}(\mfq) = \{r \in R \mid \upvarphi(r) \in \mathfrak{q} \}$.
	Since $R$ is an integral domain, $(0) \subset \mfp$ is the minimal prime ideal of $R$. 
	By Lemma~\ref{lem:3isothm}, $\mfp$ contains $s$; as such $\mfp \neq (0)$.
	It follows that $\height(\mfp) \geq 1$.
	By Krull's Hauptidealsatz \cite[Corollary~11.17]{atiy69}, every prime ideal minimal over $(s)$ has height one.
	If $\height(\mfp) > 1$, then there exists a prime ideal $\mfp'$ such that $(s) \subset \mfp' \subsetneq \mfp$.
	But this would imply that $\mfp'/(s) \subsetneq \mfp/(s) = \mathfrak{q}$ and that $\height(\mfq) \geq 1$, a contradiction.
	Thus $\height(\mfp) = 1$.
\end{proof}

For our purposes a \emph{graded ring} $R = \bigoplus_{i \in \N} R_{i}$ is a commutative ring with identity which decomposes as a direct sum of abelian groups such that $R_{i}R_{j} \subseteq R_{i+j}$ for $i, j \in \N$.
\begin{lemma}\label{lem:R^x = k^x}
	If $R = \bigoplus\limits_{m \ge 0} R_{m}$ is a commutative graded domain with $R_{0} = k$ a field, then $R^{\times} = k^{\times}$.
\end{lemma}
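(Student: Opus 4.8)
The plan is to prove the two inclusions $k^{\times}\subseteq R^{\times}$ and $R^{\times}\subseteq k^{\times}$ separately; the first is immediate and the second carries the content.

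First I would observe that $R_{0}=k$ is a subring of $R$ containing the identity, so every nonzero element of $k$ is invertible already in $k\subseteq R$, giving $k^{\times}\subseteq R^{\times}$.

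For the reverse inclusion, I would take $u\in R^{\times}$ with inverse $v\in R$, so $uv=1$, and decompose both into their homogeneous pieces. Write $u=u_{a}+u_{a+1}+\dots+u_{b}$ and $v=v_{c}+v_{c+1}+\dots+v_{d}$ with $u_{i}\in R_{i}$, $v_{j}\in R_{j}$, $a\le b$, $c\le d$, and the extreme components $u_{a},u_{b},v_{c},v_{d}$ all nonzero. The homogeneous component of $uv$ of lowest degree is $u_{a}v_{c}\in R_{a+c}$ and the component of highest degree is $u_{b}v_{d}\in R_{b+d}$. Since $R$ is a domain, both $u_{a}v_{c}$ and $u_{b}v_{d}$ are nonzero. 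But $uv=1$ is concentrated in degree $0$, so all homogeneous components of $uv$ in nonzero degrees vanish; this forces $a+c=0$ and $b+d=0$. As the grading is over $\N$ we have $a,b,c,d\ge 0$, so $a=b=c=d=0$. Hence $u=u_{0}\in R_{0}=k$, and being a nonzero element with an inverse it lies in $k^{\times}$.

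The only step needing any care — and the closest thing to an obstacle in such an elementary statement — is the appeal to the integral domain hypothesis to ensure the extreme products $u_{a}v_{c}$ and $u_{b}v_{d}$ do not vanish, which is precisely what pins down the degree ranges; without it, cancellation among homogeneous components could in principle occur and the conclusion would fail. One should also note in passing that the argument is unchanged in the degenerate cases where $u$ or $v$ is already homogeneous (i.e.\ $a=b$ or $c=d$), since the same inequalities apply verbatim.
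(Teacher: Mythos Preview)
Your proof is correct and follows essentially the same approach as the paper: decompose the unit and its inverse into homogeneous components, then use the domain hypothesis to show the top-degree component of the product is nonzero, forcing the degrees to be zero. The only minor difference is that the paper works solely with the highest-degree terms (writing $r=r_0+\dots+r_m$ with $r_m\neq 0$ and arguing $m+n=0$), whereas you track both the lowest and highest degrees; since the grading is over $\mathbb{N}$, the bottom-degree argument is redundant once the top-degree one gives $b+d=0$.
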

\begin{proof}
	Let $r = r_{0} + \hdots + r_{m}$ be invertible in $R$.
	Then there exists $s = s_{0} + \hdots + s_{n}$ in $R$ such that $rs = 1$.
	Assume that $r_{m}, s_{n} \neq 0$.
	Then $rs = \sum_{j \ge 0} (\sum_{i=0}^{j} r_{i} s_{j-i})$ with leading term $(rs)_{m+n} = r_{m} s_{n}$.
	Since $R$ is a domain and $r_{m}, s_{n} \neq 0$, we have $(rs)_{m+n} \neq 0$.
	But $rs = 1$ implies that $(rs)_{j} = 0$ for all $j > 0$, so $n + m =0$.
	That is, $n=m=0$ and hence $r,s \in k$.
\end{proof}	

In the following (see \cite[Proposition~3.4]{bulj12}), let $\langle t \rangle$ be the cyclic subgroup generated by $t$.
\begin{prop}\label{prop:unitsQ[q-1]=Cx<q>} 
	Let $T$ be a UFD and $t$ an irreducible element in $T$.
	Then the group of units in $T[t^{-1}]$ is $T^{\times} \times \langle t \rangle$.
\end{prop}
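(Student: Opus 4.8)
The plan is to describe the units of $T[t^{-1}]$ explicitly and then recognise the resulting set as an internal direct product of the two named subgroups. First I would record the elementary facts that drive the argument. Since $T$ is a domain and $t \neq 0$, the localisation map $T \to T[t^{-1}]$ is injective, so I may regard $T$ as a subring of $T[t^{-1}]$; and every element of $T[t^{-1}]$ can be written (not necessarily uniquely) in the form $a/t^{n}$ with $a \in T$ and $n \geq 0$. Moreover, since $T$ is a UFD the irreducible element $t$ is prime, and since $t$ is irreducible it is a non-unit of $T$; hence in the domain $T$ no positive power $t^{k}$ with $k \geq 1$ is a unit. This last remark will show both that $\langle t \rangle$ is infinite cyclic and that it meets $T^{\times}$ trivially.

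Next I would show that every unit of $T[t^{-1}]$ has the form $u\, t^{j}$ with $u \in T^{\times}$ and $j \in \Z$. Write a unit as $x = a/t^{n}$ and its inverse as $x^{-1} = b/t^{m}$ with $a,b \in T$; multiplying and clearing denominators inside the domain $T[t^{-1}]$ yields $ab = t^{\,n+m}$ in $T$. Now unique factorisation in $T$, together with primality of $t$, forces $a = u\, t^{k}$ for some $u \in T^{\times}$ and some $0 \leq k \leq n+m$, since any irreducible dividing $a$ divides $t^{\,n+m}$, hence divides $t$, hence is an associate of $t$. Therefore $x = a/t^{n} = u\, t^{\,k-n}$, which is of the claimed shape with $j = k-n \in \Z$. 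Conversely, every element $u\, t^{j}$ with $u \in T^{\times}$ and $j \in \Z$ is manifestly invertible in $T[t^{-1}]$, with inverse $u^{-1} t^{-j}$. Hence the unit group of $T[t^{-1}]$ is exactly $\{\, u\, t^{j} \mid u \in T^{\times},\ j \in \Z \,\}$.

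Finally I would assemble the direct product decomposition. The group $T[t^{-1}]^{\times}$ is abelian, so it suffices to check that the subgroups $T^{\times}$ and $\langle t \rangle$ together generate $T[t^{-1}]^{\times}$ and intersect trivially. Generation is immediate from the previous paragraph. For the intersection, suppose $u = t^{j}$ with $u \in T^{\times}$ and $j \in \Z$: if $j \geq 1$ then $t^{j}$ is a non-unit of the domain $T$, a contradiction; if $j \leq -1$ then the relation $u\, t^{-j} = 1$ in $T$ exhibits $t$ as a unit of $T$, again a contradiction; so $j = 0$ and $u = 1$. Thus $T^{\times} \cap \langle t \rangle = \{1\}$, and therefore $T[t^{-1}]^{\times} = T^{\times} \times \langle t \rangle$. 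The only genuinely substantive step is the unique-factorisation argument identifying the divisors of $t^{\,n+m}$ as unit multiples of powers of $t$; everything else is bookkeeping.
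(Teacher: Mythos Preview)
Your proof is correct. The paper does not actually prove this proposition; it merely cites \cite[Proposition~3.4]{bulj12} and states the result without argument. Your direct approach---writing a unit as $a/t^{n}$, using $ab = t^{n+m}$ together with unique factorisation to force $a = u\,t^{k}$, and then checking $T^{\times} \cap \langle t \rangle = \{1\}$---is the standard elementary argument and is complete as written.
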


\section{Algebraic K-theory}\label{sec:prelims:alg_k_theory}
This section includes the definitions necessary for studying algebraic K-theory and Grothendieck groups.
For a thorough exposition on the history of algebraic K-theory, see \cite{Weib99}.
We begin by introducing additive categories, abelian categories, and exact categories following \cite[Chapter~1]{bass68}.

A category $\cC$ is called an \emph{additive category} if the following properties hold:
\begin{enumerate}
	\item $\cC$ has a zero object,
	\item for all $X,Y \in \cC$, the direct product $X \times Y$ exists, and
	\item for all $X,Y \in \cC$, the set of morphisms $X$ to $Y$ has the structure of an abelian group such that the composition is bilinear.
\end{enumerate}

Let $\cA$ an be additive category.
A functor $F \colon \cA \to \cB$ is called \emph{additive} if $F(A) \oplus F(B) \to F(A \oplus B)$ is an isomorphism for all $A, B \in \cA$.
Equivalently, $F$ is additive if $F(A \oplus B) \to F(A) \oplus F(B)$ is an isomorphism for all $A, B \in \cA$.
For more details on additive functors see \cite[\href{https://stacks.math.columbia.edu/tag/010M}{Tag 010M}]{stacks-project}.
An \emph{abelian category} is defined as an additive category $\cA$ in which all kernels and cokernels exist and the natural map Coim$(a) \to \Im(a)$ is an isomorphism for each morphism $a$ in $\cA$.
An \emph{exact category} is an additive category together with a class of distinguished short sequences $X \to Y \to Z$, satisfying various axioms (see, e.g. \cite{keller96}).
These sequences are called \emph{exact sequences}.

A \emph{full exact subcategory} of an exact category $\cA$ is a full additive subcategory $\cB$, which is closed under extensions.
That is, if $0 \to X \to Y \to Z \to 0$ is exact in $\cA$ and $X$ and $Z$ are objects in $\cB$, then $Y$ is also an object of $\cB$ \cite[\S4]{keller96}.
The importance of full exact subcategories can be understood through the following two facts.
First, kernels of exact functors between abelian categories are full exact subcategories.
Second, the quotient category $\cA/\cB$ can be built.
This quotient category has the same objects as $\cA$, it is abelian, and there is a canonical exact functor $p: \cA \to \cA/\cB$ with kernel equal to $\cB$.
In the literature, full exact subcategories are also referred to as \emph{Serre subcategories} and the above functor $p$ goes by multiple names, most common are \emph{Serre quotient} or \emph{quotient functor}.

In this thesis, $\modCat R$ denotes the category of all finitely generated $R$-modules and $\proj R$ denotes the category of all finitely generated projective $R$-modules.
\begin{remark}
	When $R$ is a ring, $\modCat R$ is an exact category with exact sequences being all short exact sequences.
	Similarly, $\proj R$ is an exact category with exact sequences being those short exact sequences that exist with all terms in $\proj R$.
	In $\proj R$ every exact sequence is split.
\end{remark}
In 1973, Quillen formulated higher algebraic K-theory, defining the higher K-theory of an exact category.
Following \cite{quill73}, we define the zeroth K-group of an exact category as follows.
\begin{definition}\label{def:K_i(modR)}
	The \emph{zeroth} K\emph{-group} $\Kroth(\cE)$ of an exact category $\cE$ is defined as the abelian group generated by the objects $\left[ M \right]$ for each isomorphism class of objects of $\cE$ and one relation $[B] = [A] + [C],$ where $A, B, C \in \cE$, for every exact sequence $A \to B \to C.$
\end{definition}
Let $R$ be a noetherian ring.
In this case, $\modCat R$ is an abelian category.
In this thesis we write $\Groth(R) \colonequals \Kroth(\modCat R)$ for the K-group of the category of finitely generated $R$-modules, and $\Kroth(\proj R)$ for the K-group of the category of finitely generated projective $R$-modules.
Following standard terminology, the group $\Groth(R)$ is called the \emph{Grothendieck group of} $R$.
In other words,
\[ \Groth(R) \coloneqq \Kroth(\modCat R) = \frac{ \oplus \Z [M] }{ \langle [M] = [M'] + [M''] \rangle } \]
for $M \in \modCat R$, as the relations vary over all short exact sequences.

\begin{remark}\label{remark:canonical_iso_modR_projR}
	There is a canonical map $\Kroth(\proj R) \to \Groth(R)$, which, by the resolution theorem of \cite[\S4, Corollary~2]{quill73}, is an isomorphism if $R$ has finite global dimension.
	In general this map is not an isomorphism.
\end{remark}
A general property of Grothendieck groups is the following.
\begin{lemma}\label{lem:filtration_lemma}
	Let $R$ be a ring, $M \in \modCat R$, and consider a filtration by submodules
	$0 = M_{0} \subseteq M_{1} \subseteq \hdots \subseteq M_{n} = M.$
	Then, in $\Groth(R)$ \[ [M] = \sum_{i=0}^{n-1} \left[ \frac{M_{i+1}}{M_{i}} \right]. \]
\end{lemma}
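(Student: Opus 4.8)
The plan is to proceed by induction on the length $n$ of the filtration, using the defining relation of $\Groth(R)$ applied to short exact sequences. The base cases are immediate: for $n=0$ we have $M=0$ and the empty sum is $0=[0]$, and for $n=1$ the filtration is $0=M_0\subseteq M_1=M$, so the claim reads $[M]=[M_1/M_0]=[M]$, which is trivial.

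For the inductive step, suppose the result holds for all filtrations of length $n-1$. Given a filtration $0=M_0\subseteq M_1\subseteq\hdots\subseteq M_n=M$, consider the short exact sequence
\[
0 \to M_{n-1} \to M_n \to M_n/M_{n-1} \to 0,
\]
which gives the relation $[M_n] = [M_{n-1}] + [M_n/M_{n-1}]$ in $\Groth(R)$ by Definition~\ref{def:K_i(modR)}. Now $0=M_0\subseteq M_1\subseteq\hdots\subseteq M_{n-1}$ is a filtration of $M_{n-1}$ of length $n-1$, so by the inductive hypothesis $[M_{n-1}] = \sum_{i=0}^{n-2}[M_{i+1}/M_i]$. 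Substituting this in yields
\[
[M] = [M_n] = \sum_{i=0}^{n-2}\left[\frac{M_{i+1}}{M_i}\right] + \left[\frac{M_n}{M_{n-1}}\right] = \sum_{i=0}^{n-1}\left[\frac{M_{i+1}}{M_i}\right],
\]
as desired.

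There is essentially no obstacle here; the only point requiring a word of care is that each $M_{i+1}/M_i$ is again a finitely generated $R$-module (so the symbol $[M_{i+1}/M_i]$ makes sense as an element of $\Groth(R)$), which holds since $R$ is noetherian and submodules and quotients of finitely generated modules over a noetherian ring are finitely generated. One should also note that the short exact sequences used are genuine short exact sequences of finitely generated $R$-modules, hence among the relations defining $\Groth(R)$.
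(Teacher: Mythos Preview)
Your proof is correct and takes essentially the same approach as the paper: induction on the length of the filtration, using the defining relation of $\Groth(R)$ on a short exact sequence. The only cosmetic difference is that the paper peels off the bottom term (using $0\to M_1\to M\to M/M_1\to 0$ and the induced filtration on $M/M_1$), whereas you peel off the top term; your version is arguably slightly cleaner since it avoids the implicit appeal to the third isomorphism theorem for the quotient filtration.
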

\begin{proof}
	To prove this, we induct on $n$.
	That this holds for $n = 1$ is clear.
	Now let $n > 1$ and consider the sequence
	\[ 0 \to M_{1} \to M \to M/M_{1} \to 0, \]
	where $[M] = [M_{1}] + [M/M_{1}]$.
	This leads to a filtration
	\[ 0 = M_{1}/M_{1} \subseteq M_{2}/M_{1} \subseteq \hdots \subseteq M_{n}/M_{1} = M/M_{1} \]
	of length $n-1$.
	So by induction,
	\begin{equation*}
	[M] = [M_{1}] + [M/M_{1}] = \sum_{i=0}^{n-1} \left[ M_{i+1}/M_{i} \right].\qedhere
	\end{equation*}
\end{proof}

\begin{lemma}\label{lem:every_element_written_in_form_[M]-n[R]}
	Let $R$ be any noetherian ring.
	Then every element of $\Groth(R)$ can be written in the form $[M] - n[R]$ for some $n \ge 0$.
\end{lemma}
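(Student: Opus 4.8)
The plan is to show that every generator $[M]$ of $\Groth(R)$ can be rewritten, modulo the relations, in the claimed form, and then to observe that this form is closed under the group operations. First I would recall that $\Groth(R)$ is generated as an abelian group by the symbols $[M]$ for $M \in \modCat R$, so every element is a finite $\Z$-linear combination $\sum_i a_i [M_i]$. The key reduction is to handle negative coefficients: for any $M \in \modCat R$, since $R$ is noetherian, $M$ is finitely generated, say by $m$ elements, giving a surjection $R^{\oplus m} \twoheadrightarrow M$ with kernel $K \in \modCat R$ (again finitely generated since $R$ is noetherian). The short exact sequence $0 \to K \to R^{\oplus m} \to M \to 0$ yields the relation $[R^{\oplus m}] = [K] + [M]$ in $\Groth(R)$, i.e. $-[M] = [K] - m[R]$, using $[R^{\oplus m}] = m[R]$ (which itself follows by induction from the split sequences $0 \to R \to R^{\oplus j} \to R^{\oplus(j-1)} \to 0$, or directly from Lemma~\ref{lem:filtration_lemma}). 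Thus every $-[M]$ is already of the desired form $[N] - n[R]$ with $n = m \ge 0$.

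Next I would assemble a general element. Given $x = \sum_{i=1}^{r} a_i [M_i] \in \Groth(R)$, split the sum according to the sign of $a_i$. Each term with $a_i > 0$ contributes $a_i[M_i] = [M_i^{\oplus a_i}]$ (using the filtration lemma, or just additivity of $[-]$ on direct sums), and each term with $a_i < 0$ contributes $|a_i|(-[M_i]) = |a_i|([K_i] - m_i[R]) = [K_i^{\oplus |a_i|}] - |a_i| m_i [R]$ by the previous paragraph. Summing, and using that $[A] + [B] = [A \oplus B]$, we get $x = [M] - n[R]$ where $M$ is a single direct sum of modules and $n = \sum_{a_i < 0} |a_i| m_i \ge 0$. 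This completes the argument.

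The argument is essentially routine; the only point requiring noetherianity is that the kernel $K$ of a surjection $R^{\oplus m} \twoheadrightarrow M$ is again finitely generated, so that $[K]$ is a legitimate generator of $\Groth(R)$. If one wanted to be economical, one could even phrase the whole proof as: additivity gives $[A] + [B] = [A \oplus B]$, so the positive part collapses to a single $[M]$; and the short exact sequence above shows $-[M]$ lies in the set $S := \{[N] - n[R] : N \in \modCat R,\ n \ge 0\}$; since $S$ is visibly closed under addition (combine modules via $\oplus$ and add the integer shifts) and contains every $[M]$ and every $-[M]$, it contains all of $\Groth(R)$. I do not anticipate any genuine obstacle here — the statement is a standard normal-form lemma — though one should take a little care to state the direct-sum identity $a[M] = [M^{\oplus a}]$ cleanly, invoking Lemma~\ref{lem:filtration_lemma} (or simply the defining relation applied to $0 \to M \to M^{\oplus a} \to M^{\oplus(a-1)} \to 0$) rather than leaving it implicit.
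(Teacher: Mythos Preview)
Your proposal is correct and follows essentially the same approach as the paper: collapse sums via $[A]+[B]=[A\oplus B]$, and use the syzygy sequence $0\to K\to R^{\oplus m}\to M\to 0$ to rewrite $-[M]$ as $[K]-m[R]$. The only cosmetic difference is that the paper first reduces an arbitrary element to the form $[X]-[Y]$ and then takes a single syzygy of $Y$, whereas you take a syzygy for each negative term before collapsing; the underlying idea is identical.
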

\begin{proof}
	Let $L, N \in \modCat R$.
	Note that $n[L] = [L^{\oplus n}]$ where $n \ge 0$, and $n_{1}[L] + n_{2}[N] = [L^{\oplus n_{1}} \oplus N^{\oplus n_{2}}]$ where $n_{1}, n_{2} \ge 0$.
	
	Now, consider a general element $\sum m_{i}[L_{i}] - \sum n_{i} [N_{i}]$ of $\Groth(R)$, where $m_{i}, n_{i} \ge 0$.
	This can be rewritten as $[\oplus L_{i}^{\oplus m_{i}}] - [\oplus N_{i}^{\oplus n_{i}}]$.
	For ease of notation denote $[\oplus L_{i}^{\oplus m_{i}}]$ by $[X]$ and $[\oplus N_{i}^{\oplus n_{i}}]$ by $[Y]$.
	Hence $[X] - [Y]$ is some general element of $\Groth(R)$.
	
	Now, taking the syzygy, there is an exact sequence
	\[ 0 \to \Omega Y \to R^{n} \to Y \to 0, \]
	so that $[\Omega Y] - n[R] = - [Y]$.
	Thus, 
	\begin{align*}
	[X] - [Y] &= [X] + [\Omega Y] - n[R] \\
	&= [X \oplus \Omega Y] - n[R].
	\end{align*}
	Writing $[M] \coloneqq [X \oplus \Omega Y]$ gives the desired form $[M] - n[R]$.
\end{proof}

Introduced below, the localisation and d\'{e}vissage theorems of \cite{quill73} are important tools that only work for abelian categories.
The following result, due to Quillen in \cite[\S5, Theorem~5]{quill73}, gives a long exact sequence of K-groups.
\begin{theorem}[Localisation]\label{thm:quillen_localisation}
	Let $\cB$ be a full exact subcategory of the abelian category $\cA$, $\cA/\cB$ be the quotient category of $\cA$ by $\cB$ and let $i\colon \cB \to \cA$ and $p\colon \cA \to \cA/\cB$ denote the canonical functors.
	Then there is a long exact sequence
	\[ \hdots \xrightarrow{p} \emph{K}_{1}(\cA/\cB) \to \Kroth(\cB) \xrightarrow{i} \Kroth(\cA) \xrightarrow{p} \Kroth(\cA/\cB) \to 0. \]
\end{theorem}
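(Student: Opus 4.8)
The plan is to deduce the sequence from a homotopy fibre sequence of classifying spaces, following Quillen. Recall that for an exact category $\cE$ one has $K_i(\cE) = \pi_{i+1}(BQ\cE, 0)$, where $Q\cE$ is the $Q$-construction — the same objects as $\cE$, with morphisms $M \to N$ the isomorphism classes of diagrams $M \twoheadleftarrow U \hookrightarrow N$ in which the left map is an admissible epimorphism and the right map an admissible monomorphism (composition being given by pullback of admissible subquotients) — and $BQ\cE$ its classifying space. With this identification, the asserted long exact sequence is the long exact homotopy sequence of a fibration, so it suffices to show that the exact quotient functor $s\colon \cA \to \cA/\cB$ induces a homotopy fibre sequence
\[
BQ\cB \longrightarrow BQ\cA \xrightarrow{\ BQs\ } BQ(\cA/\cB)
\]
with $BQ\cB$ the homotopy fibre over the basepoint $0$. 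Since $BQ\cE$ is connected, the tail of the resulting sequence reads $\cdots \to \pi_2 BQ(\cA/\cB) \to \pi_1 BQ\cB \to \pi_1 BQ\cA \to \pi_1 BQ(\cA/\cB) \to \pi_0 BQ\cB = 0$, i.e. $\cdots \to K_1(\cA/\cB) \to \Kroth(\cB) \to \Kroth(\cA) \to \Kroth(\cA/\cB) \to 0$, which is the stated sequence.

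First I would identify the fibre. The comma category $Qs/0$ consists of pairs $(M, u)$ with $u\colon sM \to 0$ a morphism of $Q(\cA/\cB)$; since a $Q$-morphism into the zero object forces its source to be zero, such $u$ exists precisely when $sM \cong 0$, which — because $\cB$ is a Serre subcategory — happens exactly when $M \in \cB$, and then $u$ is unique. As $\cB$ is closed under subobjects, quotients and extensions, the admissible monos and epis among its objects are exactly those of $\cA$, so $Qs/0$ is canonically the $Q$-construction $Q\cB$.

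Next I would invoke Quillen's Theorem~B for the functor $BQs$: its hypothesis is that for every morphism $X \to Y$ of $Q(\cA/\cB)$ the base-change functor $Qs/X \to Qs/Y$ is a homotopy equivalence, and its conclusion is that $B(Qs/0) = BQ\cB$ is then the homotopy fibre of $BQs$ over $0$ — which is exactly what is wanted. To verify the hypothesis one factors an arbitrary $Q$-morphism into the two generating kinds (those induced by admissible monomorphisms and those induced by admissible epimorphisms), treats each separately, and reduces via Theorem~A to showing that various auxiliary ``categories of liftings'' of objects and morphisms along $s$ are contractible. This is where the structure of the Serre quotient enters: $s$ is essentially surjective; $\Hom_{\cA/\cB}(sM, sN)$ is the \emph{filtered} colimit of $\Hom_\cA(M', N/N')$ over subobjects $M' \subseteq M$ with $M/M' \in \cB$ and subobjects $N' \subseteq N$ with $N' \in \cB$; and every subobject, resp. quotient, of $sM$ is of the form $sM'$ for a subobject, resp. quotient, $M'$ of $M$. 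These facts make the lifting categories (co)filtered, hence of contractible nerve, and the required homotopy equivalences follow.

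The step I expect to be the main obstacle is precisely this last one — the verification of the hypothesis of Theorem~B. Assembling the long exact sequence and identifying the fibre with $Q\cB$ are formal once the machinery is in place; but showing that base-change along an \emph{arbitrary} morphism of $Q(\cA/\cB)$ is a homotopy equivalence demands careful bookkeeping of admissible subquotients together with the filtering lemmas for $\cA/\cB$, and this is the technical core of the argument. In the body of the thesis this theorem is only used as a black box, so there I would simply cite \cite[\S5, Theorem~5]{quill73} rather than reproduce the proof.
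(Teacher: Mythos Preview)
Your sketch is a faithful outline of Quillen's original argument via the $Q$-construction and Theorem~B, and your closing remark is exactly right: the paper does not prove this result at all but simply attributes it to \cite[\S5, Theorem~5]{quill73} and states it without proof. So there is nothing to compare --- the paper's ``proof'' is the citation you yourself propose using.
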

Quillen's d\'{e}vissage theorem \cite[\S5, Theorem~4]{quill73} gives conditions under which two abelian categories have the same K-theory.
\begin{theorem}[D\'{e}vissage]\label{thm:quillen_devissage}
	Let $\cA$ be an abelian category and $\cB$ a non-empty full subcategory which is closed under subobjects, quotients, and finite products in $\cA$.
	Suppose that every object $M$ in $\cA$ has a finite filtration
	\[ 0 = M_{0} \subset M_{1} \subset \hdots \subset M_{n-1} \subset M_{n} = M \]
	with $M_{i}/M_{i-1}$ in $\cB$ for each $i$.
	Then the inclusion functor induces a homotopy equivalence $\emph{K}_{i}(\cB) \cong \emph{K}_{i}(\cA)$ for all $i$.
\end{theorem}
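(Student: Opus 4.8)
The plan is to pass to Quillen's $Q$-construction, where the $K$-groups become homotopy groups, and then to verify the hypotheses of Quillen's Theorem~A. Recall that for an exact (in particular abelian) category $\cE$ there is a natural isomorphism $\mathrm{K}_i(\cE)\cong\pi_{i+1}(BQ\cE)$, where $Q\cE$ is the $Q$-construction and $B(-)$ the classifying space. Since $\cB$ inherits the structure of an abelian (hence exact) category and the inclusion $\iota\colon\cB\hookrightarrow\cA$ is exact, the asserted isomorphisms $\mathrm{K}_i(\cB)\cong\mathrm{K}_i(\cA)$ will follow once one shows that $\iota$ induces a homotopy equivalence $BQ(\iota)\colon BQ\cB\to BQ\cA$.

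To prove this I would invoke Quillen's Theorem~A: a functor $f\colon\cC\to\cD$ induces a homotopy equivalence of classifying spaces as soon as the comma category $f/d$ has contractible nerve for every object $d$ of $\cD$. Applied to $Q(\iota)$, it then suffices to show that for every object $M$ of $\cA$ the comma category $Q(\iota)/M$ is contractible. Its objects are pairs $(N,h)$ with $N$ an object of $\cB$ and $h\colon N\to M$ a morphism of $Q\cA$; such an $h$ is the datum of a \emph{layer} of $M$, i.e.\ a pair of subobjects $M_2\subseteq M_1\subseteq M$ together with an isomorphism $M_1/M_2\cong N$, the morphisms being the evident ones coming from the $Q$-construction.

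Establishing this contractibility is the crux, and it is here that the finite-filtration hypothesis on $\cA$ is indispensable --- without it d\'{e}vissage is simply false, as the inclusion of finite abelian groups into finitely generated $\mathbb{Z}$-modules shows. The strategy would be: (i) identify inside $Q(\iota)/M$ the full subcategory $J(M)$ of those layers that are genuine subobjects $N\subseteq M$ with $N$ in $\cB$; a short check shows its morphisms are exactly inclusions of subobjects, so $J(M)$ is the poset of sub-$\cB$-objects of $M$; (ii) using a finite filtration $0=M_0\subseteq M_1\subseteq\cdots\subseteq M_n=M$ with each $M_k/M_{k-1}$ in $\cB$, and inducting on $n$, show that $J(M)\hookrightarrow Q(\iota)/M$ is a homotopy equivalence --- e.g.\ by exhibiting a retraction that pushes an arbitrary layer down to a sub-$\cB$-object, or by a second application of Theorem~A; (iii) observe that $J(M)$ is contractible because it is a filtered poset: it is non-empty since $0$ lies in $\cB$, and for sub-$\cB$-objects $N_1,N_2\subseteq M$ the sum $N_1+N_2$ is a quotient of $N_1\oplus N_2$, hence lies in $\cB$ by closure under finite products and quotients, giving a common upper bound; the nerve of a directed poset is contractible.

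The inductive step (ii) is where essentially all the real work sits and is the main obstacle; the closure hypotheses on $\cB$ enter only to make the final, purely combinatorial contractibility in (iii) transparent. I would also note that for $i=0$ --- the only case needed elsewhere in this thesis --- the argument collapses to something elementary: surjectivity of $\mathrm{K}_0(\cB)\to\mathrm{K}_0(\cA)$ is immediate from the filtration lemma (Lemma~\ref{lem:filtration_lemma}), and injectivity can be checked directly.
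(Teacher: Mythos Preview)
The paper does not supply a proof of this theorem at all: it is stated with a citation to Quillen's original paper \cite[\S5, Theorem~4]{quill73} and used as a black box. So there is no ``paper's own proof'' to compare your proposal against.

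That said, your sketch is essentially Quillen's original argument, and it is broadly correct. A couple of remarks. First, in Quillen's actual proof the contractibility of $Q(\iota)/M$ is not obtained by reducing to the poset $J(M)$ of sub-$\cB$-objects of $M$; rather, Quillen shows directly that $Q(\iota)/M$ is homotopy equivalent to the ordered set of \emph{admissible layers} $(M_0,M_1)$ with $M_0\subseteq M_1\subseteq M$ and $M_1/M_0\in\cB$, and proves that this poset is contractible by exhibiting explicit adjacencies (using closure of $\cB$ under subobjects and quotients) that let one contract to the layer $(0,0)$. Your step (ii), pushing a general layer down to a sub-$\cB$-object via a retraction, is morally the same manoeuvre but packaged differently; as you acknowledge, this is where the work lies, and your description of it is a little vague. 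Second, the filtration hypothesis is used not in an induction on $n$ but to guarantee that the top layer $(0,M)$ itself lies in the poset after one has contracted everything there.

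Your closing observation that only the $i=0$ case is needed in the thesis, and that this case is elementary, is well taken and accurate.
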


\section{Techniques: Nagata's Theorem}\label{sec:prelims:nagatas_thm}
We introduce a method for computing divisor class groups.
Let $W$ be a multiplicatively closed subset of a normal noetherian integral domain $R$.
Traditionally, Nagata's Theorem, Theorem~\ref{thm:nagata} below, states there is a surjective group homomorphism $\Cl(R) \to \Cl(W^{-1}R)$ with kernel generated by the classes of prime divisors in $X_{1}(R) \backslash X_{1}(W^{-1}R)$.
For more details on this, see \cite[Theorems~7.1 and~7.2]{foss73}.

In this section, we provide a more precise version of Nagata's theorem, following \cite[Theorem~6.2.4]{ford17}.
This version details a method for computing the divisor class group of a normal noetherian integral domain.
Our results in Chapter~\ref{ch:divisor_cl_grps} and Chapter~\ref{ch:cDVs} utilise the techniques of this section.
\begin{theorem}[Nagata's Theorem]\label{thm:nagata}
	Let $R$ be a normal noetherian integral domain with field of fractions $Q(R)$.
	Let $r$ be a non-zero non-invertible element of $R$ with $\div(r) = \sum_{\mfp_{i} \in X_{1}(R)} \upnu_{\mfp_{i}}(r)\,\mfp_{i}$, where $\mfp_{i}$ is a height one prime ideal containing $r$ for $i = 1, \hdots, n$.
	Then the sequence of abelian groups
	\begin{equation}\label{seq:nagata}
	0 \to R^{\times} \longrightarrow R[r^{-1}]^{\times} \xrightarrow{\div} \bigoplus\limits_{i=1}^{n} \Z \cdot \mfp_{i} \longrightarrow \Cl(R) \longrightarrow \Cl(R[r^{-1}]) \to 0
	\end{equation}
	is exact.
\end{theorem}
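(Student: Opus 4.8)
The plan is to build the sequence \eqref{seq:nagata} out of pieces we already have and to identify the relevant maps carefully. First I would note that localising at $r$ kills exactly the height one primes containing $r$: by Theorem~\ref{thm:htbijection} (or directly, since a height one prime $\mfq$ of $R$ survives in $R[r^{-1}]$ iff $r\notin\mfq$), we have $X_1(R[r^{-1}]) = X_1(R)\setminus\{\mfp_1,\dots,\mfp_n\}$, and the localisation $R_{\mfp}$ is unchanged for $\mfp$ in this set. Hence there is a split short exact sequence of divisor groups
\[
0 \to \bigoplus_{i=1}^n \Z\cdot\mfp_i \to \Div(R) \to \Div(R[r^{-1}]) \to 0,
\]
and the valuations $\upnu_{\mfp}$ for $\mfp\in X_1(R[r^{-1}])$ agree whether computed in $R$ or $R[r^{-1}]$.

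Next I would run the exact sequence \eqref{seq:cl_grp_es_2} for both $R$ and $R[r^{-1}]$ (they share the fraction field $Q(R)$) and compare them via the commutative ladder induced by the inclusions $R^\times\hookrightarrow R[r^{-1}]^\times$, $Q(R)^\times = Q(R)^\times$, and the projection $\Div(R)\twoheadrightarrow\Div(R[r^{-1}])$. A diagram chase (equivalently, the snake lemma applied to the two rows) produces an exact sequence
\[
0 \to R^\times \to R[r^{-1}]^\times \xrightarrow{\div} \ker\!\bigl(\Div(R)\to\Div(R[r^{-1}])\bigr) \to \Cl(R)\to\Cl(R[r^{-1}])\to 0,
\]
where the third map is $\div$ followed by projection away from the $\mfp_i$-components — but by the description above, $\ker(\Div(R)\to\Div(R[r^{-1}])) = \bigoplus_{i=1}^n\Z\cdot\mfp_i$, and for $x\in R[r^{-1}]^\times$ the divisor $\div(x)$ automatically lies in this kernel (its components away from the $\mfp_i$ vanish, since $x$ is a unit in $R_{\mfp}$ for those $\mfp$ by Theorem~\ref{thm:valuation}\ref{thm:valuation 3}). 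This is exactly \eqref{seq:nagata}. The surjectivity of $\Cl(R)\to\Cl(R[r^{-1}])$ and exactness at $\Cl(R)$ come for free from the snake lemma, matching the traditional statement recalled before the theorem: the kernel of $\Cl(R)\to\Cl(R[r^{-1}])$ is generated by the classes $[\mfp_i]$.

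The one point requiring a little care — and the place I expect to spend the most effort — is verifying exactness at the term $\bigoplus_{i=1}^n\Z\cdot\mfp_i$, i.e.\ that the image of $\div\colon R[r^{-1}]^\times\to\bigoplus\Z\cdot\mfp_i$ is precisely the kernel of $\bigoplus\Z\cdot\mfp_i\to\Cl(R)$. Injectivity of $R^\times\hookrightarrow R[r^{-1}]^\times$ with the stated kernel identification, and that $\Ker(\div|_{R[r^{-1}]^\times}) = R[r^{-1}]^\times\cap Q(R)^\times$-units is handled by Theorem~\ref{thm:valuation}. For the image-equals-kernel claim: an element $D=\sum a_i\mfp_i$ maps to $0$ in $\Cl(R)$ iff $D=\div(x)$ for some $x\in Q(R)^\times$; then $x$ is a unit in every $R_{\mfp}$ with $\mfp\neq\mfp_i$ (its valuation there is $0$), so by Theorem~\ref{thm:valuation}\ref{thm:valuation 2} applied in $R[r^{-1}] = \bigcap_{\mfp\in X_1(R[r^{-1}])} R_{\mfp}$ (Theorem~\ref{thm:valuation}\ref{thm:valuation 5} for the ring $R[r^{-1}]$), both $x$ and $x^{-1}$ lie in $R[r^{-1}]$, whence $x\in R[r^{-1}]^\times$ and $D = \div(x)$ is in the image. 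The reverse inclusion is immediate. Assembling these verifications gives the exactness of \eqref{seq:nagata} at every spot.
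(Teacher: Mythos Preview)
Your proposal is correct and follows essentially the same strategy as the paper: compare the class-group exact sequences \eqref{seq:cl_grp_es_2} for $R$ and $R[r^{-1}]$ via a commutative ladder and extract \eqref{seq:nagata} by diagram chasing, using the identification $\ker(\Div(R)\to\Div(R[r^{-1}])) = \bigoplus_i \Z\cdot\mfp_i$. The only organisational difference is that the paper splits the chase into two separate Snake Lemma applications (first on the $R^\times$/$Q(R)^\times$/$\Prin$ diagram, then on the $\Prin$/$\Div$/$\Cl$ diagram) and splices the resulting short exact sequences, whereas you run one longer chase and verify exactness at $\bigoplus_i \Z\cdot\mfp_i$ directly via Theorem~\ref{thm:valuation}\ref{thm:valuation 5} applied to $R[r^{-1}]$; both routes are fine.
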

We include a variation of the proof given in \cite[Theorem~6.2.4]{ford17}, as the method illustrated will be used for results in Chapter~\ref{ch:divisor_cl_grps}.
\begin{proof}
	Clearly $R^{\times} \subseteq R[r^{-1}]^{\times} \subseteq Q(R)^{\times}$, and there is a commutative diagram
	\begin{equation}\label{cd:R*-PrinRf}
	\begin{tikzcd}[arrow style=tikz,>=stealth]
	&0 \arrow{r}
	& R^{\times} \arrow{r}
	\arrow[d, hookrightarrow, "\updelta"]
	& Q(R)^{\times} \arrow{r}{\div}
	\ar[equal]{d}
	& \Prin(R) \arrow{r}
	\arrow{d}{\upalpha}
	& 0\\
	&0 \arrow{r}
	& R[r^{-1}]^{\times} \arrow{r}
	& Q(R[r^{-1}])^{\times} \arrow{r}{\div}
	& \Prin(R[r^{-1}]) \arrow{r}
	& 0
	\end{tikzcd}
	\end{equation}
	with exact rows.
	Notice that this implies that $\upalpha$ is surjective.
	This extends in the obvious way to the following diagram:
	\begin{equation}\label{cd:extendedR*-PrinRf}
	\begin{tikzcd}[arrow style=tikz,>=stealth]
	&
	& 0 \arrow{d}
	& 0 \arrow{d}
	& \Ker(\upalpha) \arrow{d}\\
	&0 \arrow{r}
	& R^{\times} \arrow{r}
	\arrow{d}{\updelta}
	& Q(R)^{\times} \arrow{r}{\div }
	\ar[equal]{d}
	& \Prin(R) \arrow{r}
	\arrow{d}{\upalpha}
	& 0\\
	&0 \arrow{r}
	& R[r^{-1}]^{\times} \arrow{r}
	\arrow{d}
	& Q(R)^{\times} \arrow{r}{\div }
		\arrow{d}
	& \Prin(R[r^{-1}]) \arrow{r}
		\arrow{d}
	& 0\\
	&
	& \Cok(\updelta)
	& 0
	& 0
	&
	\end{tikzcd}
	\end{equation}
	By the Snake Lemma, $\Ker(\upalpha) \cong \Cok (\updelta)$, thus rewriting the first column of (\ref{cd:R*-PrinRf}) gives
	\begin{equation}\label{seq:R*-kera}
	0 \longrightarrow R^{\times} \longrightarrow R[r^{-1}]^{\times} \longrightarrow \Ker (\upalpha) \longrightarrow 0.
	\end{equation}
	Notice that $X_{1}(R[r^{-1}])$ is the subset of $X_{1}(R)$ containing the prime ideals of height one in $R$ that do not contain $r$ \cite[Exercise~2.2.15]{ford17}.
	Therefore $\Div(R[r^{-1}])$ can be viewed as the free $\Z$-submodule of $\Div(R)$ generated by prime ideals $\mfp$ in $X_{1}(R[r^{-1}])$.
	Take $\upbeta$ to be the projection onto $\Div(R[r^{-1}])$ defined by 
	\begin{equation*}
	\mfp \mapsto \left\{ \begin{array}{ll}
	0, & \text{if } r \in \mfp \\
	\mfp, & \textrm{otherwise.}
	\end{array} \right.
	\end{equation*}
	Then the diagram
	\begin{equation}\label{cd:PrinR-ClRf}
	\begin{tikzcd}[arrow style=tikz,>=stealth]
	&0 \arrow{r}
	& \Prin(R) \arrow{r}
	\arrow{d}{\upalpha}
	& \Div(R) \arrow{r}
	\arrow{d}{\upbeta}
	& \Cl(R) \arrow{r}
	\arrow{d}{\exists \upgamma}
	& 0\\
	&0 \arrow{r}
	& \Prin(R[r^{-1}]) \arrow{r}
	& \Div(R[r^{-1}]) \arrow{r}
	& \Cl(R[r^{-1}]) \arrow{r}
	& 0
	\end{tikzcd}	 
	\end{equation}
	commutes, and the rows are exact.
	We already know that both $\upalpha$ and $\upbeta$ are surjective.
	Therefore $\upgamma$ is surjective. 
	This extends in the obvious way to the following diagram:
	\begin{equation}\label{cd:extendedR*-PrinRf2}
	\begin{tikzcd}[arrow style=tikz,>=stealth]
	&0 \arrow{r}
	& \Ker(\upalpha) \arrow{r}
	\arrow{d}
	& \Ker(\upbeta) \arrow{r}
	\arrow{d}
	& \Ker(\upgamma) \arrow{d}\\
	&0 \arrow{r}
	& \Prin(R) \arrow{r}
	\arrow[d, tail, twoheadrightarrow, "\upalpha"]
	& \Div(R) \arrow{r}
	\arrow[d, tail, twoheadrightarrow, "\upbeta"]
	& \Cl(R) \arrow{r}
	\arrow[d, tail, twoheadrightarrow, "\upgamma"]
	& 0\\
	&0 \arrow{r}
	& \Prin(R[r^{-1}]) \arrow{r}
	& \Div(R[r^{-1}]) \arrow{r}{}
	& \Cl(R[r^{-1}]) \arrow{r}
	& 0
	\end{tikzcd}
	\end{equation}
	By the Snake Lemma, we obtain an exact sequence
	\begin{equation}\label{seq:kera-kerg}
	0 \longrightarrow \Ker (\upalpha) \longrightarrow \Ker (\upbeta) \longrightarrow \Ker (\upgamma) \longrightarrow 0.
	\end{equation}
	By definition, the group $\Div(R)$ is free on $X_{1}(R)$.
	Then, since the only height one primes that contain $r$ are $\mfp_{1}, \hdots , \mfp_{n}$, the kernel of $\upbeta$ is the free subgroup $\bigoplus_{i=1}^{n} \Z \cdot \mfp_{i}$.
	From \eqref{cd:PrinR-ClRf}, we also get the following exact sequence
	\begin{equation}\label{seq:kerg-ClRf}
	0 \longrightarrow \Ker (\upgamma) \longrightarrow \Cl(R) \longrightarrow \Cl(R[r^{-1}]) \longrightarrow 0.
	\end{equation}
	Splicing the sequences \eqref{seq:R*-kera}, \eqref{seq:kera-kerg}, and \eqref{seq:kerg-ClRf} gives the sequence \eqref{seq:nagata}, as desired.
\end{proof}
This result provides us with what should be a straightforward method for computing $\Cl(R)$, where $R$ is a normal noetherian integral domain.

\section{Techniques: Algebraic K-theory}
\label{sec:prelims:bass_eagon}
Computing class groups is a difficult task and, in some instances, K-theory is easier to compute.
In this section, we introduce a technique, originally described in \cite{bass68}, for doing so.
We will see several examples in Chapter~\ref{ch:divisor_cl_grps} and Chapter~\ref{ch:cDVs} where this approach works well.

As before, let $R$ be a normal noetherian integral domain with field of fractions $Q(R)$.
Each finitely generated $R$-module $M$ has a well-defined class $[M]$ in $\Groth(R)$.
The map $M \mapsto [M]$ is an additive function (see \S\ref{sec:prelims:alg_k_theory}) such that any other additive function on $\modCat R$ factors through it.
Define the \emph{rank map} $\rk_{R} \colon \modCat R \to \Z$ as
\[ \rk_{R}(M) = \dim_{Q(R)}(M \otimes_{R} Q(R)). \]
Since this is an additive function, it induces a homomorphism $\Groth(R) \to \Groth(Q(R))$ where $\Groth(Q(R)) \cong \Z.$
Under this definition $\rk(R) = 1$, thus $\Groth(R)$ decomposes as
\begin{equation}\label{eq:Groth=Z+oGroth}
	\Groth(R) = \Z \cdot [R] \oplus \oGroth(R),
\end{equation}
where $\oGroth(R)$ is the kernel of the homomorphism induced by the rank map and is called the \emph{reduced Grothendieck group} of $R$.

Let $\eC$ be the full subcategory of all $M$ such that $M \otimes_{R} Q(R) = 0$.
Since localisation is exact, $\eC$ is an abelian category.
\begin{lemma}\label{lem:M_in_C_M_p_finite_length}
	If $M \in \eC$ and $\mfp \in X_{1}(R)$, then $M_{\mfp}$ has finite length.
\end{lemma}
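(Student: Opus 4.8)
The plan is to use the characterisation of elements of $\eC$ together with the fact that $R_\mfp$ is a discrete valuation ring for $\mfp \in X_1(R)$. First I would observe that since localisation at $\mfp$ is exact and $M \otimes_R Q(R) = 0$, tensoring further we get $M_\mfp \otimes_{R_\mfp} Q(R_\mfp) = M \otimes_R Q(R) = 0$, because $Q(R_\mfp) = Q(R)$. Thus $M_\mfp$ is a finitely generated torsion module over the discrete valuation ring $R_\mfp$.

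Next I would invoke the structure theory over a DVR. By Proposition~\ref{prop:dvr}, every non-zero ideal of $R_\mfp$ is of the form $(\uppi^k)$ where $\uppi \coloneqq \uppi_\mfp$ generates the maximal ideal. Since $R_\mfp$ is a principal ideal domain (being a DVR), the finitely generated torsion module $M_\mfp$ decomposes as a finite direct sum $\bigoplus_{j=1}^{r} R_\mfp/(\uppi^{k_j})$ for some integers $k_j \geq 1$. Each summand $R_\mfp/(\uppi^{k_j})$ has a finite filtration $0 \subset (\uppi^{k_j-1})/(\uppi^{k_j}) \subset \hdots \subset (\uppi)/(\uppi^{k_j}) \subset R_\mfp/(\uppi^{k_j})$ whose successive quotients are each isomorphic to the residue field $R_\mfp/(\uppi) = R_\mfp/\mfp R_\mfp$, hence simple. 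Therefore $M_\mfp$ has a composition series of length $\sum_j k_j < \infty$, i.e. $M_\mfp$ has finite length.

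If one prefers to avoid the full structure theorem, I would instead argue directly: let $\uppi = \uppi_\mfp$ generate $\mfp R_\mfp$. Because $M_\mfp$ is torsion and finitely generated, there is some $N$ with $\uppi^N M_\mfp = 0$ (take $N$ larger than all the exponents appearing in annihilators of a finite generating set). Then $M_\mfp$ is a finitely generated module over the artinian local ring $R_\mfp/(\uppi^N)$, which has finite length as a module over itself, so $M_\mfp$ has finite length over $R_\mfp/(\uppi^N)$ and hence over $R_\mfp$. I would also note that one can reach the bound $\uppi^N M_\mfp = 0$ via Theorem~\ref{thm:valuation}: a finite generating set $\{m_1,\hdots,m_s\}$ of $M_\mfp$ is killed by a common non-zero element $c \in R$ (clearing denominators from the torsion condition), and $\upnu_\mfp(c) = N$ gives $\uppi^N \in (c)R_\mfp$.

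The main obstacle here is essentially only bookkeeping: one must be careful to justify that $M_\mfp$ is genuinely torsion as an $R_\mfp$-module, which comes down to the identity $Q(R_\mfp) = Q(R)$ and the exactness of localisation — both standard but worth stating explicitly. There is no deep difficulty; the result is a direct consequence of the DVR structure of $R_\mfp$ established earlier in this chapter.
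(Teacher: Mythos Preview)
Your proof is correct. Both you and the paper begin with the same observation---that $M_\mfp \otimes_{R_\mfp} Q(R) = M_{(0)} = 0$---but then diverge. The paper argues abstractly: since $R_\mfp$ is a one-dimensional local domain, its only primes are $(0)$ and $\mfp R_\mfp$, so $M_\mfp$ is supported only at the maximal ideal, and a finitely generated module over a noetherian ring with this property has finite length. Your main route instead exploits the explicit DVR structure, invoking the classification of finitely generated torsion modules over a PID to write down a composition series; your alternative (annihilation by $\uppi^N$, hence a module over the artinian ring $R_\mfp/(\uppi^N)$) is closer in spirit to the paper's argument. The paper's proof is shorter and does not actually need $R_\mfp$ to be a DVR---only one-dimensional local---while yours is more concrete and makes the length visible as $\sum_j k_j$, which is conceptually useful for the map $\upchi$ that follows.
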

\begin{proof}
	If $\mfp \in X_{1}(R)$ then $R_{\mfp}$ is a 1-dimensional integral domain with unique non-maximal prime ideal $(0)$ and unique maximal ideal $\mfp R_{\mfp}$.
	Since $(M_{\mfp})_{(0)} = M_{(0)} = 0$, $M_{\mfp}$ is only supported at the maximal ideal.
	It follows that $M_{\mfp}$ has finite length \cite[Chapter~10.10]{Cohn03}.
\end{proof}

In the situation of the lemma, we write $l_{\mfp}(M_{\mfp})$ for the length of the $R_{\mfp}$-module $M_{\mfp}$.

\begin{lemma}\label{lem:l(R/(a))=n_p(a)}
	Let $R$ be a normal domain, $\mfp \in X_{1}(R),$ and $a \in R$ nonzero.
	Then we have the equality $\ell_{\mfp}(R_{\mfp}/(a)) = \upnu_{\mfp}(a)$.
\end{lemma}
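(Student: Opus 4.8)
The plan is to work entirely inside the DVR $R_{\mfp}$ and reduce to a statement about filtrations of the length-one module $R_{\mfp}/(a)$. First I would recall from Proposition~\ref{prop:dvr} and the discussion preceding Theorem~\ref{thm:valuation} that $R_{\mfp}$ is a DVR with maximal ideal $\mfp R_{\mfp} = (\uppi_{\mfp})$, and that every nonzero element factors as $a = y\,\uppi_{\mfp}^{\,k}$ with $y \in R_{\mfp}^{\times}$ and $k = \upnu_{\mfp}(a)$ (this is exactly how $\upnu_{\mfp}$ was defined, and also follows from Theorem~\ref{thm:valuation}\ref{thm:valuation 4}). Since $y$ is a unit, $(a) = (\uppi_{\mfp}^{\,k})$ as ideals of $R_{\mfp}$, so it suffices to compute $\ell_{\mfp}\bigl(R_{\mfp}/(\uppi_{\mfp}^{\,k})\bigr)$ and show it equals $k = \upnu_{\mfp}(a)$.

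Next I would exhibit the obvious filtration
\[
0 = (\uppi_{\mfp}^{\,k})/(\uppi_{\mfp}^{\,k}) \subseteq (\uppi_{\mfp}^{\,k-1})/(\uppi_{\mfp}^{\,k}) \subseteq \hdots \subseteq (\uppi_{\mfp})/(\uppi_{\mfp}^{\,k}) \subseteq R_{\mfp}/(\uppi_{\mfp}^{\,k})
\]
of $R_{\mfp}/(\uppi_{\mfp}^{\,k})$ by $R_{\mfp}$-submodules, with $k$ steps. The key computation is that each successive quotient $(\uppi_{\mfp}^{\,j})/(\uppi_{\mfp}^{\,j+1})$ is isomorphic to the residue field $R_{\mfp}/\mfp R_{\mfp}$, hence simple: multiplication by $\uppi_{\mfp}^{\,j}$ gives a surjection $R_{\mfp} \to (\uppi_{\mfp}^{\,j})/(\uppi_{\mfp}^{\,j+1})$ whose kernel, by Proposition~\ref{prop:dvr} again (every ideal is a power of $(\uppi_{\mfp})$, together with the fact that $R_{\mfp}$ is a domain so $\uppi_{\mfp}^{\,j}$ is a non-zero-divisor), is exactly $(\uppi_{\mfp}) = \mfp R_{\mfp}$. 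Therefore the filtration above is a composition series of length $k$, and by the Jordan–H\"older theorem $\ell_{\mfp}\bigl(R_{\mfp}/(\uppi_{\mfp}^{\,k})\bigr) = k$.

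Finally I would assemble the pieces: $\ell_{\mfp}(R_{\mfp}/(a)) = \ell_{\mfp}\bigl(R_{\mfp}/(\uppi_{\mfp}^{\,k})\bigr) = k = \upnu_{\mfp}(a)$. The statement that $M_{\mfp} = (R_{\mfp}/(a))$ has finite length at all is guaranteed by Lemma~\ref{lem:M_in_C_M_p_finite_length} applied to $M = R/(a)$ (which clearly lies in $\eC$ since $a$ becomes a unit in $Q(R)$), so the quantity $\ell_{\mfp}$ is well-defined. I do not anticipate a serious obstacle here; the only mild care needed is the bookkeeping that $(a) = (\uppi_{\mfp}^{\upnu_{\mfp}(a)})$ \emph{after localising at} $\mfp$ even when $a$ lies in several height-one primes of $R$ — globally $(a)$ has a more complicated primary decomposition, but that is invisible after passing to $R_{\mfp}$, and this is precisely the point where the normality/Krull-domain hypothesis (so that $R_{\mfp}$ is a DVR) is used.
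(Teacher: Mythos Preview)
Your proof is correct and follows the same approach as the paper: both reduce to the DVR $R_{\mfp}$, write $(a) = (\uppi_{\mfp}^{\,\upnu_{\mfp}(a)})$ using the unit factorisation, and conclude by computing the length of $R_{\mfp}/(\uppi_{\mfp}^{\,k})$. The paper simply asserts this last length equals $k$, whereas you spell out the composition series explicitly; your extra detail is harmless and arguably clearer.
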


\begin{proof}
	Since localisation is exact, $R/(a) \in \eC$. 
	The ring $R_{\mfp}$ is a DVR with maximal ideal $\mfp R_{\mfp} = (\uppi_{\mfp})$.
	Thus $a = u \uppi_{\mfp}^{\upnu}$ for some unit $u \in R$, where by definition $\upnu = \upnu_{\mfp}(a)$.
	Hence
	\[
	\ell_{\mfp}(R_{\mfp}/(a)) = \ell_{\mfp}(R_{\mfp}/(\uppi_{\mfp}^{\upnu})) = \upnu_{\mfp}(a).
	\]
\end{proof}

Recall the group $\Div(R)$ from Definition~\ref{def:weilDiv} and the exact sequence \eqref{seq:cl_grp_es_2}.
It follows from Lemma~\ref{lem:M_in_C_M_p_finite_length} that there exists a map $\upchi \colon \eC \to \Div(R)$ given by
\[ M \mapsto \sum_{\mfp \in X_{1}(R)} \ell_{\mfp} (M_{\mfp}) [\mfp]. \]
The map $\upchi$ is an additive function and we denote by the same letter the induced map $\Kroth(\eC) \to \Div(R)$. 
From the inclusion $\eC \subseteq \mod R$ there is an induced map on Grothendieck groups $\Kroth(\eC) \to \Groth(R)$ whose image is $\oGroth(R)$.

We use the following important result (see \cite[Chapter~IX, Proposition~6.6]{bass68}) which shows that there is an induced surjective map $c \colon \Groth(R) \to\Cl(R)$.
\begin{prop}\label{prop:bass_Groth(R)_to_Cl(R)}
	Let $R$ be a normal noetherian integral domain with field of fractions $Q(R)$.
	Then there exists a unique homomorphism $c \colon \Groth(R) \to \Cl(R)$ such that the diagram
	\begin{equation}\label{eq:bass_Groth(R)_to_Cl(R)}
	\begin{tikzcd}[arrow style=tikz,>=stealth]
	&G_{1}(Q(R)) \arrow{r}
	\arrow{d}{\textnormal{det} (\cong)}
	& \Kroth(\eC) \arrow{r}
	\arrow{d}{\upchi}
	& \Groth(R) \arrow{r}{\rk}
	\arrow{d}{\textnormal{c}}
	& \Groth(Q(R)) \arrow{r}
	\arrow{d}
	& 0\\
	& Q(R)^{\times} \arrow{r}{\div}
	& \Div(R) \arrow{r}
	& \Cl(R) \arrow{r}
	& 0
	\end{tikzcd}	 
	\end{equation}
	commutes.
	Moreover, $\upchi$ and $c$ are surjective homomorphisms.
\end{prop}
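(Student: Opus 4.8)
The plan is to exploit the localisation theorem for K-theory (Theorem~\ref{thm:quillen_localisation}) applied to the full exact subcategory $\eC \subseteq \modCat R$, together with Quillen's d\'evissage theorem (Theorem~\ref{thm:quillen_devissage}) to identify $\Kroth(\eC)$, and then to compare the resulting long exact sequence with the exact sequence \eqref{seq:cl_grp_es_2} that defines $\Cl(R)$. First I would observe that $\eC$ is closed under subobjects, quotients and finite products in $\modCat R$ (localisation is exact), and that for $M \in \eC$ we have $M_{\mfp}$ of finite length at every $\mfp \in X_1(R)$ by Lemma~\ref{lem:M_in_C_M_p_finite_length}, so that the map $\upchi\colon M \mapsto \sum_{\mfp} \ell_{\mfp}(M_{\mfp})[\mfp]$ is a well-defined additive function, hence descends to $\Kroth(\eC) \to \Div(R)$.

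Next I would set up the localisation sequence: the quotient category $\modCat R / \eC$ is equivalent to $\modCat Q(R)$ (a finitely generated $R$-module is killed in the quotient precisely when it is torsion, i.e.\ lies in $\eC$, so the quotient is the category of finite-dimensional $Q(R)$-vector spaces). Theorem~\ref{thm:quillen_localisation} then yields the exact sequence
\[
\mathrm{K}_1(\modCat R) \to \mathrm{K}_1(Q(R)) \to \Kroth(\eC) \to \Groth(R) \xrightarrow{\rk} \Groth(Q(R)) \to 0,
\]
which gives the top row of \eqref{eq:bass_Groth(R)_to_Cl(R)} (using $\mathrm{K}_1(Q(R)) \cong Q(R)^{\times}$ via the determinant, which for a field is the standard identification $\mathrm{GL}/E \cong K^\times$). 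For the bottom row, \eqref{seq:cl_grp_es_2} is exact. To produce $c$, I would first check the left-hand square commutes: the composite $\mathrm{K}_1(Q(R)) \to \Kroth(\eC) \xrightarrow{\upchi} \Div(R)$ sends the class of multiplication-by-$a$ on $Q(R)$ (for $a \in Q(R)^\times$) to $\sum_{\mfp} \ell_{\mfp}(R_{\mfp}/(a))[\mfp]$, which by Lemma~\ref{lem:l(R/(a))=n_p(a)} (extended to $Q(R)^\times$ by writing $a = a_1/a_2$ and using additivity of $\upnu_{\mfp}$) equals $\div(a)$; hence the image of $\upchi$ contains $\Prin(R)$, i.e.\ $\div$ factors as $Q(R)^\times \to \Div(R) \to \Cl(R)$ compatibly. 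A routine diagram chase — the snake-lemma style argument comparing the two exact rows — then produces the unique induced map $c\colon \Groth(R) \to \Cl(R)$ making the right square commute, and shows $c$ is well-defined on the quotient by $\im(\upchi)$.

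Finally, surjectivity: $\upchi$ is surjective because every basis element $[\mfp] \in \Div(R)$ is hit, e.g.\ by $[R/\mfp] \in \Kroth(\eC)$ which maps to $\ell_{\mfp}(R_\mfp/\mfp R_\mfp)[\mfp] = [\mfp]$ plus (possibly) contributions at other primes that can be corrected, or more cleanly by noting that $\Div(R)$ is free on $X_1(R)$ and choosing for each $\mfp$ a module supported only at $\mfp$ of the right length; and then $c$ is surjective since $\Cl(R) = \Div(R)/\Prin(R)$ is a quotient of $\im(\upchi) \subseteq \Groth(R)$ under $c$, i.e.\ $c$ restricted to $\im(\Kroth(\eC) \to \Groth(R)) = \oGroth(R)$ already surjects onto $\Cl(R)$ because $\Cl(R) = \Cok(\upchi\colon \mathrm{K}_1(Q(R)) \to \Div(R))$ equals $\Cok$ of $\Div(R) \to \Cl(R)$ precomposed appropriately. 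I expect the main obstacle to be the bookkeeping in identifying the quotient category $\modCat R/\eC$ with $\modCat Q(R)$ and checking that the localisation sequence's connecting maps coincide on the nose with the maps $\upchi$ and $\div$ — in particular making the determinant identification $\mathrm{K}_1(Q(R)) \cong Q(R)^\times$ interact correctly with $\div$ on the level of the boundary map — rather than any single hard computation; everything else is standard homological algebra once those identifications are pinned down.
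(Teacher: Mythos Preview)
Your approach is essentially the same as the one the paper relies on (it defers to Bass for the construction of $c$ via the localisation sequence and only spells out surjectivity). The overall structure is correct.

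However, your surjectivity arguments are muddled where they need not be. For $\upchi$, you hedge with ``plus (possibly) contributions at other primes that can be corrected,'' but there are no such contributions: if $\mfp, \mfq \in X_1(R)$ are distinct, then $\mfp \not\subseteq \mfq$, so some $s \in \mfp \setminus \mfq$ becomes a unit in $R_{\mfq}$ and $(R/\mfp)_{\mfq} = 0$. Thus $\upchi([R/\mfp]) = [\mfp]$ on the nose. This is exactly the computation the paper records for $c$, and it immediately gives surjectivity of both $\upchi$ and $c$. Your subsequent sentence ``$\Cl(R) = \Cok(\upchi\colon \mathrm{K}_1(Q(R)) \to \Div(R))$'' conflates $\upchi$ with $\div$ and mislabels domains; drop that line of argument entirely and just use the direct computation.
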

\begin{proof}
	For full details of the proof, see \cite[Chapter~IX, Proposition~6.6]{bass68}.
	To see that $c$ is surjective let $\mfp \in X_{1}(R)$ and consider the $R$-module $R/\mfp$.
	Then 
	\[ c\left( \left[ \frac{R}{\mfp} \right] \right) = \sum_{\mfq \in X_{1}(R)} \ell_{\mfq}\left( \left( \frac{R}{\mfp} \right)_{\mfq} \right) [\mfq] = [\mfp]. \qedhere \]
\end{proof}

The map $\Kroth(\eC) \to \Div(R) \to \Cl(R)$ factors through the map $\Kroth(\eC) \to \oGroth(R)$ and so induces a map $\upgamma \colon \oGroth(R) \to \Cl(R)$.
We wish to extend this result to give a more precise description of the relationship between $\Groth(R)$ and $\Cl(R)$.
In the following, we first determine the generators of $\Groth(R)$.
\begin{lemma}\label{lem:Groth_generated_by_R/p}
	Let $R$ be a noetherian ring, then $\Groth(R)$ is generated by modules of the form $[R/\mfp]$ where $\mfp \in \Spec R$.
\end{lemma}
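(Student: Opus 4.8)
The plan is to reduce the statement to the existence of a \emph{prime filtration} of a finitely generated module over a noetherian ring, and then feed that filtration into Lemma~\ref{lem:filtration_lemma}, which has already been proved.

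Since $\Groth(R)$ is generated as an abelian group by the symbols $[M]$ with $M \in \modCat R$, it is enough to show that each such $[M]$ lies in the subgroup generated by the classes $[R/\mfp]$, $\mfp \in \Spec R$. First I would invoke the standard fact that any $0 \neq M \in \modCat R$ admits a finite chain of submodules $0 = M_{0} \subsetneq M_{1} \subsetneq \dots \subsetneq M_{n} = M$ with $M_{i+1}/M_{i} \cong R/\mfp_{i}$ for some $\mfp_{i} \in \Spec R$. One constructs this by noetherian induction: the set $\Ass M$ is nonempty, and choosing $\mfp_{0} \in \Ass M$ produces an embedding $R/\mfp_{0} \hookrightarrow M$; one sets $M_{1}$ to be its image, replaces $M$ by $M/M_{1}$, and repeats. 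The resulting ascending chain of submodules of $M$ must terminate because $R$ is noetherian, which forces $M_{n} = M$ for some $n$. (Alternatively this can simply be cited as a known result, e.g.\ from Matsumura.)

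With the filtration in hand, Lemma~\ref{lem:filtration_lemma} gives
\[ [M] = \sum_{i=0}^{n-1}\left[\frac{M_{i+1}}{M_{i}}\right] = \sum_{i=0}^{n-1}[R/\mfp_{i}], \]
so $[M]$, and hence every element of $\Groth(R)$, is a $\Z$-linear combination of classes of the form $[R/\mfp]$. The only piece of genuine content is the existence of the prime filtration, and the single point there that needs care is that the chain of submodules stabilises — which is immediate from the noetherian hypothesis. I therefore do not anticipate a real obstacle: once Lemma~\ref{lem:filtration_lemma} is available the argument is essentially formal.
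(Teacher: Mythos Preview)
Your proposal is correct and is essentially the same approach as the paper: the paper simply cites the prime filtration result (Eisenbud, Proposition~3.7) and says the lemma follows directly, while you have spelled out the argument and made the application of Lemma~\ref{lem:filtration_lemma} explicit.
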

\begin{proof}
	This follows directly from \cite[Proposition~3.7]{eisenCA95}.
\end{proof}

The next result is described in \cite{Ea68} and included here for completeness.
\begin{lemma}\label{lem:R/(p+xR)=0}
	Let $R$ be a noetherian integral domain, $\mfp \in \Spec R$, and $x \in R \backslash \mfp$.
	Then $[ R/(\mfp + xR)] = 0$ in $\Groth(R)$.
\end{lemma}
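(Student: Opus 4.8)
The plan is to exhibit a short exact sequence of finitely generated $R$-modules whose outer two terms are isomorphic, so that the middle term — which will be $R/(\mfp + xR)$ — has class zero in $\Groth(R)$. The key observation is that multiplication by $x$ on the cyclic module $R/\mfp$ is injective: since $R$ is a domain and $\mfp$ is prime, $R/\mfp$ is a domain, and $x \notin \mfp$ means the image $\bar x$ of $x$ in $R/\mfp$ is a nonzero element, hence a nonzerodivisor. Therefore the map $x\cdot(-)\colon R/\mfp \to R/\mfp$ is injective, with cokernel $\frac{R/\mfp}{x(R/\mfp)} = \frac{R/\mfp}{(xR+\mfp)/\mfp} \cong R/(\mfp + xR)$ by the third isomorphism theorem (Lemma~\ref{lem:3isothm} supplies the lattice of submodules being used here).

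Concretely, I would write down the short exact sequence of $R$-modules
\[
0 \longrightarrow R/\mfp \xrightarrow{\;x\cdot\;} R/\mfp \longrightarrow R/(\mfp + xR) \longrightarrow 0,
\]
verify exactness (injectivity on the left by the domain argument above; exactness in the middle and surjectivity on the right are formal, identifying the cokernel as just described), and then apply the defining relation of $\Groth(R)$ from Definition~\ref{def:K_i(modR)}. This gives $[R/\mfp] = [R/\mfp] + [R/(\mfp+xR)]$ in $\Groth(R)$, whence $[R/(\mfp + xR)] = 0$, as claimed.

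There is no serious obstacle here; the only point requiring the hypotheses is the injectivity of multiplication by $x$, which is exactly where "$R$ a domain" (so $R/\mfp$ is a domain for $\mfp$ prime) and "$x \notin \mfp$" are used — if either failed, $\bar x$ could be zero or a zerodivisor in $R/\mfp$ and the left-hand map would not be injective. One should also note in passing that all three modules are finitely generated over the noetherian ring $R$, so they genuinely live in $\modCat R$ and the computation takes place in $\Groth(R)$ as defined.
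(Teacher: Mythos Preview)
Your proposal is correct and follows exactly the same approach as the paper: both write down the short exact sequence $0 \to R/\mfp \xrightarrow{x} R/\mfp \to R/(\mfp + xR) \to 0$ and read off the relation $[R/\mfp] = [R/\mfp] + [R/(\mfp+xR)]$ in $\Groth(R)$. Your version is in fact more detailed than the paper's (which simply asserts the sequence is exact ``since $\mfp$ is prime''); note in passing that $R/\mfp$ being a domain needs only primality of $\mfp$, not that $R$ itself is a domain.
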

\begin{proof}
	Since $\mfp$ is prime, there is a short exact sequence
	\[ 0 \to R/\mfp \xrightarrow{x} R/\mfp \to R/(\mfp + xR) \to 0. \]
	This implies that $[R/\mfp] = [R/\mfp] + [R/(\mfp + xR)]$.
	Hence $[R/(\mfp + xR)] = 0$ in $\Groth(R)$.
\end{proof}

Since localisation is exact, it follows that $\oGroth(R)$ is generated by all $[R/\mfp]$ where $\mfp \neq (0)$ is a prime ideal of $R$.
That is, $\oGroth(R) = \langle [R/\mfp] \mid \height(\mfp) \ge 1 \rangle$.
Now, denote by $H$ the subgroup of $\oGroth(R)$ generated by all $[R/\mfp]$ where $\height(\mfp) \ge 2$.
In the following proposition we look at the restriction $\upgamma \colon \oGroth(R) \to \Cl(R)$ of the homomorphism $c \colon \Groth(R) \to \Cl(R)$ from Proposition~\ref{prop:bass_Groth(R)_to_Cl(R)}.

\begin{prop}\label{prop:huneke_oGroth(R)_to_Cl(R)}
	Let $R$ be a normal noetherian integral domain.
	Then the map $\upgamma$ is surjective and induces an isomorphism
	\[
		\oGroth(R)/H \cong \Cl(R).
	\]
\end{prop}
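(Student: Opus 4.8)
The plan is to exhibit an explicit two-sided inverse for the homomorphism $\overline{\upgamma}\colon \oGroth(R)/H\to\Cl(R)$ induced by $\upgamma$. First I would check that $\upgamma$ really does descend to $\oGroth(R)/H$, i.e. that $H\subseteq\ker\upgamma$. If $\height(\mfp)\ge 2$ then $R/\mfp$ lies in $\eC$, and since $\mfp\subseteq\mfq$ forces $\height(\mfq)\ge 2$ we have $(R/\mfp)_{\mfq}=0$ for every $\mfq\in X_{1}(R)$; hence, by the computation of $c$ in the proof of Proposition~\ref{prop:bass_Groth(R)_to_Cl(R)} (via the commutative square there), $\upgamma([R/\mfp])$ equals the class in $\Cl(R)$ of $\sum_{\mfq\in X_{1}(R)}\ell_{\mfq}((R/\mfp)_{\mfq})[\mfq]=0$, so $\upgamma([R/\mfp])=0$. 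The same computation gives $\upgamma([R/\mfp])=[\mfp]$ when $\height(\mfp)=1$. Thus $\overline{\upgamma}$ is well defined, and as $\Cl(R)$ is generated by the $[\mfp]$ with $\mfp\in X_{1}(R)$, both $\overline{\upgamma}$ and $\upgamma$ are surjective.

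Next I would construct the inverse. Since $\Div(R)$ is free on $X_{1}(R)$, there is a unique homomorphism $\psi\colon\Div(R)\to\oGroth(R)/H$ with $\psi([\mfp])=[R/\mfp]+H$; the heart of the argument is to show $\psi$ kills $\Prin(R)$. Every principal divisor has the form $\div(a)-\div(b)$ with $a,b\in R$ nonzero, so it suffices to show $\psi(\div(a))=0$. By Corollary~\ref{cor:appearance_of_div(r)_r_in_R}, $\psi(\div(a))=\sum_{\mfp\in X_{1}(R)}\upnu_{\mfp}(a)\,([R/\mfp]+H)$, so I need $\sum_{\mfp\in X_{1}(R)}\upnu_{\mfp}(a)[R/\mfp]\in H$. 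To get this I would use that $R/(a)\in\eC$, that $[R/(a)]=0$ in $\Groth(R)$ (from $0\to R\xrightarrow{a} R\to R/(a)\to 0$), and a prime filtration $0=M_{0}\subset\cdots\subset M_{n}=R/(a)$ with $M_{i}/M_{i-1}\cong R/\mfp_{i}$, where each $\mfp_{i}\supseteq(a)$ is nonzero so $\height(\mfp_{i})\ge 1$. Lemma~\ref{lem:filtration_lemma} gives $0=[R/(a)]=\sum_{i}[R/\mfp_{i}]$. Localising the filtration at a fixed $\mfp\in X_{1}(R)$ and using exactness of localisation, the only surviving subquotients are those with $\mfp_{i}\subseteq\mfp$; but $\height(\mfp)=1$ forces such $\mfp_{i}$ to equal $\mfp$, each then contributing $R_{\mfp}/\mfp R_{\mfp}$, of length one. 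Hence $\#\{i:\mfp_{i}=\mfp\}=\ell_{\mfp}((R/(a))_{\mfp})=\upnu_{\mfp}(a)$ by Lemma~\ref{lem:l(R/(a))=n_p(a)}. Splitting $\sum_{i}[R/\mfp_{i}]$ into the terms with $\height(\mfp_{i})=1$ and those with $\height(\mfp_{i})\ge 2$ now yields $0=\sum_{\mfp\in X_{1}(R)}\upnu_{\mfp}(a)[R/\mfp]+h$ for some $h\in H$, which is exactly what was needed.

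Consequently $\psi$ factors through $\Cl(R)=\Div(R)/\Prin(R)$, giving a homomorphism $\overline{\psi}\colon\Cl(R)\to\oGroth(R)/H$ with $\overline{\psi}([\mfp])=[R/\mfp]+H$. It then remains to see that $\overline{\psi}$ and $\overline{\upgamma}$ are mutually inverse, which can be verified on generators: $\oGroth(R)/H$ is generated by the classes $[R/\mfp]+H$ with $\height(\mfp)=1$ (those of larger height being $0$), and on these $\overline{\psi}\,\overline{\upgamma}([R/\mfp]+H)=\overline{\psi}([\mfp])=[R/\mfp]+H$, while $\overline{\upgamma}\,\overline{\psi}([\mfp])=\overline{\upgamma}([R/\mfp]+H)=[\mfp]$ for $\mfp\in X_{1}(R)$. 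Hence $\overline{\upgamma}$ is an isomorphism. The main obstacle is the middle paragraph: identifying, via localisation and Lemma~\ref{lem:l(R/(a))=n_p(a)}, the precise multiplicity with which each height-one prime occurs among the subquotients of a prime filtration of $R/(a)$, and combining this with the vanishing $[R/(a)]=0$; once that is in hand, the remaining steps are formal manipulations on generators.
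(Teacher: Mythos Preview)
Your proof is correct and follows essentially the same strategy as the paper: both show $H\subseteq\ker\upgamma$, then define the inverse on $\Div(R)$ by $\mfp\mapsto[R/\mfp]+H$, and verify it kills $\Prin(R)$ via the vanishing $[R/(a)]=0$, a prime filtration of $R/(a)$, and Lemma~\ref{lem:l(R/(a))=n_p(a)}. Your treatment is in fact slightly more explicit than the paper's, since you spell out why the height-one prime $\mfp$ appears exactly $\upnu_{\mfp}(a)$ times among the filtration factors by localising at $\mfp$, and you check both compositions on generators (the paper only checks $\tilde{\upbeta}\circ\upgamma=\mathrm{id}$ and appeals to surjectivity).
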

\begin{proof}
	Let $X_{1}(R)$ be the set of height one prime ideals of $R$ and $\mfp \in X_{1}(R)$. 
	By Proposition~\ref{prop:bass_Groth(R)_to_Cl(R)}, $c$ is surjective.
	Notice that, for all $\mfp \in X_{1}(R)$, we have $[R/\mfp] \in \oGroth(R)$.
	Hence $\upgamma$ is also surjective.
	
	First, we show that $H$ is in the kernel of $\upgamma$.
	Let $\mathfrak{a} \in \Spec R$ such that $\height(\mathfrak{a}) \ge 2$.
	It follows that
	\[ \upgamma \left( \left[ \frac{R}{\mathfrak{a}} \right] \right) = \sum_{\mathfrak{b} \in X_{1}(R)} \ell_{\mathfrak{b}}\left( \left( \frac{R}{\mathfrak{a}} \right)_{\mathfrak{b}} \right) [\mathfrak{b}] = 0, \]
	where the second equality comes from the proof of Proposition~\ref{prop:bass_Groth(R)_to_Cl(R)}.
	Thus, $H \subseteq \Ker \upgamma$.
	
	Hence $\upgamma$ induces a surjection $\oGroth(R)/H \twoheadrightarrow \Cl(R)$.
	To prove the statement, we next construct a left inverse of this induced map.
	Consider $\upbeta \colon \Div(R) \to \oGroth(R)/H$ defined by $\mfp \mapsto [R/\mfp] + H$.
	We first claim that this factors through $\Cl(R)$.
	This requires us to show that $\Prin(R) \subseteq \Ker \upbeta$.
	
	Let $a \in R$ be non-zero.
	Then $\div(a) = \sum_{\mfp \in X_{1}(R)} \upnu_{\mfp}(a) [\mfp]$.
	Since $R$ is a domain the short exact sequence
	\[
	0 \to R \stackrel{a \cdot}{\longrightarrow} R \to R/(a) \to 0
	\]
	implies that $[R/(a)] = 0$, and also that $[R/(a)] \in \oGroth(R)$.
	Hence in the quotient $\oGroth(R)/H$, 
	\begin{align*}
		0 = [R/(a)] + H &= \sum_{\mfp \in X_{1}(R)} \ell_{\mfp}(R/(a))[R/\mfp] + H \tag{by Lemma~\ref{lem:filtration_lemma}}\\
		&= \sum_{\mfp \in X_{1}(R)} \upnu_{\mfp}(a)[R/\mfp] + H \tag{by Lemma~\ref{lem:l(R/(a))=n_p(a)}}\\
		&= \upbeta(\div(a)).
	\end{align*}
	Thus $\Prin(R) \subseteq \Ker(\upbeta)$ giving an induced map
	 $\tilde{\upbeta} \colon \Cl(R) \to \oGroth(R)/H.$
	Moreover, by inspection $\tilde{\upbeta} \circ \upgamma = \text{ id}_{\oGroth(R)/H}$, so the surjection $\upgamma \colon \oGroth(R)/H \twoheadrightarrow \Cl(R)$ is an isomorphism.
\end{proof}

\chapter{Noncommutative minimal models}\label{ch:other_prelims}
	This chapter provides a brief introduction to resolutions of singularities, Auslander-Reiten (AR) theory, minimal models, and noncommutative crepant resolutions (NCCRs).
	In addition, we introduce a large class of examples of singularities coming from \cite{cbh98}.
	The discussion of minimal models covers material required in Chapter~\ref{ch:minimal_models}, while the discussion of NCCRs and the brief introduction to the Homological Minimal Model Programme (MMP) involves background information for \S\ref{sec:isolated_cdv_with_NCCR}.
	We do not provide many of the technical details of the Homological MMP, but we provide suitable references for those wishing to know more about the technicalities.
	We also note that, throughout this thesis, our approach to minimal models and resolutions of singularities will be noncommutative.
	
\section{Resolving a singularity}\label{sec:resolving_sings}
	Resolving the singularities of a variety $Y$ consists of finding a smooth (i.e., non-singular) variety $X$ such that there is a proper, birational morphism $f: X \to Y$.
	Such a map $f$ is called a \emph{resolution of singularities} of $Y$.
	A major result, due to Hironaka in \cite{Hi64}, states that every singular variety over an algebraically closed field of characteristic $0$ admits a resolution.
	
	With the existence of such resolutions, one might ask if there is a `best' resolution such that $X$ is as `close' to the original space $Y$ as possible.
	This question leads to the notion of a minimal resolution.
	A \emph{minimal resolution} is a resolution $f \colon X \to Y$ such that any other resolution of $Y$ factors through $f$.
	In dimension $2$, which covers the case of Kleinian singularities and their deformations, a minimal resolution always exists and is unique \cite[Theorem~5.9]{Lau71}.
	Unfortunately, as we move to higher dimensions, this is not necessarily the case; minimal resolutions might not exist.
	This led Reid to introduce the notion of `crepancy' in \cite{Re83}.
	If $f \colon X \to Y$ is a resolution of $Y$, then $f$ is called \emph{crepant} if the pullback along $f$ of the canonical divisor of $Y$ is the canonical divisor of $X$.
	
	For resolutions of Kleinian singularities, the notions of crepancy and minimality coincide, so Kleinian singularities admit a unique crepant resolution.
	But, in higher dimensions, crepant resolutions do not always exist, and when they do, usually they are not unique.
	
	\begin{remark}\label{remark:crepant_res_quotient_sings}
	Recall the quotient singularities $\C^{n}/G$ of \S\ref{sec:intro:ksings} where $G$ is a finite subgroup of SL$(n, \C)$.
		\begin{enumerate}
			\item[(a)] When $n=2$, the singularities are precisely the Kleinian singularities.
			As discussed above, a crepant resolution always exists and is unique.
			\item[(b)] When $n=3$, the singularities always admit a crepant resolution but it is usually no longer unique \cite{BKR}.
			\item[(c)] When $n \geq 4$, such quotient singularities do not necessarily have crepant resolutions, e.g., the group $\frac{1}{2}(1,1,1,1)$; see \cite[Example~2.28]{craw01}.
		\end{enumerate}
	\end{remark}
	The fact that crepant resolutions of cDV singularities may not be unique and, in fact, may not exist at all led to the notion of a minimal model, introduced in the next section.

\section{Minimal models}\label{sec:minimal_models_prelims}
The basic idea of a \emph{minimal model} $f \colon X \to \Spec R$, where $\Spec R$ is an isolated complete local cDV singularity (see \S\ref{sec:intro:cDVs}), is that crepancy is more important than $X$ being smooth.
Since $X$ is not necessarily smooth, $f$ may not be a resolution.
As such, one is asking for a crepant morphism $f$ where the singularities of $X$ are not `too bad'.
Formally, minimal models are defined as follows.
\begin{definition}
	Let $f \colon X \to \Spec R$ be a crepant projective, birational map, where $R$ is complete local and $X$ has at worst $\Q$-factorial terminal singularities.
	Then $f$ is called a \emph{minimal model} of $\Spec R$.
\end{definition}
It is well known that, for a threefold $\Spec R$ which is a cDV singularity, minimal models exist, and there are only finitely many of them \cite{kawamataMatsuki87}.
Indeed, Gorenstein terminal singularities are precisely the isolated cDV singularities \cite{Re83}.
For such hypersurfaces, factorial is equivalent to $\Q$-factorial (see, e.g. \cite[Theorem~2.11]{IyWe14QFact}).
In the next section, we study modules that have an intimate relationship with minimal models and allow us to completely understand minimal models of such threefold singularities.

\subsection{Maximal modifying modules}\label{subsec:maximal_modifying_modules}
When $R$ is a commutative ring, we write $(-)^{*}$ for the functor $\Hom_{R}(-,R) \colon \modCat R \to \modCat R$ and call an $R$-module $M$ \emph{reflexive} if the natural map $M \to M^{**}$ is an isomorphism.
Denote by $\refl R$ the full subcategory of $\mod R$ consisting of reflexive $R$-modules.
Assume now that $R$ is complete local.
We call an $R$-module $M$ \emph{basic} if the indecomposable direct summands $M_{i}$ of $M$ are mutually non-isomorphic.
The following definition of (maximal) modifying modules was first given in \cite[Definition~4.1]{IyWe14ARdual}.
\begin{definition}
	Let $R$ be an isolated complete local cDV singularity, then
	\begin{enumerate}[label=(\arabic*)]
		\item a module $M \in \refl R$ is called a \emph{modifying module} if $\End_{R}(M) \in \CM R$;
		\item a module $M \in \refl R$ is called a \emph{maximal modifying (MM) module} if $M$ is modifying and, furthermore, if $M \oplus Y$ is modifying for $Y \in \refl R$, then $Y \in \add M$;
		\item if $M$ is an MM $R$-module, then $\End_{R}(M)$ is a \emph{maximal modification algebra (MMA)}.
	\end{enumerate}
\end{definition}
In other words, $M$ is an MM module if 
\[
\add M = \{ Y \in \refl R \mid \End_{R}(M \oplus Y) \in \CM R \}.
\]

Consider an MMA of the form $\Lambda \coloneqq \End_{R}(M)$, where $M = M_{0} \oplus M_{1} \oplus \cdots \oplus M_{t}$ is a basic MM $R$-module, with $M_{0} \cong R$ and $M_{1}, \hdots, M_{t}$ the non-free indecomposable summands of $M$.
All MMAs are derived equivalent and any algebra derived equivalent to an MMA is also an MMA \cite[Corollary~1.17]{IyWe14ARdual}.

To each minimal model of an isolated complete local cDV singularity, Donovan-Wemyss \cite{DW19twistsandbraids} associate a corresponding finite-dimensional algebra, called a contraction algebra.
Although contraction algebras have deformation-theoretic origins, their role in this thesis comes from the study of MMAs.
With this approach, we define contraction algebras following \cite[Definition~2.11]{DW19twistsandbraids}.
\begin{definition}\label{def:contraction_algebras}
	Given a minimal model $f \colon X \to \Spec R$ of an isolated complete local cDV singularity, let $\Lambda \coloneqq \End_{R}(M)$ be the corresponding MMA\footnote{In the context of this thesis, the correspondence of an endomorphism ring such as $\Lambda$ to an MMA is best understood through the one-to-one correspondence discussed later in \S\ref{subsec:homMMP}.
	For now, just know that this correspondence exists.}.
	Then the \emph{contraction algebra of $f$} is the stable endomorphism algebra
	\begin{equation*}
	\CL \cong \underline{\End}_{R}(M) \cong \underline{\End}_{R} \left( \bigoplus_{j=1}^{t} M_{j} \right) \cong \Lambda/\Lambda e \Lambda,
	\end{equation*}
	where $e$ is the idempotent in $\Lambda$ corresponding to the summand $R$ of $M$.
\end{definition}

It is useful to keep in mind that just as $\Lambda$ can be presented as a quiver with relations (see \S\ref{sec:deformed_preproj_deformed_ksings}) so too can $\CL$.
The quiver corresponding to $\CL$ has the same vertices and relations as those in the quiver of $\Lambda$, except the vertex (and hence the relations) corresponding to $R$ are not included.
As an example, consider the case of $t=3$; then we have the following corresponding quivers.
\begin{figure}[H]
	\centering
	\begin{subfigure}[b]{.3\textwidth}
		\centering
		\begin{tikzcd}
		{1} \arrow[r] \arrow[d, bend right] & {2} \arrow[d] \arrow[l, bend right] \\
		{0} \arrow[u] \arrow[r, bend right] & {3} \arrow[l] \arrow[u, bend right]                                                
		\end{tikzcd}
		\caption{Quiver of $\Lambda$}
		\label{fig:preproj_quiver_An}
	\end{subfigure}
	\begin{subfigure}[b]{.3\textwidth}
		\centering
		\begin{tikzcd}
		{1} \arrow[r] \arrow[d, bend right, color=black!20!white] & {2} \arrow[d] \arrow[l, bend right] \\
		{\textcolor{gray}{0}} \arrow[u, color=black!30!white] \arrow[r, bend right, color=black!20!white] & {3} \arrow[l, color=black!20!white] \arrow[u, bend right]                                                
		\end{tikzcd}
		\caption{Quiver of $\CL$}
		\label{fig:preproj_quiver_An}
	\end{subfigure}
\end{figure}
Note that, for cDV singularities, the contraction algebra corresponding to a minimal model is finite-dimensional and its quiver is known to be symmetric \cite[Proposition~3.2.10]{august19thesis}.
These quivers can arise from Auslander-Reiten theory, introduced next.

\section{Auslander-Reiten theory}\label{subsec:AR_theory}
The AR-theory introduced in this section will be useful later as it leads to a systematic method for computing the quiver of an MMA and allows us to determine Grothendieck groups of complete local rings of finite $\CM$-type.

Let $C$ be an indecomposable CM $R$-module.
Then, an \emph{Auslander-Reitein (AR) sequence ending in $C$} is a short exact sequence
\[ 0 \to A \to B \xrightarrow{f} C \to 0 \]
in $\CM R$ such that every map $D \to C$ in $\CM R$ which is not a split epimorphism factors through $f$.

As a consequence of AR-duality, it is known (see, e.g. \cite[Theorem~3.2]{yosh90}) that AR-sequences exist in $\CM R$ when $R$ is complete local and isolated singularity.
Given this existence, it is possible to attach a quiver to $\CM R$ called the AR-quiver.
It is a graph constructed from the isomorphism classes of the indecomposable $\CM R$-modules and certain maps between them.
To define the AR-quiver, we follow \cite[Definition~5.2]{yosh90}.
\begin{definition}\label{def:AR_quiver}
	The \emph{Auslander-Reiten (AR) quiver} $\Gamma$ of $\CM R$ has vertices that are in one-to-one correspondence with the isomorphism classes of indecomposable $\CM R$-modules $M$. 
	To determine the arrows, for each $M$ consider the AR-sequence which ends at $M$, say 
	\begin{equation}\label{seq:ARsequence}
	0 \to \uptau(M) \to N \xrightarrow{f} M \to 0.
	\end{equation}
	When $R$ is complete local, we can uniquely decompose the module $N$ into indecomposable modules as $N \cong \bigoplus_{i=1}^{t} M_{i}^{\oplus m_{i}}$.
	Then, for each $i$, there are $m_{i}$ arrows from $M_{i}$ to $M$ in the AR-quiver of $\CM R$.
\end{definition}
\begin{remark}
	Consider the ring $S \coloneqq \C[U, V]$ and a finite subgroup $G$ of SL$(2, \C)$.
	It is known (see, e.g. \cite{Aus86}) that the AR-quiver of $S^{G}$ is isomorphic to the McKay quiver of $G$ from \S\ref{sec:intro:mckay_correspondence}.
	In higher dimensions, $\CM R$ usually has infinitely many indecomposable modules, whereas the McKay quiver only has finitely many vertices.
\end{remark}

Now, suppose that $R$ has finite CM type.
Then for each indecomposable module $M_{1},...,M_{t} \in \CM R$, consider the AR-sequence $0 \to \uptau(M_{j}) \to L_{j} \to M_{j} \to 0$.
Expressing $[M_{j}] + [\uptau(M_{j})] - [L_{j}]$ in terms of $[M_{0}], \hdots, [M_{t}]$ gives a tuple $(a_{0j}, \hdots, a_{tj})$ of integers.
These tuples, as $j$ varies between $1$ and $t$, form the \emph{AR-matrix} $\Upupsilon \colon \Z^{t} \to \Z^{t+1}$ (for more details, see \cite[Definition~2.3]{Holm15}).
The next theorem was originally proven in \cite{AR86}, but the form included here can be found in \cite[\S1]{Holm15}.
\begin{theorem}\label{thm:Holm_localG_0=Cok}
	Let $R$ be a complete local ring with a dualising module and suppose that $R$ is of finite $\CM$-type.
	Then there is an isomorphism of abelian groups
	\[ \Groth(R) \cong \Cok \Upupsilon, \]
	where $\Upupsilon \colon \Z^{t} \to \Z^{t+1}$ denotes the AR-matrix.
\end{theorem}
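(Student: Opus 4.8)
The plan is to exhibit a pair of mutually inverse homomorphisms between $\Groth(R)$ and $\Cok\Upupsilon$. Throughout I use that the existence of a dualising module forces $R$ to be Cohen--Macaulay, say $\dim R=d$, that finite $\CM$-type forces $R$ to be an isolated singularity (so that the almost split sequences used to build $\Upupsilon$ actually exist, by Auslander--Reiten duality), and that $\CM R$ is Krull--Schmidt since $R$ is complete local, with indecomposables $M_{0}=R,M_{1},\hdots,M_{t}$.

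First I would check that $\pi\colon\Z^{t+1}\to\Groth(R)$, $e_{i}\mapsto[M_{i}]$, is surjective. For $N\in\modCat R$, the depth lemma applied to the syzygies of a free resolution shows the $d$-th syzygy $\Omega^{d}N$ is either zero or maximal Cohen--Macaulay, and iterating the sequences $0\to\Omega^{j+1}N\to P_{j}\to\Omega^{j}N\to 0$ yields $[N]=\big(\sum_{j=0}^{d-1}(-1)^{j}\rank P_{j}\big)[R]+(-1)^{d}[\Omega^{d}N]$ in $\Groth(R)$; since $\Omega^{d}N$ is a direct sum of the $M_{i}$ by finite $\CM$-type and Krull--Schmidt, $\pi$ is onto. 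By construction the $j$-th column of $\Upupsilon$ is the coordinate vector of $[M_{j}]+[\uptau(M_{j})]-[L_{j}]$ in the basis $[M_{0}],\hdots,[M_{t}]$, and this element is zero in $\Groth(R)$ by the almost split sequence $0\to\uptau(M_{j})\to L_{j}\to M_{j}\to 0$; hence $\Im\Upupsilon\subseteq\ker\pi$ and $\pi$ factors as a surjection $\bar\pi\colon\Cok\Upupsilon\twoheadrightarrow\Groth(R)$.

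To invert $\bar\pi$ I would build an additive function $\theta\colon\modCat R\to\Cok\Upupsilon$ with $\theta(M_{i})=\bar e_{i}$. Writing $\overline{[X]}\in\Cok\Upupsilon$ for the image of the multiplicity vector of a Cohen--Macaulay module $X$ (well defined by Krull--Schmidt), the candidate is $\theta(N)\coloneqq\big(\sum_{j=0}^{d-1}(-1)^{j}\rank P_{j}\big)\bar e_{0}+(-1)^{d}\,\overline{[\Omega^{d}N]}$, computed from a minimal free resolution of $N$. The essential input is the statement that \emph{for every short exact sequence $0\to A\to B\to C\to 0$ of Cohen--Macaulay $R$-modules one has $\overline{[B]}=\overline{[A]}+\overline{[C]}$ in $\Cok\Upupsilon$}, equivalently that in the exact category $\CM R$ the almost split sequences generate all relations in the Grothendieck group. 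Granting it, $\theta$ is well defined and additive (apply the horseshoe lemma to a short exact sequence in $\modCat R$, pass to $d$-th syzygies, absorb the free summands, and bookkeep Betti numbers), and $\theta(M_{i})=\bar e_{i}$ follows by applying the same statement to the syzygy sequences of $M_{i}$. Then $\theta$ descends to $\bar\theta\colon\Groth(R)\to\Cok\Upupsilon$, and $\bar\theta\circ\bar\pi$ is the identity on each $\bar e_{i}$, hence the identity; combined with surjectivity of $\bar\pi$ this gives $\Groth(R)\cong\Cok\Upupsilon$.

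The main obstacle is the italicised fact that almost split sequences generate all short-exact-sequence relations among Cohen--Macaulay modules. This is a theorem of Auslander and Reiten \cite{AR86} (in the spirit of Butler's theorem for artin algebras), proved by a Noetherian induction along the Auslander--Reiten quiver: a non-split short exact sequence is compared, through the defining lifting property of the almost split sequence ending at its right-hand term, with that almost split sequence, which strictly decreases some measure on the AR-quiver, the base case being split sequences. Everything outside this is routine manipulation of syzygies and resolutions; the one structural ingredient borrowed is the existence and elementary properties of almost split sequences in $\CM R$, which rest on Auslander--Reiten duality and hence on the presence of the dualising module together with $R$ being an isolated singularity.
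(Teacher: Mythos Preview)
The paper does not give its own proof of this statement; it simply records the result with the remark that it ``was originally proven in \cite{AR86}, but the form included here can be found in \cite[\S1]{Holm15}.'' Your outline is a correct reconstruction of that classical argument: the surjection $\Z^{t+1}\twoheadrightarrow\Groth(R)$ via $e_{i}\mapsto[M_{i}]$ and the vanishing of the AR-relations are straightforward, and you have correctly isolated the substantive point, namely that every short-exact-sequence relation in $\CM R$ is a $\Z$-linear combination of AR-sequence relations, which is exactly the content of the Auslander--Reiten/Butler theorem you cite. A slightly cleaner packaging than building $\theta$ on all of $\modCat R$ is to first invoke the standard isomorphism $\Groth(R)\cong\Kroth(\CM R)$ (e.g.\ \cite[Lemma~13.2]{yosh90}) and then argue entirely inside $\CM R$, which avoids the bookkeeping with non-minimal horseshoe resolutions; but your version works once you observe, as you implicitly do, that the formula for $\theta(N)$ is independent of the chosen free resolution. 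In short: your proposal is correct and is precisely the argument the paper is pointing to.
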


\section{Noncommutative crepant resolutions}\label{sec:NCCRs}
The notion of a smooth noncommutative minimal model, called a noncommutative crepant resolution, was first introduced by Van den Bergh in \cite[Definition~4.1]{VdB09}; for full details and references, see \cite{Le12}.
\begin{definition}\label{def:NCCRs}
	Let $R$ be a normal Gorenstein domain.
	A \emph{noncommutative crepant resolution} (NCCR) of $R$ is a ring of the form $\End_{R}(M)$, where $M$ is a non-zero reflexive $R$-module, such that $\End_{R}(M)$ has finite global dimension, and further is maximal Cohen-Macaulay as an $R$-module.
\end{definition}

In noncommutative algebraic geometry, NCCRs are one of the nicest kinds of resolution.
The following is a standard example (see, e.g. \cite[Example~1.1]{VdB09}) of an NCCR, which covers quotient singularities.
\begin{example}\label{example:End_R(S)=skewgroupring_an_NCCR}
	Let $V$ be a finite-dimensional vector space and $G$ a finite subgroup of $\SL(V)$.
	Write $S \coloneqq \text{Sym}(V)$ and $R \coloneqq S^{G}$.
	Then $\End_{R}(S)$ is an NCCR of $R$.
	By a result of Auslander \cite{AG60BrauerGrpOfCommRing, Aus62purityOfBranchLocus} (see also \cite[Theorem~3.2]{IyRyo13}), it turns out that $\End_{R}(S)$ is isomorphic to the skew group ring $S \# G$ introduced later in Definition~\ref{def:skew_group_ring}.
\end{example}

Another example is that of the suspended pinch point.
\begin{example}(The suspended pinch point)
	Consider the ring
	\[ R \coloneqq \frac{\C[u,v,x,y]}{(uv-x^{2}y)}. \]
	Then $\End_{R}(R \oplus (u,x) \oplus (u,x^{2}))$, $\End_{R}(R \oplus (u,x) \oplus (u,xy))$, and $\End_{R}(R \oplus (u, y) \oplus (u, xy))$ are all NCCRs \cite[Example~3.11]{We12}.
\end{example}

The noncommutative Bondal--Orlov conjecture asks if all NCCRs of a fixed $R$ are derived equivalent.
In \cite[Theorem~1.5]{IyWe13NoncommBondalOr}, the following holds more generally, but we restrict to normal Gorenstein domains as they suffice for our purposes.

\begin{theorem}\label{thm:Y_Lambda_derived_equiv}
	Let $R$ be a Gorenstein normal domain of dimension $d$.
	\begin{enumerate}
		\item\label{thm:Y_Lambda_derived_equiv 1} When $d=2$, all NCCRs of $R$ are Morita equivalent.
		\item\label{thm:Y_Lambda_derived_equiv 2} When $d=3$, all NCCRs of $R$ are derived equivalent.
	\end{enumerate}
\end{theorem}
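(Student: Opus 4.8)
The plan is to deduce both statements from the existence of a bimodule linking the two noncommutative crepant resolutions, following and specialising \cite[Theorem~1.5]{IyWe13NoncommBondalOr}. Write $\Lambda \coloneqq \End_R(M)$ and $\Gamma \coloneqq \End_R(N)$ for two NCCRs of $R$. The starting observation is that, since $\End_R(M)$ is maximal Cohen--Macaulay over $R$ with finite global dimension (necessarily $d$), it behaves like a non-singular $R$-order: it is projective over $R$ in codimension one, $M$ is a modifying module, and $\Hom_R(M, X)$ is reflexive over $R$ for every $X \in \refl R$. The key object is
\[
	T \coloneqq \Hom_R(M, N),
\]
a $\Gamma$-$\Lambda$-bimodule, finitely generated on each side. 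The goal is to prove that $T$ is a tilting bimodule, namely that it is a tilting module over $\Lambda$ with $\End_\Lambda(T) \cong \Gamma$; in dimension two one shows in addition that $T$ is a progenerator.

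For part~(1), in dimension two $\refl R = \CM R$, since both consist exactly of the modules of depth $\geq 2 = \dim R$; thus $M, N \in \CM R$. The module $T = \Hom_R(M,N)$ is then reflexive (a Hom into a reflexive module is reflexive), hence MCM over $R$, and by the noncommutative Auslander--Buchsbaum formula, using $\depth_R \Lambda = \dim R = \gldim \Lambda$, it has projective dimension zero over $\Lambda$; symmetrically $T' \coloneqq \Hom_R(N, M)$ is projective over $\Gamma$. The composition pairings $T \otimes_\Lambda T' \to \End_R(N) = \Gamma$ and $T' \otimes_\Gamma T \to \End_R(M) = \Lambda$ have reflexive source and target over $R$ (the sources are MCM because $T, T'$ are $\Lambda$- resp.\ $\Gamma$-projective) and are isomorphisms on the regular locus of $\Spec R$, whose complement has codimension $\geq 2$ by normality; a morphism of reflexive modules that is an isomorphism in codimension one is an isomorphism. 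Hence $T$ and $T'$ are mutually inverse bimodules, and $\Lambda$ and $\Gamma$ are Morita equivalent.

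For part~(2), in dimension three one cannot expect $\add M = \add N$ --- the non-uniqueness of minimal models (flops) is exactly this failure --- so an honest derived equivalence is required. Here $T = \Hom_R(M,N)$ is still reflexive over $R$, hence $\depth_R T \geq 2$, and the noncommutative Auslander--Buchsbaum formula together with $\gldim \Lambda = 3$ gives $\pdim_\Lambda T \leq 1$. It remains to check the remaining tilting axioms: $\Ext^i_\Lambda(T, T) = 0$ for all $i > 0$, that the smallest thick subcategory of $\Db(\Lambda)$ containing $T$ is all of $\perf \Lambda$, and $\End_\Lambda(T) \cong \Gamma$. Granting these, $T$ is a tilting module and $\mathrm{R}\Hom_\Lambda(T,-)$ induces the desired equivalence $\Db(\Lambda) \simeq \Db(\Gamma)$.

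The main obstacle is the rigidity statement $\Ext^{>0}_\Lambda(T, T) = 0$, together with the identification $\End_\Lambda(T) \cong \Gamma$: unlike the degree-zero statement used in the surface case, this $\Ext$-vanishing over the noncommutative ring $\Lambda$ does not follow from a codimension-one comparison, and one must use simultaneously that \emph{both} $M$ and $N$ are modifying and that $\gldim \Lambda = \gldim \Gamma = d \leq 3$, reducing the computation to a depth argument over $R$. Since this is precisely \cite[Theorem~1.5]{IyWe13NoncommBondalOr} specialised to normal Gorenstein domains, in the write-up I would carry out the reductions above and invoke that theorem for the technical core rather than reproving it.
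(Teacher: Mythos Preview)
The paper does not give its own proof of this statement: it is stated as a quotation of \cite[Theorem~1.5]{IyWe13NoncommBondalOr}, restricted to the Gorenstein normal domain setting. Your proposal is therefore aligned with the paper --- you sketch the standard tilting-bimodule argument and then invoke exactly the same reference for the technical core --- so there is nothing to correct; if anything, you are supplying more detail than the paper does.
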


Recall that there are some situations in which resolutions of singularities exist and are unique.
In the case of NCCRs, they do not necessarily exist, and if they do exist, they are usually not unique up to isomorphism.
The best we can hope for is unique up to Morita equivalence.
\begin{example}
	The ring
	\[ \frac{\C[u,v,x,y]}{(uv - x^{2} + y^{3})} \]
	has no NCCR (see, e.g. \cite[Theorem~1.3]{BIKR}) and, by Theorem~\ref{thm:Y_Lambda_derived_equiv}\eqref{thm:Y_Lambda_derived_equiv 1}, the ring 
	\[ \frac{\C[u,v,x]}{(uv - x^{2})} \]
	has unique NCCR (up to Morita equivalence).
\end{example}

\subsection{Cluster-tilting objects}\label{subsec:cluster_tilting_objects}
Full details on Calabi-Yau algebras and cluster-tilting theory can be found in many places in the literature, see, e.g. \cite{AIR14}, \cite{Le12}, \cite{BuanMarsh}, or \cite{IyRe08}.
For our purposes, we outline the connection to NCCRs here.
Let $k$ be a field.
A $k$-linear $\Hom$-finite exact category $\cC$ is \emph{$d$-Calabi-Yau} ($d$-CY) if there exist functorial isomorphisms
\[ D \Ext_{\cC}^{d-i}(X,Y) \cong \Ext_{\cC}^{d}(Y,X) \]
for all $X,Y$ in $\cC$ where $D \coloneqq \Hom_{k}(-,k)$.
An important class of objects in these categories are cluster-tilting objects, defined in the following way.
\begin{definition}\label{def:cluster_tilt}
	Let $M$ be an object of an exact category $\cC$.
	\begin{enumerate}[label=(\arabic*)]
		\item\label{def:cluster_tilt_1} $M$ is \emph{rigid} if $\Ext_{\cC}^{1}(M,M) = 0$.
		\item\label{def:cluster_tilt_2} $M$ is \emph{maximal rigid} if $M$ is rigid and, moreover, if $M \oplus Y$ is rigid for some $Y \in \cC$ then $Y \in \add M$.
		\item\label{def:cluster_tilt_3} $M$ is a \emph{cluster-tilting} object of $\cC$ (or simply, \emph{cluster-tilting}) if it is rigid and furthermore if $\Ext_{\cC}^{1}(M,Y) = 0$ for some $Y$ in $\cC$ then $Y \in \add M$, and also if $\Ext_{\cC}^{1}(Y,M) = 0$ then $Y \in \add M$.
	\end{enumerate}
\end{definition}
Equivalently, $M$ is a maximal rigid object of $\cC$ if
\[ \add M = \{ Y \in \cC \mid \Ext_{\cC}^{1}(M \oplus Y, M \oplus Y) = 0 \}, \]
and $M$ is cluster-tilting if
\[
	\add M = \{ Y \in \cC \mid \Ext_{\cC}^{1}(M, Y) = 0 \} = \{ X \in \cC \mid \Ext_{\cC}^{1}(X,M) = 0 \}.
\]

The most common example of cluster-tilting objects comes from invariant theory.
\begin{example}\label{example:cluster_tilting}
	As in \S\ref{sec:intro:ksings}, let $G$ be a finite subgroup of SL$(n, \C)$ and $R \coloneqq \C[[x_{1}, \hdots, x_{n}]]^{G}$.
	If $R$ is an isolated singularity, then the $R$-module $\C[[x_{1}, \hdots, x_{n}]]$ is a cluster-tilting object in $\CM R$; see \cite[Theorem~2.5]{Iy07higherAR}.
\end{example}

Just as minimal models and their corresponding MMAs are much more general than, and actually include, the case of NCCRs for cDV singularities, in certain ways modifying and maximal modifying modules are much more general than rigid and maximal rigid modules.
In fact, when $R$ is $3$-dimensional with isolated singularities, modifying modules recover the notion of rigid modules and MM modules recover the notion of maximal rigid modules \cite[Proposition~5.12]{IyWe14ARdual}.

The following result, due to \cite[Theorem~5.2.1]{Iy07},
gives a correspondence between NCCRs and cluster-tilting objects.

\begin{theorem}\label{thm:CM_Rmodule_M_gives_NCCR_M_cluster_tilting}
	Let $R$ be an isolated Gorenstein singularity of dimension $d \ge 2$.
	Then a $\CM R$-module $M$ gives an NCCR if and only if it is cluster-tilting.
\end{theorem}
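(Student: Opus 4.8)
The plan is to prove the two implications by translating homological data of the order $\Lambda\colonequals\End_R(M)$ into $\Ext$-vanishing in $\CM R$, and conversely. Write $d=\dim R$; since $R$ is an isolated Gorenstein singularity, $\CM R$ is a Frobenius category with projective--injective objects $\add R$, for $X,Y\in\CM R$ one has $\Ext^i_R(X,Y)\cong\Ext^i_{\CM R}(X,Y)$ for $i\ge 1$, and all of these groups have finite length. I take the cluster-tilting condition in the form appropriate to dimension $d$; for $d=3$, the case of interest for cDV singularities, this is precisely the $\Ext^1$-condition of Definition~\ref{def:cluster_tilt}. In either direction one has $R\in\add M$: on the cluster-tilting side this is forced by $\Ext^{>0}_R(M,R)=0$ (as $M$ is MCM over a Gorenstein ring) together with the right-orthogonality axiom, and on the NCCR side it is the known fact that an NCCR of a normal Gorenstein domain is a generator. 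Hence $\Lambda$ is module-finite over $R$ with an idempotent $e$ satisfying $e\Lambda e\cong R$, the functor $\Hom_R(M,-)$ restricts to an equivalence $\add M\xrightarrow{\ \sim\ }\proj\Lambda$, and $\Hom_R(M,-)$ is fully faithful on $\refl R$.

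For ``cluster-tilting $\Rightarrow$ NCCR'' two points must be checked. First, $\Lambda\in\CM R$: the cluster-tilting hypothesis yields $\Ext^i_R(M,M)=0$ for $1\le i\le d-2$ (for $d=3$ this is rigidity; for larger $d$ it is the usual propagation of rigidity in the $(d-1)$-Calabi--Yau category $\uCM R$), and then, resolving $M$ by free modules and applying $\Hom_R(-,M)$, one obtains a complex of MCM modules $M^{b_\bullet}$ whose cohomology vanishes in degrees $1,\dots,d-2$; a descending application of the depth lemma (the image of $M^{b_{d-2}}\to M^{b_{d-1}}$ is a submodule of an MCM module, hence of positive depth) gives $\depth_R\Hom_R(M,M)\ge d$. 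Second, $\gldim\Lambda<\infty$: here one builds projective $\Lambda$-resolutions of the simple $\Lambda$-modules --- equivalently, resolutions governed by the indecomposable summands of $M$ --- out of iterated right $\add M$-approximations in $\CM R$; the cluster-tilting (maximal orthogonality) hypothesis is exactly what guarantees that these approximation sequences have length at most $d-1$, and since $\Ext^{>0}_R(M,\add M)=0$ the functor $\Hom_R(M,-)$ is exact on them and converts them into projective resolutions, giving $\gldim\Lambda\le d$.

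For the converse, let $\Lambda=\End_R(M)$ be an NCCR. Running the depth argument of the previous paragraph in reverse --- a nonzero finite-length cohomology module sitting in a complex of MCM modules at a spot of index $\le d-2$ would force $\depth_R\Lambda\le d-1$ --- recovers $\Ext^i_R(M,M)=0$ for $1\le i\le d-2$, so in particular $M$ is rigid. For the orthogonality, suppose $Y\in\CM R$ has $\Ext^i_R(M,Y)=0$ for $1\le i\le d-2$ (for $d=3$: $\Ext^1_R(M,Y)=0$). The same depth argument, with target $Y$, shows $\Hom_R(M,Y)\in\CM R$, i.e.\ it is maximal Cohen--Macaulay over $R$; but $\Lambda$ is a non-singular order, so the Auslander--Buchsbaum equality $\pdim_\Lambda(-)+\depth_R(-)=d$ forces $\Hom_R(M,Y)$ to be projective over $\Lambda$, hence of the form $\Hom_R(M,Z)$ with $Z\in\add M$, and full faithfulness of $\Hom_R(M,-)$ on $\refl R$ then gives $Y\cong Z\in\add M$. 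The reverse inclusion $\add M\subseteq\{Y\in\CM R\mid\Ext^i_R(M,Y)=0\text{ for }1\le i\le d-2\}$ is clear, and the remaining left-orthogonality condition follows by applying the same argument to the opposite NCCR $\Lambda^{\op}\cong\End_R(\Hom_R(M,R))$, or, for $d=3$, directly from the $2$-Calabi--Yau duality $\Ext^1_R(X,M)\cong D\Ext^1_R(M,X)$ on $\uCM R$; hence $M$ is cluster-tilting. I expect the genuine difficulty to be concentrated in the finite-global-dimension step: producing projective resolutions of \emph{all} $\Lambda$-modules and showing that the $\add M$-approximation sequences terminate in length $d$ is exactly where the full strength of the cluster-tilting axiom (as opposed to mere rigidity or maximal rigidity) is needed.
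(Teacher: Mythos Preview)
The paper does not give its own proof of this statement; it is quoted directly from Iyama \cite[Theorem~5.2.1]{Iy07}. Your proposal is essentially the standard argument found there and in its later refinements (Iyama--Reiten, Iyama--Wemyss): the depth-lemma computation showing $\End_R(M)\in\CM R$ from $\Ext$-vanishing, the bound $\gldim\Lambda\le d$ via iterated $\add M$-approximations, and the Auslander--Buchsbaum step in the converse direction are exactly the ingredients used in the literature.

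One small point: your parenthetical ``propagation of rigidity in the $(d-1)$-Calabi--Yau category'' is misleading. For $d\ge 4$ there is no automatic propagation from $\Ext^1(M,M)=0$ to higher $\Ext$-vanishing; rather, the correct hypothesis is $(d-1)$-cluster tilting, which \emph{by definition} includes $\Ext^i_R(M,M)=0$ for $1\le i\le d-2$. You acknowledge this elsewhere (``I take the cluster-tilting condition in the form appropriate to dimension $d$''), and you are right that the paper's Definition~\ref{def:cluster_tilt} is the $\Ext^1$-only version, which matches the needed condition only for $d\le 3$. Since the thesis only applies the result when $d=3$, the discrepancy is harmless there, but your sketch would be cleaner if you simply stated the $(d-1)$-cluster-tilting hypothesis up front rather than invoking a propagation that does not exist in general.
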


Recall the precise description of $\Groth(R)$ given in Theorem~\ref{thm:Holm_localG_0=Cok}.
This description was based on the AR-matrix $\Upupsilon$ and requires the very strong assumption that $R$ has finite $\CM$ type.
In \cite[Proposition~7.28]{navkal13}, Navkal extends Theorem~\ref{thm:Holm_localG_0=Cok} by dropping the finiteness assumptions, at the cost of assuming the existence of a cluster-tilting object.
Fix a basic cluster-tilting object $M_{0} \oplus \hdots \oplus M_{t} \in \CM R$ with $M_{0} \cong R$.
For $j>0$ consider
\[ 0 \to L_{j}^{t} \to \cdots \to L_{j}^{1} \to L_{j}^{0} \coloneqq M_{j} \to 0 \]
the higher AR-sequence ending in $M_{j}$ (for definition, see, e.g. \cite[\S6.3]{navkal13}).
Expressing $\sum_{i=0}^{t}(-1)^{i}[L_{j}^{i}]$ in terms of $[M_{0}], \hdots, [M_{t}]$ gives a tuple $(a_{0j}, \hdots, a_{tj})$ of integers.
These tuples, as $j$ varies between $1$ and $t$, form the \emph{higher AR-matrix} $\Upomega \colon \Z^{t} \to \Z^{t+1}$ (for more details, see \cite[\S6.3]{navkal13}).
The next theorem was originally proven in \cite[Theorem~1.3]{navkal13}.

\begin{theorem}\label{thm:NavkalG_0(R)}
	Let $R$ be a Gorenstein complete local ring of dimension $3$ over an algebraically closed field, which has isolated singularities and admits a cluster-tilting object (equivalently, an NCCR).
	Then,
	\[ \Groth(R) \cong \Cok \Upomega \]
	where $\Upomega \colon \Z^{t} \to \Z^{t+1}$ denotes the higher AR matrix.
\end{theorem}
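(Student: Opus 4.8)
The plan is to compare $\Groth(R)$ with the K-theory of the noncommutative crepant resolution that $R$ carries. Put $\Lambda\coloneqq\End_R(M)$ for the given basic cluster-tilting object $M=M_0\oplus\cdots\oplus M_t$ with $M_0\cong R$; by Theorem~\ref{thm:CM_Rmodule_M_gives_NCCR_M_cluster_tilting} this is an NCCR, so $\gldim\Lambda<\infty$, whence Remark~\ref{remark:canonical_iso_modR_projR} gives $\Kroth(\proj\Lambda)\xrightarrow{\sim}\Kroth(\modCat\Lambda)$. Since $\Lambda$ is module-finite over the complete local ring $R$ it is semiperfect, so this group is free on the classes $[P_0],\ldots,[P_t]$ of the indecomposable projectives $P_i=\Hom_R(M,M_i)$; thus $\Kroth(\modCat\Lambda)\cong\Z^{t+1}$. (I work with right $\Lambda$-modules throughout --- which also has finite global dimension --- so that $P_i\otimes_\Lambda\Lambda e_0=\Hom_R(R,M_i)=M_i$ for the idempotent $e_0\in\Lambda$ cutting out the summand $R$.)

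The first substantive step is to locate $\Upomega$ inside this group. For $1\le j\le t$, applying $\Hom_R(M,-)$ to the higher almost split sequence $0\to L_j^{m}\to\cdots\to L_j^{0}=M_j\to0$ produces the minimal projective resolution of the simple $S_j$ over $\Lambda$ --- this acyclicity is the defining property of the higher AR-sequence, cf.\ \cite[\S6.3]{navkal13} --- so, decomposing each $L_j^i$ into copies of the $M_k$ and reading off multiplicities, the identity $[S_j]=\sum_i(-1)^i[\Hom_R(M,L_j^i)]$ exhibits the $j$-th column of $\Upomega$ in the basis $\{[P_k]\}$. Hence $\Cok\Upomega\cong\Kroth(\modCat\Lambda)/\langle[S_1],\ldots,[S_t]\rangle$, and the theorem reduces to identifying this quotient with $\Groth(R)$. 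Now the exact functor $-\otimes_\Lambda\Lambda e_0\colon\modCat\Lambda\to\modCat(e_0\Lambda e_0)=\modCat R$ (exact since $\Lambda e_0$ is $\Lambda$-projective) induces $\theta\colon\Kroth(\modCat\Lambda)\to\Groth(R)$ with $\theta[P_i]=[M_i]$ and $\theta[S_j]=0$ for $j\ge1$ --- note $\theta[S_0]=[R/\mfm]$, a class that need not vanish, which is why $S_0$ is absent from $\Upomega$ yet cannot simply be discarded. Thus $\theta$ descends to $\bar\theta\colon\Cok\Upomega\to\Groth(R)$, and it remains to show $\bar\theta$ is an isomorphism.

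For surjectivity one checks that $[M_0],\ldots,[M_t]$ generate $\Groth(R)$: by Auslander--Buchweitz theory (available as $R$ is Gorenstein) any $N\in\modCat R$ is, modulo a module of finite projective dimension --- whose class lies in $\Z[R]=\Z[M_0]$ --- the cokernel of a map between maximal Cohen--Macaulay modules, and each MCM module has a finite $\add M$-resolution because $M$ is cluster-tilting in $\CM R$; converting such resolutions into combinations of the $[P_k]$ and applying $\theta$ shows every class in $\Groth(R)$ is a $\Z$-combination of the $[M_i]$. Injectivity is the \textbf{crux}: one must prove that the only relations among the $[M_i]$ in $\Groth(R)$ are the $\Z$-span of the higher AR relations $\sum_i(-1)^i[L_j^i]=0$. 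Mirroring the finite CM type case of Theorem~\ref{thm:Holm_localG_0=Cok}, the route is to construct an explicit inverse $\psi\colon\Groth(R)\to\Cok\Upomega$ by the recipe above --- Cohen--Macaulay approximate, take an $\add M$-resolution, read off the signed sum of the $[P_k]$ modulo $\langle[S_1],\ldots,[S_t]\rangle$ --- and to verify that $\psi$ is well defined (independent of every choice) and additive on short exact sequences; checking $\bar\theta\psi=\mathrm{id}$ and $\psi\bar\theta=\mathrm{id}$ on the respective generators then finishes the proof.

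The hardest point, I expect, is precisely this well-definedness of $\psi$, and it is there that all the hypotheses enter. One needs the higher AR theory of \cite{navkal13} to know that $\add M$-resolutions of MCM modules exist with uniformly bounded length, and one needs $R$ to have an \emph{isolated} singularity so that $M_\mfp$ is free and $\Lambda_\mfp$ is Morita equivalent to $R_\mfp$ for every non-maximal prime $\mfp$; the latter lets one apply Quillen's localisation theorem (Theorem~\ref{thm:quillen_localisation}) to the Serre subcategories of finite-length modules on the two sides, together with Quillen's d\'evissage theorem (Theorem~\ref{thm:quillen_devissage}), to identify $\modCat\Lambda$ with $\modCat R$ away from the closed point and thereby control $\ker\bar\theta$. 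The remaining bookkeeping is to see that $\ker\bar\theta=0$, i.e.\ that the discrepancy between the $t+1$ simples of $\Lambda$ and the $t$ columns of $\Upomega$ is accounted for exactly by the (torsion) class $[R/\mfm]$ in $\Groth(R)$.
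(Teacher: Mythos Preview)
The paper does not prove this theorem itself; it is quoted from Navkal's thesis and cited as \cite[Theorem~1.3]{navkal13}, so there is no in-paper proof to compare against.

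That said, your outline is essentially the architecture of Navkal's argument: pass to the NCCR $\Lambda=\End_R(M)$, use that the higher AR sequences become minimal projective resolutions of the simples $S_1,\ldots,S_t$ after applying $\Hom_R(M,-)$, so that $\Cok\Upomega\cong\Kroth(\modCat\Lambda)/\langle[S_1],\ldots,[S_t]\rangle$, and then compare with $\Groth(R)$ via the functor $e_0(-)$ (equivalently $-\otimes_\Lambda\Lambda e_0$). Your diagnosis that injectivity of $\bar\theta$ is the crux, and that it is handled via Quillen localisation and d\'evissage together with the isolated-singularity hypothesis (so that $\Lambda_\mfp$ is Morita equivalent to $R_\mfp$ for $\mfp\neq\mfm$), is correct. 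One minor point: since $R$ is complete local over a field it is excellent, and then $[R/\mfm]=0$ in $\Groth(R)$ (cf.\ \cite[Lemma~13.4]{yosh90}, used elsewhere in this paper), so the ``discrepancy'' you worry about at the end in fact vanishes outright rather than merely being torsion.
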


The main result in \cite{navkal13} uses the above to explicitly describe $\Groth(R)$ in the case of type $\tA$ cDVs which admit NCCR(s).
Later, we go further: in \S\ref{sec:isolated_cdv_with_NCCR}, we show that our results in Chapter~\ref{ch:knitting} allow us to describe $\Groth(R)$ for any cDV singularity $R$ that admits an NCCR, and in \S\ref{sec:arbitrary_type_A_cDV} we also describe $\Groth(R)$ for all $\ctAn{n}$ singularities regardless of whether they admit an NCCR, and regardless of whether they are isolated.

\section{The deformed preprojective algebra}\label{sec:deformed_preproj_deformed_ksings}
One way to get larger classes of examples from Kleinian singularities is by studying not just the singularities themselves, but also their deformations.
These deformations are important not only in geometry but also in representation theory and Lie theory.
Deformations of Kleinian singularities were studied by Crawley-Boevey and Holland in \cite{cbh98} using deformed preprojective algebras, which we now recall.
In general, these are noncommutative i.e., not-necessarily-commutative rings.

First, we recall the notions of quivers and path algebras.
Let $Q$ be a (finite) \emph{quiver}, that is, $Q$ is a directed graph consisting of finitely many vertices and arrows.
Say there are $n+1$ vertices, then each vertex is labelled $i=0, \hdots, n$, and each vertex $i$ has the trivial loop $e_{i} \colon i \to i$.
Let $c$ be an arrow, then the \emph{head} of $c$ is the vertex where $c$ points to and is denoted $h(c)$.
The \emph{tail} of $c$ is the vertex where $c$ starts and is denoted $t(c)$.
Informally, we think of $c$ as a map $c \colon t(c) \to h(c)$.
The set of vertices is denoted ${\sf{Q_{0}}}$ and the set of arrows by ${\sf{Q_{1}}}$; both are finite.
Fix a field $k$.
\begin{definition}
	The \emph{path algebra} $kQ$ of the quiver $Q$ is the associative $k$-algebra 
	with $k$-basis given by all trivial loops and non-trivial paths in $Q$.
	Multiplication is given by concatenation of paths\footnote{Throughout this thesis a concatenated path such as $pq$ means ``$p$ then $q$''.}:
	\begin{equation*}
	pq \coloneqq \left\{ \begin{array}{ll}
	p \cdot q, & \text{if } h(p) = t(q) \\
	0, & \textrm{otherwise.}
	\end{array} \right.
	\quad 
	e_{i}p \coloneqq \left\{ \begin{array}{ll}
	p , & \text{if } t(p) = i \\
	0, & \textrm{otherwise.}
	\end{array} \right.
	\quad
	pe_{i} \coloneqq \left\{ \begin{array}{ll}
	p , & \text{if } h(p) = i \\
	0, & \textrm{otherwise.}
	\end{array} \right.
	\end{equation*}
	for all paths $p$ and $q$.
	In addition, for $i, j \in {\sf{Q_{0}}}$, we have
	\[
		e_{i}^{2} = e_{i} \qquad \text{and} \qquad e_{i}e_{j} = 0 \quad (i \neq j).
	\]
\end{definition}
\noindent Notice that the $e_{i}$ are orthogonal idempotents in $kQ$ so the sum of the $e_{i}$'s is the identity element of $kQ$, denoted $1_{kQ}$.
A \emph{relation} in a quiver $Q$ is a $k$-linear combination of paths in $Q$, each with the same head and tail.
Relations can be thought of as telling us that following a specific path $p$ is the same as following a specific path $q$.
The specified relations generate a two-sided ideal of $kQ$.
Denote a quiver $Q$ with a set of relations $R$ by $(Q, R)$ and the path algebra of an associated quiver with relations by $kQ/\langle R \rangle$.

The \emph{double quiver} of the quiver $Q$, denoted $\bar{Q}$, consists of the vertices and arrows $c \colon t(c) \to h(c)$ of $Q$, together with a new arrow $c^{*} \colon h(c) \to t(c)$ for each original arrow $c$.
Let $k\bar{Q}$ be the path algebra of $\bar{Q}$.
The \emph{deformed preprojective algebra} is defined in \cite[\S2]{cbh98} as the associative algebra
\[
	\Pi^{\uplambda}(Q) = k\bar{Q} / \left( \sum_{c \in {\sf{Q_{1}}}} [c, c^{*}] - \sum_{i \in {\sf{Q_{0}}}} \uplambda_{i}e_{i} \right),
\]
where $\uplambda = (\uplambda_{i})_{i \in {\sf{Q_{0}}}} \in k^{\sf{Q_{0}}}$ is a weight, $c^{*}$ is the double arrow of the arrow $c$, and $[c,c^{*}]$ is the commutator $cc^{*} - c^{*}c.$
The \emph{preprojective algebra} $\Pi \coloneqq \Pi^{0}$ is a special case of the deformed preprojective algebra where $\uplambda = 0$.
We write $\Pi^{\uplambda}$ when the choice of $Q$ is clear.

Given an extended Dynkin diagram, choose an orientation to produce a quiver.
The double quivers are depicted in Figure~\ref{fig:preproj_quivers}.
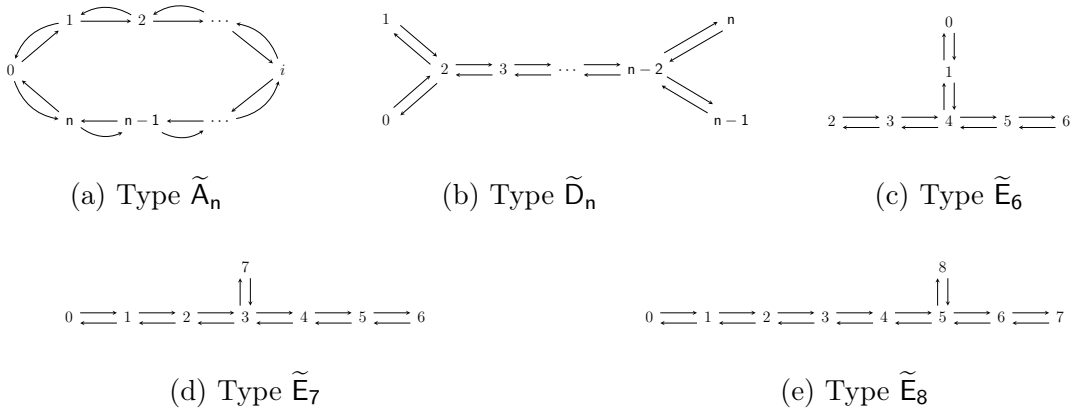
\begin{figure}[H]
	\centering
	\begin{subfigure}[b]{.3\textwidth}
		\centering
		\adjustbox{scale=.5}{\begin{tikzcd}[arrow style=tikz,>=stealth]
		& 1 \arrow[r, swap] \arrow[dl, swap, bend right = 30] & 2 \arrow[r, swap] \arrow[l, swap, bend right = 30] & \arrow[l, swap, bend right = 30] \cdots \arrow[dr, swap] & \\
		0 \arrow[ur, swap] \arrow[dr, swap, bend right = 30] & & & & i \arrow[ul, swap, bend right = 30] \arrow[dl, swap] \\
		& \sf{n} \arrow[ul, swap] \arrow[r, swap, bend right = 30] & \sf{n}-1 \arrow[l, swap] \arrow[r, swap, bend right = 30] & \arrow[l, swap] \cdots \arrow[ur, swap, bend right = 30] &\\
		\end{tikzcd}}
		\caption{Type $\teAn{n}$}
		\label{fig:preproj_quiver_An}
	\end{subfigure}
	\begin{subfigure}[b]{.3\textwidth}
		\centering
		\adjustbox{scale=.5}{\begin{tikzcd}[arrow style=tikz,>=stealth]		
		& 1 \arrow[dr, shift left=.75ex] & & & & & \arrow[dl, shift left=.75ex] \sf{n} & \\
		& & \arrow[dl, shift left=.75ex] \arrow[ul, shift left=.75ex] 2 \arrow[r, shift left=.75ex] & \arrow[l, shift left=.75ex] 3 \arrow[r, shift left=.75ex] & \arrow[l, shift left=.75ex] \cdots \arrow[r, shift left=.75ex] & \arrow[l, shift left=.75ex] \sf{n}-2 \arrow[ur, shift left=.75ex] \arrow[dr, shift left=.75ex] & & \\
		& 0 \arrow[ur, shift left=.75ex] & & & & & \arrow[ul, shift left=.75ex] \sf{n}-1 &\\
		\end{tikzcd}}
		\caption{Type $\teDn{n}$}
		\label{fig:preproj_quiver_Dn}
	\end{subfigure}
	\qquad \begin{subfigure}[b]{.29\textwidth}
		\centering
		\adjustbox{scale=.5}{\begin{tikzcd}[arrow style=tikz,>=stealth]
		&&& 0 \arrow[d, shift left=.75ex] &&&\\
		&&& \arrow[u, shift left=.75ex] 1 \arrow[d, shift left=.75ex] &&&\\
		& 2 \arrow[r, shift left=.75ex] & \arrow[l, shift left=.75ex] 3 \arrow[r, shift left=.75ex] & \arrow[l, shift left=.75ex] \arrow[u, shift left=.75ex] 4 \arrow[r, shift left=.75ex] & \arrow[l, shift left=.75ex] 5 \arrow[r, shift left=.75ex] & \arrow[l, shift left=.75ex] 6 & \\
		\end{tikzcd}}
		\caption{Type $\teEn{6}$}
		\label{fig:preproj_quiver_E6}
	\end{subfigure}
\newline \\
	\begin{subfigure}[b]{.49\textwidth}
		\centering
		\adjustbox{scale=.5}{\begin{tikzcd}[arrow style=tikz,>=stealth]
		&&&& 7 \arrow[d, shift left=.75ex] &&&\\
		& 0 \arrow[r, shift left=.75ex] & \arrow[l, shift left=.75ex] 1 \arrow[r, shift left=.75ex] & \arrow[l, shift left=.75ex] 2 \arrow[r, shift left=.75ex] & \arrow[l, shift left=.75ex] \arrow[u, shift left=.75ex] 3 \arrow[r, shift left=.75ex]& \arrow[l, shift left=.75ex] 4 \arrow[r, shift left=.75ex]  & \arrow[l, shift left=.75ex] 5 \arrow[r, shift left=.75ex] & \arrow[l, shift left=.75ex] 6 & \\
		\end{tikzcd}}
		\caption{Type $\teEn{7}$}
		\label{fig:preproj_quiver_E7}
	\end{subfigure}
	\begin{subfigure}[b]{.5\textwidth}
		\centering
		\adjustbox{scale=.5}{\begin{tikzcd}[arrow style=tikz,>=stealth]
		&&&&&& 8 \arrow[d, shift left=.75ex] &&&\\
		& 0 \arrow[r, shift left=.75ex] & \arrow[l, shift left=.75ex] 1 \arrow[r, shift left=.75ex] & \arrow[l, shift left=.75ex] 2 \arrow[r, shift left=.75ex] & \arrow[l, shift left=.75ex] 3 \arrow[r, shift left=.75ex] & \arrow[l, shift left=.75ex] 4 \arrow[r, shift left=.75ex]& \arrow[l, shift left=.75ex] \arrow[u, shift left=.75ex] 5 \arrow[r, shift left=.75ex]  & \arrow[l, shift left=.75ex] 6 \arrow[r, shift left=.75ex] & \arrow[l, shift left=.75ex] 7 & \\
		\end{tikzcd}}
		\caption{Type $\teEn{8}$}
		\label{fig:preproj_quiver_E8}
	\end{subfigure}\caption{Double quivers of $\ADE$ type}\label{fig:preproj_quivers}
\end{figure}

\subsection{Deformations of Kleinian singularities}\label{subsec:deformations_of_ksings} 
As before, let $S \coloneqq \C[U, V]$ and $R \coloneqq S^{G}$ be a Kleinian singularity.
\begin{definition}\label{def:skew_group_ring} 
	Suppose $G$ is a group acting on a ring $S$.
	The \emph{skew group ring} $S \# G$ is the free $S$-module with elements of $G$ as a basis where multiplication is extended linearly by the rule $(rg)(sh) = r(g \cdot s)gh$ for $r,s \in S$ and $g,h \in G$.
	Here $g \cdot s$ is the image of $s$ under the action of $g$.
\end{definition}
Since the group $G$ acts naturally on the free algebra $\C \langle U, V \rangle$, we can form the skew group algebra $\C \langle U, V \rangle \# G$.
For more details, see \cite[\S3.7]{cbLect}.

Let $\uplambda$ be a weight, that is $\uplambda = (\uplambda_{i})_{i \in {\sf{Q_{0}}}} \in \C^{\sf{Q_{0}}}$, and let $e_{0} \coloneqq \frac{1}{|G|} \sum_{g \in G} g$ be the average of the group elements; $e_{0}$ is an idempotent and an element of $\C \langle U, V \rangle \# G$.
We can identify the centre of the group algebra $Z(\C G)$ with $\C^{\sf{Q_{0}}}$.
Under this identification there exists a unique $z_{\uplambda}$ in $Z(\C G)$ such that the trace of $z_{\uplambda}$ on $\uprho_{i}$ is $(\dim \uprho_{i}) \uplambda_{i}$; for a more in depth explanation see \cite{cbh98}.
As in \cite{cbh98}, define
\[ \cS^{\uplambda}(Q) \coloneqq \frac{\C \langle U, V \rangle \# G}{\langle UV - VU - \uplambda \rangle} \quad \text{ and } \quad \cO^{\uplambda}(Q) \coloneqq e_{0} \cS^{\uplambda}e_{0}. \]

Just as with $\Pi^{\uplambda}$, we frequently write $\cS^{\uplambda}$ and $\cO^{\uplambda}$ when the choice of $Q$ is clear.
We can filter $\cS^{\uplambda}$ with $U$ and $V$ in degree $1$ and elements of $G$ in degree $0$, and this restricts to a filtration of $\cO^{\uplambda}$ \cite{cbh98}.
With these filtrations we can consider the associated graded rings $\text{gr } \cS^{\uplambda}$ and $\text{gr } \cO^{\uplambda}$.
Crawley-Boevey--Holland prove the following result in \cite[Lemma~1.1]{cbh98}.
\begin{lemma}\label{lem:grS^lam_cong_skew_grp_O^lam_cong_coord_ring}
	With the above filtrations, there are isomorphisms:
	\[ \mathrm{gr } \cS^{\uplambda} \cong S \# G \quad \text{and} \quad \mathrm{gr } \cO^{\uplambda} \cong R. \]
\end{lemma}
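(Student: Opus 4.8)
The plan is to establish the two isomorphisms in turn, deducing the statement for $\cO^{\uplambda}$ from that for $\cS^{\uplambda}$. Write $T \colonequals \C\langle U,V\rangle \# G$, filtered by total degree in $U,V$ with $G$ in degree $0$; since this filtration is in fact a $\Z_{\ge 0}$-grading, $\mathrm{gr}\,T \cong T$ with $U,V$ in degree $1$. Recall the general fact that for a filtered ring $T$ with two-sided ideal $I$ one has $\mathrm{gr}(T/I) \cong \mathrm{gr}(T)/\mathrm{gr}(I)$, and that $\mathrm{gr}(I)$ contains the ideal generated by the leading symbols of any generating set of $I$, giving a canonical surjection $\mathrm{gr}(T)/\langle\text{leading symbols}\rangle \twoheadrightarrow \mathrm{gr}(T/I)$. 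Here $I = \langle UV - VU - z_{\uplambda}\rangle$, where $z_{\uplambda}\in Z(\C G)$ is the central element written $\uplambda$ in the defining relation of $\cS^{\uplambda}$; it lives in degree $0 < 2$, so the leading symbol of the relator is $UV - VU$. As $G \subseteq \SL(2,\C)$, this element spans a one-dimensional $G$-stable subspace of $T_2$ (a copy of $\Lambda^2 V$), so the ideal it generates in $T$ is $G$-stable and $T/\langle UV - VU\rangle \cong \C[U,V]\#G = S\#G$. Thus we obtain a surjection $S\#G \twoheadrightarrow \mathrm{gr}\,\cS^{\uplambda}$.

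The heart of the matter is to show this surjection is injective, equivalently that $\cS^{\uplambda}$ admits the PBW-type basis $\{\,U^aV^b g : a,b \ge 0,\ g \in G\,\}$. Spanning is routine: the smash-product relations push all group elements to the right, and $UV = VU + z_{\uplambda}$ reorders the $U,V$-letters, the correction $z_{\uplambda}\in\C G$ only lowering $U,V$-degree. Linear independence is the genuine content, and I would obtain it from Bergman's diamond lemma applied to the rewriting system consisting of $UV \mapsto VU + z_{\uplambda}$ and $g x \mapsto (g\cdot x)\,g$ for $x \in \{U,V\}$, $g \in G$ (with $\C G$ as coefficient ring): the only overlap ambiguities come from a group letter meeting an occurrence of $UV$, and a short computation shows they close up precisely because $\det g = 1$ for $g \in G$ and $z_{\uplambda}$ is central in $\C G$. (Alternatively, since every non-identity element of $G \subseteq \SL(2,\C) = \mathrm{Sp}(2,\C)$ is a symplectic reflection and $z_{\uplambda}$ decomposes accordingly, $\cS^{\uplambda}$ is a symplectic reflection algebra and this is the Etingof--Ginzburg PBW theorem; or one may simply appeal to \cite[Lemma~1.1]{cbh98}.) Matching graded dimensions then upgrades the surjection to an isomorphism $S\#G \xrightarrow{\ \sim\ } \mathrm{gr}\,\cS^{\uplambda}$.

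Finally I would deduce $\mathrm{gr}\,\cO^{\uplambda} \cong R$. The idempotent $e_0 = \tfrac{1}{|G|}\sum_{g\in G} g$ is homogeneous of degree $0$, so passing to the associated graded commutes with passing to the corner $e_0(-)e_0$; hence $\mathrm{gr}\,\cO^{\uplambda} = \mathrm{gr}(e_0\cS^{\uplambda}e_0) = e_0(\mathrm{gr}\,\cS^{\uplambda})e_0 \cong e_0(S\#G)e_0$. It remains to identify $e_0(S\#G)e_0$ with $R = S^G$. For a monomial $m \in S$ one computes $e_0\,m\,e_0 = \overline{m}\,e_0$, where $\overline{m}\colonequals\tfrac{1}{|G|}\sum_{g\in G} g\cdot m \in S^G$ is the Reynolds average of $m$; thus $e_0(S\#G)e_0 = S^G e_0$, and $s \mapsto s\,e_0$ is a ring isomorphism $S^G \xrightarrow{\ \sim\ } S^G e_0$ — injective because $e_0 \neq 0$ in $S\#G$, and multiplicative because $G$-invariants commute with $e_0$ and $e_0^2 = e_0$. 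Combined with the previous step this gives $\mathrm{gr}\,\cO^{\uplambda} \cong R$.

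The step I expect to be the main obstacle is the injectivity of $S\#G \to \mathrm{gr}\,\cS^{\uplambda}$: this is the only place one uses something beyond formal manipulation of filtered rings — namely flatness of the family $\uplambda \mapsto \cS^{\uplambda}$ — whereas the reduction to it and the passage to the corner by $e_0$ are bookkeeping.
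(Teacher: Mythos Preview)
The paper does not give its own proof of this lemma: it is stated as a citation of \cite[Lemma~1.1]{cbh98}. Your argument is correct and is essentially the standard proof --- the surjection from leading symbols, PBW/diamond-lemma for injectivity, and the observation that $e_0$ has degree $0$ so that corner-by-$e_0$ commutes with $\mathrm{gr}$ --- and indeed you yourself note that one may simply appeal to \cite[Lemma~1.1]{cbh98}, which is exactly what the paper does.
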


This means that $\cS^{\uplambda}$ can be viewed as a deformation of $S \# G$, and $\cO^{\uplambda}$ as a deformation of $R$.
Crawley-Boevey--Holland determined many ring theoretic and homological properties of these deformations; see \cite[Lemmas~1.2 and~1.3]{cbh98}.
Provided $G$ is non-trivial, the ring $\cS^{\uplambda}$ is always noncommutative and, in general, $\cO^{\uplambda}$ is also noncommutative.
For us, $\cO^{\uplambda}$ will always be commutative and has Krull dimension 2 since below we will only consider the case when $\uplambda \cdot \updelta = \sum_{i} \uplambda_{i}\updelta_{i} = 0$, see \cite[Theorem~0.4(1) and (5)]{cbh98}.
Here, $\updelta_{i}$ is the dimension of the corresponding representation at vertex $i$ (see \S\ref{sec:intro:mckay_correspondence}).

To determine many of the properties alluded to above, the following result is due to Crawley-Boevey--Holland \cite[Theorem~0.1]{cbh98}, who prove that deformations $\cS^{\uplambda}$ are Morita equivalent to deformed preprojective algebras $\Pi^{\uplambda}$.
\begin{theorem}\label{thm:S^lam_Morita_equiv_Pi^lam}
	There is a Morita equivalence between $\cS^{\uplambda}$ and $\Pi^{\uplambda}$.
	This induces an isomorphism $\cO^{\uplambda} \cong \ePie$.
\end{theorem}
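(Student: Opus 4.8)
The plan is to follow the strategy of Crawley-Boevey--Holland, realising both $\cS^{\uplambda}$ and $\Pi^{\uplambda}$ as corner rings $e(-)e$ of one and the same algebra and reading off the Morita equivalence. Throughout, identify $\C\langle U,V\rangle$ with the tensor algebra of the natural $2$-dimensional $G$-representation $\upomega$, and write $\uptheta\coloneqq UV-VU\in\C\langle U,V\rangle\#G$, so that $\cS^{\uplambda}=(\C\langle U,V\rangle\#G)/\langle\uptheta-z_{\uplambda}\rangle$ with $z_{\uplambda}\in Z(\C G)$ the central element fixed by the trace condition recalled above. Since $G\subseteq\SL(2,\C)$ preserves the symplectic form on $\upomega$, the element $\uptheta$ is $G$-invariant, i.e.\ it commutes with every $g\in G$ inside $\C\langle U,V\rangle\#G$; and $z_{\uplambda}\in Z(\C G)$ likewise commutes with $\C G$.

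First I would fix, for the irreducible representations $\uprho_{0},\hdots,\uprho_{n}$ (with $\uprho_{0}$ trivial), primitive idempotents $e_{i}\in\C G$ projecting onto the $\uprho_{i}$-isotypic components, chosen so that $e_{0}=\tfrac{1}{|G|}\sum_{g\in G}g$ is the idempotent appearing in the statement, and set $e\coloneqq\sum_{i=0}^{n}e_{i}$. Using the Artin--Wedderburn decomposition of $\C G$ one sees that $\C G\,e\,\C G=\C G$; since $\C G$ is a unital subring of $\C\langle U,V\rangle\#G$, the idempotent $e$ is full there, and its image is then full in the quotient $\cS^{\uplambda}$. A full idempotent induces a Morita equivalence between a ring and its corner, so $\cS^{\uplambda}$ is Morita equivalent to $e\cS^{\uplambda}e$.

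The heart of the argument is the ring-theoretic McKay correspondence. Grading $\C\langle U,V\rangle\#G=\bigoplus_{m\ge 0}\upomega^{\otimes m}\otimes\C G$ by tensor degree, the block $e_{i}\bigl(\upomega^{\otimes m}\otimes\C G\bigr)e_{j}$ is canonically $\Hom_{G}(\uprho_{i},\upomega^{\otimes m}\otimes\uprho_{j})$, with multiplication given by composition; for $m=1$ these blocks provide the arrows of the McKay quiver $\bar{Q}$, the double of an orientation $Q$ of the relevant extended Dynkin diagram, and one obtains a ring isomorphism $e(\C\langle U,V\rangle\#G)e\cong\C\bar{Q}$ carrying each $e_{i}$ to the vertex idempotent at $i$. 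I would then track the images of $e\uptheta e$ and $ez_{\uplambda}e$: choosing the bases of the $\Hom_{G}$-spaces so that each "starred" arrow $c^{*}$ is the symplectic partner of $c$, the $G$-invariant element $\uptheta$ is carried, up to the evident sign, to the moment-map element $\sum_{c\in{\sf{Q_{1}}}}[c,c^{*}]$, while $z_{\uplambda}$, being central, acts on $\uprho_{i}$ as the scalar $\uplambda_{i}$ (this is precisely why $z_{\uplambda}$ is defined via the traces $(\dim\uprho_{i})\uplambda_{i}$), so $ez_{\uplambda}e\mapsto\sum_{i\in{\sf{Q_{0}}}}\uplambda_{i}e_{i}$. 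One then checks that the two-sided ideal of $e(\C\langle U,V\rangle\#G)e$ cut out by $e\langle\uptheta-z_{\uplambda}\rangle e$ is exactly the one generated by $\sum_{c}[c,c^{*}]-\sum_{i}\uplambda_{i}e_{i}$, whence $e\cS^{\uplambda}e\cong\C\bar{Q}/\bigl\langle \sum_{c\in{\sf{Q_{1}}}}[c,c^{*}]-\sum_{i\in{\sf{Q_{0}}}}\uplambda_{i}e_{i}\bigr\rangle=\Pi^{\uplambda}$, via an isomorphism fixing $e_{0}$.

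Combining the last two paragraphs gives the Morita equivalence $\cS^{\uplambda}\sim\Pi^{\uplambda}$. For the second assertion, since $e_{0}=ee_{0}=e_{0}e$ we have $\cO^{\uplambda}=e_{0}\cS^{\uplambda}e_{0}=e_{0}\bigl(e\cS^{\uplambda}e\bigr)e_{0}$, and applying the isomorphism $e\cS^{\uplambda}e\cong\Pi^{\uplambda}$ (which fixes $e_{0}$) yields $\cO^{\uplambda}\cong\ePie$, as claimed. The main obstacle is the computation in the third paragraph: one must set up the arrow bases compatibly with the $G$-equivariant symplectic pairing on $\upomega$ and verify, block by block (via Schur's lemma and a character computation), that $\uptheta$ and $z_{\uplambda}$ land on the preprojective relation element with the correct coefficients and signs, together with the non-formal point that the ideal they generate passes correctly to the corner (something that fails for a general element in place of $\uptheta-z_{\uplambda}$). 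Everything else is bookkeeping with Morita equivalence.
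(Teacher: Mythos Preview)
The paper does not give its own proof of this theorem: it is stated there as a citation of \cite[Theorem~0.1]{cbh98}, with no argument supplied. Your sketch is exactly the strategy of that original source---fullness of a sum of primitive idempotents in $\C G$, the McKay identification of the corner $e(\C\langle U,V\rangle\#G)e$ with the path algebra $\C\bar{Q}$, and the transport of the relation $\uptheta-z_{\uplambda}$ to the preprojective relation---so there is nothing to compare. Your outline is correct and your caveats about the delicate points (compatibility of the arrow bases with the symplectic pairing, and the passage of the ideal to the corner) are well placed; these are precisely the details one finds in \cite{cbh98}.
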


\section{Homological Minimal Model Program}\label{subsec:homMMP}

For this thesis, as we do not require much knowledge of the inner workings of the Homological MMP, we just give a very brief overview.
At a very high level, the goal of the Homological MMP is to find the `best' (i.e., simplest) approximations of singular spaces.
These `best' approximations are precisely the minimal models.

Although geometric by nature, the Homological MMP provides a purely algebraic output, lifting the correspondence of \S\ref{sec:intro:mckay_correspondence} from dimension two to threefold cDV singularities.
In \cite[Corollary~6.9]{We18}, Wemyss shows that for an isolated complete local cDV singularity $R$, there is a one-to-one correspondence
\begin{equation*}
\left\{ \parbox{4cm}{\centering basic MM $R$-module generators} \right\} \longleftrightarrow \left\{ \parbox{4cm}{\centering minimal models $f_{i} \colon X_{i} \to \Spec R$} \right\}.
\end{equation*}
Wemyss then considers the situation where the minimal models of $\Spec R$ are smooth.
In this case, the above correspondence reduces to the following one-to-one correspondence
\begin{equation*}
\left\{ \parbox{4cm}{\centering basic cluster-tilting objects in $\CM R$} \right\} \longleftrightarrow \left\{ \parbox{4cm}{\centering crepant resolutions $f_{i} \colon X_{i} \to \Spec R$} \right\}.
\end{equation*}

Now, suppose there is an MMA of the form $\End_{R}(M)$, where $M$ is a basic MM module with $M_{0} \cong R$ and $M_{1}, \hdots, M_{t}$ the non-free indecomposable summands of $M$.
By the above bijection, $M$ is in one-to-one correspondence with a minimal model $f \colon X \to \Spec R$.
This extends to a bijection between the indecomposable summands of $M$ and the (exceptional) curves in the corresponding minimal model.
The bijection is illustrated in Figure~\ref{fig:curves_indecomp_summands_correspondence}.
\begin{figure}[H]
	\centering
	\tikzset{Bullet/.style={fill=black,draw,color=#1,circle,minimum size=3pt,scale=0.25}}
	\begin{tikzpicture}[looseness=1, scale=0.5]
	\draw (0,0) ellipse (3cm and 1.3cm);
	\node[Bullet=red] at (0,0) {};
	\draw[rotate=36] (0,10) ellipse (2cm and 4.5cm);
	\draw[rotate=36, red] (0,11.2) to [bend right=25] (0,13.3);
	\draw[rotate=36, red] (0,10.2) to [bend right=25] (0,12.2);
	\node[scale=1.5, red] at (-5.5,7.9) {\scriptsize $\ddots$};
	\draw[rotate=36, red] (0,6.7) to [bend right=25] (0,8.7);
	\node[scale=1.5] at (-7,11) {\tiny $C_{1}$};
	\node[scale=1.5] at (-5.7,9.4) {\tiny $C_{2}$};
	\node[scale=1.5] at (-3.4,6) {\tiny $C_{3}$};
	\node[scale=1.5] at (0.1,13.9) {\scriptsize{M}};
	\node[scale=1.5, rotate=130.5] at (0.8,13.2) {\scriptsize{=}};
	\node[scale=1.5] at (1.5,12.5) {\scriptsize{$R$}};
	\node[scale=1.5] at (2.2,11.8) {\scriptsize{$\oplus$}};
	\node[scale=1.5] at (3,11) {\scriptsize{$M_{1}$}};
	\node[scale=1.5] at (3.8,10.2) {\scriptsize{$\oplus$}};
	\node[scale=1.5] at (4.6,9.4) {\scriptsize{$M_{2}$}};
	\node[scale=1.5] at (5.4,8.6) {\scriptsize{$\oplus$}};
	\node[scale=1.5] at (6.3,7.9) {\scriptsize{$\ddots$}};
	\node[scale=1.5] at (7.2,6.8) {\scriptsize{$\oplus$}};
	\node[scale=1.5] at (8,6) {\scriptsize{$M_{t}$}};
	\draw[<->, line width=0.2mm, bend right=25, densely dotted, blue] (2.25,11) to (-6.5,11);
	\draw[<->, line width=0.2mm, bend right=25, densely dotted, blue] (3.75,9.4) to (-5.25,9.4);
	\draw[<->, line width=0.2mm, bend right=25, densely dotted, blue] (7.25,6) to (-3,6);
	\draw[->, looseness=0,rotate=35, line width=0.2mm] (0,5) to (0,2);
	\node[scale=1.5] at (-9.5,10.1) {\scriptsize{$X$}};
	\node[scale=1.5] at (-1.4,3.2) {\scriptsize{$f$}};
	\node[scale=1.5] at (4,-1) {\scriptsize{$\Spec R$}};
	\end{tikzpicture}
	\caption{One-to-one correspondence of non-free indecomposable summands of a basic MM module $M$ and the (exceptional) curves in the corresponding minimal model.}
	\label{fig:curves_indecomp_summands_correspondence}
\end{figure}
\chapter{Viehweg's setting}\label{ch:divisor_cl_grps}
In this chapter, we study rings of the form
\begin{equation}\label{eq:the_ring_A}
	A \colonequals \frac{\C[u,v,x_{1}, \hdots , x_{n}]}{(uv - f_{1}^{a_{1}} \cdots  f_{t}^{a_{t}})},
\end{equation}
where the $f_{i} \in \C[x_{1}, \hdots , x_{n}]$ are irreducible and pairwise coprime and each $a_{i}$ is at least one.
In particular, each $f_{i}$ is non-constant.
Such rings form part of Viehweg's setting (see \S\ref{sec:intro:viehwegs_setting}).
In this chapter we compute the divisor class group of $A$ in general, and the Grothendieck group of $A$ when $n=1$.
The main result, Theorem~\ref{thm:G_0(R)=Z+Cl(R)_dim2}, asserts that in the \emph{global} setting, when $n=1$, the isomorphism \eqref{eq:Groth=Z+Cl_general} holds for $A$.
In \S\ref{sec:Groth_deformed_preproj}, we apply these results to the centres of type $\tA$ deformed preprojective algebras.

\section{Determining class groups using Nagata's Theorem}\label{sec:cl_grp_nagatas_thm}
In this section, we determine the divisor class groups of $A$, as above, using Nagata's theorem (Theorem~\ref{thm:nagata}).
To begin, we require the following.

\begin{lemma}\label{lem:A_is_normal_ID}
	The scheme $X = \Spec A$ is reduced and irreducible (equivalently, $A$ is an integral domain) and it is normal.
\end{lemma}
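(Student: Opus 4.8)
The plan is to verify the two assertions separately, first that $A$ is a domain and then that it is normal, in both cases by reducing to standard facts about the hypersurface ring $\C[u,v,x_1,\dots,x_n]/(g)$ where $g \colonequals uv - f_1^{a_1}\cdots f_t^{a_t}$. For irreducibility and reducedness it suffices to show that $g$ is an irreducible polynomial in the UFD $P \colonequals \C[u,v,x_1,\dots,x_n]$, since then $(g)$ is a prime ideal and $A = P/(g)$ is an integral domain (hence reduced and $\Spec A$ irreducible). To see $g$ is irreducible, view $P = \C[x_1,\dots,x_n][u,v]$ and regard $g$ as a polynomial in $u$ over the domain $D \colonequals \C[x_1,\dots,x_n][v]$: it is monic of degree one in $u$, namely $g = vu - h$ with $h \colonequals f_1^{a_1}\cdots f_t^{a_t} \in \C[x_1,\dots,x_n] \subset D$. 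A degree-one polynomial $vu - h$ over a domain is irreducible provided it is not a product of a unit and a polynomial of lower degree; since its $u$-content is $\gcd(v,h)=1$ in the UFD $D$ (as $v$ does not divide $h$, because $h$ involves none of $u,v$), Gauss's lemma gives that $g$ is irreducible in $D[u] = P$.

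For normality I would invoke Serre's criterion: a noetherian ring is normal if and only if it satisfies $(R_1)$ and $(S_2)$. The ring $A$ is a hypersurface, hence a complete intersection, hence Cohen--Macaulay, which gives $(S_2)$ (indeed $(S_k)$ for all $k$); this is standard and can be cited from any commutative algebra reference. So the entire content is the regularity-in-codimension-one condition $(R_1)$: one must show that the singular locus of $\Spec A$ has codimension at least $2$. Using the Jacobian criterion, a point of $\Spec A$ is singular precisely when all partial derivatives of $g$ vanish there, i.e. when $u = 0$, $v = 0$, and $\partial_{x_j}(f_1^{a_1}\cdots f_t^{a_t}) = 0$ for all $j$, in addition to $g = 0$ (which is then automatic). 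Computing $\partial_{x_j} h = \sum_i a_i f_i^{a_i - 1}\bigl(\prod_{k\neq i} f_k^{a_k}\bigr)\partial_{x_j} f_i$, and using that the $f_i$ are irreducible and pairwise coprime, one checks that the common zero locus of $u$, $v$, and all these derivatives is contained in the union over $i$ of $V(u,v,f_i)$ together with the locus where some $f_i$ is singular; the former has codimension $3$ in $\Spec P$ hence codimension $2$ in $\Spec A$, and the latter is contained in the singular locus of an irreducible hypersurface in $\C[x_1,\dots,x_n]$, which has codimension at least $2$ there, hence codimension at least $2$ in $\Spec A$ as well. (When $t \geq 2$ one must also account for the locus $V(u,v,f_i,f_j)$ with $i\neq j$, but this has codimension $4$ in $\Spec P$.) Thus $\Sing(\Spec A)$ has codimension $\geq 2$, establishing $(R_1)$.

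The main obstacle is the bookkeeping in the $(R_1)$ verification: one has to carefully organise the case analysis coming from the factorisation $h = f_1^{a_1}\cdots f_t^{a_t}$ — in particular separating the contribution where some $f_i$ vanishes to order $\geq 2$ (forcing $a_i \geq 2$, so that $f_i^{a_i-1}$ already kills that summand and one lands in $V(u,v,f_i)$) from the contribution where exactly one $f_i$ vanishes simply (where the derivative condition becomes $\partial_{x_j} f_i = 0$, i.e. one is at a singular point of $V(f_i)$), and the mixed cases where two distinct $f_i, f_j$ vanish. I would present this as a short lemma stating $\Sing(\Spec A) \subseteq \bigl(\bigcup_i V(u,v,f_i)\bigr) \cup \bigl(\bigcup_i \{u=v=0\}\cap \Sing(V(f_i))\bigr)$ and then bound the codimension of each piece using Theorem~\ref{thm:height+dim}. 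Everything else — the Gauss-lemma irreducibility argument and the $(S_2)$ claim — is routine and citable.
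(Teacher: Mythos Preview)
Your proposal is correct, but it inverts the paper's order and uses a different mechanism for each half. The paper proves normality \emph{first}, using the same Serre-criterion skeleton you do but with a much coarser singular-locus bound: from $\partial_u g = v$ and $\partial_v g = u$ one gets $\Sing(\Spec A) \subseteq \{u=v=0\}\cap X$, and this locus already has dimension $n-1$ (it is $V(h)$ inside $\mathbb{A}^n$), hence codimension $2$ in the $(n+1)$-dimensional $X$. There is no need to touch $\partial_{x_j} h$ at all, so your case analysis on the factorisation of $h$ is correct but superfluous. For the domain claim, the paper does \emph{not} argue that $g$ is irreducible; instead it uses that a normal ring is a finite product of normal domains, and then shows the product has one factor by exhibiting a dense connected open $U = \{u\neq 0\}\cup\{v\neq 0\}$ whose complement has codimension $2$. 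Your Gauss-lemma argument is more elementary and self-contained (and avoids the slightly delicate equidimensionality step the paper needs), while the paper's approach has the virtue of making the normality argument do double duty.
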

\begin{proof}
	Every hypersurface in affine $(n+2)$-space is a complete intersection.
	Complete intersections are CM and they are normal if and only if the singular locus has codimension at least two \cite[II, Proposition~8.23]{hart77}.
	By the Jacobian criterion, the singular locus is included in the locus $(u = v = 0)$.
	This subvariety has dimension $(n+2) - 3 = n-1$.
	Therefore, the singular locus has codimension at least $(n+1) - (n-1) = 2$ in $X$.
	Hence $A$ is a normal ring.

	We next claim $A$ is a domain.
	Since $A$ is a normal ring, it automatically decomposes into a product of normal domains $A \cong A_{1} \oplus \cdots \oplus A_{m}$ \cite[Exercise~9.11]{matsu00}.
	We claim $m =1$.
	Consider $U_{1} = \{ u \neq 0 \} = \Spec \C[u^{\pm 1}, x_{1}, \hdots, x_{n}]$, (which is connected since this ring is a domain).
	Similarly, $U_{2} = \{v \neq 0 \}$ is connected.
	Then these are open sets contained in the smooth locus of $X$.
	Thus we have the following two facts:
	\begin{enumerate}[label=(\alph*)]
		\item The complement to $U = U_1 \cup U_2$ has codimension $2$ in $X$.
		\item $U_1 \cap U_2 \neq \emptyset$.
	\end{enumerate}
	Fact (a) is true since $X \smallsetminus U$ is defined by $u = v = f_{1}^{a_{1}} \cdots f_{t}^{a_{t}} = 0$, and so is a hypersurface in $\C^n$.
	Fact (b) can be seen by taking $u = v = 1$ and finding a point in $Y = \{ x_i \mid f_{1}^{a_{1}} \cdots f_{t}^{a_{t}} = 1 \}$.
	
	Since $U_{1}$ and $U_{2}$ are connected and their intersection is nonempty, $U$ is connected.
	Since $U$ is smooth and connected, it is irreducible.
	Indeed, all stalks of points in $U$ are regular local rings, which are domains.
	For any point lying on the intersection of irreducible components, the local ring at that point is not a domain: it has as many minimal prime ideals as the number of irreducible components (see part (2) of \cite[\href{https://stacks.math.columbia.edu/tag/00ET}{Tag 00ET}]{stacks-project}).
	Hence $U$ is contained in some irreducible component $Y$ of $X = \Spec A$ (see, e.g. part (1) of  \cite[\href{https://stacks.math.columbia.edu/tag/004W}{Tag 004W}]{stacks-project}).
	But the irreducible components of $\Spec A$ are the $\Spec A_{i}$, and so $U \subseteq \Spec A_{i}$ for some $i$.
	Since $A$ is a hypersurface, it is equidimensional; see the remarks following Definition~10.134.1 of  \cite[\href{https://stacks.math.columbia.edu/tag/00S8}{Tag 00S8}]{stacks-project}.
	This implies that $\dim \Spec A_{i} = n+1$ for all $i$.
	Given that we have shown that $\dim X \smallsetminus U \le \dim X - 2$, it follows that $m=1$.
\end{proof}

Note that $u \in A$ is a non-zero element and observe that
\[
A[u^{-1}] = 
\frac{\C[u^{\pm 1}, v, x_{1}, \hdots , x_{n}]}{(v = u^{-1} {f_{1}}^{a_{1}} \hdots {f_{t}}^{a_{t}})}
= 
\C[u^{\pm 1}, x_{1}, \hdots , x_{n}].
\]
Therefore, $A[u^{-1}]$ is a UFD.
As such, every prime ideal of height one in $A[u^{-1}]$ is principal \cite[Theorem~20.1]{matsu00}.

\begin{lemma}\label{lem:A[u]* = C*x<u>}
	The units in $A[u^{-1}]$ are $\C^{\times} \times \langle u \rangle$.
\end{lemma}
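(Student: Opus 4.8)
The plan is to recognise $A[u^{-1}]$ as a localisation of a polynomial ring at an irreducible element, and then invoke Proposition~\ref{prop:unitsQ[q-1]=Cx<q>}. Concretely, set $T \colonequals \C[u, x_{1}, \hdots, x_{n}]$, so that the computation recorded immediately before the statement identifies $A[u^{-1}]$ with the Laurent polynomial ring $\C[u^{\pm 1}, x_{1}, \hdots, x_{n}] = T[u^{-1}]$.

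First I would observe that $T$ is a polynomial ring over the field $\C$, hence a UFD, and that $u$ is an irreducible (equivalently, prime) element of $T$. Applying Proposition~\ref{prop:unitsQ[q-1]=Cx<q>} with this $T$ and $t = u$ then gives
\[ A[u^{-1}]^{\times} = T[u^{-1}]^{\times} = T^{\times} \times \langle u \rangle. \]
To finish, I would note that the units of a polynomial ring over a field are precisely the nonzero scalars, so $T^{\times} = \C^{\times}$ (this also follows from Lemma~\ref{lem:R^x = k^x} applied to the grading of $T$ by total degree with $T_{0} = \C$), which yields $A[u^{-1}]^{\times} = \C^{\times} \times \langle u \rangle$, as claimed.

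There is essentially no serious obstacle here: the only points requiring (routine) care are the identification of $A[u^{-1}]$ with $T[u^{-1}]$, already available from the remark preceding the statement, and checking the hypotheses of Proposition~\ref{prop:unitsQ[q-1]=Cx<q>}, namely that $T$ is a UFD and that $u$ is irreducible in it, both standard. One could alternatively argue directly, since an element of the Laurent polynomial ring is a unit if and only if, after multiplying by a suitable power $u^{k}$, it becomes a unit of $\C[u, x_{1}, \hdots, x_{n}]$, and the only such units are the nonzero constants; but appealing to Proposition~\ref{prop:unitsQ[q-1]=Cx<q>} is cleaner and keeps the argument uniform with the later applications of Nagata's theorem.
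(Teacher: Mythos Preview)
Your proposal is correct and follows essentially the same approach as the paper: identify $A[u^{-1}]$ with $\C[u^{\pm 1}, x_{1}, \hdots, x_{n}]$, apply Proposition~\ref{prop:unitsQ[q-1]=Cx<q>} to the UFD $\C[u, x_{1}, \hdots, x_{n}]$ with irreducible element $u$, and then conclude using Lemma~\ref{lem:R^x = k^x} that the units of the polynomial ring are $\C^{\times}$.
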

\begin{proof}
	From the above $A[u^{-1}]^{\times} = \C[u^{\pm1}, x_{1}, \hdots , x_{n}]^{\times}$.
	Now, take $Q = \C[u, x_{1}, \hdots , x_{n}]$ and $q = u$, so $Q[q^{-1}]^{\times} = A[u^{-1}]^{\times}$.
	By Proposition~\ref{prop:unitsQ[q-1]=Cx<q>}, $Q[q^{-1}]^{\times} = Q^{\times} \times \langle q \rangle$, thus
	\[
	A[u^{-1}]^{\times} = \C[u, x_{1}, \hdots , x_{n}]^{\times} \times \langle u \rangle.
	\]
	By Lemma~\ref{lem:R^x = k^x}, $\C[u, x_{1}, \hdots , x_{n}]^{\times} = \C^{\times}$ and the result follows.
\end{proof}
This result leads to the following description of the units of $A$.
\begin{lemma}\label{lem:A*=C*}
	The units in $A$ are $\C^{\times}$.
\end{lemma}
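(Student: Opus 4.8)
The plan is to deduce $A^{\times} = \C^{\times}$ from the two preceding lemmas, using the obvious inclusions of rings. First I would note that $A$ sits inside its localisation $A[u^{-1}]$, so $A^{\times} \subseteq A[u^{-1}]^{\times}$, and by Lemma~\ref{lem:A[u]* = C*x<u>} we have $A[u^{-1}]^{\times} = \C^{\times} \times \langle u \rangle$. Hence any unit $a \in A^{\times}$ can be written as $a = \lambda u^{k}$ for some $\lambda \in \C^{\times}$ and $k \in \Z$; its inverse $a^{-1} = \lambda^{-1} u^{-k}$ must also lie in $A$.

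The key point is then to rule out $k \neq 0$. If $k > 0$, then $a^{-1} = \lambda^{-1} u^{-k} \in A$ forces $u^{-k} \in A$, which is impossible: inside $A[u^{-1}] = \C[u^{\pm 1}, x_1, \dots, x_n]$ the element $u^{-k}$ is not a polynomial in $u$, and $A \subseteq \C[u, x_1, \dots, x_n]$ via the presentation $v \mapsto u^{-1} f_1^{a_1}\cdots f_t^{a_t}$ — wait, this needs care, since $v$ does \emph{not} map into $\C[u,x_1,\dots,x_n]$. A cleaner route: apply the $\N$-grading on $A$. The ring $A = \C[u,v,x_1,\dots,x_n]/(uv - f_1^{a_1}\cdots f_t^{a_t})$ is graded by total degree provided $\deg u + \deg v = \sum a_i \deg f_i$; one can always choose positive weights making the relation homogeneous (e.g. give each $x_j$ weight $1$, give $u$ weight $1$, and give $v$ weight $\big(\sum_i a_i \deg f_i\big) - 1$, which is $\geq 1$ since each $f_i$ is non-constant). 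Then $A = \bigoplus_{m \geq 0} A_m$ is a graded domain with $A_0 = \C$, so Lemma~\ref{lem:R^x = k^x} applies directly and gives $A^{\times} = \C^{\times}$.

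In fact the grading argument alone suffices and is the most economical, so I would present the proof as: exhibit positive integer weights on $u, v, x_1, \dots, x_n$ making $uv - f_1^{a_1}\cdots f_t^{a_t}$ homogeneous, observe this makes $A$ a commutative graded domain (domain by Lemma~\ref{lem:A_is_normal_ID}) with degree-zero part $\C$, and invoke Lemma~\ref{lem:R^x = k^x}. Alternatively, I would keep the localisation argument as a remark: $A^{\times} \subseteq A[u^{-1}]^{\times} = \C^{\times} \times \langle u\rangle$ by Lemma~\ref{lem:A[u]* = C*x<u>}, and symmetrically $A^{\times} \subseteq A[v^{-1}]^{\times} = \C^{\times} \times \langle v \rangle$; an element lying in both $\C^{\times}\langle u\rangle$ and $\C^{\times}\langle v\rangle$ inside $A$ must be a nonzero scalar, since $u$ and $v$ generate distinct (principal) height-one primes.

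The only genuine obstacle is making sure the chosen weights are legitimate, i.e.\ strictly positive and compatible with the single relation; this is routine because each $f_i$ is non-constant, so $\sum_i a_i \deg_{\mathrm{wt}} f_i \geq 2$ once the $x_j$ carry weight $1$, leaving room to split the weight between $u$ and $v$. Everything else is a direct appeal to Lemma~\ref{lem:R^x = k^x} (or to Lemmas~\ref{lem:A[u]* = C*x<u>} and~\ref{lem:R^x = k^x} together), so the proof is short.
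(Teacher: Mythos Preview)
Your main grading argument has a genuine gap: the $f_i$ are arbitrary irreducible polynomials in $\C[x_1,\dots,x_n]$, not assumed homogeneous. In the very applications the paper has in mind (e.g.\ the deformed type~$\tA$ centres, where $f=\prod_k (x-\upnu_k)$), the factors $x-\upnu_k$ with $\upnu_k\neq 0$ are \emph{not} homogeneous under any positive weighting of $x$, so there is no way to assign positive weights to $u,v,x_1,\dots,x_n$ making $uv-\prod f_i^{a_i}$ homogeneous. Thus Lemma~\ref{lem:R^x = k^x} does not apply to $A$ directly, and the sentence ``this is routine because each $f_i$ is non-constant'' is where the argument breaks.

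Your alternative route, however, is essentially correct and is a nice variant of the paper's proof. The paper argues as you begin: from $A^\times\subseteq A[u^{-1}]^\times=\C^\times\times\langle u\rangle$ one gets $a=\lambda u^k$, hence $u^k\in A^\times$. To kill $k$, the paper does \emph{not} grade; instead it picks a root $c=(c_1,\dots,c_n)$ of $f_1$ (possible since $f_1$ is non-constant) and uses the quotient homomorphism $A\twoheadrightarrow A/(u,x_1-c_1,\dots,x_n-c_n)\cong\C[v]$, under which $u\mapsto 0$, so $u^k$ cannot be a unit unless $k=0$. Your symmetric-localisation idea achieves the same end: from $\lambda u^k=\mu v^m$ and $v=u^{-1}f$ in $A[u^{-1}]=\C[u^{\pm1},x_1,\dots,x_n]$ one gets $\lambda u^{k+m}=\mu f^m$, forcing $k+m=0$ and then $k=0$ since $f$ is non-constant. (Your stated reason, that ``$u$ and $v$ generate distinct principal height-one primes'', is not quite right---$(u)$ and $(v)$ are not prime when $t>1$ or some $a_i>1$---but the computation above is what actually does the work.)
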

\begin{proof}
	By Lemma~\ref{lem:A_is_normal_ID}, $A$ is a domain, so the natural ring homomorphism $A \to A[u^{-1}]$ is injective.
	Since homomorphisms preserve units this induces an injective map $A^{\times} \to A[u^{-1}]^{\times}$.
	Using this together with Lemma~\ref{lem:A[u]* = C*x<u>}, if $a \in A^{\times}$, then $a = \uplambda u^{k}$ for some $\uplambda \in \C^{\times}$ and some $k \in \Z$.
	Necessarily, $u^{k}$ is a unit in $A$, since $a$ and $\uplambda$ are both units.
	
	Since $f_{1}$ is not constant it has a root $c$ which is equal to $(c_{1}, \hdots, c_{n})$.
	Now the ring homomorphism $\upvarphi : A \to A/ (u, x_{1} - c_{1}, \hdots , x_{n} - c_{n}) = \C[v]$ sends $u$ to $0$.
	Thus $\upvarphi(u^{k}) = \upvarphi(u)^{k} = 0$, hence $u^{k}$ cannot be a unit unless $k=0$, since homomorphisms preserve units.
	Hence $k=0$, so $a = \uplambda$ and thus $A^{\times} = \C^{\times}$.
\end{proof}

For the following result consider the ring
\[
T\colonequals A/(u) = \frac{\C [v, x_{1}, \hdots , x_{n}]}{(f_{1}^{a_{1}} \hdots f_{t}^{a_{t}})}.
\]

\begin{lemma}\label{lem:Anonlyht0}
	There are finitely many prime ideals of height zero in $T$, and they are all of the form $(f_{i})$ for some $i$.
\end{lemma}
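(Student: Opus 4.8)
The plan is to identify the height zero primes of $T$ with the minimal primes of the ring $T = \C[v,x_1,\hdots,x_n]/(f_1^{a_1}\cdots f_t^{a_t})$, and then compute these minimal primes directly from the factorisation of the defining element. First I would observe that since $\C[v,x_1,\hdots,x_n]$ is noetherian, it has only finitely many minimal primes over any given ideal, so certainly $T$ has only finitely many minimal primes; this immediately gives finiteness. A prime ideal of $T$ has height zero precisely when it corresponds (under the order-preserving bijection of Lemma~\ref{lem:3isothm}) to a prime of $\C[v,x_1,\hdots,x_n]$ that is minimal over the principal ideal $(f_1^{a_1}\cdots f_t^{a_t})$.

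The key computation is then: in a UFD (indeed a polynomial ring), the primes minimal over a principal ideal $(g)$ are exactly the ideals $(p)$ where $p$ ranges over the distinct irreducible factors of $g$. Here $g = f_1^{a_1}\cdots f_t^{a_t}$ with the $f_i$ irreducible, pairwise coprime, and each $a_i\ge 1$, so the distinct irreducible factors are precisely $f_1,\hdots,f_t$ (up to units). Hence the minimal primes over $(g)$ are exactly $(f_1),\hdots,(f_t)$, and these are pairwise distinct since the $f_i$ are pairwise non-associate. Pulling back through the quotient map, the height zero primes of $T$ are exactly the images of $(f_1),\hdots,(f_t)$, which is what is claimed.

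For the step that $(p)$ is minimal over $(g)$ when $p \mid g$: by Krull's Hauptidealsatz, $(p)$ has height one in $\C[v,x_1,\hdots,x_n]$ (it is a nonzero principal prime), so it contains no prime strictly between it and $(0)$; since $(0) \subsetneq (g) \subseteq (p)$, the ideal $(p)$ is minimal over $(g)$. Conversely, if $\mfp$ is minimal over $(g)$, then $\mfp$ has height one by Hauptidealsatz, hence is principal in the UFD $\C[v,x_1,\hdots,x_n]$, say $\mfp = (p)$ with $p$ irreducible; since $g \in (p)$ we get $p \mid g = f_1^{a_1}\cdots f_t^{a_t}$, forcing $p$ to be associate to some $f_i$. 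I do not expect any serious obstacle here — the only mild subtlety is being careful that $T$ itself is not a domain (the element $f_1^{a_1}\cdots f_t^{a_t}$ is reducible when $t>1$ or some $a_i>1$), so one genuinely must work with the minimal primes of the quotient rather than treating $T$ as having a unique height zero prime $(0)$.
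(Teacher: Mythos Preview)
Your proposal is correct and follows essentially the same idea as the paper: both arguments identify the height zero primes of $T$ as the ideals $(f_i)$ by exploiting primality and the factorisation of the defining element. The paper's version is marginally more direct---it works entirely inside $T$, observing that $f_1^{a_1}\cdots f_t^{a_t}=0$ in $T$ forces any prime $I$ to contain some $f_i$, then checks that $(f_i)$ is already prime (since $T/(f_i)\cong\C[v,x_1,\hdots,x_n]/(f_i)$ is a domain), whence $I=(f_i)$ by minimality---whereas you pass through the correspondence with the polynomial ring and invoke Krull's Hauptidealsatz and the UFD property, which is a slightly heavier but equally valid route.
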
	
\begin{proof}
	Suppose that $I$ is a height zero prime ideal in $T$.
	Necessarily, $f_{1}^{a_{1}} \hdots f_{t}^{a_{t}}$ is in $I$.
	Hence there must be some $f_{i} \in I$, which implies that $(f_{i}) \subseteq I$.
	But $T/(f_{i}) \cong \frac{\C [v, x_{1}, \hdots , x_{n}]}{(f_{i})}$, with $f_{i}$ irreducible.
	Therefore $T/(f_{i})$ is an integral domain and so $(f_{i})$ is a prime ideal of $T$.
	Since $I$ has height zero, $(f_{i}) = I$.
	This shows that $(f_{i})$ is a height zero prime ideal, and all height zero prime ideals are of this form.
\end{proof}
Lemma~\ref{lem:Anonlyht0} allows us to give a precise description of the height one prime ideals of $\div(u)$ where $\div$ is defined in \S\ref{sec:prelims:class_grp}.
\begin{lemma}\label{lem:Anonlyprimeideal1}
	The only prime ideals of height one appearing in $\div (u)$ are $\mfp_{i} = (u, f_{i})$.
\end{lemma}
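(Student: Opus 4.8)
The plan is to combine Theorem~\ref{thm:htbijection} with the identification $T = A/(u)$ and the description of height zero primes in $T$ obtained in Lemma~\ref{lem:Anonlyht0}. By Corollary~\ref{cor:appearance_of_div(r)_r_in_R}, the divisor $\div(u)$ is supported precisely on the height one prime ideals of $A$ that contain $u$, so it suffices to identify this set. Since $u \in A$ is a nonzero element of the normal (in particular integral) domain $A$ (Lemma~\ref{lem:A_is_normal_ID}), Theorem~\ref{thm:htbijection} applies with $s = u$: the quotient map $\upvarphi \colon A \to A/(u) = T$ induces a bijection
\[
\{ \textnormal{height one primes of $A$ containing $u$} \} \xrightarrow{\upvarphi_{*}} \{ \textnormal{height zero primes of $T$} \},
\]
sending $\mfp$ to $\mfp/(u)$, with inverse given by taking preimages along $\upvarphi$.

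First I would invoke Lemma~\ref{lem:Anonlyht0}, which tells us the height zero primes of $T$ are exactly the ideals $(f_{i})$ for $i = 1, \hdots, t$. Then I would trace each such prime back through the bijection: the preimage of $(f_{i}) \subseteq T = \C[v,x_{1},\hdots,x_{n}]/(f_{1}^{a_{1}}\cdots f_{t}^{a_{t}})$ under $\upvarphi \colon A \to T$ is the ideal of $A$ generated by $u$ together with (a lift of) $f_{i}$, i.e. $\mfp_{i} = (u, f_{i})$. Concretely, $\upvarphi^{-1}((f_{i})) = \{ a \in A \mid \upvarphi(a) \in (f_{i}) \}$; since $\upvarphi$ kills $u$ and $T/(f_{i}) \cong \C[v,x_{1},\hdots,x_{n}]/(f_{i})$, an element of $A$ lies in this preimage iff it lies in $(u, f_{i})$. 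By Theorem~\ref{thm:htbijection} this preimage has height one, so $\mfp_{i} = (u,f_{i})$ is a height one prime of $A$ containing $u$, and conversely every height one prime of $A$ containing $u$ arises this way. Combined with Corollary~\ref{cor:appearance_of_div(r)_r_in_R}, this shows the only height one primes appearing in $\div(u)$ are the $\mfp_{i} = (u,f_{i})$, as claimed.

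The only genuinely delicate point is the bookkeeping in the preimage computation — verifying that $\upvarphi^{-1}((f_{i}))$ really is $(u, f_{i})$ rather than something larger, which amounts to checking that $f_{i}$ (viewed in $A$, modulo $u$) generates the corresponding prime of $T$ and that no extra elements of $A$ map into $(f_{i})$. This is routine once one uses that the $f_{i}$ are irreducible and pairwise coprime, so that $T/(f_{i})$ is a domain and the generator of the height zero prime is literally the image of $f_{i}$. Everything else is a direct application of the structural results already established in this section.
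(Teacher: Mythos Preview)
Your proposal is correct and follows essentially the same route as the paper: both combine Corollary~\ref{cor:appearance_of_div(r)_r_in_R}, the bijection of Theorem~\ref{thm:htbijection}, and Lemma~\ref{lem:Anonlyht0} to identify the height one primes containing $u$ as the $(u,f_i)$. The only cosmetic difference is that the paper first verifies directly (via a dimension count) that each $(u,f_i)$ is a height one prime and then invokes the bijection to see there are no others, whereas you let the bijection do all the work and compute preimages; both are fine.
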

\begin{proof}
	Notice that the ideal $\mfp_{i} = (u, f_{i})$ is a prime ideal in $A$ since
	\begin{align*}
	A/\mfp_i&\cong \left( \frac{\C [u, v, x_{1}, \hdots , x_{n}]}{(uv - f_{1}^{a_{1}} \hdots f_{t}^{a_{t}})} \right) / (u, f_{i})\\
	&= \C[v, x_1, \hdots, x_n]/(f_{i})
	\end{align*}
	which is an integral domain since $f_i$ is irreducible.
	Furthermore, $\C[v, x_{1}, \hdots, x_{n}]/(f_{i})$ is the coordinate ring of a hypersurface in $\mathbb{A}^{n+1}$, thus $\dim A/\mfp_i=n$.
	By Theorem~\ref{thm:height+dim}
	\[
	\height(\mfp_{i}) + \dim A/ \mfp_{i} = \dim A,
	\]
	hence $\height(\mfp_{i}) = 1$.
	Hence the $(u, f_{i})$ are height one primes of $A$, which under $A \to A/(u) = T$ clearly map to $(f_{i})$.
	
	By Lemma~\ref{lem:Anonlyht0} the $(f_{i})$ are all height zero prime ideals of $A/(u)$, hence under the bijection in Theorem~\ref{thm:htbijection}, it follows that the only height one prime ideals of $A$ containing $u$ are those of the form $\mfp_{i} = (u, f_{i})$.
	By Corollary~\ref{cor:appearance_of_div(r)_r_in_R} these are precisely the height one prime ideals appearing in $\div(u)$.
\end{proof}

By Lemma~\ref{lem:Anonlyprimeideal1} it follows that $\div(u) = \sum_{i=1}^{t} b_{i}(u,f_{i})$ for some positive integers $b_{i}$.
\begin{lemma}\label{lem:Anbi=ai}
	Let $a_{i}$ be the exponents in the defining equation of the ring $A$ and $b_{i}$ the coefficients defined above.
	Then $b_{i} = a_{i}$.
\end{lemma}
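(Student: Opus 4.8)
The plan is to pin down $b_i$ as the valuation $\upnu_{\mfp_i}(u)$: by Lemma~\ref{lem:Anonlyprimeideal1} together with the definition of $\div$, we have $\div(u)=\sum_i\upnu_{\mfp_i}(u)\,\mfp_i=\sum_i b_i\,\mfp_i$, so it is enough to prove $\upnu_{\mfp_i}(u)=a_i$. I would do this through a short length computation at the local ring $A_{\mfp_i}$.

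First, I would apply Lemma~\ref{lem:l(R/(a))=n_p(a)} with $R=A$ (which is a normal noetherian domain by Lemma~\ref{lem:A_is_normal_ID}), with $\mfp=\mfp_i\in X_{1}(A)$ (height one by Lemma~\ref{lem:Anonlyprimeideal1}), and with the nonzero element $u$. This gives
\[
b_i=\upnu_{\mfp_i}(u)=\ell_{\mfp_i}\bigl(A_{\mfp_i}/(u)\bigr),
\]
reducing the problem to showing that this length is $a_i$. Since localisation commutes with passing to quotients, $A_{\mfp_i}/(u)\cong \bigl(A/(u)\bigr)_{\bar\mfp_i}=T_{\bar\mfp_i}$, where $T=A/(u)=\C[v,x_1,\hdots,x_n]/(f_1^{a_1}\hdots f_t^{a_t})$ and $\bar\mfp_i$ denotes the image of $\mfp_i$ in $T$; by the proof of Lemma~\ref{lem:Anonlyprimeideal1} this image is $(f_i)T$.

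Next I would carry out the length computation. For each $j\neq i$ the element $f_j$ is coprime to the irreducible $f_i$, hence a unit in $\C[v,x_1,\hdots,x_n]_{(f_i)}$; therefore localising $T$ at $(f_i)$ annihilates every factor of the defining relation except $f_i^{a_i}$, yielding $T_{(f_i)}\cong \C[v,x_1,\hdots,x_n]_{(f_i)}/(f_i^{a_i})$. As $\C[v,x_1,\hdots,x_n]$ is a UFD and $f_i$ is irreducible, $S:=\C[v,x_1,\hdots,x_n]_{(f_i)}$ is a DVR with uniformiser $f_i$, so $S/(f_i^{a_i})$ admits the composition series $S/(f_i^{a_i})\supset (f_i)/(f_i^{a_i})\supset\cdots\supset 0$ of length exactly $a_i$. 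Because $A_{\mfp_i}$ acts on this module through the surjection $A_{\mfp_i}\twoheadrightarrow T_{(f_i)}$ and the two local rings share the same residue field, the $A_{\mfp_i}$-length equals the $T_{(f_i)}$-length, namely $a_i$; combined with the displayed identity this gives $b_i=a_i$.

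The single point that deserves care is the bookkeeping that the length is the same whether computed over $A_{\mfp_i}$ or over its quotient $T_{(f_i)}$, which is routine since submodule lattices and composition factors are insensitive to enlarging the acting ring in this way. If one wishes to bypass Lemma~\ref{lem:l(R/(a))=n_p(a)}, the identical content can instead be read off inside the DVR $A_{\mfp_i}$: localising $A$ at $\C[x_1,\hdots,x_n]\setminus(f_i)$ presents $A_{\mfp_i}$ as a localisation of $\C[x_1,\hdots,x_n]_{(f_i)}[u,v]/(uv-wf_i^{a_i})$ with $w:=\prod_{j\neq i}f_j^{a_j}$ a unit, and there $v$ is a unit as well (it is not in $\mfp_i$), so $u=wv^{-1}f_i^{a_i}$ forces $\mfp_iA_{\mfp_i}=(f_i)$; hence $\upnu_{\mfp_i}(f_i)=1$ and $\upnu_{\mfp_i}(u)=a_i\,\upnu_{\mfp_i}(f_i)=a_i$. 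Either way, the whole substance of the lemma is the assertion that $f_i$ is a uniformiser at $\mfp_i$, and once that is established the rest is formal.
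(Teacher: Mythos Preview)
Your proposal is correct, and you have in fact given two valid proofs. Your alternative at the end is essentially the paper's argument: the paper works directly in the DVR $A_{\mfp_i}$, first establishing by height/dimension arguments that $v\notin\mfp_i$ and $f_j\notin\mfp_i$ for $j\neq i$ (you simply assert the former), then writes $u=v^{-1}f_1^{a_1}\cdots f_t^{a_t}$ to conclude that $f_i$ generates $\mfp_iA_{\mfp_i}$ and that $\upnu_{\mfp_i}(u)=a_i$.

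Your main approach is a genuine variant. By invoking Lemma~\ref{lem:l(R/(a))=n_p(a)} and passing to the quotient $T=A/(u)=\C[v,x_1,\hdots,x_n]/(f_1^{a_1}\cdots f_t^{a_t})$, you convert the valuation into a length computation in $T_{(f_i)}$, which you identify with $S/(f_i^{a_i})$ for the DVR $S=\C[v,x_1,\hdots,x_n]_{(f_i)}$. This buys you something: the variable $v$ is just a spectator in $S$, so you never need the separate height-two contradiction argument that the paper uses to show $v\notin\mfp_i$. The paper's route, on the other hand, is self-contained in that it does not appeal to the length--valuation correspondence, and it makes the uniformiser visible inside $A_{\mfp_i}$ itself. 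Both arguments ultimately rest on the same fact you identify at the end: $f_i$ is a uniformiser at $\mfp_i$.
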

\begin{proof}
	Without loss of generality, we may assume $i=1$.
	The proof for $i > 1$ is similar.
	Set $\mfp_{1} \colonequals (u,f_{1})$.
	We begin by showing that $v, f_{2}^{a_{2}} \hdots f_{t}^{a_{t}} \notin \mfp_{1}$.
	The methods for proving this for $v$ and $f_{i}^{a_{i}}$ are slightly different.
	
	First we show that $v \notin \mfp_{1}$.
	Suppose that $v \in \mfp_{1}$.
	Then $A/ \mfp_{1}= A/(u, f_{1}, v)$ is isomorphic to $\C[x_{1}, \hdots, x_{n}]/(f_{1})$ which is an integral domain since the $f_{i}$ are irreducible.
	It is a ring of dimension $n-1$.
	Therefore, by Theorem~\ref{thm:height+dim}, $(u,f_1,v) = \mfp_{1}$ is a height two prime ideal of $A$.
	This contradicts the fact that $\height(\mfp_{1})=1$, thus $v \notin \mfp_{1}$.
	
	Next, we show that $f_{2}^{a_{2}} \hdots f_{t}^{a_{t}} \notin \mfp_{1}$.
	If $f_{2}^{a_{2}} \hdots f_{t}^{a_{t}} \in \mfp_{1}$ then there exists some $i$ such that $f_{i} \in \mfp_{1}$.
	Without loss of generality, we may assume $f_{2} \in \mfp_{1}$. 	
	Then $A/ \mfp_{1} = A/(u, f_{1}, f_{2})$ is isomorphic to $\C[v, x_{1}, \hdots, x_{n}]/(f_{1}, f_{2})$.
	As before, factoring by $f_{1}$ gives a domain since $f_{1}$ is irreducible.
	By assumption $f_{2} \notin (f_{1})$.
	By Theorem~\ref{thm:height+dim}, the Krull dimension of $A/ \mfp_{1}$ is $(n-1)$ minus the minimum of the heights of the prime ideals containing $f_{2}$ in $\C[v, x_{1}, \hdots, x_{n}]/(f_{1})$.
	Since the latter ring is a domain and $f_{2}$ is not a unit in this ring, the height of all prime ideals containing $(f_{2})$ is at least one.
	Hence, the Krull dimension of $A/ \mfp_{1}$ is at most $n-2$.
	This implies that $(u, f_{1}, f_{2}) = \mfp_{1}$ is not a height one prime ideal of $A$.
	Again, this is a contradiction, thus $f_{2} \notin \mfp_{1}$.

	By definition, the element $u \in A_{\mfp_{1}}$ can be written as $s \uppi_{\mfp_{1}}^{\upnu_{\mfp_{1}}(u)}$ for some unit $s \in A_{\mfp_{1}}^{\times}$ and $\upnu_{\mfp_{1}}(u)$ as defined in Theorem~\ref{thm:valuation}.
	Thus $v, f_{2}, \hdots , f_{t}$ are all units in $A_{\mfp_{1}}$ and $u = v^{-1} f_{1}^{a_{1}} \hdots f_{t}^{a_{t}}$.
	This has two consequences.
	First, $\mfp_{1} A_{\mfp_{1}} = (u, f_{1}) A_{\mfp_{1}}$ is generated by $f_{1}$.
	So $\uppi_{\mfp_{1}} = f_{1}$.
	Second, 
	\[
	u = f_{1}^{a_{1}}(v^{-1} f_{2}^{a_{2}} \hdots f_{n}^{a_{n}}),
	\]
	and thus, $\upnu_{\mfp_{1}}(u) = a_{1}$.
	The proof that $\upnu_{\mfp_{i}}(u) = a_{i}$ for $i > 1$ is identical.
\end{proof}
By Lemma~\ref{lem:Anbi=ai}, $\div(u) = \sum_{i=1}^{t} a_{i}(u,f_{i})$.
\begin{theorem}\label{thm:Cl(A)_global}
	Let $A$ be as in \eqref{eq:the_ring_A}.
	Then 
	\[ \Cl(A) \cong \Z^{\oplus t}/ (a_1,\hdots,a_t). \]
\end{theorem}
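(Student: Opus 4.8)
The plan is to apply Nagata's Theorem (Theorem~\ref{thm:nagata}) with the distinguished element $r = u$. By Lemma~\ref{lem:A_is_normal_ID}, $A$ is a normal noetherian integral domain, and $u$ is non-zero and (by Lemma~\ref{lem:A*=C*}) non-invertible, so the hypotheses are satisfied. By Lemma~\ref{lem:Anonlyprimeideal1} and Lemma~\ref{lem:Anbi=ai} we have $\div(u) = \sum_{i=1}^{t} a_{i}(u,f_{i})$, so the free abelian group appearing in the Nagata sequence is $\bigoplus_{i=1}^{t}\Z\cdot\mfp_{i}$ with $\mfp_{i} = (u,f_{i})$. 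Theorem~\ref{thm:nagata} then gives the exact sequence
\[
0 \to A^{\times} \longrightarrow A[u^{-1}]^{\times} \xrightarrow{\div} \bigoplus_{i=1}^{t}\Z\cdot\mfp_{i} \longrightarrow \Cl(A) \longrightarrow \Cl(A[u^{-1}]) \to 0.
\]

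Next I would simplify the two ends of this sequence. As noted in the discussion preceding Lemma~\ref{lem:A[u]* = C*x<u>}, $A[u^{-1}] \cong \C[u^{\pm 1},x_{1},\hdots,x_{n}]$ is a UFD, so by Proposition~\ref{prop:ufd=>cl=0} we have $\Cl(A[u^{-1}]) = 0$. Hence the tail of the sequence reads
\[
A[u^{-1}]^{\times} \xrightarrow{\div} \bigoplus_{i=1}^{t}\Z\cdot\mfp_{i} \longrightarrow \Cl(A) \longrightarrow 0,
\]
which identifies $\Cl(A)$ with $\bigl(\bigoplus_{i=1}^{t}\Z\cdot\mfp_{i}\bigr)/\operatorname{im}(\div)$.

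It remains to compute $\operatorname{im}(\div)$. By Lemma~\ref{lem:A[u]* = C*x<u>}, $A[u^{-1}]^{\times} = \C^{\times}\times\langle u\rangle$, and by Lemma~\ref{lem:A*=C*} together with Theorem~\ref{thm:valuation}\ref{thm:valuation 3} the subgroup $\C^{\times} = A^{\times}$ lies in $\Ker(\div)$. Therefore $\operatorname{im}(\div)$ is the cyclic subgroup generated by $\div(u) = \sum_{i=1}^{t} a_{i}\mfp_{i}$, which under the identification $\bigoplus_{i=1}^{t}\Z\cdot\mfp_{i} \cong \Z^{\oplus t}$ (sending $\mfp_{i}$ to the $i$-th standard basis vector) is the element $(a_{1},\hdots,a_{t})$. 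This yields $\Cl(A) \cong \Z^{\oplus t}/(a_{1},\hdots,a_{t})$, as claimed. Since every substantive ingredient --- normality of $A$, the unit groups of $A$ and $A[u^{-1}]$, and the valuations $\upnu_{\mfp_{i}}(u) = a_{i}$ --- has already been established in the preceding lemmas, the argument is essentially bookkeeping; the only point requiring care is confirming that nothing beyond (powers of) $u$ contributes to $\operatorname{im}(\div)$, which is precisely what Lemmas~\ref{lem:A[u]* = C*x<u>} and~\ref{lem:A*=C*} guarantee.
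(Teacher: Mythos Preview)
Your proposal is correct and follows essentially the same approach as the paper's proof: apply Nagata's Theorem with $r=u$, kill the right-hand term via $\Cl(A[u^{-1}])=0$ (since $A[u^{-1}]$ is a UFD), and identify the image of $\div$ as the cyclic subgroup generated by $(a_1,\ldots,a_t)$ using the unit computations of Lemmas~\ref{lem:A[u]* = C*x<u>} and~\ref{lem:A*=C*}. The only cosmetic difference is that the paper splices the Nagata sequence down to a short exact sequence $0\to\Z\langle u\rangle\to\Z^{\oplus t}\to\Cl(A)\to 0$ before reading off the cokernel, whereas you compute $\operatorname{im}(\div)$ directly; the content is the same.
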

\begin{proof}
	Together, Lemmas~\ref{lem:A[u]* = C*x<u>}, \ref{lem:A*=C*}, and \ref{lem:Anbi=ai} imply that the exact sequence \eqref{seq:nagata} can be rewritten as
	\[
	0 \to A^{\times} = \C^{\times} \longrightarrow \C^{\times} \times \langle u \rangle \longrightarrow \bigoplus_{i=1}^t\Z(u,f_i) \longrightarrow \Cl(A) \longrightarrow \Cl(A[u^{-1}]) \to 0.
	\]
	Since $A[u^{-1}]$ is a UFD, Proposition~\ref{prop:ufd=>cl=0} says that the class group of $A[u^{-1}]$ is trivial.
	Hence the above sequence becomes
	\[
	0 \to \C^{\times} \longrightarrow \C^{\times} \times \langle u \rangle \longrightarrow \bigoplus_{i=1}^t\Z(u,f_i) \longrightarrow \Cl(A) \to 0.
	\]
	Splicing gives the short exact sequence
	\begin{equation*}\label{seq:nagata3split}
	0 \longrightarrow \Z \langle u \rangle 
	\stackrel{\uppsi}{\longrightarrow}
	\bigoplus_{i=1}^t\Z(u,f_i) \longrightarrow \Cl(A) \longrightarrow 0
	\end{equation*}
	where $\uppsi$ is given by $u \mapsto \div(u)$.
	By Lemma~\ref{lem:Anbi=ai}, $\div(u) = \sum_{i=1}^t a_i(u,f_{i})$.
	This simplifies to the short exact sequence
	\begin{equation*}\label{seq:nagata4split}
	0 \longrightarrow \Z \stackrel{\upgamma}{\longrightarrow}
	\Z^{\oplus t} \longrightarrow \Cl(A) \longrightarrow 0
	\end{equation*}
	where $\upgamma$ is given by $1 \mapsto (a_1,\hdots,a_t)$.
	It follows that
	\[
	\Cl(A) = \Z^{\oplus t}/ (a_1,\hdots,a_t).\qedhere
	\]
\end{proof}
	
\section{The K-theory of $A$}\label{sec:cl_grp_k_theory}
In this section, we use algebraic K-theory to compute the divisor class group of type $\tA$ Kleinian singularities.
In the local case, the isomorphism \eqref{eq:Groth=Z+Cl_general} was already known to hold by \cite[Lemma~13.3]{yosh90}.
In fact, it is known to hold for any $2$-dimensional local integral domain.
But here we consider global situations.
As such, the result in Theorem~\ref{thm:G_0(R)=Z+Cl(R)_dim2} is new and covers a wider range of rings.

Recall that $\Groth(R) \colonequals \Kroth(\modCat R)$, where $\modCat R$ is the category of finitely generated left $R$-modules as in \S\ref{sec:prelims:alg_k_theory}.
The ring $A$, as in \eqref{eq:the_ring_A}, is equal to
\[
\frac{\C [u,v,x_{1}, \hdots, x_{n}]}{(uv-f(x_{1}, \hdots, x_{n}))}.
\]
\begin{prop}\label{prop:[A/m]=0}
	Let $\mfm$ be a maximal ideal of $A$. Then $[A/\mfm]=0$ in $\Groth(A)$.
\end{prop}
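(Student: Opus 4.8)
The plan is to deduce the statement from Lemma~\ref{lem:R/(p+xR)=0}. Since $A$ is a noetherian integral domain (Lemma~\ref{lem:A_is_normal_ID}), it suffices to produce, for the given maximal ideal $\mfm$, a prime ideal $\mfp \subsetneq \mfm$ together with an element $x \in \mfm \setminus \mfp$ such that $\mfm = \mfp + xA$; Lemma~\ref{lem:R/(p+xR)=0} then gives $[A/\mfm] = [A/(\mfp + xA)] = 0$ in $\Groth(A)$.

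Because $\C$ is algebraically closed, the Nullstellensatz identifies $\mfm$ with the ideal of a point $p = (u_0, v_0, x_0^{(1)}, \dots, x_0^{(n)})$ lying on the hypersurface $uv = f(\underline{x})$, i.e.\ $\mfm = (u - u_0,\, v - v_0,\, x_1 - x_0^{(1)}, \dots, x_n - x_0^{(n)})A$ with $u_0 v_0 = f(\underline{x_0})$. This relation forces one of three cases: (A) $u_0 \neq 0$; (B) $v_0 \neq 0$; (C) $u_0 = v_0 = 0$, in which case $f(\underline{x_0}) = 0$. Case (B) is symmetric to (A) under exchanging $u$ and $v$, so only (A) and (C) need to be treated.

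In Case (A) I would take $\mfp \colonequals (u - u_0,\, x_1 - x_0^{(1)}, \dots, x_{n-1} - x_0^{(n-1)})A$. Quotienting out these generators leaves $A/\mfp \cong \C[v, x_n]/(u_0 v - g(x_n))$ with $g(x_n) \colonequals f(x_0^{(1)}, \dots, x_0^{(n-1)}, x_n)$; since $u_0 \neq 0$ one eliminates $v$ to get $A/\mfp \cong \C[x_n]$, a domain, so $\mfp$ is prime with $\dim A/\mfp = 1$ (hence $\mfp \subsetneq \mfm$, using Theorem~\ref{thm:height+dim} if one wants the coheight count, though $\dim A/\mfp = 1 > 0 = \dim A/\mfm$ already forces proper containment). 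From $uv - u_0 v_0 = (u - u_0)v + u_0(v - v_0)$ and $uv - u_0 v_0 = f(\underline{x}) - f(\underline{x_0}) \in (x_1 - x_0^{(1)}, \dots, x_n - x_0^{(n)})A$, invertibility of $u_0$ shows $v - v_0 \in \mfp + (x_n - x_0^{(n)})A$, so $\mfm = \mfp + (x_n - x_0^{(n)})A$; and $x_n - x_0^{(n)} \notin \mfp$ because its image in $\C[x_n]$ is nonzero. Lemma~\ref{lem:R/(p+xR)=0} then yields $[A/\mfm] = 0$. In Case (C), since $f(\underline{x_0}) = 0$, I would instead take $\mfp \colonequals (u,\, x_1 - x_0^{(1)}, \dots, x_n - x_0^{(n)})A$, so that $A/\mfp \cong \C[v]/(f(\underline{x_0})) = \C[v]$ is a domain; now $\mfm = \mfp + vA$ directly (as $u_0 = v_0 = 0$) and $v \notin \mfp$, so Lemma~\ref{lem:R/(p+xR)=0} again gives $[A/\mfm] = 0$.

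The bookkeeping — computing each quotient ring and checking that $\mfm$ is generated by $\mfp$ together with the chosen element — is routine. The one point that genuinely requires care, and the reason for the case split, is choosing $\mfp$ so that it is actually prime, i.e.\ so that $A/\mfp$ is an integral domain: the relation $uv = f(\underline{x})$ obstructs the naive choice of slice at points where $u_0 = v_0 = 0$, and there the vanishing $f(\underline{x_0}) = 0$ is precisely what rescues the argument.
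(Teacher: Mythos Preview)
Your proof is correct and follows essentially the same approach as the paper: Nullstellensatz to identify $\mfm$ with a point, a case split on which of $u_0,v_0$ vanishes, and an appeal to Lemma~\ref{lem:R/(p+xR)=0} with a carefully chosen prime $\mfp$. The only noteworthy difference is in the generic case: where the paper (when $u_0,v_0\neq 0$) takes $\mfp=(x_1-c_1,\dots,x_n-c_n)$ with $A/\mfp\cong\C[u,u^{-1}]$ and slices by $v-v_0$, you instead take $\mfp=(u-u_0,x_1-x_0^{(1)},\dots,x_{n-1}-x_0^{(n-1)})$ with $A/\mfp\cong\C[x_n]$ and slice by $x_n-x_0^{(n)}$, which lets your Case~(A) handle $u_0\neq 0$ uniformly regardless of whether $v_0$ vanishes.
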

\begin{proof}
	By Hilbert's Nullstellensatz every maximal ideal $\mfm$ of $A$ is of the form 
	\[ \mfm = (u-a, v-b, x_{1}-c_{1}, \hdots, x_{n}-c_{n}) \]
	for some $a, b, c_{1}, \hdots, c_{n}  \in \C$.
	Suppose $b=0$, and consider the ideal $\mfp = (v, x_{1}-c_{1}, \hdots, x_{n}-c_{n})$.
	Notice that this is a prime ideal of $A$ since
	\[ A/(v, x_{1}-c_{1}, \hdots, x_{n}-c_{n}) \cong \C[u], \]
	which is an integral domain.
	Thus, by Lemma~\ref{lem:R/(p+xR)=0}, $[A/\mfm]= [A/\mfp + (u-a)A] = 0$. 
	The case of $a=0$ follows by symmetry.
	Hence we can suppose $a \neq 0$ and $b \neq 0$.
	In particular, since $\mfm$ is a maximal ideal of $A$, $ab - f(c_{1}, \hdots, c_{n}) = 0$.
	So, $f(c_{1}, \hdots, c_{n}) \neq 0$.
	
	In order to prove the statement it suffices to find a prime ideal $\mfp$ such that $v-b \notin \mfp$ but $\mfp + (v-b)A = \mfm$.
	Consider the ideal $\mfp \colonequals (x_{1}-c_{1}, \hdots , x_{n}-c_{n})$.
	Setting $\lambda \colonequals ab \neq 0$,
	\[
	A/(x_{1}-c_{1}, \hdots , x_{n}-c_{n}) \cong \frac{\C[u,v]}{(uv - \lambda)} \cong \C[u,u^{-1}]
	\]
	where the latter isomorphism is given by $v \mapsto \frac{1}{\lambda} u^{-1}$.
	This quotient is an integral domain, so $\mfp$ is a prime ideal.
	Furthermore,
	\begin{align*}
	\mfp + (v-b)A &= (x_{1}-c_{1}, \hdots , x_{n}-c_{n}) + (v-b)A \\
	&= (v-b, x_{1}-c_{1}, \hdots , x_{n}-c_{n}).
	\end{align*}
	Now we need to show that $\mathfrak{n} \colonequals (v-b, x_{1}-c_{1}, \hdots , x_{n}-c_{n})$ equals $\mfm$.
	Observe that $A/\mathfrak{n} \cong \C$, which is a field.
	Thus $\mathfrak{n}$ is a maximal ideal of $A$ and by Lemma~\ref{lem:R/(p+xR)=0}, $[A/\mathfrak{n}]=0$.
	Since $\mathfrak{n} \subseteq \mfm$ and $\mathfrak{n}$ is maximal, it follows that $\mathfrak{n} = \mfm$.
	Thus $[A/\mfm] = 0$, as desired.
\end{proof}

At the outset, we were not expecting the following result.
\begin{theorem}\label{thm:G_0(R)=Z+Cl(R)_dim2}
	Let $A$ be as in \eqref{eq:the_ring_A}.
	If $A$ has Krull dimension 2 then \eqref{eq:Groth=Z+Cl_general} holds, that is,
	\[ \Groth(A) \cong \Z \oplus \Cl(A). \]
\end{theorem}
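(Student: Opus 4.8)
The plan is to combine the rank-map decomposition and the comparison map of Section~\ref{sec:prelims:bass_eagon} with Proposition~\ref{prop:[A/m]=0}. First observe that the hypothesis $\dim A = 2$ forces $n = 1$, since $A$ is a hypersurface in $\C^{n+2}$ and hence has Krull dimension $n+1$; by Lemma~\ref{lem:A_is_normal_ID} the ring $A$ is then a normal noetherian integral domain, so all the results of Section~\ref{sec:prelims:bass_eagon} apply to it. By \eqref{eq:Groth=Z+oGroth} we have the decomposition $\Groth(A) = \Z\cdot[A] \oplus \oGroth(A)$, so it suffices to produce an isomorphism $\oGroth(A) \cong \Cl(A)$.

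For this I would invoke Proposition~\ref{prop:huneke_oGroth(R)_to_Cl(R)}, which gives an isomorphism $\oGroth(A)/H \cong \Cl(A)$, where $H \leq \oGroth(A)$ is the subgroup generated by the classes $[A/\mfp]$ with $\height(\mfp) \geq 2$. The key point is that $H = 0$. Indeed, $A$ is a finitely generated $\C$-algebra that is a domain, so Theorem~\ref{thm:height+dim} gives $\height(\mfp) + \dim A/\mfp = \dim A = 2$ for every prime $\mfp$ of $A$; hence $\height(\mfp) \geq 2$ forces $\dim A/\mfp = 0$, and since $A/\mfp$ is then a zero-dimensional domain finitely generated over $\C$ it is a field, i.e. $\mfp$ is maximal. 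Thus every generator of $H$ is of the form $[A/\mfm]$ for a maximal ideal $\mfm$, and Proposition~\ref{prop:[A/m]=0} shows each such class is zero in $\Groth(A)$, hence in $\oGroth(A)$. Therefore $H = 0$, so $\oGroth(A) \cong \Cl(A)$, and combining with the decomposition above yields $\Groth(A) \cong \Z \oplus \Cl(A)$.

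The genuinely substantive ingredient is Proposition~\ref{prop:[A/m]=0}: for each maximal ideal $\mfm$ one must exhibit a height-one prime $\mfp$ and an element $x \notin \mfp$ with $\mfm = \mfp + xA$, so that Lemma~\ref{lem:R/(p+xR)=0} applies — this is exactly the step flagged in the text as unexpected, and it is what makes the global two-dimensional case work. Everything else in the present argument is formal bookkeeping with the exact sequences of Proposition~\ref{prop:bass_Groth(R)_to_Cl(R)} and Proposition~\ref{prop:huneke_oGroth(R)_to_Cl(R)}, so I do not anticipate any further obstacle.
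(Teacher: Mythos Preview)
Your proposal is correct and follows essentially the same route as the paper: use the rank decomposition \eqref{eq:Groth=Z+oGroth}, invoke Proposition~\ref{prop:huneke_oGroth(R)_to_Cl(R)}, and kill $H$ by combining the equidimensionality of $A$ (so every height~$\geq 2$ prime is maximal) with Proposition~\ref{prop:[A/m]=0}. Your write-up is slightly more explicit than the paper's in justifying why height~$\geq 2$ forces maximality, but the argument is the same.
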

\begin{proof}
	By Proposition~\ref{prop:[A/m]=0}, $[ A/\mfm ] = 0$ for all maximal ideals of $A$.
	Since $A$ is $2$-dimensional and equidimensional, all of its maximal ideals are height two prime ideals.
	Thus, by Proposition~\ref{prop:huneke_oGroth(R)_to_Cl(R)}, $\oGroth(A) \cong \Cl(A)$.
	Recall that $\Groth(A)$ decomposes as in \eqref{eq:Groth=Z+oGroth} via the rank map.
	Hence
	\begin{align*}
		 \Groth(A) &\cong \Z \oplus \oGroth(A) \\
		 &\cong \Z \oplus \Cl(A).\qedhere
	\end{align*}
\end{proof}
In future sections we will evidence other situations in which \eqref{eq:Groth=Z+Cl_general} holds.
In particular, we will prove that there exists an isomorphism \eqref{eq:Groth=Z+Cl_general} for certain local cDV singularities.

\section{K-theory of deformed preprojective algebras}\label{sec:Groth_deformed_preproj}

The techniques of \S\ref{sec:cl_grp_nagatas_thm} and \S\ref{sec:cl_grp_k_theory} work especially well for surfaces coming from deformed preprojective algebras.
The main result of this section gives an explicit isomorphism between the description given in \cite[Theorem~10.2]{cbh98} of the Grothendieck group of the centre $\cO^{\uplambda}$ of the deformed preprojective algebra of a type $\teA$ quiver and our computation of the Grothendieck group coming from Theorem~\ref{thm:G_0(R)=Z+Cl(R)_dim2}.

The results in \cite{cbh98} are for general extended Dynkin type.
Here, we restrict to extended Dynkin type $\teA$.
Recall from \S\ref{subsec:deformations_of_ksings} the rings $\cS^{\uplambda}$ and $\cO^{\uplambda}$ where $\uplambda = (\uplambda_{0}, \hdots, \uplambda_{\sf{n}})$.
The ring $\cO^{\uplambda}$ has Krull dimension 2 and, assuming $\uplambda \cdot \updelta = 0$, it is a commutative ring.

The (finite) root system of type $\tA$ is the root system of the Lie algebra $\mathfrak{sl}_{n+1}$.
The Cartan subalgebra of this Lie algebra can be realised as the vector space $\mathfrak{h}$ of diagonal matrices of trace zero.
Therefore, the dual space of $\mathfrak{h}$ is $\mathfrak{h}^* = \mathrm{Span}_{\C} \{ r_{0}, \hdots, r_{\sf{n}} \} / (r_{0} + \cdots + r_{\sf{n}})$.
Inside this complex vector space we have the weight lattice $L = \mathrm{Span}_{\Z} \{ r_{0}, \hdots, r_{\sf{n}} \} / (r_{0} + \cdots + r_{\sf{n}})$ and the root lattice $Q$ spanned by the set of simple roots $\Delta = \{ r_{0} - r_{1}, \hdots, r_{{\sf{n}}-1} - r_{\sf{n}}\}$.
The associated root system is $\Phi = \{ r_{i} - r_{j} \, | \, i \neq j \}$.
Let $\langle - , - \rangle$ denote the natural pairing between $\mathfrak{h}^*$ and $\mathfrak{h}$. 
Now, given a weight $\uplambda \in \mathfrak{h}$, we define $\Phi_{\uplambda} = \{ \upalpha \in \Phi \, | \, \langle \upalpha, \uplambda \rangle = 0 \}$ and $Q_{\uplambda} = \mathrm{Span}_{\Z} \, \Phi_{\uplambda}$ in $Q$.
Write $\upnu_{i} \coloneqq \uplambda_{0} + \cdots + \uplambda_{i}$ where $i = 0, \hdots, {\sf{n}}$.\footnote{This is the same as in Appendix~\ref{app:centre_deform_preproj}, just shifting the index by $-1$.}

In \cite[Lemma~10.1]{cbh98},
Crawley-Boevey--Holland show that for all $i \ge 0$,
\[ \textnormal{K}_{i}(\proj \cS^{\uplambda}) = \textnormal{K}_{i}(\cS^{\uplambda}) \cong \textnormal{K}_{i}(\C)^{\sf{Q_{0}}}. \]
They go on to give a full description of $\Kroth(\cO^{\uplambda})$ and $\Groth(\cO^{\uplambda})$.
Set $\upmu = (\uplambda_{1}, \hdots, \uplambda_{\sf{n}})$, which is the weight vector $\uplambda$ without the weight at vertex 0.
Quillen's localisation (Theorem~\ref{thm:quillen_localisation}) gives a short exact sequence
\begin{equation}\label{seq:quillen_localisation}
\Groth(\Pi^{\upmu}) \xrightarrow{\upvarphi} \Groth(\Pi^{\uplambda}) \to \Groth(\cO^{\uplambda}) \to 0.
\end{equation}
Let $t_{i}$ be the class of the finitely generated projective module $\Pi^{\uplambda} e_{i}$ in $\Groth(\Pi^{\uplambda})$.
By \cite[\S10]{cbh98}, since $\Pi^{\uplambda}$ has finite global dimension, it follows that $\Groth(\Pi^{\uplambda})$ is a free abelian group with basis $t_{i}$ where $i = 0, \hdots, \sf{n}$.
The simple $\Pi^{\upmu}$-modules are in bijection with the vertices $i$ of the quiver of $\Pi^{\upmu}$ such that $\uplambda_{i}=0$ \cite[Lemma~6.1]{cbh98}.
Write $I$ for the vertex set of these vertices and denote the corresponding simple modules by $S_{i}$.
These $S_{i}$ have a resolution over $\Pi^{\uplambda}$ given by
\begin{equation}\label{seq:resolution_of_S_i}
0 \to \Pi^{\uplambda}e_{i} \to \bigoplus_{\substack{a\colon j \to i\\ \text{in } \bar{Q}}} \Pi^{\uplambda}e_{j} \to \Pi^{\uplambda}e_{i} \to S_{i} \to 0.
\end{equation}
From this resolution it follows that the image of $S_{i}$ in $\Groth(\Pi^{\uplambda})$ is $2t_{i} - t_{i-1} - t_{i+1}$.
This leads to the following result.

\begin{theorem}\label{thm:K0(modR)=oplusZP/SNF}
	Let $\uplambda = (\uplambda_{0}, \hdots, \uplambda_{\sf{n}})$.
	Assume that $\uplambda \cdot \updelta = 0$ and that $\uplambda_{i} \ge 0$ for all vertices $i = 1, \hdots, {\sf{n}}$.
	Then
	\[
	\Groth(\cO^{\uplambda}) = \frac{\bigoplus_{i =0}^{\sf{n}} \Z t_{i}}{\left( 2t_{i} - t_{i-1} - t_{i+1} \mid \uplambda_{i} = 0 \right) }
	\]
	and the rank of $\Groth(\cO^{\uplambda})$ is ${\sf{n}}+ 1 - | I|$.
\end{theorem}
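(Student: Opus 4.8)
The plan is to extract the desired presentation directly from the exact sequence \eqref{seq:quillen_localisation} coming from Quillen's localisation theorem, using the explicit generators and relations already assembled above. First I would invoke the resolution theorem (Remark~\ref{remark:canonical_iso_modR_projR}): since both $\Pi^{\uplambda}$ and $\Pi^{\upmu}$ have finite global dimension, the canonical maps $\Kroth(\proj \Pi^{\uplambda}) \to \Groth(\Pi^{\uplambda})$ and $\Kroth(\proj \Pi^{\upmu}) \to \Groth(\Pi^{\upmu})$ are isomorphisms, so I may work with the projective modules $\Pi^{\uplambda}e_i$ throughout. As recorded above, $\Groth(\Pi^{\uplambda})$ is then free on the classes $t_0, \hdots, t_{\sf{n}}$, and the short exact sequence \eqref{seq:quillen_localisation} identifies $\Groth(\cO^{\uplambda})$ with the cokernel of $\upvarphi \colon \Groth(\Pi^{\upmu}) \to \Groth(\Pi^{\uplambda})$.

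Next I would compute the image of $\upvarphi$. The key input is that $\upvarphi$ is induced by the functor $\eC \hookrightarrow \modCat \Pi^{\uplambda}$ on the subcategory of $\Pi^{\uplambda}$-modules supported away from the relevant central element, and by d\'{e}vissage (Theorem~\ref{thm:quillen_devissage}) $\Groth$ of that subcategory is generated by the simple $\Pi^{\upmu}$-modules $S_i$ with $i \in I$, i.e. those vertices with $\uplambda_i = 0$. So the image of $\upvarphi$ is precisely the subgroup generated by the classes $[S_i]$ for $i \in I$, viewed inside $\Groth(\Pi^{\uplambda})$. By the resolution \eqref{seq:resolution_of_S_i}, $[S_i] = 2t_i - t_{i-1} - t_{i+1}$ in $\Groth(\Pi^{\uplambda})$ (indices read cyclically, as the quiver is $\teAn{\sf{n}}$). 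Therefore
\[
\Groth(\cO^{\uplambda}) = \Cok \upvarphi = \frac{\bigoplus_{i=0}^{\sf{n}} \Z t_i}{\left( 2t_i - t_{i-1} - t_{i+1} \mid \uplambda_i = 0\right)},
\]
which is the asserted presentation.

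For the rank statement, I would argue that the $|I|$ relation vectors $2t_i - t_{i-1} - t_{i+1}$, $i \in I$, are $\Z$-linearly independent in $\bigoplus \Z t_i$, so the cokernel has rank $({\sf{n}}+1) - |I|$. This is the step I expect to be the main obstacle: one must verify linear independence of these particular rows of the (affine type $\teAn{\sf{n}}$) Cartan-type matrix restricted to the index set $I$. Since $\uplambda\cdot\updelta = 0$ and not all $\uplambda_i$ vanish (at least vertex $0$ may be excluded, and the hypothesis $\uplambda_i \ge 0$ with $\uplambda\cdot\updelta=0$ forces $I \neq \{0,\hdots,\sf{n}\}$), the set $I$ is a proper subset of the vertices, so the corresponding subdiagram is a disjoint union of Dynkin type $\tAn{}$ segments; the associated submatrix is (block diagonal with blocks being) genuine finite-type Cartan matrices, which are nonsingular. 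Hence the relations are independent and the rank is ${\sf{n}}+1-|I|$ as claimed. I would spell out the reduction to finite-type Cartan matrices and cite their nonsingularity rather than recomputing determinants.
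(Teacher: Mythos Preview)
Your approach is essentially the one the paper follows (which in turn defers the heavy lifting to \cite[Theorem~10.2]{cbh98}): compute $\Groth(\cO^{\uplambda})$ as the cokernel of $\upvarphi$ in the localisation sequence \eqref{seq:quillen_localisation}, identify $\Groth(\Pi^{\uplambda})$ with $\bigoplus_i \Z t_i$, and use the resolution \eqref{seq:resolution_of_S_i} to see that the image of $\upvarphi$ is generated by $2t_i - t_{i-1} - t_{i+1}$ for $i \in I$. The paper's own proof is brief: it simply cites \cite{cbh98} when some root pairs to zero with $\uplambda$, and handles the regular case $I = \emptyset$ via the Morita equivalence $\cO^{\uplambda} \simeq \Pi^{\uplambda}$.

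One genuine slip: you assert that $\Pi^{\upmu}$ has finite global dimension. This is false in general---for $\upmu = 0$ the preprojective algebra of a Dynkin quiver is self-injective and non-semisimple (for $n \ge 2$), hence of infinite global dimension. Fortunately you never actually use this: your d\'{e}vissage argument already shows the image of $\upvarphi$ is generated by the simples $[S_i]$, since $\Pi^{\upmu}$ is finite-dimensional. Just drop the global-dimension claim for $\Pi^{\upmu}$; it is only needed (and true) for $\Pi^{\uplambda}$.

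On the rank computation: your worry about whether $I$ is a proper subset of $\{0,\hdots,\sf{n}\}$ is moot. By definition $I$ consists of vertices of the quiver of $\Pi^{\upmu}$, which does not contain vertex $0$, so $0 \notin I$ automatically---you do not need the sign hypothesis on the $\uplambda_i$ here. Given that, your argument via finite-type $\tA$ Cartan blocks is correct; alternatively, since the only $\Z$-linear relation among \emph{all} $\sf{n}+1$ rows of the affine $\teAn{n}$ Cartan matrix has every coefficient nonzero, any proper subset of rows is automatically independent.
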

\begin{proof}
	This is precisely the result given in \cite[Theorem~10.2]{cbh98} when $\uplambda \cdot \upalpha = 0$ for some $\upalpha \in \Phi$.
	When $\uplambda \cdot \upalpha \neq 0$ for every $\upalpha \in \Phi$, $\cO^{\uplambda}$ is Morita equivalent to $\Pi^{\uplambda}$ by \cite[Corollary~6.9]{cbh98} and $I = \emptyset$.
	In this case, the left most term of \eqref{seq:quillen_localisation} is zero and the map $\Groth(\Pi^{\lambda}) \to \Groth(\cO^{\uplambda})$ is an isomorphism.
\end{proof}

In other words, $\Groth(\cO^{\uplambda}) \cong \Groth(\Pi^{\uplambda})/\Im(\upvarphi)$ where the map $\upvarphi$ is as in \eqref{seq:quillen_localisation}, which describes the image of each $\Pi^{\upmu}$-module $S_{i}$.
Using Theorem~\ref{thm:Cl(A)_global} we give a Lie-theoretic description of the class group of $\cO^{\uplambda}$.
\begin{lemma}\label{lem:Cl(O^lambda)_isomorphic_L/Q_lambda}
	Let $L$ and $Q_{\uplambda}$ be as above.
	The class group of $\cO^{\uplambda}$ is isomorphic to $L / Q_{\uplambda}$. 
\end{lemma}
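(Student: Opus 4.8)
The plan is to identify both sides of the claimed isomorphism with an explicit presentation and then exhibit a matching of generators and relations. By Lemma~\ref{lem:grS^lam_cong_skew_grp_O^lam_cong_coord_ring} and the discussion in \S\ref{sec:Groth_deformed_preproj}, the centre $\cO^{\uplambda}$ of the type $\teAn{n}$ deformed preprojective algebra is a commutative ring of Krull dimension $2$, and it is known (cf.\ \cite{cbh98}) that it has the form $\C[u,v,x_1,\dots,x_m]/(uv - \prod_i g_i^{c_i})$ lying in Viehweg's setting of \eqref{eq:the_ring_A}; concretely, writing the $\upnu_i = \uplambda_0 + \cdots + \uplambda_i$, the factors of $uv$ are grouped according to the coincidences among partial sums of the weights, and the multiplicity $a_j$ of the $j$-th factor equals the number of consecutive indices $i$ for which $\uplambda_i = 0$ in that block (equivalently, counting the simple $\Pi^{\upmu}$-modules $S_i$, $i\in I$). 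First I would make this presentation precise, matching the factorisation $uv = f_1^{a_1}\cdots f_t^{a_t}$ to the combinatorics of $I$ and the partial sums $\upnu_i$.

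Next I would apply Theorem~\ref{thm:Cl(A)_global}, which gives $\Cl(\cO^{\uplambda}) \cong \Z^{\oplus t}/(a_1,\dots,a_t)$, where $t$ is the number of distinct factors and $a_1,\dots,a_t$ their multiplicities. So it remains to identify $\Z^{\oplus t}/(a_1,\dots,a_t)$ with $L/Q_{\uplambda}$. The idea is that $L = \mathrm{Span}_{\Z}\{r_0,\dots,r_{\sf n}\}/(r_0+\cdots+r_{\sf n}) \cong \Z^{\sf n}$, and the root sublattice $Q_{\uplambda} = \mathrm{Span}_{\Z}\Phi_{\uplambda}$ is generated by those simple roots $r_i - r_{i+1}$ with $\langle r_i - r_{i+1}, \uplambda\rangle = \uplambda_{i+1} = 0$ (re-indexing appropriately), i.e.\ exactly the simple roots indexed by $I$. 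Quotienting $L$ by these relations collapses each "block" of consecutive vertices joined by roots in $\Phi_{\uplambda}$ to a single class, leaving one free generator per block, i.e.\ $t$ generators; the single surviving relation $r_0 + \cdots + r_{\sf n} = 0$ becomes $\sum_j a_j \bar e_j = 0$ since the $j$-th block contains $a_j$ vertices. This yields precisely $\Z^{\oplus t}/(a_1,\dots,a_t)$.

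Concretely, I would define a homomorphism $L \to \Z^{\oplus t}$ sending each $r_i$ to the standard basis vector $e_{b(i)}$ of the block $b(i)$ containing $i$, check it is surjective with kernel $Q_{\uplambda}$ (one inclusion is immediate since each generator $r_i - r_{i+1}$ of $Q_{\uplambda}$ maps to $0$; the reverse needs a short argument that no further relations appear, using that $\Phi_{\uplambda}$ is a root subsystem of type $\tA$ whose connected components are exactly the blocks), and then check that the induced map $L/Q_{\uplambda} \to \Z^{\oplus t}/(a_1,\dots,a_t)$ is well-defined and bijective, since the image of $r_0 + \cdots + r_{\sf n}$ is $\sum_j a_j e_j$. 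Composing with the isomorphism of Theorem~\ref{thm:Cl(A)_global} gives $\Cl(\cO^{\uplambda}) \cong L/Q_{\uplambda}$. The main obstacle I anticipate is the bookkeeping in the first step: pinning down exactly how the defining equation of $\cO^{\uplambda}$ (its factorisation type $f_1^{a_1}\cdots f_t^{a_t}$) is read off from the weight $\uplambda$ and the vertex set $I$, and verifying that the blocks of $\Phi_{\uplambda}$ correspond bijectively to the factors with matching multiplicities — once that dictionary is fixed, the lattice computation is routine.
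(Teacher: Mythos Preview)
Your overall strategy is exactly the paper's: invoke Theorem~\ref{thm:Cl(A)_global} together with the explicit presentation of $\cO^{\uplambda}$ in Viehweg's form (this is Theorem~\ref{thm:app:preproj_type_A_isomorphic_ring_A}) to get $\Cl(\cO^{\uplambda})\cong\Z^{\oplus t}/(a_1,\dots,a_t)$, then match this with $L/Q_{\uplambda}$ by sending each $r_i$ to the generator of its block.

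There is one genuine gap. You characterise $Q_{\uplambda}$ as generated by the \emph{simple} roots $r_i-r_{i+1}$ with $\uplambda_{i+1}=0$, and correspondingly describe the blocks as runs of \emph{consecutive} indices. This is false in general: for arbitrary $\uplambda$ one can have $\upnu_i=\upnu_k$ with $i,k$ not adjacent, so $r_i-r_k\in\Phi_{\uplambda}$ is not in the span of the simple roots you list, and the multiplicities $a_j$ in the factorisation of $uv$ count indices with a common value of $\upnu$, not lengths of consecutive runs. The paper avoids this by partitioning $[0,{\sf n}]=\bigsqcup_j I_j$ via the equivalence $\upnu_x=\upnu_y$ (so $|I_j|=a_j$ on the nose) and using that $Q_{\uplambda}$ is spanned by all $r_i-r_k$ with $\upnu_i=\upnu_k$; the map $r_i\mapsto s_j$ for $i\in I_j$ then visibly kills exactly $Q_{\uplambda}$ and sends $\sum r_i$ to $\sum a_j s_j$. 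Your ``consecutive blocks'' picture is precisely the extra simplification that the positivity hypothesis $\uplambda_i\ge 0$ buys --- and that is the content of the \emph{next} lemma in the paper (Lemma~\ref{lem:Cl_(O^lambda)_extends}), not of this one.
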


\begin{proof}
	Combining Theorem~\ref{thm:Cl(A)_global} and Theorem~\ref{thm:app:preproj_type_A_isomorphic_ring_A}, 
	\[
		\Cl(\cO^{\uplambda}) \cong \Z^{\oplus t} / (a_{1}, \hdots, a_{t}) \cong \bigoplus_{i =1}^{t} \Z \cdot s_{i}/ (a_{1}s_{1} + \cdots + a_{t}s_{t}).
	\]
	Decompose $[0,{\sf{n}}] = \bigsqcup_{j = 1}^{t} I_{j}$, where $x,y \in I_{j}$ if and only if $\upnu_{x} = \upnu_{y}$.
	Then $|I_{j}| = a_{j}$.
	So we define the map $L / Q_{\uplambda} \to \Cl(\cO^{\uplambda})$ by $r_{i} \mapsto s_{j}$ if $i \in I_{j}$.
	Then the relation $r_{0} + \cdots + r_{\sf{n}} = 0$ becomes $a_{1}s_{1} + \cdots + a_{t}s_{t} = 0$ and the relations generated by $Q_{\uplambda}$ are just saying that $r_{i}$ and $r_{k}$ are mapped to the same thing if and only if $\upnu_{i} = \upnu_{k}$.
	Thus, it is an isomorphism. 
\end{proof}

Recall that $t_{i}$ denotes the class of $\Pi^{\uplambda}e_{i}$ in $\Groth(\Pi^{\uplambda})$.
If the minimal imaginary root is $\updelta = \sum_{i = 0}^{\sf{n}} \updelta_{i} t_{i}$ then, since $\bar{Q}$ has underlying type $\teA$, we have $\updelta_{i} = 1$ for all $i = 0, \hdots, {\sf{n}}$.
In this case, the rank map on $\Groth(\cO^{\uplambda})$ sends $t_{i}$ to $1$ (more generally, it sends a vector $w$ to $w \cdot \updelta$).
The kernel is spanned by all elements $t_{i} - t_{j}$.
Write $P$ for the $\Z$-lattice spanned by these elements, so that $P$ is the kernel of the rank map.
By Theorem~\ref{thm:K0(modR)=oplusZP/SNF},
\begin{align*}
\Groth(\cO^{\uplambda}) &\cong \frac{\bigoplus_{i = 0}^{\sf{n}} \Z t_{i}}{( 2t_{i} - t_{i-1} - t_{i+1} \mid \uplambda_{i} = 0)} \\
&\cong \Z \oplus \frac{P}{( 2t_{i} - t_{i-1} - t_{i+1} \mid \uplambda_{i} = 0)},
\end{align*}
where the copy of $\Z$ in the second line is the image of the rank map.
We define an isomorphism $L \to P$ by $r_{i} \mapsto t_{i} - t_{i-1}$.

\begin{lemma}\label{lem:Cl_(O^lambda)_extends}
	Assume that $\uplambda_{i} \ge 0$ for all $i = 1, \hdots, {\sf{n}}$.
	The isomorphism $L \to P$ extends to an isomorphism $L / Q_{\uplambda} \cong P / ( 2t_{i} - t_{i-1} - t_{i+1} \, | \, \uplambda_{i} = 0)$. 
\end{lemma}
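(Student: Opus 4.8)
The plan is to prove Lemma~\ref{lem:Cl_(O^lambda)_extends} by reducing it to the standard root-theoretic fact that, for a dominant parameter, the roots orthogonal to $\uplambda$ are spanned by the simple roots they contain.

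First I would record two elementary facts about the explicit map $L\to P$, $r_i\mapsto t_i-t_{i-1}$ (indices read modulo ${\sf n}+1$). It sends each root $r_i-r_{i+1}$ to $(t_i-t_{i-1})-(t_{i+1}-t_i)=2t_i-t_{i-1}-t_{i+1}$. Moreover, under the identification of $\uplambda$ with an element of $\mathfrak h$ fixed in the preceding discussion, one has $\langle r_i-r_{i+1},\uplambda\rangle=-\uplambda_i$ for every $i$; in particular $r_i-r_{i+1}\in\Phi_\uplambda$ precisely when $\uplambda_i=0$. Combining these, the denominator sublattice $R:=(2t_i-t_{i-1}-t_{i+1}\mid \uplambda_i=0)$ of $P$ is exactly the image under $L\xrightarrow{\ \sim\ }P$ of the sublattice $S:=\langle r_i-r_{i+1}\mid \uplambda_i=0\rangle$ of $L$. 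Since $L\to P$ is an isomorphism, it therefore suffices to prove the equality of sublattices $S=Q_\uplambda$ inside $L$: then $L\to P$ restricts to an isomorphism $Q_\uplambda\xrightarrow{\sim}R$ and so descends to the asserted isomorphism $L/Q_\uplambda\cong P/R$.

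The inclusion $S\subseteq Q_\uplambda$ is immediate, since each generator $r_i-r_{i+1}$ of $S$ (with $\uplambda_i=0$) is a root lying in $\Phi_\uplambda$ and $Q_\uplambda=\mathrm{Span}_\Z\Phi_\uplambda$. The reverse inclusion $Q_\uplambda\subseteq S$ is the heart of the argument, and is exactly where the standing hypothesis $\uplambda_i\ge 0$ ($1\le i\le{\sf n}$) of Theorem~\ref{thm:K0(modR)=oplusZP/SNF} is used; it is enough to show $\Phi_\uplambda\subseteq S$. Because $\uplambda\cdot\updelta=0$ and $\uplambda_i\ge 0$ for $1\le i\le{\sf n}$, the partial sums satisfy $\upnu_i-\upnu_{i-1}=\uplambda_i\ge 0$, so $(\upnu_i)$ is weakly monotone; equivalently, the function $j\mapsto\langle r_j,\uplambda\rangle$ is monotone along a cyclic listing of the vertices whose consecutive differences are exactly the roots $r_i-r_{i+1}$, and in particular its constancy classes are intervals along that listing. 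Now take $\alpha\in\Phi_\uplambda$; replacing $\alpha$ by $-\alpha$ if necessary, write $\alpha=r_a-r_b$ with $a$ preceding $b$ in this listing. Since $\langle r_a,\uplambda\rangle=\langle r_b,\uplambda\rangle$ and the pairing is monotone in between, it is constant on the whole stretch from $a$ to $b$; hence each consecutive root $r_i-r_{i+1}$ along that stretch satisfies $\langle r_i-r_{i+1},\uplambda\rangle=0$, i.e.\ $\uplambda_i=0$, and $\alpha=r_a-r_b$ is their sum. Thus $\alpha\in S$, giving $\Phi_\uplambda\subseteq S$ and hence $Q_\uplambda\subseteq S$, which completes the proof.

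The only step that is not pure bookkeeping is this ``monotone staircase'', and it is worth stressing why the sign condition cannot be dropped: monotonicity of the partial sums $\upnu_i$ is precisely what forces the roots orthogonal to $\uplambda$ to be supported on intervals, so that $\Phi_\uplambda$ is generated as a lattice by the simple roots $r_i-r_{i+1}$ it contains; for a general weight the constancy classes of $(\upnu_i)$ need not be intervals, $Q_\uplambda$ can then be strictly larger than $S$, and the isomorphism fails. As a sanity check, in the extreme case $\uplambda=0$ one has $\Phi_\uplambda=\Phi$, $S=\langle r_i-r_{i+1}\mid\text{all }i\rangle=Q=Q_\uplambda$, and both sides of the lemma reduce to the finite cyclic group of order ${\sf n}+1$, recovering $\Cl(R)$ for the underlying type $\tA$ Kleinian singularity.
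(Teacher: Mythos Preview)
Your approach is the paper's: reduce to showing that $L\to P$ carries $Q_\uplambda$ onto the submodule generated by the $2t_i-t_{i-1}-t_{i+1}$ with $\uplambda_i=0$, which in turn reduces to showing $Q_\uplambda$ is spanned by the simple roots it contains. The paper simply asserts this last fact; your monotone-staircase argument (constancy intervals of $(\upnu_i)$ under $\uplambda_i\ge 0$) is exactly the justification the paper omits, and it is the right one.

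Two small slips to fix. First, with the conventions visible in the proof of Lemma~\ref{lem:Cl(O^lambda)_isomorphic_L/Q_lambda} (namely $r_x-r_y\in\Phi_\uplambda$ iff $\upnu_x=\upnu_y$), one gets $\langle r_i-r_{i+1},\uplambda\rangle=\upnu_i-\upnu_{i+1}=-\uplambda_{i+1}$, not $-\uplambda_i$; so your sublattice $S$ as written is shifted by one index from $Q_\uplambda$. (The paper's own computation of the image of $r_i-r_{i-1}$ under $L\to P$ has the complementary off-by-one, so some relabelling is needed whichever source you follow.) Second, drop the word ``cyclic'' from your listing: the root system in play is finite type $A_{\sf n}$ with the linear ordering $0<1<\cdots<{\sf n}$, and the monotonicity of $(\upnu_i)$ you use is along that chain, not around a cycle. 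With those two cosmetic adjustments your argument is complete and slightly more detailed than the paper's.
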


\begin{proof}
	Note that if $\uplambda_{i} \ge 0$ for all $1 \le i \le {\sf{n}}$, then $Q_{\uplambda}$ is the lattice generated by all $r_{i} - r_{i-1}$ for all $i$ with $\upnu_{i} = \upnu_{i-1}$.
	This is the same as all $i$ such that $\uplambda_{i}= 0$.
	Moreover, the image of $r_{i} - r_{i-1}$ under the map $L \to P$ is $2 t_{i} - t_{i-1} - t_{i+1}$.	
	It follows that the image of $Q_{\lambda}$ under the isomorphism $L \to P$ is precisely the submodule generated by all $2t_{i} - t_{i-1} - t_{i+1}$ for $\uplambda_i = 0. $
\end{proof}

Combining Theorem~\ref{thm:K0(modR)=oplusZP/SNF}, Lemma~\ref{lem:Cl(O^lambda)_isomorphic_L/Q_lambda}, and Lemma~\ref{lem:Cl_(O^lambda)_extends} gives our main result in this section.
\begin{cor}
	Let $\bar{Q}$ be type $\teA$ and assume that $\uplambda_{i} \ge 0$ for all $i =1, \hdots, {\sf{n}}$.
	Then
	\[
	\Groth(\cO^{\uplambda}) \cong \Z \oplus \Cl(\cO^{\uplambda}),
	\]
	with $\Cl(\cO^{\uplambda}) \cong L/Q_{\uplambda} \cong P/( 2t_{i} - t_{i-1} - t_{i+1} \, | \, \uplambda_{i} = 0)$.
\end{cor}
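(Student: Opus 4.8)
The plan is to assemble the corollary by composing the three results cited in the sentence preceding it, together with the explicit rank-map computation carried out just before Lemma~\ref{lem:Cl_(O^lambda)_extends}. First I would recall that, since $\bar{Q}$ has underlying type $\teA$, the minimal imaginary root satisfies $\updelta_{i} = 1$ for all $i = 0, \hdots, {\sf{n}}$, so the rank map on $\Groth(\cO^{\uplambda})$ sends each $t_{i}$ to $1$ and has kernel the $\Z$-lattice $P$ spanned by the elements $t_{i} - t_{j}$. Feeding the presentation of $\Groth(\cO^{\uplambda})$ from Theorem~\ref{thm:K0(modR)=oplusZP/SNF} through this splitting (which is legitimate because each relator $2t_{i} - t_{i-1} - t_{i+1}$ has rank $2 - 1 - 1 = 0$ and hence lies in $P$) yields
\[
\Groth(\cO^{\uplambda}) \;\cong\; \Z \,\oplus\, \frac{P}{(\,2t_{i} - t_{i-1} - t_{i+1} \mid \uplambda_{i} = 0\,)},
\]
where the displayed $\Z$ is the image of the rank map, i.e.\ the copy of $\Groth(Q(\cO^{\uplambda})) \cong \Z$ appearing in the decomposition \eqref{eq:Groth=Z+oGroth}.

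Next I would invoke Lemma~\ref{lem:Cl_(O^lambda)_extends}: its hypothesis $\uplambda_{i} \ge 0$ for $1 \le i \le {\sf{n}}$ is exactly the assumption of the corollary (and $\uplambda\cdot\updelta = 0$ is in force throughout the section), so the isomorphism $L \to P$, $r_{i} \mapsto t_{i} - t_{i-1}$, descends to an isomorphism $L/Q_{\uplambda} \cong P/(\,2t_{i} - t_{i-1} - t_{i+1} \mid \uplambda_{i} = 0\,)$. Finally, Lemma~\ref{lem:Cl(O^lambda)_isomorphic_L/Q_lambda} (which rests on Theorem~\ref{thm:Cl(A)_global} and Theorem~\ref{thm:app:preproj_type_A_isomorphic_ring_A}) gives $\Cl(\cO^{\uplambda}) \cong L/Q_{\uplambda}$. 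Composing these three isomorphisms gives the chain
\[
\Groth(\cO^{\uplambda}) \;\cong\; \Z \oplus \frac{P}{(\,2t_{i} - t_{i-1} - t_{i+1} \mid \uplambda_{i} = 0\,)} \;\cong\; \Z \oplus L/Q_{\uplambda} \;\cong\; \Z \oplus \Cl(\cO^{\uplambda}),
\]
and the asserted identifications $\Cl(\cO^{\uplambda}) \cong L/Q_{\uplambda} \cong P/(\,2t_{i} - t_{i-1} - t_{i+1} \mid \uplambda_{i} = 0\,)$ are read off directly.

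Since all three ingredients are already established, I do not anticipate a genuine obstacle; the work is bookkeeping. The one point deserving care is consistency of indexing conventions: the shift by $-1$ in the definition of $\upnu_{i}$ flagged in the footnote, and the passage from $\uplambda$ to $\upmu = (\uplambda_{1}, \hdots, \uplambda_{{\sf{n}}})$ used in \eqref{seq:quillen_localisation}, must line up across Theorem~\ref{thm:K0(modR)=oplusZP/SNF}, Lemma~\ref{lem:Cl(O^lambda)_isomorphic_L/Q_lambda}, and Lemma~\ref{lem:Cl_(O^lambda)_extends}. I would also note explicitly that the hypothesis $\uplambda_{i} \ge 0$ is needed precisely (and only) for Lemma~\ref{lem:Cl_(O^lambda)_extends}, where it forces $Q_{\uplambda}$ to be generated by the consecutive simple-root differences $r_{i} - r_{i-1}$ with $\uplambda_{i} = 0$ rather than by a larger collection of roots $r_{i} - r_{j}$; without it the identification of the relator set on the $P$-side with $Q_{\uplambda}$ would fail.
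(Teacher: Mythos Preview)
Your proposal is correct and follows exactly the approach the paper takes: the corollary is stated immediately after the sentence ``Combining Theorem~\ref{thm:K0(modR)=oplusZP/SNF}, Lemma~\ref{lem:Cl(O^lambda)_isomorphic_L/Q_lambda}, and Lemma~\ref{lem:Cl_(O^lambda)_extends} gives our main result in this section,'' and your write-up simply spells out that combination (together with the rank-map splitting computed just before Lemma~\ref{lem:Cl_(O^lambda)_extends}). Your remarks on where the positivity hypothesis enters and on the indexing consistency are accurate elaborations of what the paper leaves implicit.
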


\section{Limitations of techniques}\label{sec:limitations_of_tech}
Initially, the techniques used and described in this chapter appear to be quite promising.
It is possible to use Theorem~\ref{thm:nagata} to compute the divisor class group of the Kleinian singularity of type $\teDn{4}$.
Using this method, the class group of such an (undeformed) Kleinian singularity is $\Z/2\Z \oplus \Z/2\Z$.
Unfortunately, the method does not appear to work for $\teDn{n}$ where ${\sf{n}} > 4$.
The difficulty occurs in the first step of the process, trying to prove that an appropriate localisation of the ring is a UFD.

Not only does the use of Theorem~\ref{thm:nagata} become more complicated when moving past type $\tA$ surfaces, but as we will see in Chapter~\ref{ch:cDVs}, computing class groups and Grothendieck groups becomes far more difficult in higher dimensions, where there is a significant increase in the amount of information one needs to track.
These challenges aside, we still lift the results of this chapter to various rings in dimension $3$.

\chapter{Knitting}\label{ch:knitting}
In this chapter, we compute the quiver of $\End_{R}( \bigoplus_{i \in I} V_{i})$ when $R$ is a Kleinian singularity and $\{V_{i}\}_{i \in I}$ is a subset of the $\CM$ $R$-modules.
The main result, Theorem~\ref{thm:number_arrows_0_to_i_equals_reverse}, asserts that the quiver is always symmetric.
This fact will be a crucial ingredient in computing the K-theory of cDV singularities in \S\ref{sec:isolated_cdv_with_NCCR}.

We use AR-theory and an application of the knitting algorithm to compute these quivers.
In \S\ref{subsec:AR_theory} we introduced AR-sequences and AR-quivers, remarking that, in dimension two, the AR-quiver of $\ADE$ surface singularities coincides with the McKay quiver.
Knitting on this AR-quiver allows us, in Theorem~\ref{thm:number_arrows_0_1_2}, to give an exact description of the possible number of arrows between vertices in the quiver of $\End_{R}( \bigoplus_{i \in I} V_{i})$.

The AR-quivers studied in this chapter are described in Figure~\ref{fig:repetition_quivers}.
\begin{landscape}
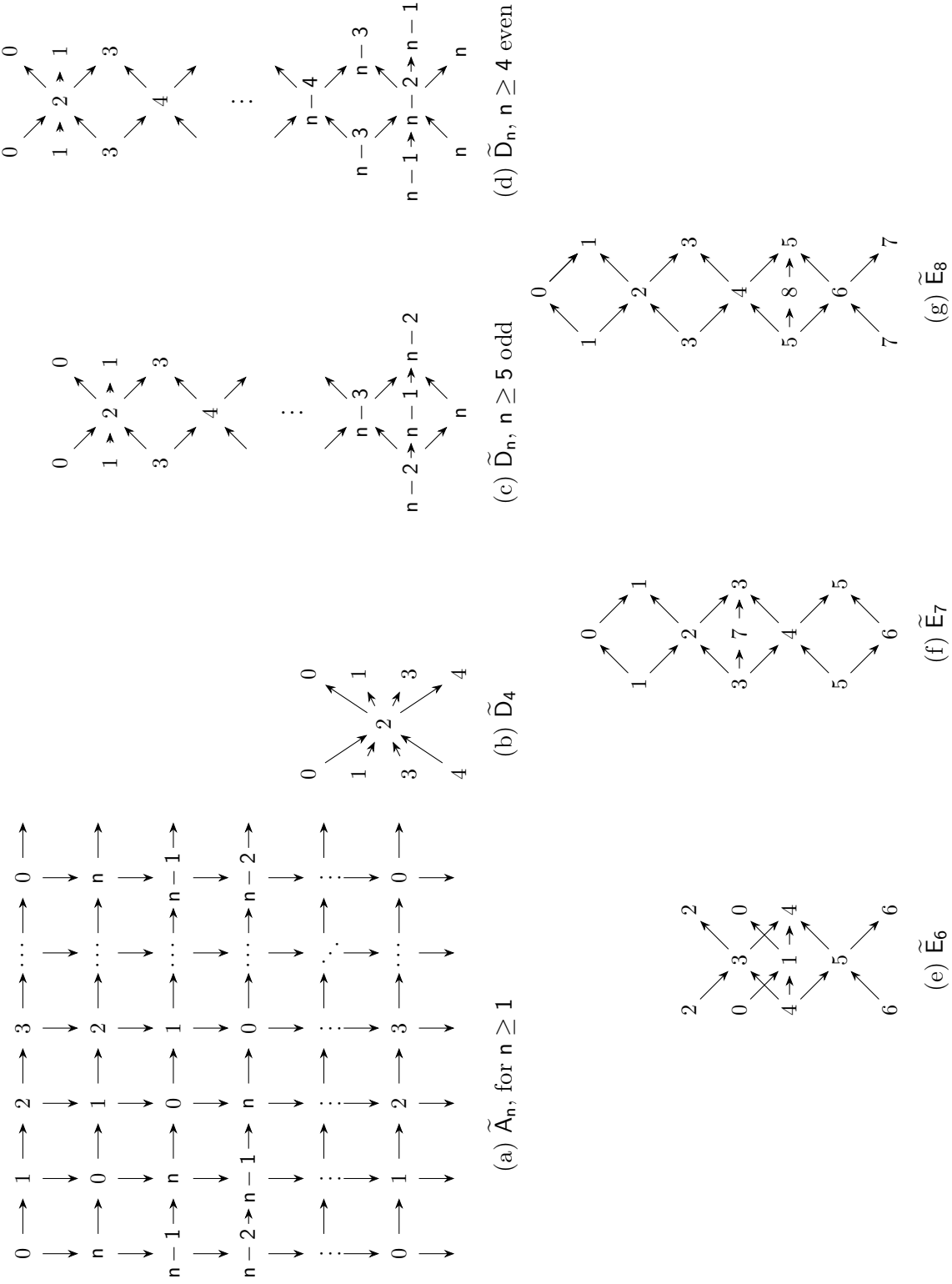
\begin{figure}[b]
	\centering
	\begin{subfigure}[b]{.4\textwidth}
		\centering
		\begin{tikzpicture}[->, shorten >=9pt, shorten <=9pt, scale=0.8]
			\node at ( 0,0) [] {\footnotesize{$0$}};
			\node at ( 0,-1.5) [] {\footnotesize{$\sf{n}$}};
			\node at ( 0,-3) [] {\footnotesize{$\sf{n}-1$}};
			\node at ( 0,-4.5) [] {\footnotesize{$\sf{n}-2$}};
			\node at ( 0,-6) [] {\footnotesize{$\vdots$}};
			\node at ( 0,-7.5) [] {\footnotesize{$0$}};
			
			\node at ( 1.5,0) [] {\footnotesize{$1$}};
			\node at ( 1.5,-1.5) [] {\footnotesize{$0$}};
			\node at ( 1.5,-3) [] {\footnotesize{$\sf{n}$}};
			\node at ( 1.5,-4.5) [] {\footnotesize{$\sf{n}-1$}};
			\node at ( 1.5,-6) [] {\footnotesize{$\vdots$}};
			\node at ( 1.5,-7.5) [] {\footnotesize{$1$}};
			
			\node at ( 3,0) [] {\footnotesize{$2$}};
			\node at ( 3,-1.5) [] {\footnotesize{$1$}};
			\node at ( 3,-3) [] {\footnotesize{$0$}};
			\node at ( 3,-4.5) [] {\footnotesize{$\sf{n}$}};
			\node at ( 3,-6) [] {\footnotesize{$\vdots$}};
			\node at ( 3,-7.5) [] {\footnotesize{$2$}};
			
			\node at ( 4.5,0) [] {\footnotesize{$3$}};
			\node at ( 4.5,-1.5) [] {\footnotesize{$2$}};
			\node at ( 4.5,-3) [] {\footnotesize{$1$}};
			\node at ( 4.5,-4.5) [] {\footnotesize{$0$}};
			\node at ( 4.5,-6) [] {\footnotesize{$\vdots$}};
			\node at ( 4.5,-7.5) [] {\footnotesize{$3$}};
			
			\node at ( 6,0) [] {\footnotesize{$\cdots$}};
			\node at ( 6,-1.5) [] {\footnotesize{$\cdots$}};
			\node at ( 6,-3) [] {\footnotesize{$\cdots$}};
			\node at ( 6,-4.5) [] {\footnotesize{$\cdots$}};
			\node at ( 6,-6) [] {\footnotesize{$\ddots$}};
			\node at ( 6,-7.5) [] {\footnotesize{$\cdots$}};
			
			\node at ( 7.5,0) [] {\footnotesize{$0$}};
			\node at ( 7.5,-1.5) [] {\footnotesize{$\sf{n}$}};
			\node at ( 7.5,-3) [] {\footnotesize{$\sf{n}-1$}};
			\node at ( 7.5,-4.5) [] {\footnotesize{$\sf{n}-2$}};
			\node at ( 7.5,-6) [] {\footnotesize{$\vdots$}};
			\node at ( 7.5,-7.5) [] {\footnotesize{$0$}};
			\draw[->, -{Stealth[]}, color=black] ( 0,0) -- ( 0,-1.5);
			\draw[->, -{Stealth[]}, color=black] ( 0,-1.5) -- ( 0,-3);
			\draw[->, -{Stealth[]}, color=black] ( 0,-3) -- ( 0,-4.5);
			\draw[->, -{Stealth[]}, color=black] ( 0,-4.5) -- ( 0,-6);
			\draw[->, -{Stealth[]}, color=black] ( 0,-6) -- ( 0,-7.5);
			\draw[->, -{Stealth[]}, color=black] ( 0,-7.5) -- ( 0,-9);
			\draw[->, -{Stealth[]}, color=black] ( 1.5,0) -- ( 1.5,-1.5);
			\draw[->, -{Stealth[]}, color=black] ( 1.5,-1.5) -- ( 1.5,-3);
			\draw[->, -{Stealth[]}, color=black] ( 1.5,-3) -- ( 1.5,-4.5);
			\draw[->, -{Stealth[]}, color=black] ( 1.5,-4.5) -- ( 1.5,-6);
			\draw[->, -{Stealth[]}, color=black] ( 1.5,-6) -- ( 1.5,-7.5);
			\draw[->, -{Stealth[]}, color=black] ( 1.5,-7.5) -- ( 1.5,-9);
			\draw[->, -{Stealth[]}, color=black] ( 3,0) -- ( 3,-1.5);
			\draw[->, -{Stealth[]}, color=black] ( 3,-1.5) -- ( 3,-3);
			\draw[->, -{Stealth[]}, color=black] ( 3,-3) -- ( 3,-4.5);
			\draw[->, -{Stealth[]}, color=black] ( 3,-4.5) -- ( 3,-6);
			\draw[->, -{Stealth[]}, color=black] ( 3,-6) -- ( 3,-7.5);
			\draw[->, -{Stealth[]}, color=black] ( 3,-7.5) -- ( 3,-9);
			\draw[->, -{Stealth[]}, color=black] ( 4.5,0) -- ( 4.5,-1.5);
			\draw[->, -{Stealth[]}, color=black] ( 4.5,-1.5) -- ( 4.5,-3);
			\draw[->, -{Stealth[]}, color=black] ( 4.5,-3) -- ( 4.5,-4.5);
			\draw[->, -{Stealth[]}, color=black] ( 4.5,-4.5) -- ( 4.5,-6);
			\draw[->, -{Stealth[]}, color=black] ( 4.5,-6) -- ( 4.5,-7.5);
			\draw[->, -{Stealth[]}, color=black] ( 4.5,-7.5) -- ( 4.5,-9);
			\draw[->, -{Stealth[]}, color=black] ( 6,0) -- ( 6,-1.5);
			\draw[->, -{Stealth[]}, color=black] ( 6,-1.5) -- ( 6,-3);
			\draw[->, -{Stealth[]}, color=black] ( 6,-3) -- ( 6,-4.5);
			\draw[->, -{Stealth[]}, color=black] ( 6,-4.5) -- ( 6,-6);
			\draw[->, -{Stealth[]}, color=black] ( 6,-6) -- ( 6,-7.5);
			\draw[->, -{Stealth[]}, color=black] ( 6,-7.5) -- ( 6,-9);
			\draw[->, -{Stealth[]}, color=black] ( 7.5,0) -- ( 7.5,-1.5);
			\draw[->, -{Stealth[]}, color=black] ( 7.5,-1.5) -- ( 7.5,-3);
			\draw[->, -{Stealth[]}, color=black] ( 7.5,-3) -- ( 7.5,-4.5);
			\draw[->, -{Stealth[]}, color=black] ( 7.5,-4.5) -- ( 7.5,-6);
			\draw[->, -{Stealth[]}, color=black] ( 7.5,-6) -- ( 7.5,-7.5);
			\draw[->, -{Stealth[]}, color=black] ( 7.5,-7.5) -- ( 7.5,-9);
			\draw[->, -{Stealth[]}, color=black] ( 0,0) -- ( 1.5,0);
			\draw[->, -{Stealth[]}, color=black] ( 1.5,0) -- ( 3,0);
			\draw[->, -{Stealth[]}, color=black] ( 3,0) -- ( 4.5,0);
			\draw[->, -{Stealth[]}, color=black] ( 4.5,0) -- ( 6,0);
			\draw[->, -{Stealth[]}, color=black] ( 6,0) -- ( 7.5,0);
			\draw[->, -{Stealth[]}, color=black] ( 7.5,0) -- ( 9,0);
			\draw[->, -{Stealth[]}, color=black] ( 0,-1.5) -- ( 1.5,-1.5);
			\draw[->, -{Stealth[]}, color=black] ( 1.5,-1.5) -- ( 3,-1.5);
			\draw[->, -{Stealth[]}, color=black] ( 3,-1.5) -- ( 4.5,-1.5);
			\draw[->, -{Stealth[]}, color=black] ( 4.5,-1.5) -- ( 6,-1.5);
			\draw[->, -{Stealth[]}, color=black] ( 6,-1.5) -- ( 7.5,-1.5);
			\draw[->, -{Stealth[]}, color=black] ( 7.5,-1.5) -- ( 9,-1.5);
			\draw[->, -{Stealth[]}, color=black] ( 0.2,-3) -- ( 1.5,-3);
			\draw[->, -{Stealth[]}, color=black] ( 1.5,-3) -- ( 3,-3);
			\draw[->, -{Stealth[]}, color=black] ( 3,-3) -- ( 4.5,-3);
			\draw[->, -{Stealth[]}, color=black] ( 4.5,-3) -- ( 6,-3);
			\draw[->, -{Stealth[]}, color=black] ( 6,-3) -- ( 7.3,-3);
			\draw[->, -{Stealth[]}, color=black] ( 7.7,-3) -- ( 9,-3);
			\draw[->, -{Stealth[]}, color=black] ( 0.2,-4.5) -- ( 1.3,-4.5);
			\draw[->, -{Stealth[]}, color=black] ( 1.7,-4.5) -- ( 3,-4.5);
			\draw[->, -{Stealth[]}, color=black] ( 3,-4.5) -- ( 4.5,-4.5);
			\draw[->, -{Stealth[]}, color=black] ( 4.5,-4.5) -- ( 6,-4.5);
			\draw[->, -{Stealth[]}, color=black] ( 6,-4.5) -- ( 7.3,-4.5);
			\draw[->, -{Stealth[]}, color=black] ( 7.7,-4.5) -- ( 9,-4.5);
			\draw[->, -{Stealth[]}, color=black] ( 0,-6) -- ( 1.5,-6);
			\draw[->, -{Stealth[]}, color=black] ( 1.5,-6) -- ( 3,-6);
			\draw[->, -{Stealth[]}, color=black] ( 3,-6) -- ( 4.5,-6);
			\draw[->, -{Stealth[]}, color=black] ( 4.5,-6) -- ( 6,-6);
			\draw[->, -{Stealth[]}, color=black] ( 6,-6) -- ( 7.5,-6);
			\draw[->, -{Stealth[]}, color=black] ( 7.5,-6) -- ( 9,-6);
			\draw[->, -{Stealth[]}, color=black] ( 0,-7.5) -- ( 1.5,-7.5);
			\draw[->, -{Stealth[]}, color=black] ( 1.5,-7.5) -- ( 3,-7.5);
			\draw[->, -{Stealth[]}, color=black] ( 3,-7.5) -- ( 4.5,-7.5);
			\draw[->, -{Stealth[]}, color=black] ( 4.5,-7.5) -- ( 6,-7.5);
			\draw[->, -{Stealth[]}, color=black] ( 6,-7.5) -- ( 7.5,-7.5);
			\draw[->, -{Stealth[]}, color=black] ( 7.5,-7.5) -- ( 9,-7.5);
		\end{tikzpicture}\subcaption{$\teAn{n}$, for $\sf{n} \ge 1$}\label{fig:repetition_quivers_A_n}
	\end{subfigure}
	\begin{subfigure}[b]{.3\textwidth}
		\centering
		\begin{tikzpicture}[->,shorten >=9pt, shorten <=9pt, scale=0.8]
			\node at ( 0,0) [] {\footnotesize{$0$}};
			\node at ( 0,-1) [] {\footnotesize{$1$}};
			\node at ( 0,-2) [] {\footnotesize{$3$}};
			\node at ( 0,-3) [] {\footnotesize{$4$}};
			\node at ( 1,-1.5) [] {\footnotesize{$2$}};
				\draw[->, -{Stealth[]}, color=black] ( 0,0) -- ( 1,-1.5);
				\draw[->, -{Stealth[]}, color=black] ( 0,-1) -- ( 1,-1.5);
				\draw[->, -{Stealth[]}, color=black] ( 0,-2) -- ( 1,-1.5);
				\draw[->, -{Stealth[]}, color=black] ( 0,-3) -- ( 1,-1.5);
			\node at ( 2,0) [] {\footnotesize{$0$}};
			\node at ( 2,-1) [] {\footnotesize{$1$}};
			\node at ( 2,-2) [] {\footnotesize{$3$}};
			\node at ( 2,-3) [] {\footnotesize{$4$}};
				\draw[->, -{Stealth[]}, color=black] ( 1,-1.5) -- ( 2,0);
				\draw[->, -{Stealth[]}, color=black] ( 1,-1.5) -- ( 2,-1);
				\draw[->, -{Stealth[]}, color=black] ( 1,-1.5) -- ( 2,-2);
				\draw[->, -{Stealth[]}, color=black] ( 1,-1.5) -- ( 2,-3);
		\end{tikzpicture}\subcaption{$\teDn{4}$}\label{fig:repetition_quivers_D_4}
	\end{subfigure}
	\centering
	\begin{subfigure}[b]{.3\textwidth}
		\centering
		\begin{tikzpicture}[->,shorten >=9pt, shorten <=9pt, scale=0.8]
			\node at ( 0,0) [] {\footnotesize{$0$}};
			\node at ( 0,-1) [] {\footnotesize{$1$}};
			\node at ( 1,-1) [] {\footnotesize{$2$}};
			\node at ( 0,-2) [] {\footnotesize{$3$}};
			\node at ( 1,-3) [] {\footnotesize{$4$}};
			\node at ( 0,-4) {};
			\node at ( 0,-5) {};
			\node at ( 1,-6) [] {\footnotesize{$\sf{n}-3$}};
			\node at ( -0.5,-7) [] {\footnotesize{$\sf{n}-2$}};
			\node at ( 1,-7) [] {\footnotesize{$\sf{n}-1$}};
			\node at ( 1,-8) [] {\footnotesize{$\sf{n}$}};
				\draw[->, -{Stealth[]}, color=black] ( 0,0) -- ( 1,-1);
				\draw[->, -{Stealth[]}, color=black] ( 0,-1) -- ( 1,-1);
				\draw[->, -{Stealth[]}, color=black] ( 0,-2) -- ( 1,-1);
				\draw[->, -{Stealth[]}, color=black] ( 0,-2) -- ( 1,-3);
				\draw[->, -{Stealth[]}, color=black] ( 0,-4) -- ( 1,-3);
				\draw[->, -{Stealth[]}, color=black] ( 0,-5) -- ( 1,-6);
				\draw[->, -{Stealth[]}, color=black] ( 0,-7) -- ( 1,-6);
				\draw[->, -{Stealth[]}, color=black] ( -0.3,-7) -- ( 0.8,-7);
				\draw[->, -{Stealth[]}, color=black] ( 0,-7) -- ( 1,-8);
			\node at ( 2,0) [] {\footnotesize{$0$}};
			\node at ( 2,-1) [] {\footnotesize{$1$}};
			\node at ( 2,-2) [] {\footnotesize{$3$}};
			\node at ( 1,-4.5) {\vdots};
			\node at ( 2,-4) {};
			\node at ( 2,-5) {};
			\node at ( 2.5,-7) [] {\footnotesize{$\sf{n}-2$}};
				\draw[->, -{Stealth[]}] ( 1,-1) -- ( 2,0);
				\draw[->, -{Stealth[]}] ( 1,-1) -- ( 2,-1);
				\draw[->, -{Stealth[]}] ( 1,-1) -- ( 2,-2);
				\draw[->, -{Stealth[]}] ( 1,-3) -- ( 2,-2);
				\draw[->, -{Stealth[]}] ( 1,-3) -- ( 2,-4);
				\draw[->, -{Stealth[]}] ( 1,-6) -- ( 2,-5);
				\draw[->, -{Stealth[]}] ( 1,-6) -- ( 2,-7);
				\draw[->, -{Stealth[]}] ( 1.2,-7) -- ( 2.3,-7);
				\draw[->, -{Stealth[]}] ( 1,-8) -- ( 2,-7);
		\end{tikzpicture}\subcaption{$\teDn{n}$, $\sf{n} \ge 5$ odd}\label{fig:repetition_quivers_D_n_odd}
	\end{subfigure}
	\begin{subfigure}[b]{.3\textwidth}
		\centering
		\begin{tikzpicture}[->,shorten >=9pt, shorten <=9pt, scale=0.8]
			\node at ( 0,0) [] {\footnotesize{$0$}};
			\node at ( 0,-1) [] {\footnotesize{$1$}};
			\node at ( 1,-1) [] {\footnotesize{$2$}};
			\node at ( 0,-2) [] {\footnotesize{$3$}};
			\node at ( 1,-3) [] {\footnotesize{$4$}};
			\node at ( 0,-4) {};
			\node at ( 0,-5) {};
			\node at ( 1,-6) [] {\footnotesize{$\sf{n}-4$}};
			\node at ( 0,-7) [] {\footnotesize{$\sf{n}-3$}};
			\node at ( -0.5,-8) [] {\footnotesize{$\sf{n}-1$}};
			\node at ( 1,-8) [] {\footnotesize{$\sf{n}-2$}};
			\node at ( 0,-9) [] {\footnotesize{$\sf{n}$}};
			\draw[->, -{Stealth[]}, color=black] ( 0,0) -- ( 1,-1);
			\draw[->, -{Stealth[]}, color=black] ( 0,-1) -- ( 1,-1);
			\draw[->, -{Stealth[]}, color=black] ( 0,-2) -- ( 1,-1);
			\draw[->, -{Stealth[]}, color=black] ( 0,-2) -- ( 1,-3);
			\draw[->, -{Stealth[]}, color=black] ( 0,-4) -- ( 1,-3);
			\draw[->, -{Stealth[]}, color=black] ( 0,-5) -- ( 1,-6);
			\draw[->, -{Stealth[]}, color=black] ( 0,-7) -- ( 1,-6);
			\draw[->, -{Stealth[]}, color=black] ( 0,-7) -- ( 1,-8);
			\draw[->, -{Stealth[]}, color=black] ( -0.3,-8) -- ( 0.8,-8);
			\draw[->, -{Stealth[]}, color=black] ( 0,-9) -- ( 1,-8);
			\draw[->, -{Stealth[]}, color=black] ( 1,-1) -- ( 2,0);
			\draw[->, -{Stealth[]}, color=black] ( 1,-1) -- ( 2,-1);
			\draw[->, -{Stealth[]}, color=black] ( 1,-1) -- ( 2,-2);
			\draw[->, -{Stealth[]}, color=black] ( 1,-3) -- ( 2,-2);
			\draw[->, -{Stealth[]}, color=black] ( 1,-3) -- ( 2,-4);
			\draw[->, -{Stealth[]}, color=black] ( 1,-6) -- ( 2,-5);
			\draw[->, -{Stealth[]}, color=black] ( 1,-6) -- ( 2,-7);
			\draw[->, -{Stealth[]}, color=black] ( 1,-8) -- ( 2,-7);
			\draw[->, -{Stealth[]}, color=black] ( 1.2,-8) -- ( 2.3,-8);
			\draw[->, -{Stealth[]}, color=black] ( 1,-8) -- ( 2,-9);
			\node at ( 2,0) [] {\footnotesize{$0$}};
			\node at ( 2,-1) [] {\footnotesize{$1$}};
			\node at ( 2,-2) [] {\footnotesize{$3$}};
			\node at ( 2,-4) {};
			\node at ( 1,-4.5) {\vdots};
			\node at ( 2,-5) {};
			\node at ( 2,-7) [] {\footnotesize{$\sf{n}-3$}};
			\node at ( 2.5,-8) [] {\footnotesize{$\sf{n}-1$}};
			\node at ( 2,-9) [] {\footnotesize{$\sf{n}$}};
		\end{tikzpicture}\subcaption{$\teDn{n}$, $\sf{n} \ge 4$ even}\label{fig:repetition_quivers_D_n_even}
	\end{subfigure}
	\begin{subfigure}[b]{.3\textwidth}
		\centering
		\begin{tikzpicture}[->,shorten >=7pt, shorten <=7pt, scale=0.8]
			\node at ( 0,1) [] {\footnotesize{$2$}};
			\node at ( 0,0) [] {\footnotesize{$0$}};
			\node at ( 1,0) [] {\footnotesize{$3$}};
			\node at ( 0,-1) [] {\footnotesize{$4$}};
			\node at ( 1,-1) [] {\footnotesize{$1$}};
			\node at ( 1,-2) [] {\footnotesize{$5$}};
			\node at ( 0,-3) [] {\footnotesize{$6$}};
			\draw[->, -{Stealth[]}, color=black] ( 0,1) -- ( 1,0);
			\draw[->, -{Stealth[]}, color=black] ( 0,0) -- ( 1,-1);
			\draw[->, -{Stealth[]}, color=black] ( 0,-1) -- ( 1,0);
			\draw[->, -{Stealth[]}, color=black] ( 0,-1) -- ( 1,-1);
			\draw[->, -{Stealth[]}, color=black] ( 0,-1) -- ( 1,-2);
			\draw[->, -{Stealth[]}, color=black] ( 0,-3) -- ( 1,-2);
			\draw[->, -{Stealth[]}, color=black] ( 1,0) -- ( 2,1);
			\draw[->, -{Stealth[]}, color=black] ( 1,0) -- ( 2,-1);
			\draw[->, -{Stealth[]}, color=black] ( 1,-1) -- ( 2,0);
			\draw[->, -{Stealth[]}, color=black] ( 1,-1) -- ( 2,-1);
			\draw[->, -{Stealth[]}, color=black] ( 1,-2) -- ( 2,-1);
			\draw[->, -{Stealth[]}, color=black] ( 1,-2) -- ( 2,-3);
			\node at ( 2,1) [] {\footnotesize{$2$}};
			\node at ( 2,0) [] {\footnotesize{$0$}};
			\node at ( 2,-1) [] {\footnotesize{$4$}};
			\node at ( 2,-3) [] {\footnotesize{$6$}};
		\end{tikzpicture}\subcaption{$\teEn{6}$}\label{fig:repetition_quivers_E_6}
	\end{subfigure}
	\begin{subfigure}[b]{.33\textwidth}
		\centering
		\begin{tikzpicture}[->,shorten >=7pt, shorten <=7pt, scale=0.8]
			\node at ( 1,0) [] {\footnotesize{$0$}};
			\node at ( 0,-1) [] {\footnotesize{$1$}};
			\node at ( 1,-2) [] {\footnotesize{$2$}};
			\node at ( 1,-3) [] {\footnotesize{$7$}};
			\node at ( 0,-3) [] {\footnotesize{$3$}};
			\node at ( 1,-4) [] {\footnotesize{$4$}};
			\node at ( 0,-5) [] {\footnotesize{$5$}};
			\node at ( 1,-6) [] {\footnotesize{$6$}};
				\draw[->, -{Stealth[]}, color=black] ( 0,-1) -- ( 1,0);
				\draw[->, -{Stealth[]}, color=black] ( 0,-1) -- ( 1,-2);
				\draw[->, -{Stealth[]}, color=black] ( 0,-3) -- ( 1,-2);
				\draw[->, -{Stealth[]}, color=black] ( 0,-3) -- ( 1,-3);
				\draw[->, -{Stealth[]}, color=black] ( 0,-3) -- ( 1,-4);
				\draw[->, -{Stealth[]}, color=black] ( 0,-5) -- ( 1,-4);
				\draw[->, -{Stealth[]}, color=black] ( 0,-5) -- ( 1,-6);
				\draw[->, -{Stealth[]}, color=black] ( 1,0) -- ( 2,-1);
				\draw[->, -{Stealth[]}, color=black] ( 1,-2) -- ( 2,-1);
				\draw[->, -{Stealth[]}, color=black] ( 1,-2) -- ( 2,-3);
				\draw[->, -{Stealth[]}, color=black] ( 1,-3) -- ( 2,-3);
				\draw[->, -{Stealth[]}, color=black] ( 1,-4) -- ( 2,-3);
				\draw[->, -{Stealth[]}, color=black] ( 1,-4) -- ( 2,-5);
				\draw[->, -{Stealth[]}, color=black] ( 1,-6) -- ( 2,-5);
			\node at ( 2,-1) [] {\footnotesize{$1$}};
			\node at ( 2,-3) [] {\footnotesize{$3$}};
			\node at ( 2,-5) [] {\footnotesize{$5$}};
		\end{tikzpicture}\subcaption{$\teEn{7}$}\label{fig:repetition_quivers_E_7}
	\end{subfigure}
	\begin{subfigure}[b]{.33\textwidth}
		\centering
		\begin{tikzpicture}[->,shorten >=7pt, shorten <=7pt, scale=0.8]
			\node at ( 1,0) [] {\footnotesize{$0$}};
			\node at ( 0,-1) [] {\footnotesize{$1$}};
			\node at ( 1,-2) [] {\footnotesize{$2$}};
			\node at ( 0,-3) [] {\footnotesize{$3$}};
			\node at ( 1,-4) [] {\footnotesize{$4$}};
			\node at ( 1,-5) [] {\footnotesize{$8$}};
			\node at ( 0,-5) [] {\footnotesize{$5$}};
			\node at ( 1,-6) [] {\footnotesize{$6$}};
			\node at ( 0,-7) [] {\footnotesize{$7$}};
				\draw[->, -{Stealth[]}, color=black] ( 0,-1) -- ( 1,0);
				\draw[->, -{Stealth[]}, color=black] ( 0,-1) -- ( 1,-2);
				\draw[->, -{Stealth[]}, color=black] ( 0,-3) -- ( 1,-2);
				\draw[->, -{Stealth[]}, color=black] ( 0,-3) -- ( 1,-4);
				\draw[->, -{Stealth[]}, color=black] ( 0,-5) -- ( 1,-4);
				\draw[->, -{Stealth[]}, color=black] ( 0,-5) -- ( 1,-5);
				\draw[->, -{Stealth[]}, color=black] ( 0,-5) -- ( 1,-6);
				\draw[->, -{Stealth[]}, color=black] ( 0,-7) -- ( 1,-6);
				\draw[->, -{Stealth[]}, color=black] ( 1,0) -- ( 2,-1);
				\draw[->, -{Stealth[]}, color=black] ( 1,0) -- ( 2,-1);
				\draw[->, -{Stealth[]}, color=black] ( 1,-2) -- ( 2,-1);
				\draw[->, -{Stealth[]}, color=black] ( 1,-2) -- ( 2,-3);
				\draw[->, -{Stealth[]}, color=black] ( 1,-4) -- ( 2,-3);
				\draw[->, -{Stealth[]}, color=black] ( 1,-4) -- ( 2,-5);
				\draw[->, -{Stealth[]}, color=black] ( 1,-5) -- ( 2,-5);
				\draw[->, -{Stealth[]}, color=black] ( 1,-6) -- ( 2,-5);
				\draw[->, -{Stealth[]}, color=black] ( 1,-6) -- ( 2,-7);
			\node at ( 2,-1) [] {\footnotesize{$1$}};
			\node at ( 2,-3) [] {\footnotesize{$3$}};
			\node at ( 2,-5) [] {\footnotesize{$5$}};
			\node at ( 2,-7) [] {\footnotesize{$7$}};
		\end{tikzpicture}\subcaption{$\teEn{8}$}\label{fig:repetition_quivers_E_8}
	\end{subfigure}
	\caption{AR-quivers with underlying extended $\ADE$ Dynkin diagram where, in each case, the left and right hand sides are identified and in type $\tA$ all vertices labelled $i$ are identified.}\label{fig:repetition_quivers}
\end{figure}
\end{landscape}

\section{Knitting algorithm}\label{sec:knitting_alg}
Let $( \Delta_{\aff} )_{0}$ be the set of vertices of the extended $\ADE$ Dynkin graph $\Delta_{\aff}$, and $I \subseteq ( \Delta_{\aff})_{0}$ a subset which contains the extended vertex $0$.
Set $\GammaI{I} \colonequals e_{I} \Pi e_{I} \cong \End_{R}(\bigoplus_{i \in I} V_{i})$ where $\Pi$ is the preprojective algebra of $\Delta_{\aff}$, $e_{I} = \sum_{i \in I} e_{i}$, and $R$ is a Kleinian singularity.
Then $\GammaI{I}$ can be written as a quiver with relations, where the vertices are in bijection with $I$ and the number of arrows between vertices in the quiver of $\GammaI{I}$ is determined using the following knitting algorithm (see, e.g. \cite[Corollary~3.3]{We11}, \cite{IyWe08SpecialClassificationCM}).
\begin{knitting}\label{knitting_algorithm}
	Let $j \in I$ and consider the AR-quiver with underlying graph $\Delta_{\aff}$.
	\begin{enumerate}
		\item\label{knitting_algorithm step_1} Draw the translation quiver and circle each vertex in $I$.
		For example, for $\teDn{7}$ with $I = \{ 0, 2, 5 \}$ the translation quiver with circled vertices is as follows.
		\vspace{-.3cm}\begin{figure}[H]
			\centering
			\begin{tikzpicture}[->,shorten >=7pt, shorten <=7pt, scale=0.9]
				\draw[-,densely dotted] (0,0.75) -- (0,-5.5);
				\draw[-,densely dotted] (2,0.75) -- (2,-5.5);
				\node at ( 0,0) [circle, draw] {\tiny{$\bullet$}};
				\node at ( 0,-1) [circle, fill= black, inner sep=1pt] {};
				\node at ( 1,-1) [circle, draw] {\tiny{$\bullet$}};
				\node at ( 0,-2) [circle, fill=black, inner sep=1pt] {};
				\node at ( 1,-3) [circle, fill=black, inner sep=1pt] {};
				\node at ( 0,-4) [circle, draw] {\tiny{$\bullet$}};
				\node at ( 1,-4) [circle, fill=black, inner sep=1pt] {};
				\node at ( 1,-5) [circle, fill=black, inner sep=1pt] {};
				\draw[->, -{Stealth[]}, color=black] ( 0,0) -- ( 1,-1);
				\draw[->, -{Stealth[]}, color=black] ( 0,-1) -- ( 1,-1);
				\draw[->, -{Stealth[]}, color=black] ( 0,-2) -- ( 1,-1);
				\draw[->, -{Stealth[]}, color=black] ( 0,-2) -- ( 1,-3);
				\draw[->, -{Stealth[]}, color=black] ( 0,-4) -- ( 1,-3);
				\draw[->, -{Stealth[]}, color=black] ( 0,-4) -- ( 1,-4);
				\draw[->, -{Stealth[]}, color=black] ( 0,-4) -- ( 1,-5);
				\draw[->, -{Stealth[]}, color=black] ( 1,-1) -- ( 2,0);
				\draw[->, -{Stealth[]}, color=black] ( 1,-1) -- ( 2,-1);
				\draw[->, -{Stealth[]}, color=black] ( 1,-1) -- ( 2,-2);
				\draw[->, -{Stealth[]}, color=black] ( 1,-3) -- ( 2,-2);
				\draw[->, -{Stealth[]}, color=black] ( 1,-3) -- ( 2,-4);
				\draw[->, -{Stealth[]}, color=black] ( 1,-4) -- ( 2,-4);
				\draw[->, -{Stealth[]}, color=black] ( 1,-5) -- ( 2,-4);
				\draw[-,densely dotted] (4,0.75) -- (4,-5.5);
				\node at ( 2,0) [circle, draw] {\tiny{$\bullet$}};
				\node at ( 2,-1) [circle, fill= black, inner sep=1pt] {};
				\node at ( 3,-1) [circle, draw] {\tiny{$\bullet$}};
				\node at ( 2,-2) [circle, fill=black, inner sep=1pt] {};
				\node at ( 3,-3) [circle, fill=black, inner sep=1pt] {};
				\node at ( 2,-4) [circle, draw] {\tiny{$\bullet$}};
				\node at ( 3,-4) [circle, fill=black, inner sep=1pt] {};
				\node at ( 3,-5) [circle, fill=black, inner sep=1pt] {};
				\draw[->, -{Stealth[]}, color=black] ( 2,0) -- ( 3,-1);
				\draw[->, -{Stealth[]}, color=black] ( 2,-1) -- ( 3,-1);
				\draw[->, -{Stealth[]}, color=black] ( 2,-2) -- ( 3,-1);
				\draw[->, -{Stealth[]}, color=black] ( 2,-2) -- ( 3,-3);
				\draw[->, -{Stealth[]}, color=black] ( 2,-4) -- ( 3,-3);
				\draw[->, -{Stealth[]}, color=black] ( 2,-4) -- ( 3,-4);
				\draw[->, -{Stealth[]}, color=black] ( 2,-4) -- ( 3,-5);
				\draw[->, -{Stealth[]}, color=black] ( 3,-1) -- ( 4,0);
				\draw[->, -{Stealth[]}, color=black] ( 3,-1) -- ( 4,-1);
				\draw[->, -{Stealth[]}, color=black] ( 3,-1) -- ( 4,-2);
				\draw[->, -{Stealth[]}, color=black] ( 3,-3) -- ( 4,-2);
				\draw[->, -{Stealth[]}, color=black] ( 3,-3) -- ( 4,-4);
				\draw[->, -{Stealth[]}, color=black] ( 3,-4) -- ( 4,-4);
				\draw[->, -{Stealth[]}, color=black] ( 3,-5) -- ( 4,-4);
				\draw[-,densely dotted] (6,0.75) -- (6,-5.5);
				\node at ( 4,0) [circle, draw] {\tiny{$\bullet$}};
				\node at ( 4,-1) [circle, fill= black, inner sep=1pt] {};
				\node at ( 5,-1) [circle, draw] {\tiny{$\bullet$}};
				\node at ( 4,-2) [circle, fill=black, inner sep=1pt] {};
				\node at ( 5,-3) [circle, fill=black, inner sep=1pt] {};
				\node at ( 4,-4) [circle, draw] {\tiny{$\bullet$}};
				\node at ( 5,-4) [circle, fill=black, inner sep=1pt] {};
				\node at ( 5,-5) [circle, fill=black, inner sep=1pt] {};
				\draw[->, -{Stealth[]}, color=black] ( 4,0) -- ( 5,-1);
				\draw[->, -{Stealth[]}, color=black] ( 4,-1) -- ( 5,-1);
				\draw[->, -{Stealth[]}, color=black] ( 4,-2) -- ( 5,-1);
				\draw[->, -{Stealth[]}, color=black] ( 4,-2) -- ( 5,-3);
				\draw[->, -{Stealth[]}, color=black] ( 4,-4) -- ( 5,-3);
				\draw[->, -{Stealth[]}, color=black] ( 4,-4) -- ( 5,-4);
				\draw[->, -{Stealth[]}, color=black] ( 4,-4) -- ( 5,-5);
				\draw[->, -{Stealth[]}, color=black] ( 5,-1) -- ( 6,0);
				\draw[->, -{Stealth[]}, color=black] ( 5,-1) -- ( 6,-1);
				\draw[->, -{Stealth[]}, color=black] ( 5,-1) -- ( 6,-2);
				\draw[->, -{Stealth[]}, color=black] ( 5,-3) -- ( 6,-2);
				\draw[->, -{Stealth[]}, color=black] ( 5,-3) -- ( 6,-4);
				\draw[->, -{Stealth[]}, color=black] ( 5,-4) -- ( 6,-4);
				\draw[->, -{Stealth[]}, color=black] ( 5,-5) -- ( 6,-4);
				\draw[-,densely dotted] (8,0.75) -- (8,-5.5);
				\node at ( 6,0) [circle, draw] {\tiny{$\bullet$}};
				\node at ( 6,-1) [circle, fill= black, inner sep=1pt] {};
				\node at ( 7,-1) [circle, draw] {\tiny{$\bullet$}};
				\node at ( 6,-2) [circle, fill=black, inner sep=1pt] {};
				\node at ( 7,-3) [circle, fill=black, inner sep=1pt] {};
				\node at ( 6,-4) [circle, draw] {\tiny{$\bullet$}};
				\node at ( 7,-4) [circle, fill=black, inner sep=1pt] {};
				\node at ( 7,-5) [circle, fill=black, inner sep=1pt] {};
				\node at ( 8,0) [circle, draw] {\tiny{$\bullet$}};
				\node at ( 8,-1) [circle, fill= black, inner sep=1pt] {};
				\node at ( 8,-2) [circle, fill=black, inner sep=1pt] {};
				\node at ( 8,-4) [circle, draw] {\tiny{$\bullet$}};
				\node at (8.75,-2.5) [] {\textbf{...}};
				\draw[->, -{Stealth[]}, color=black] ( 6,0) -- ( 7,-1);
				\draw[->, -{Stealth[]}, color=black] ( 6,-1) -- ( 7,-1);
				\draw[->, -{Stealth[]}, color=black] ( 6,-2) -- ( 7,-1);
				\draw[->, -{Stealth[]}, color=black] ( 6,-2) -- ( 7,-3);
				\draw[->, -{Stealth[]}, color=black] ( 6,-4) -- ( 7,-3);
				\draw[->, -{Stealth[]}, color=black] ( 6,-4) -- ( 7,-4);
				\draw[->, -{Stealth[]}, color=black] ( 6,-4) -- ( 7,-5);
				\draw[->, -{Stealth[]}, color=black] ( 7,-1) -- ( 8,0);
				\draw[->, -{Stealth[]}, color=black] ( 7,-1) -- ( 8,-1);
				\draw[->, -{Stealth[]}, color=black] ( 7,-1) -- ( 8,-2);
				\draw[->, -{Stealth[]}, color=black] ( 7,-3) -- ( 8,-2);
				\draw[->, -{Stealth[]}, color=black] ( 7,-3) -- ( 8,-4);
				\draw[->, -{Stealth[]}, color=black] ( 7,-4) -- ( 8,-4);
				\draw[->, -{Stealth[]}, color=black] ( 7,-5) -- ( 8,-4);
			\end{tikzpicture}
		\end{figure}
		\vspace{-.8cm}\item\label{knitting_algorithm step_2} Starting in the leftmost occurrence of the vertex $j$, at this vertex $j$ write $1$ with a box around it and write $0$ at all other vertices in the same column.
		Call this the first column.
	
		\item\label{knitting_algorithm step_3} Consider now the second column, which is the next column to the right of the first column.
		Then, for each vertex $k$ in the second column, if there is an arrow from vertex $j$ to vertex $k$ in the translation quiver, write $1$ at this vertex $k$.
		Otherwise, write $0$ at this vertex $k$.
	
		\item\label{knitting_algorithm step_4}Assume that the first $l \ge 2$ columns of the translation quiver have been filled.
		Then, to fill in the $(l+1)^{\text{st}}$-column, proceed as follows: the value at a vertex $r$ (in column $l+1$) is equal to the sum of all the values at vertices $s$ in column $l$ where there is an arrow from vertex $s$ to vertex $r$ in the translation quiver, minus the value at vertex $r$ in column $l-1$.
		If any value is circled, then consider it to be equal to $0$ (the square box at vertex $j$ is not considered circled).
	
		\item\label{knitting_algorithm step_5} Repeat step~\eqref{knitting_algorithm step_4} until a value of $-1$ occurs, then stop.
		
		\item\label{knitting_algorithm step_6} For each vertex $i \in I$, write $r_{j, i}$ for the sum of the non-negative values at all vertices $i$ in the translation quiver.
		The values at these vertices in the translation quiver should be circled since $i \in I$.
	\end{enumerate}
	The vertex $j$ with the boxed 1, the vertices with circled values, and with the $-1$ can all be used to obtain exact sequences which have various universal properties (see, e.g. \cite[4.4]{IyWe08SpecialClassificationCM}).
	For our purposes, it suffices to solely pay attention to the circled values.
	This becomes apparent in the following result; for further details, see \cite[\S4]{We11}.
	\begin{theorem}
		For each vertex $i \in I$, the value of $r_{j, i}$ in the above calculation is precisely the number of arrows from vertex $j$ to vertex $i$ in the quiver of $\GammaI{I}$.
	\end{theorem}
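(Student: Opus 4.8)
The plan is to match both sides against (relative) Auslander--Reiten theory in $\add M$, where $M\colonequals\bigoplus_{i\in I}V_{i}$ so that $\GammaI{I}\cong\End_{R}(M)$, and then to identify the knitting calculation with the construction of a minimal $\add M$-approximation of $V_{j}$. Recall first the standard description of the quiver of an endomorphism ring: for $R$ complete local and $M\in\CM R$ basic, the number of arrows from vertex $j$ to vertex $i$ in the quiver of $\End_{R}(M)$ equals $\dim_{\C}\bigl(\rad_{\add M}(V_{j},V_{i})/\rad_{\add M}^{2}(V_{j},V_{i})\bigr)$, the radical being that of the additive category $\add M$. Equivalently, a minimal generating set of $\rad_{\add M}(M,V_{j})$ as a $\GammaI{I}$-module yields a map $g\colon U\to V_{j}$ in $\add M$ lying in $\rad_{\add M}$, with $U\cong\bigoplus_{i\in I}V_{i}^{\oplus a_{ij}}$ where $a_{ij}$ is the number of arrows from $j$ to $i$; this $g$ is the radical part of the minimal right $\add M$-approximation attached to $V_{j}$, and its summands $V_{i}$ occupy various positions of the AR-quiver around $V_{j}$. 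Thus it suffices to show that the knitting algorithm, started at $j$, reads off precisely these multiplicities $a_{ij}$.

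Next I would explain why the mesh recursion of Steps \eqref{knitting_algorithm step_3}--\eqref{knitting_algorithm step_5} computes the $a_{ij}$. By the Remark following Definition~\ref{def:AR_quiver} (with \cite{Aus86}), the $\Z\Delta_{\aff}$-shaped translation quivers of Figure~\ref{fig:repetition_quivers} are the AR-quivers of $\CM R$, and the almost split sequences in $\CM R$ furnish exactly the mesh relations controlling how the functor $\uHom_{R}(V_{j},-)$ propagates along the AR-quiver. Hence placing the boxed $1$ at the first occurrence of $V_{j}$, $0$ elsewhere, and propagating by the mesh rule assembles a minimal presentation of $V_{j}$ by irreducible maps in $\CM R$. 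The circling convention is the passage from $\CM R$ to the relative theory over $\add M$: whenever a positive value lands on an indecomposable $V_{i}\in\add M$ that summand is split off --- it is recorded in $r_{j,i}$ in Step~\eqref{knitting_algorithm step_6} and then set to $0$ so that it no longer propagates --- which is precisely the effect of replacing the $\CM R$-presentation of $V_{j}$ by its minimal right $\add M$-approximation sequence. The value $-1$ in Step~\eqref{knitting_algorithm step_5} signals that the class of $V_{j}$ has re-entered the computation, i.e. that the approximation sequence has closed up, so the recursion halts having recorded exactly the data $a_{ij}$, and comparison with the first paragraph gives $r_{j,i}=a_{ij}$. Alternatively one can simply invoke \cite[\S4]{We11} and \cite[\S4]{IyWe08SpecialClassificationCM}, where this dictionary between knitting and minimal $\add M$-approximations is established, and specialise it to the AR-quivers listed here.

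The main obstacle is the middle step: proving that the bare mesh recursion, once modified by the circling rule, returns the \emph{minimal} right $\add M$-approximation --- so that no summand is over- or under-counted and no non-minimality is introduced --- and that the recursion always terminates by reaching a $-1$ before any spurious negative entry can appear. This is where the positivity and finite-length properties peculiar to the preprojective algebra of extended $\ADE$ Dynkin type must be used, and it is the technical heart of the argument; given it, the reductions in the previous paragraphs are essentially formal.
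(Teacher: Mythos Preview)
The paper does not actually prove this theorem: it is stated inside the Knitting Algorithm environment as a known fact, with the sentence ``for further details, see \cite[\S4]{We11}'' immediately preceding it, and the references \cite[Corollary~3.3]{We11} and \cite{IyWe08SpecialClassificationCM} already cited when introducing the algorithm. So there is no proof in the paper to compare against --- the theorem is simply quoted from the literature.

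Your proposal, by contrast, sketches the argument that lies behind those references: arrows in the quiver of $\End_{R}(M)$ are counted by the multiplicities in the minimal right $\add M$-approximation of $V_{j}$, and the knitting recursion with the circling rule computes exactly those multiplicities by propagating along AR-meshes and stripping off summands in $\add M$ as they appear. This is the correct picture, and you even note at the end that one can ``simply invoke \cite[\S4]{We11} and \cite[\S4]{IyWe08SpecialClassificationCM}'', which is precisely what the paper does. Your identification of the main obstacle --- minimality of the resulting approximation and termination of the recursion --- is also accurate; these are the points where the $\ADE$ combinatorics enter in the cited sources. So your write-up is a faithful expansion of what the paper leaves to a citation, and in that sense goes beyond the paper rather than diverging from it.
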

\end{knitting}

\begin{remark}
	This result fixes $j$ and applies the knitting algorithm to produce $r_{j, i}$.
	It is also possible to fix $i$ and knit to obtain $r_{i, j}$, but this requires us to adapt the algorithm slightly.
	We do this as follows: Step~\eqref{knitting_algorithm step_2} begins in the rightmost column of the translation quiver, and we move left.
	Step~\eqref{knitting_algorithm step_3} and Step~\eqref{knitting_algorithm step_4} are similar but are achieved by moving from right to left.
	As in Step~\eqref{knitting_algorithm step_5}, we stop when a $-1$ occurs.
	Step~\eqref{knitting_algorithm step_6} is also similar, that is, for each $i \in I$, write $r_{i, j}$ for the sum of the non-negative values at all vertices $i$ in the translation quiver.
	Just as before, for each vertex $i$, the value of $r_{i, j}$ is precisely the number of arrows from any vertex $i$ to vertex $j$ in the quiver of $\GammaI{I}$.
\end{remark}
We illustrate both algorithms in the following example.	
\begin{example}\label{example:knitting_D_6_(0_1_3)}
	Consider the AR-quiver of $\teDn{6}$ with vertices labelled $0, \hdots, 6$ as in Figure~\ref{fig:repetition_quivers_D_n_even}.
	Let $I = \{ 0, 1, 3 \}$ and suppose we want to determine the number of arrows \emph{from} vertex $0$ to some vertex $i$ in the quiver of $\GammaI{I}$.
	We start the knitting algorithm by drawing the translation quiver from Figure~\ref{fig:repetition_quivers_D_n_even}, omitting any labels.
	\begin{figure}[H]
		\centering
		\begin{tikzpicture}[->,shorten >=5pt, shorten <=5pt, scale=0.8]
			\node at ( 0,0) [circle, fill=black, inner sep=1pt] {};
			\node at ( 0,-1) [circle, fill= black, inner sep=1pt] {};
			\node at ( 1,-1) [circle, fill=black, inner sep=1pt] {};
			\node at ( 0,-2) [circle, fill=black, inner sep=1pt] {};
			\node at ( 1,-3) [circle, fill=black, inner sep=1pt] {};
			\node at ( 0,-3) [circle, fill=black, inner sep=1pt] {};
			\node at ( 0,-4) [circle, fill=black, inner sep=1pt] {};
				\draw[->, -{Stealth[]}, color=black] ( 0,0) -- ( 1,-1);
				\draw[->, -{Stealth[]}, color=black] ( 0,-1) -- ( 1,-1);
				\draw[->, -{Stealth[]}, color=black] ( 0,-2) -- ( 1,-1);
				\draw[->, -{Stealth[]}, color=black] ( 0,-2) -- ( 1,-3);
				\draw[->, -{Stealth[]}, color=black] ( 0,-3) -- ( 1,-3);
				\draw[->, -{Stealth[]}, color=black] ( 0,-4) -- ( 1,-3);
				\draw[->, -{Stealth[]}, color=black] ( 1,-1) -- ( 2,0);
				\draw[->, -{Stealth[]}, color=black] ( 1,-1) -- ( 2,-1);
				\draw[->, -{Stealth[]}, color=black] ( 1,-1) -- ( 2,-2);
				\draw[->, -{Stealth[]}, color=black] ( 1,-3) -- ( 2,-2);
				\draw[->, -{Stealth[]}, color=black] ( 1,-3) -- ( 2,-3);
				\draw[->, -{Stealth[]}, color=black] ( 1,-3) -- ( 2,-4);
			\node at ( 2,0) [circle, fill=black, inner sep=1pt] {};
			\node at ( 2,-1) [circle, fill= black, inner sep=1pt] {};
			\node at ( 3,-1) [circle, fill=black, inner sep=1pt] {};
			\node at ( 2,-2) [circle, fill=black, inner sep=1pt] {};
			\node at ( 3,-3) [circle, fill=black, inner sep=1pt] {};
			\node at ( 2,-3) [circle, fill=black, inner sep=1pt] {};
			\node at ( 2,-4) [circle, fill=black, inner sep=1pt] {};
				\draw[->, -{Stealth[]}, color=black] ( 2,0) -- ( 3,-1);
				\draw[->, -{Stealth[]}, color=black] ( 2,-1) -- ( 3,-1);
				\draw[->, -{Stealth[]}, color=black] ( 2,-2) -- ( 3,-1);
				\draw[->, -{Stealth[]}, color=black] ( 2,-2) -- ( 3,-3);
				\draw[->, -{Stealth[]}, color=black] ( 2,-3) -- ( 3,-3);
				\draw[->, -{Stealth[]}, color=black] ( 2,-4) -- ( 3,-3);
				\draw[->, -{Stealth[]}, color=black] ( 3,-1) -- ( 4,0);
				\draw[->, -{Stealth[]}, color=black] ( 3,-1) -- ( 4,-1);
				\draw[->, -{Stealth[]}, color=black] ( 3,-1) -- ( 4,-2);
				\draw[->, -{Stealth[]}, color=black] ( 3,-3) -- ( 4,-2);
				\draw[->, -{Stealth[]}, color=black] ( 3,-3) -- ( 4,-3);
				\draw[->, -{Stealth[]}, color=black] ( 3,-3) -- ( 4,-4);
			\node at ( 4,0) [circle, fill=black, inner sep=1pt] {};
			\node at ( 4,-1) [circle, fill= black, inner sep=1pt] {};
			\node at ( 5,-1) [circle, fill=black, inner sep=1pt] {};
			\node at ( 4,-2) [circle, fill=black, inner sep=1pt] {};
			\node at ( 5,-3) [circle, fill=black, inner sep=1pt] {};
			\node at ( 4,-3) [circle, fill=black, inner sep=1pt] {};
			\node at ( 4,-4) [circle, fill=black, inner sep=1pt] {};
				\draw[->, -{Stealth[]}, color=black] ( 4,0) -- ( 5,-1);
				\draw[->, -{Stealth[]}, color=black] ( 4,-1) -- ( 5,-1);
				\draw[->, -{Stealth[]}, color=black] ( 4,-2) -- ( 5,-1);
				\draw[->, -{Stealth[]}, color=black] ( 4,-2) -- ( 5,-3);
				\draw[->, -{Stealth[]}, color=black] ( 4,-3) -- ( 5,-3);
				\draw[->, -{Stealth[]}, color=black] ( 4,-4) -- ( 5,-3);
				\draw[->, -{Stealth[]}, color=black] ( 5,-1) -- ( 6,0);
				\draw[->, -{Stealth[]}, color=black] ( 5,-1) -- ( 6,-1);
				\draw[->, -{Stealth[]}, color=black] ( 5,-1) -- ( 6,-2);
				\draw[->, -{Stealth[]}, color=black] ( 5,-3) -- ( 6,-2);
				\draw[->, -{Stealth[]}, color=black] ( 5,-3) -- ( 6,-3);
				\draw[->, -{Stealth[]}, color=black] ( 5,-3) -- ( 6,-4);
			\node at ( 6,0) [circle, fill=black, inner sep=1pt] {};
			\node at ( 6,-1) [circle, fill= black, inner sep=1pt] {};
			\node at ( 7,-1) [circle, fill=black, inner sep=1pt] {};
			\node at ( 6,-2) [circle, fill=black, inner sep=1pt] {};
			\node at ( 7,-3) [circle, fill=black, inner sep=1pt] {};
			\node at ( 6,-3) [circle, fill=black, inner sep=1pt] {};
			\node at ( 6,-4) [circle, fill=black, inner sep=1pt] {};
			\draw[->, -{Stealth[]}, color=black] ( 6,0) -- ( 7,-1);
			\draw[->, -{Stealth[]}, color=black] ( 6,-1) -- ( 7,-1);
			\draw[->, -{Stealth[]}, color=black] ( 6,-2) -- ( 7,-1);
			\draw[->, -{Stealth[]}, color=black] ( 6,-2) -- ( 7,-3);
			\draw[->, -{Stealth[]}, color=black] ( 6,-3) -- ( 7,-3);
			\draw[->, -{Stealth[]}, color=black] ( 6,-4) -- ( 7,-3);
			\draw[->, -{Stealth[]}, color=black] ( 7,-1) -- ( 8,0);
			\draw[->, -{Stealth[]}, color=black] ( 7,-1) -- ( 8,-1);
			\draw[->, -{Stealth[]}, color=black] ( 7,-1) -- ( 8,-2);
			\draw[->, -{Stealth[]}, color=black] ( 7,-3) -- ( 8,-2);
			\draw[->, -{Stealth[]}, color=black] ( 7,-3) -- ( 8,-3);
			\draw[->, -{Stealth[]}, color=black] ( 7,-3) -- ( 8,-4);
			\node at ( 8,0) [circle, fill=black, inner sep=1pt] {};
			\node at ( 8,-1) [circle, fill= black, inner sep=1pt] {};
			\node at ( 8,-2) [circle, fill=black, inner sep=1pt] {};
			\node at ( 8,-3) [circle, fill=black, inner sep=1pt] {};
			\node at ( 8,-4) [circle, fill=black, inner sep=1pt] {};
		\end{tikzpicture}
	\end{figure}
	Since we are calculating the number of arrows out of vertex $0$ in the quiver of $\GammaI{I}$, we will start the knitting calculation in the leftmost column of the translation quiver and add more columns to the right as needed.
	Now, circle all positions corresponding to the vertices in $I$.
	In this example, we circle vertices $0$, $1,$ and $3$ repeatedly.
	In the position of vertex $0$ in the first column place a boxed $1$, and in all other vertices in the first column, write $0$.
	\begin{figure}[H]
		\centering
		\begin{tikzpicture}[->,shorten >=9pt, shorten <=9pt, scale=0.95]
			\node at ( 0,0) [rectangle, draw] {\footnotesize{$1$}};
			\node at ( 0,-1) [circle, draw] {\footnotesize{$0$}};
			\node at ( 1,-1) [circle, fill=black, inner sep=1pt] {};
			\node at ( 0,-2) [circle, draw] {\footnotesize{$0$}};
			\node at ( 1,-3) [circle, fill=black, inner sep=1pt] {};
			\node at ( 0,-3) [] {\footnotesize{$0$}};
			\node at ( 0,-4) [] {\footnotesize{$0$}};
				\draw[->, -{Stealth[]}, color=black] ( 0,0) -- ( 1,-1);
				\draw[->, -{Stealth[]}, color=black] ( 0,-1) -- ( 1,-1);
				\draw[->, -{Stealth[]}, color=black] ( 0,-2) -- ( 1,-1);
				\draw[->, -{Stealth[]}, color=black] ( 0,-2) -- ( 1,-3);
				\draw[->, -{Stealth[]}, color=black] ( 0,-3) -- ( 1,-3);
				\draw[->, -{Stealth[]}, color=black] ( 0,-4) -- ( 1,-3);
				\draw[->, -{Stealth[]}, color=black] ( 1,-1) -- ( 2,0);
				\draw[->, -{Stealth[]}, color=black] ( 1,-1) -- ( 2,-1);
				\draw[->, -{Stealth[]}, color=black] ( 1,-1) -- ( 2,-2);
				\draw[->, -{Stealth[]}, color=black] ( 1,-3) -- ( 2,-2);
				\draw[->, -{Stealth[]}, color=black] ( 1,-3) -- ( 2,-3);
				\draw[->, -{Stealth[]}, color=black] ( 1,-3) -- ( 2,-4);
			\node at ( 2,0) [circle, draw, inner sep=6.2pt] {};
			\node at ( 2,-1) [circle, draw, inner sep=6.2pt] {};
			\node at ( 3,-1) [circle, fill=black, inner sep=1pt] {};
			\node at ( 2,-2) [circle, draw, inner sep=6.2pt] {};
			\node at ( 3,-3) [circle, fill=black, inner sep=1pt] {};
			\node at ( 2,-3) [circle, fill=black, inner sep=1pt] {};
			\node at ( 2,-4) [circle, fill=black, inner sep=1pt] {};
				\draw[->, -{Stealth[]}, color=black] ( 2,0) -- ( 3,-1);
				\draw[->, -{Stealth[]}, color=black] ( 2,-1) -- ( 3,-1);
				\draw[->, -{Stealth[]}, color=black] ( 2,-2) -- ( 3,-1);
				\draw[->, -{Stealth[]}, color=black] ( 2,-2) -- ( 3,-3);
				\draw[->, -{Stealth[]}, color=black] ( 2,-3) -- ( 3,-3);
				\draw[->, -{Stealth[]}, color=black] ( 2,-4) -- ( 3,-3);
		\end{tikzpicture}
	\end{figure}
	The first column is completely filled in, so we move to the second column.
	By Step~\eqref{knitting_algorithm step_3}, since there is an arrow from vertex $0$ to vertex $2$, write $1$ as the value at vertex $2$ in the second column.
	All other vertices in the second column have no arrow from vertex $0$, and so write $0$ at all other vertices in the second column.
	\begin{figure}[H]
		\centering
		\begin{tikzpicture}[->,shorten >=9pt, shorten <=9pt, scale=0.95]
			\node at ( 0,0) [rectangle, draw] {\footnotesize{$1$}};
			\node at ( 0,-1) [circle, draw] {\footnotesize{$0$}};
			\node at ( 1,-1) [] {\footnotesize{$1$}};
			\node at ( 0,-2) [circle, draw] {\footnotesize{$0$}};
			\node at ( 1,-3) [] {\footnotesize{$0$}};
			\node at ( 0,-3) [] {\footnotesize{$0$}};
			\node at ( 0,-4) [] {\footnotesize{$0$}};
				\draw[->, -{Stealth[]}, color=black] ( 0,0) -- ( 1,-1);
				\draw[->, -{Stealth[]}, color=black] ( 0,-1) -- ( 1,-1);
				\draw[->, -{Stealth[]}, color=black] ( 0,-2) -- ( 1,-1);
				\draw[->, -{Stealth[]}, color=black] ( 0,-2) -- ( 1,-3);
				\draw[->, -{Stealth[]}, color=black] ( 0,-3) -- ( 1,-3);
				\draw[->, -{Stealth[]}, color=black] ( 0,-4) -- ( 1,-3);
				\draw[->, -{Stealth[]}, color=black] ( 1,-1) -- ( 2,0);
				\draw[->, -{Stealth[]}, color=black] ( 1,-1) -- ( 2,-1);
				\draw[->, -{Stealth[]}, color=black] ( 1,-1) -- ( 2,-2);
				\draw[->, -{Stealth[]}, color=black] ( 1,-3) -- ( 2,-2);
				\draw[->, -{Stealth[]}, color=black] ( 1,-3) -- ( 2,-3);
				\draw[->, -{Stealth[]}, color=black] ( 1,-3) -- ( 2,-4);
			\node at ( 2,0) [circle, draw, inner sep=6.2pt] {};
			\node at ( 2,-1) [circle, draw, inner sep=6.2pt] {};
			\node at ( 3,-1) [circle, fill=black, inner sep=1pt] {};
			\node at ( 2,-2) [circle, draw, inner sep=6.2pt] {};
			\node at ( 3,-3) [circle, fill=black, inner sep=1pt] {};
			\node at ( 2,-3) [circle, fill=black, inner sep=1pt] {};
			\node at ( 2,-4) [circle, fill=black, inner sep=1pt] {};
				\draw[->, -{Stealth[]}, color=black] ( 2,0) -- ( 3,-1);
				\draw[->, -{Stealth[]}, color=black] ( 2,-1) -- ( 3,-1);
				\draw[->, -{Stealth[]}, color=black] ( 2,-2) -- ( 3,-1);
				\draw[->, -{Stealth[]}, color=black] ( 2,-2) -- ( 3,-3);
				\draw[->, -{Stealth[]}, color=black] ( 2,-3) -- ( 3,-3);
				\draw[->, -{Stealth[]}, color=black] ( 2,-4) -- ( 3,-3);
		\end{tikzpicture}
	\end{figure}
	With the first two columns filled, we proceed to Step~\eqref{knitting_algorithm step_4} to calculate the third column.
	In the following illustration the red vertices and arrows are those under consideration.
	Observe that vertex $0$ in column~3 (the red circled $a$) has an arrow coming from vertex $2$ in column~2 (the red $1$) and the value of vertex $0$ in column~1 (the red boxed $1$) is $1$.
	So, following the instructions in Step~\eqref{knitting_algorithm step_4}, as indicated by the calculation of $a$ below, vertex $0$ in column~3 has a value of $0$.
	\vspace{-.8cm}\begin{figure}[H]
		\centering
		\begin{tikzpicture}[shorten >=9pt, shorten <=9pt, scale=0.95]
			\node[color=red] at ( 0,0) [rectangle, draw, color=red] {\footnotesize{$1$}};
			\node at ( 0,-1) [circle, draw] {\footnotesize{$0$}};
			\node[color=red] at ( 1,-1) [] {\footnotesize{$1$}};
			\node at ( 0,-2) [circle, draw] {\footnotesize{$0$}};
			\node at ( 1,-3) [] {\footnotesize{$0$}};
			\node at ( 0,-3) [] {\footnotesize{$0$}};
			\node at ( 0,-4) [] {\footnotesize{$0$}};
			\draw[->, -{Stealth[]}, color=red] ( 0,0) -- ( 1,-1);
			\draw[->, -{Stealth[]}, color=black] ( 0,-1) -- ( 1,-1);
			\draw[->, -{Stealth[]}, color=black] ( 0,-2) -- ( 1,-1);
			\draw[->, -{Stealth[]}, color=black] ( 0,-2) -- ( 1,-3);
			\draw[->, -{Stealth[]}, color=black] ( 0,-3) -- ( 1,-3);
			\draw[->, -{Stealth[]}, color=black] ( 0,-4) -- ( 1,-3);
			\draw[->, -{Stealth[]}, color=red] ( 1,-1) -- ( 2,0);
			\draw[->, -{Stealth[]}, color=black] ( 1,-1) -- ( 2,-1);
			\draw[->, -{Stealth[]}, color=black] ( 1,-1) -- ( 2,-2);
			\draw[->, -{Stealth[]}, color=black] ( 1,-3) -- ( 2,-2);
			\draw[->, -{Stealth[]}, color=black] ( 1,-3) -- ( 2,-3);
			\draw[->, -{Stealth[]}, color=black] ( 1,-3) -- ( 2,-4);
			\node[color=red] at ( 2,0) [circle, draw,color=red] {\footnotesize{$a$}};
			\node at ( 2,-1) [circle, draw, inner sep=6.2pt] {};
			\node at ( 3,-1) [circle, fill=black, inner sep=1pt] {};
			\node at ( 2,-2) [circle, draw, inner sep=6.2pt] {};
			\node at ( 3,-3) [circle, fill=black, inner sep=1pt] {};
			\node at ( 2,-3) [circle, fill=black, inner sep=1pt] {};
			\node at ( 2,-4) [circle, fill=black, inner sep=1pt] {};
			\draw[->, -{Stealth[]}, color=black] ( 2,0) -- ( 3,-1);
			\draw[->, -{Stealth[]}, color=black] ( 2,-1) -- ( 3,-1);
			\draw[->, -{Stealth[]}, color=black] ( 2,-2) -- ( 3,-1);
			\draw[->, -{Stealth[]}, color=black] ( 2,-2) -- ( 3,-3);
			\draw[->, -{Stealth[]}, color=black] ( 2,-3) -- ( 3,-3);
			\draw[->, -{Stealth[]}, color=black] ( 2,-4) -- ( 3,-3);
			\draw[color=blue, densely dotted] ( 0,0) to [out=90,in=100] ( 6.7,0);
			\draw[color=blue, densely dotted] ( 1,-1) to [out=110,in=100] ( 6,0);
			\node at ( 5.3,0) [] {\footnotesize{$a$}};
			\node at ( 5.65,0) [] {\footnotesize{$=$}};
			\node at ( 6,0) [] {\footnotesize{$1$}};
			\node at ( 6.35,0) [] {\footnotesize{$-$}};
			\node at ( 6.7,0) [] {\footnotesize{$1$}};
			\node at ( 7.05,0) [] {\footnotesize{$=$}};
			\node at ( 7.4,0) [] {\footnotesize{$0$}};
		\end{tikzpicture}
	\end{figure}
	Similarly, in column~3 the values at vertices $1$ and $3$ are both $1$ (notice that they are also circled) and the values at $4$ and $5$ are both $0$ (these vertices are not circled).
	\begin{figure}[H]
		\centering
		\begin{tikzpicture}[->,shorten >=9pt, shorten <=9pt, scale=0.95]
			\node at ( 0,0) [rectangle, draw] {\footnotesize{$1$}};
			\node at ( 0,-1) [circle, draw] {\footnotesize{$0$}};
			\node at ( 1,-1) [] {\footnotesize{$1$}};
			\node at ( 0,-2) [circle, draw] {\footnotesize{$0$}};
			\node at ( 1,-3) [] {\footnotesize{$0$}};
			\node at ( 0,-3) [] {\footnotesize{$0$}};
			\node at ( 0,-4) [] {\footnotesize{$0$}};
				\draw[->, -{Stealth[]}, color=black] ( 0,0) -- ( 1,-1);
				\draw[->, -{Stealth[]}, color=black] ( 0,-1) -- ( 1,-1);
				\draw[->, -{Stealth[]}, color=black] ( 0,-2) -- ( 1,-1);
				\draw[->, -{Stealth[]}, color=black] ( 0,-2) -- ( 1,-3);
				\draw[->, -{Stealth[]}, color=black] ( 0,-3) -- ( 1,-3);
				\draw[->, -{Stealth[]}, color=black] ( 0,-4) -- ( 1,-3);
				\draw[->, -{Stealth[]}, color=black] ( 1,-1) -- ( 2,0);
				\draw[->, -{Stealth[]}, color=black] ( 1,-1) -- ( 2,-1);
				\draw[->, -{Stealth[]}, color=black] ( 1,-1) -- ( 2,-2);
				\draw[->, -{Stealth[]}, color=black] ( 1,-3) -- ( 2,-2);
				\draw[->, -{Stealth[]}, color=black] ( 1,-3) -- ( 2,-3);
				\draw[->, -{Stealth[]}, color=black] ( 1,-3) -- ( 2,-4);
			\node at ( 2,0) [circle, draw] {\footnotesize{$0$}};
			\node at ( 2,-1) [circle, draw] {\footnotesize{$1$}};
			\node at ( 3,-1) [circle, fill=black, inner sep=1pt] {};
			\node at ( 2,-2) [circle, draw] {\footnotesize{$1$}};
			\node at ( 3,-3) [circle, fill=black, inner sep=1pt] {};
			\node at ( 2,-3) [] {\footnotesize{$0$}};
			\node at ( 2,-4) [] {\footnotesize{$0$}};
				\draw[->, -{Stealth[]}, color=black] ( 2,0) -- ( 3,-1);
				\draw[->, -{Stealth[]}, color=black] ( 2,-1) -- ( 3,-1);
				\draw[->, -{Stealth[]}, color=black] ( 2,-2) -- ( 3,-1);
				\draw[->, -{Stealth[]}, color=black] ( 2,-2) -- ( 3,-3);
				\draw[->, -{Stealth[]}, color=black] ( 2,-3) -- ( 3,-3);
				\draw[->, -{Stealth[]}, color=black] ( 2,-4) -- ( 3,-3);
		\end{tikzpicture}
	\end{figure}

	Next, we repeat Step~\eqref{knitting_algorithm step_4}.
	Since the circled $1$'s behave as zero, in this example the $-1$ appears in vertex $2$ in column~4 of the translation quiver.
	\begin{figure}[H]
		\centering
		\begin{tikzpicture}[->,shorten >=9pt, shorten <=9pt, scale=0.95]
			\node at ( 0,0) [rectangle, draw] {\footnotesize{$1$}};
			\node at ( 0,-1) [circle, draw] {\footnotesize{$0$}};
			\node at ( 1,-1) [] {\footnotesize{$1$}};
			\node at ( 0,-2) [circle, draw] {\footnotesize{$0$}};
			\node at ( 1,-3) [] {\footnotesize{$0$}};
			\node at ( 0,-3) [] {\footnotesize{$0$}};
			\node at ( 0,-4) [] {\footnotesize{$0$}};
				\draw[->, -{Stealth[]}, color=black] ( 0,0) -- ( 1,-1);
				\draw[->, -{Stealth[]}, color=black] ( 0,-1) -- ( 1,-1);
				\draw[->, -{Stealth[]}, color=black] ( 0,-2) -- ( 1,-1);
				\draw[->, -{Stealth[]}, color=black] ( 0,-2) -- ( 1,-3);
				\draw[->, -{Stealth[]}, color=black] ( 0,-3) -- ( 1,-3);
				\draw[->, -{Stealth[]}, color=black] ( 0,-4) -- ( 1,-3);
				\draw[->, -{Stealth[]}, color=black] ( 1,-1) -- ( 2,0);
				\draw[->, -{Stealth[]}, color=black] ( 1,-1) -- ( 2,-1);
				\draw[->, -{Stealth[]}, color=black] ( 1,-1) -- ( 2,-2);
				\draw[->, -{Stealth[]}, color=black] ( 1,-3) -- ( 2,-2);
				\draw[->, -{Stealth[]}, color=black] ( 1,-3) -- ( 2,-3);
				\draw[->, -{Stealth[]}, color=black] ( 1,-3) -- ( 2,-4);
			\node at ( 2,0) [circle, draw] {\footnotesize{$0$}};
			\node at ( 2,-1) [circle, draw] {\footnotesize{$1$}};
			\node at ( 3,-1) [] {\footnotesize{$-1$}};
			\node at ( 2,-2) [circle, draw] {\footnotesize{$1$}};
			\node at ( 3,-3) [] {\footnotesize{$0$}};
			\node at ( 2,-3) [] {\footnotesize{$0$}};
			\node at ( 2,-4) [] {\footnotesize{$0$}};
				\draw[->, -{Stealth[]}, color=black] ( 2,0) -- ( 3,-1);
				\draw[->, -{Stealth[]}, color=black] ( 2,-1) -- ( 3,-1);
				\draw[->, -{Stealth[]}, color=black] ( 2,-2) -- ( 3,-1);
				\draw[->, -{Stealth[]}, color=black] ( 2,-2) -- ( 3,-3);
				\draw[->, -{Stealth[]}, color=black] ( 2,-3) -- ( 3,-3);
				\draw[->, -{Stealth[]}, color=black] ( 2,-4) -- ( 3,-3);
		\end{tikzpicture}
	\end{figure}
	For Step~\eqref{knitting_algorithm step_6}, suppose that $i=1$, then sum the values in all occurrences of the circled vertex $1$ in the translation quiver.
	We see that $r_{0, 1} = 1$.
	Similarly, $r_{0, 0} = 0$ and $r_{0, 3} = 1$.
	Hence, there is precisely one arrow from vertex $0$ to vertex $1$ in the quiver of $\GammaI{I}$ and one arrow from vertex $0$ to vertex $3$ in the quiver of $\GammaI{I}$.

	Now, to determine the number of arrows \emph{to} vertex $0$ in the quiver of $\GammaI{I}$ we repeat the knitting algorithm but start at vertex $0$ in the rightmost column of the translation quiver and move left.
	The result of Steps~\eqref{knitting_algorithm step_1}--\eqref{knitting_algorithm step_5} is summarised by the following diagram.
	\begin{figure}[H]
		\centering
		\begin{tikzpicture}[->,shorten >=9pt, shorten <=9pt, scale=0.95]
			\node at ( 1,-1) [] {\footnotesize{$-1$}};
			\node at ( 1,-3) [] {\footnotesize{$0$}};
				\draw[->, -{Stealth[]}, color=black] ( 1,-1) -- ( 2,0);
				\draw[->, -{Stealth[]}, color=black] ( 1,-1) -- ( 2,-1);
				\draw[->, -{Stealth[]}, color=black] ( 1,-1) -- ( 2,-2);
				\draw[->, -{Stealth[]}, color=black] ( 1,-3) -- ( 2,-2);
				\draw[->, -{Stealth[]}, color=black] ( 1,-3) -- ( 2,-3);
				\draw[->, -{Stealth[]}, color=black] ( 1,-3) -- ( 2,-4);
			\node at ( 2,0) [circle, draw] {\footnotesize{$0$}};
			\node at ( 2,-1) [circle, draw] {\footnotesize{$1$}};
			\node at ( 3,-1) [] {\footnotesize{$1$}};
			\node at ( 2,-2) [circle, draw] {\footnotesize{$1$}};
			\node at ( 3,-3) [] {\footnotesize{$0$}};
			\node at ( 2,-3) [] {\footnotesize{$0$}};
			\node at ( 2,-4) [] {\footnotesize{$0$}};
				\draw[->, -{Stealth[]}, color=black] ( 2,0) -- ( 3,-1);
				\draw[->, -{Stealth[]}, color=black] ( 2,-1) -- ( 3,-1);
				\draw[->, -{Stealth[]}, color=black] ( 2,-2) -- ( 3,-1);
				\draw[->, -{Stealth[]}, color=black] ( 2,-2) -- ( 3,-3);
				\draw[->, -{Stealth[]}, color=black] ( 2,-3) -- ( 3,-3);
				\draw[->, -{Stealth[]}, color=black] ( 2,-4) -- ( 3,-3);
				\draw[->, -{Stealth[]}, color=black] ( 3,-1) -- ( 4,0);
				\draw[->, -{Stealth[]}, color=black] ( 3,-1) -- ( 4,-1);
				\draw[->, -{Stealth[]}, color=black] ( 3,-1) -- ( 4,-2);
				\draw[->, -{Stealth[]}, color=black] ( 3,-3) -- ( 4,-2);
				\draw[->, -{Stealth[]}, color=black] ( 3,-3) -- ( 4,-3);
				\draw[->, -{Stealth[]}, color=black] ( 3,-3) -- ( 4,-4);
			\node at ( 4,0) [rectangle, draw] {\footnotesize{$1$}};
			\node at ( 4,-1) [circle, draw] {\footnotesize{$0$}};
			\node at ( 4,-2) [circle, draw] {\footnotesize{$0$}};
			\node at ( 4,-3) [] {\footnotesize{$0$}};
			\node at ( 4,-4) [] {\footnotesize{$0$}};
		\end{tikzpicture}
	\end{figure}
	For Step~\eqref{knitting_algorithm step_6}, sum the circled values at each occurrence of the chosen vertex $i$ to see $r_{0, 0} = 0$, $r_{1, 0} = 1$, and $r_{3, 0} = 1$.
	Thus, in the quiver of $\GammaI{I}$, there is one arrow from vertex $1$ to vertex $0$, and one arrow from vertex $3$ to vertex $0$.
\end{example}

\section{Knitting results}\label{sec:knitting_results}
In Example~\ref{example:knitting_D_6_(0_1_3)} there is a striking symmetry appearing between the knitting calculations which go left to right and those which go right to left.
In fact, combining the final calculations in Example~\ref{example:knitting_D_6_(0_1_3)} gives the following, where the dotted line represents a line of reflection.
\vspace{-.6cm}\begin{figure}[H]
	\centering
	\begin{tikzpicture}[->,shorten >=9pt, shorten <=9pt, scale=0.9]
		\node at ( 1,-1) [] {\footnotesize{$-1$}};
		\node at ( 1,-3) [] {\footnotesize{$0$}};
		\draw[->, -{Stealth[]}, color=black] ( 1,-1) -- ( 2,0);
		\draw[->, -{Stealth[]}, color=black] ( 1,-1) -- ( 2,-1);
		\draw[->, -{Stealth[]}, color=black] ( 1,-1) -- ( 2,-2);
		\draw[->, -{Stealth[]}, color=black] ( 1,-3) -- ( 2,-2);
		\draw[->, -{Stealth[]}, color=black] ( 1,-3) -- ( 2,-3);
		\draw[->, -{Stealth[]}, color=black] ( 1,-3) -- ( 2,-4);
		\node at ( 2,0) [circle, draw] {\footnotesize{$0$}};
		\node at ( 2,-1) [circle, draw] {\footnotesize{$1$}};
		\node at ( 3,-1) [] {\footnotesize{$1$}};
		\node at ( 2,-2) [circle, draw] {\footnotesize{$1$}};
		\node at ( 3,-3) [] {\footnotesize{$0$}};
		\node at ( 2,-3) [] {\footnotesize{$0$}};
		\node at ( 2,-4) [] {\footnotesize{$0$}};
		\draw[->, -{Stealth[]}, color=black] ( 2,0) -- ( 3,-1);
		\draw[->, -{Stealth[]}, color=black] ( 2,-1) -- ( 3,-1);
		\draw[->, -{Stealth[]}, color=black] ( 2,-2) -- ( 3,-1);
		\draw[->, -{Stealth[]}, color=black] ( 2,-2) -- ( 3,-3);
		\draw[->, -{Stealth[]}, color=black] ( 2,-3) -- ( 3,-3);
		\draw[->, -{Stealth[]}, color=black] ( 2,-4) -- ( 3,-3);
		\draw[->, -{Stealth[]}, color=black] ( 3,-1) -- ( 4,0);
		\draw[->, -{Stealth[]}, color=black] ( 3,-1) -- ( 4,-1);
		\draw[->, -{Stealth[]}, color=black] ( 3,-1) -- ( 4,-2);
		\draw[->, -{Stealth[]}, color=black] ( 3,-3) -- ( 4,-2);
		\draw[->, -{Stealth[]}, color=black] ( 3,-3) -- ( 4,-3);
		\draw[->, -{Stealth[]}, color=black] ( 3,-3) -- ( 4,-4);
		\node at ( 4,0) [rectangle, draw] {\footnotesize{$1$}};
		\node at ( 4,-1) [circle, draw] {\footnotesize{$0$}};
		\node at ( 5,-1) [] {\footnotesize{$1$}};
		\node at ( 4,-2) [circle, draw] {\footnotesize{$0$}};
		\node at ( 5,-3) [] {\footnotesize{$0$}};
		\node at ( 4,-3) [] {\footnotesize{$0$}};
		\node at ( 4,-4) [] {\footnotesize{$0$}};
		\draw[-,densely dotted] (4,1.5) -- (4,-5);
		\draw[->, -{Stealth[]}, color=black] ( 3.5,0.75) -- ( 0.5,0.75);
		\draw[->, -{Stealth[]}, color=black] ( 4.5,0.75) -- ( 7.5,0.75);
		\draw[->, -{Stealth[]}, color=black] ( 4,0) -- ( 5,-1);
		\draw[->, -{Stealth[]}, color=black] ( 4,-1) -- ( 5,-1);
		\draw[->, -{Stealth[]}, color=black] ( 4,-2) -- ( 5,-1);
		\draw[->, -{Stealth[]}, color=black] ( 4,-2) -- ( 5,-3);
		\draw[->, -{Stealth[]}, color=black] ( 4,-3) -- ( 5,-3);
		\draw[->, -{Stealth[]}, color=black] ( 4,-4) -- ( 5,-3);
		\draw[->, -{Stealth[]}, color=black] ( 5,-1) -- ( 6,0);
		\draw[->, -{Stealth[]}, color=black] ( 5,-1) -- ( 6,-1);
		\draw[->, -{Stealth[]}, color=black] ( 5,-1) -- ( 6,-2);
		\draw[->, -{Stealth[]}, color=black] ( 5,-3) -- ( 6,-2);
		\draw[->, -{Stealth[]}, color=black] ( 5,-3) -- ( 6,-3);
		\draw[->, -{Stealth[]}, color=black] ( 5,-3) -- ( 6,-4);
		\node at ( 6,0) [circle, draw] {\footnotesize{$0$}};
		\node at ( 6,-1) [circle, draw] {\footnotesize{$1$}};
		\node at ( 7,-1) [] {\footnotesize{$-1$}};
		\node at ( 6,-2) [circle, draw] {\footnotesize{$1$}};
		\node at ( 7,-3) [] {\footnotesize{$0$}};
		\node at ( 6,-3) [] {\footnotesize{$0$}};
		\node at ( 6,-4) [] {\footnotesize{$0$}};
		\draw[->, -{Stealth[]}, color=black] ( 6,0) -- ( 7,-1);
		\draw[->, -{Stealth[]}, color=black] ( 6,-1) -- ( 7,-1);
		\draw[->, -{Stealth[]}, color=black] ( 6,-2) -- ( 7,-1);
		\draw[->, -{Stealth[]}, color=black] ( 6,-2) -- ( 7,-3);
		\draw[->, -{Stealth[]}, color=black] ( 6,-3) -- ( 7,-3);
		\draw[->, -{Stealth[]}, color=black] ( 6,-4) -- ( 7,-3);
	\end{tikzpicture}	
\end{figure}
\vspace{-.65cm}With this depiction it becomes clear that, for $i, j \in I$ where $I = \{ 0, 1, 3 \}$,
\vspace{-.45cm}\begin{align*}
	\# \text{ arrows } j \rightarrow i \text{ in the quiver of } \GammaI{I} &= r_{j, i}\\
	&= \textnormal{ sum of values at circled $i$ from centre to right} \\
	&= \textnormal{ sum of values at circled $i$ from centre to left} \\
	&= r_{i, j} \\
	&= \# \text{ arrows } i \rightarrow j \text{ in the quiver of } \GammaI{I}.
\end{align*}
We use and extend this symmetry in the main result of this chapter, Theorem~\ref{thm:number_arrows_0_to_i_equals_reverse}.
For type $\tA$, the line of reflection looks visually different from the one above.
In particular, it is diagonal.
Despite this visual difference, it is still a line through the original boxed vertex.
\vspace{-.6cm}\begin{figure}[H]
	\centering
	\begin{tikzpicture}[->, shorten >=9pt, shorten <=9pt, scale=0.75]
	\draw[-,densely dotted] (-1,-10) -- ( 10,1);
	\node at ( 0,0) [] {\footnotesize{$0$}};
	\node at ( 0,-1.5) [] {\footnotesize{$\sf{n}$}};
	\node at ( -0.25,-3) [] {\footnotesize{$\sf{n}-1$}};
	\node at ( -0.25,-4.5) [] {\footnotesize{$\sf{n}-2$}};
	\node at ( 0,-6) [] {\footnotesize{$\vdots$}};
	\node at ( 0,-7.5) [] {\footnotesize{$0$}};
	\node at ( 1.5,0) [] {\footnotesize{$1$}};
	\node at ( 1.5,-1.5) [] {\footnotesize{$0$}};
	\node at ( 1.5,-3) [] {\footnotesize{$\sf{n}$}};
	\node at ( 1.5,-4.5) [] {\footnotesize{$\sf{n}-1$}};
	\node at ( 1.5,-6) [] {\footnotesize{$\vdots$}};
	\node at ( 1.5,-7.5) [] {\footnotesize{$1$}};
	\node at ( 3,0) [] {\footnotesize{$2$}};
	\node at ( 3,-1.5) [] {\footnotesize{$1$}};
	\node at ( 3,-3) [] {\footnotesize{$0$}};
	\node at ( 3,-4.5) [] {\footnotesize{$\sf{n}$}};
	\node at ( 3,-6) [] {\footnotesize{$\vdots$}};
	\node at ( 3,-7.5) [] {\footnotesize{$2$}};
	\node at ( 4.5,0) [] {\footnotesize{$3$}};
	\node at ( 4.5,-1.5) [] {\footnotesize{$2$}};
	\node at ( 4.5,-3) [] {\footnotesize{$1$}};
	\node at ( 4.5,-4.5) [rectangle, draw] {\footnotesize{$0$}};
	\node at ( 4.5,-6) [] {\footnotesize{$\vdots$}};
	\node at ( 4.5,-7.5) [] {\footnotesize{$3$}};
	\node at ( 6,0) [] {\footnotesize{$\cdots$}};
	\node at ( 6,-1.5) [] {\footnotesize{$\cdots$}};
	\node at ( 6,-3) [] {\footnotesize{$\cdots$}};
	\node at ( 6,-4.5) [] {\footnotesize{$\cdots$}};
	\node at ( 6,-6) [] {\footnotesize{$\ddots$}};
	\node at ( 6,-7.5) [] {\footnotesize{$\cdots$}};
	\node at ( 7.5,0) [] {\footnotesize{$0$}};
	\node at ( 7.5,-1.5) [] {\footnotesize{$\sf{n}$}};
	\node at ( 7.5,-3) [] {\footnotesize{$\sf{n}-1$}};
	\node at ( 7.5,-4.5) [] {\footnotesize{$\sf{n}-2$}};
	\node at ( 7.5,-6) [] {\footnotesize{$\vdots$}};
	\node at ( 7.5,-7.5) [] {\footnotesize{$0$}};
	\draw[->, -{Stealth[]}, color=black] ( 0,0) -- ( 0,-1.5);
	\draw[->, -{Stealth[]}, color=black] ( 0,-1.5) -- ( 0,-3);
	\draw[->, -{Stealth[]}, color=black] ( 0,-3) -- ( 0,-4.5);
	\draw[->, -{Stealth[]}, color=black] ( 0,-4.5) -- ( 0,-6);
	\draw[->, -{Stealth[]}, color=black] ( 0,-6) -- ( 0,-7.5);
	\draw[->, -{Stealth[]}, color=black] ( 0,-7.5) -- ( 0,-9);
	\draw[->, -{Stealth[]}, color=black] ( 1.5,0) -- ( 1.5,-1.5);
	\draw[->, -{Stealth[]}, color=black] ( 1.5,-1.5) -- ( 1.5,-3);
	\draw[->, -{Stealth[]}, color=black] ( 1.5,-3) -- ( 1.5,-4.5);
	\draw[->, -{Stealth[]}, color=black] ( 1.5,-4.5) -- ( 1.5,-6);
	\draw[->, -{Stealth[]}, color=black] ( 1.5,-6) -- ( 1.5,-7.5);
	\draw[->, -{Stealth[]}, color=black] ( 1.5,-7.5) -- ( 1.5,-9);
	\draw[->, -{Stealth[]}, color=black] ( 3,0) -- ( 3,-1.5);
	\draw[->, -{Stealth[]}, color=black] ( 3,-1.5) -- ( 3,-3);
	\draw[->, -{Stealth[]}, color=black] ( 3,-3) -- ( 3,-4.5);
	\draw[->, -{Stealth[]}, color=black] ( 3,-4.5) -- ( 3,-6);
	\draw[->, -{Stealth[]}, color=black] ( 3,-6) -- ( 3,-7.5);
	\draw[->, -{Stealth[]}, color=black] ( 3,-7.5) -- ( 3,-9);
	\draw[->, -{Stealth[]}, color=black] ( 4.5,0) -- ( 4.5,-1.5);
	\draw[->, -{Stealth[]}, color=black] ( 4.5,-1.5) -- ( 4.5,-3);
	\draw[->, -{Stealth[]}, color=black] ( 4.5,-3) -- ( 4.5,-4.5);
	\draw[->, -{Stealth[]}, color=black] ( 4.5,-4.5) -- ( 4.5,-6);
	\draw[->, -{Stealth[]}, color=black] ( 4.5,-6) -- ( 4.5,-7.5);
	\draw[->, -{Stealth[]}, color=black] ( 4.5,-7.5) -- ( 4.5,-9);
	\draw[->, -{Stealth[]}, color=black] ( 6,0) -- ( 6,-1.5);
	\draw[->, -{Stealth[]}, color=black] ( 6,-1.5) -- ( 6,-3);
	\draw[->, -{Stealth[]}, color=black] ( 6,-3) -- ( 6,-4.5);
	\draw[->, -{Stealth[]}, color=black] ( 6,-4.5) -- ( 6,-6);
	\draw[->, -{Stealth[]}, color=black] ( 6,-6) -- ( 6,-7.5);
	\draw[->, -{Stealth[]}, color=black] ( 6,-7.5) -- ( 6,-9);
	\draw[->, -{Stealth[]}, color=black] ( 7.5,0) -- ( 7.5,-1.5);
	\draw[->, -{Stealth[]}, color=black] ( 7.5,-1.5) -- ( 7.5,-3);
	\draw[->, -{Stealth[]}, color=black] ( 7.5,-3) -- ( 7.5,-4.5);
	\draw[->, -{Stealth[]}, color=black] ( 7.5,-4.5) -- ( 7.5,-6);
	\draw[->, -{Stealth[]}, color=black] ( 7.5,-6) -- ( 7.5,-7.5);
	\draw[->, -{Stealth[]}, color=black] ( 7.5,-7.5) -- ( 7.5,-9);
	\draw[->, -{Stealth[]}, color=black] ( 0,0) -- ( 1.5,0);
	\draw[->, -{Stealth[]}, color=black] ( 1.5,0) -- ( 3,0);
	\draw[->, -{Stealth[]}, color=black] ( 3,0) -- ( 4.5,0);
	\draw[->, -{Stealth[]}, color=black] ( 4.5,0) -- ( 6,0);
	\draw[->, -{Stealth[]}, color=black] ( 6,0) -- ( 7.5,0);
	\draw[->, -{Stealth[]}, color=black] ( 7.5,0) -- ( 9,0);
	\draw[->, -{Stealth[]}, color=black] ( 0,-1.5) -- ( 1.5,-1.5);
	\draw[->, -{Stealth[]}, color=black] ( 1.5,-1.5) -- ( 3,-1.5);
	\draw[->, -{Stealth[]}, color=black] ( 3,-1.5) -- ( 4.5,-1.5);
	\draw[->, -{Stealth[]}, color=black] ( 4.5,-1.5) -- ( 6,-1.5);
	\draw[->, -{Stealth[]}, color=black] ( 6,-1.5) -- ( 7.5,-1.5);
	\draw[->, -{Stealth[]}, color=black] ( 7.5,-1.5) -- ( 9,-1.5);
	\draw[->, -{Stealth[]}, color=black] ( 0,-3) -- ( 1.5,-3);
	\draw[->, -{Stealth[]}, color=black] ( 1.5,-3) -- ( 3,-3);
	\draw[->, -{Stealth[]}, color=black] ( 3,-3) -- ( 4.5,-3);
	\draw[->, -{Stealth[]}, color=black] ( 4.5,-3) -- ( 6,-3);
	\draw[->, -{Stealth[]}, color=black] ( 6,-3) -- ( 7.3,-3);
	\draw[->, -{Stealth[]}, color=black] ( 7.7,-3) -- ( 9,-3);
	\draw[->, -{Stealth[]}, color=black] ( 0,-4.5) -- ( 1.3,-4.5);
	\draw[->, -{Stealth[]}, color=black] ( 1.7,-4.5) -- ( 3,-4.5);
	\draw[->, -{Stealth[]}, color=black] ( 3,-4.5) -- ( 4.5,-4.5);
	\draw[->, -{Stealth[]}, color=black] ( 4.5,-4.5) -- ( 6,-4.5);
	\draw[->, -{Stealth[]}, color=black] ( 6,-4.5) -- ( 7.3,-4.5);
	\draw[->, -{Stealth[]}, color=black] ( 7.7,-4.5) -- ( 9,-4.5);
	\draw[->, -{Stealth[]}, color=black] ( 0,-6) -- ( 1.5,-6);
	\draw[->, -{Stealth[]}, color=black] ( 1.5,-6) -- ( 3,-6);
	\draw[->, -{Stealth[]}, color=black] ( 3,-6) -- ( 4.5,-6);
	\draw[->, -{Stealth[]}, color=black] ( 4.5,-6) -- ( 6,-6);
	\draw[->, -{Stealth[]}, color=black] ( 6,-6) -- ( 7.5,-6);
	\draw[->, -{Stealth[]}, color=black] ( 7.5,-6) -- ( 9,-6);
	\draw[->, -{Stealth[]}, color=black] ( 0,-7.5) -- ( 1.5,-7.5);
	\draw[->, -{Stealth[]}, color=black] ( 1.5,-7.5) -- ( 3,-7.5);
	\draw[->, -{Stealth[]}, color=black] ( 3,-7.5) -- ( 4.5,-7.5);
	\draw[->, -{Stealth[]}, color=black] ( 4.5,-7.5) -- ( 6,-7.5);
	\draw[->, -{Stealth[]}, color=black] ( 6,-7.5) -- ( 7.5,-7.5);
	\draw[->, -{Stealth[]}, color=black] ( 7.5,-7.5) -- ( 9,-7.5);
	\end{tikzpicture}
\end{figure}
\vspace{-.65cm}The main result of this chapter shows that the quiver $\Gamma_I$ is always symmetric.
That is, the number of arrows from $i$ to $j$ equals the number of arrows from $j$ to $i$. 
\begin{theorem}\label{thm:number_arrows_0_to_i_equals_reverse}
	For any $I \subseteq (\Delta_{\aff})_{0}$, $r_{i, j} = r_{j, i}$ in the quiver of $\GammaI{I}$.
\end{theorem}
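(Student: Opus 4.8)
The plan is to prove the symmetry $r_{i,j} = r_{j,i}$ by showing that the knitting calculation starting at vertex $j$ in the leftmost relevant column and reading rightwards is a ``reflection'' of the knitting calculation starting at vertex $i$ in the rightmost relevant column and reading leftwards, and that this reflection fixes a line passing through the boxed starting vertex. Concretely, one observes that each of the AR-quivers $\Gamma_{\aff}$ of extended $\ADE$ type (as depicted in Figure~\ref{fig:repetition_quivers}) is \emph{symmetric under a reflection} $\sigma$ of the translation quiver: in types $\teDn{n}$ and $\teEn{n}$ this is the left--right reflection through a vertical line (visible in the diagram following Example~\ref{example:knitting_D_6_(0_1_3)}), while in type $\teAn{n}$ it is the diagonal reflection through a line of slope $\pm 1$ (visible in the last diagram of \S\ref{sec:knitting_results}). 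In each case $\sigma$ reverses the orientation of every arrow, preserves the translation $\tau$, and sends a vertex labelled $k$ to another vertex labelled $k$. The key point is that the knitting recursion (Knitting Algorithm~\ref{knitting_algorithm} Step~\eqref{knitting_algorithm step_4}) together with its boundary conditions (the boxed $1$, the zeros elsewhere in the starting column, the treatment of circled values as $0$, and stopping at a $-1$) is \emph{invariant under $\sigma$} — reversing arrows swaps ``the column to the left'' with ``the column to the right'', so the forward recursion for $j$ becomes exactly the backward recursion for a suitably placed copy of $j$.

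First I would set up the combinatorial framework: fix $I \subseteq (\Delta_{\aff})_0$ containing $0$, describe the translation quiver $\Z\Delta_{\aff}/\langle\text{identifications}\rangle$ precisely, and exhibit the reflection $\sigma$ on it in each $\ADE$ type, verifying that $\sigma$ reverses all arrows and commutes with $\tau$. Next I would make precise the ``knitting function'': given a starting vertex $v$ (a copy of $j$) and a direction (left-to-right or right-to-left), the algorithm produces an integer-valued function $K_v$ on the vertices of the translation quiver, determined by the boxed-$1$ initial condition and the mesh recursion, with circled entries forced to be $0$ and the process terminating at the first $-1$. The core lemma is then: if $v'$ is the copy of $j$ that $\sigma$ sends to $v$, then the left-to-right knitting function $K_v$ and the right-to-left knitting function $K_{v'}$ satisfy $K_v = K_{v'}\circ\sigma$; in particular their circled values at any copy of $i$ agree after applying $\sigma$. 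Summing over all copies of $i$ and invoking the Theorem at the end of Knitting Algorithm~\ref{knitting_algorithm} (which identifies these sums with $r_{j,i}$ and $r_{i,j}$ respectively) gives $r_{j,i} = r_{i,j}$.

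The proof of the core lemma proceeds by induction on the columns (for types $\teDn{n}, \teEn{n}$) or on the anti-diagonals (for type $\teAn{n}$), working outward from the boxed vertex $v$ in both directions simultaneously; since $\sigma$ fixes the line through $v$ and interchanges the two sides, the base case (the starting column/anti-diagonal, where only $v$ carries a $1$) is manifestly $\sigma$-symmetric, and the inductive step is just the observation that the mesh relation at a vertex $r$ in terms of its column-$\ell$ predecessors becomes, under $\sigma$, the mesh relation at $\sigma(r)$ in terms of its column-$\ell$ predecessors read from the other side — the arrow-reversal property of $\sigma$ is exactly what is needed. One also checks the termination conditions are respected: the first occurrence of a $-1$ on one side maps under $\sigma$ to the first $-1$ on the other side. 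The main obstacle I anticipate is bookkeeping the identifications in the translation quivers — in type $\teAn{n}$ all vertices labelled $i$ are identified and the reflection line is diagonal, so one must be careful that ``the leftmost copy of $j$'' and ``the rightmost copy of $j$ from the other calculation'' are genuinely exchanged by $\sigma$ and that the periodicity is compatible with $\sigma$; similarly in the $\teDn{n}$ even/odd cases the fork at the end of the Dynkin diagram behaves slightly differently under $\sigma$ and needs separate (but routine) verification. A cleaner, more conceptual alternative — which I would mention as a remark — is to avoid the case analysis entirely by using AR-duality: for a $2$-dimensional Kleinian singularity $R$ the translation $\tau$ is the identity on $\CM R$ (the category is $1$-Calabi--Yau in the stable sense), so $\Gamma_I = \End_R(\bigoplus_{i\in I}V_i)$ is a symmetric algebra and $\Ext^1$-type dimension counts force the quiver to be symmetric; but since the thesis wants the explicit knitting description, the reflection argument above is the natural route to present.
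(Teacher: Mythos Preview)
Your proposal is correct and takes essentially the same approach as the paper: both arguments rest on the observation that the AR-quiver (equivalently, the McKay quiver) of an $\ADE$ surface singularity is visibly symmetric under a reflection through the line containing the boxed starting vertex, so the forward knitting from $j$ and the backward knitting from $j$ are mirror images and hence produce the same circled sums. The paper's proof is considerably terser than what you outline --- it simply invokes the visible symmetry of Figure~\ref{fig:repetition_quivers} and concludes, without writing out the inductive verification of the core lemma or the type-by-type description of $\sigma$ --- so your plan would flesh out details the paper leaves implicit; your alternative remark via the $1$-Calabi--Yau property is a genuinely different (and cleaner) route that the paper does not pursue.
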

\begin{proof}
	If $i = j$, then there is nothing to prove as $r_{i,i} = r_{i,i}$.
	So, assume $i \neq j$.
	Then the number of arrows $r_{i,j}$ from vertex $i$ to vertex $j$ in the quiver of $\GammaI{I}$ can be calculated via the knitting algorithm starting in the leftmost column and moving right.
	In exactly the same way, the number of arrows $r_{j,i}$ from vertex $j$ to vertex $i$ in the quiver of $\GammaI{I}$ can be calculated via the knitting algorithm starting in the rightmost column and moving left. 
	As explained above (see, e.g. \cite{Aus86}), the AR-quiver of $\ADE$ surface singularities coincides with the McKay quiver, which, in Figure~\ref{fig:repetition_quivers} is visibly symmetric.
	Hence, $r_{i,j}$ can be obtained from $r_{j,i}$ by reflecting in the line through the original boxed vertex, and vice versa.
	Therefore, $r_{i,j} = r_{j,i}$ for all $i, j \in (\Delta_{\aff})_{0}$ in the quiver of $\GammaI{I}$.
\end{proof}

The final result, Theorem~\ref{thm:number_arrows_0_1_2}, together with intersection theory, is sufficient to obtain the full quiver of $\GammaI{I}$.
However, we refrain from stating the quivers explicitly since in \S\ref{sec:isolated_cdv_with_NCCR} we are only concerned with the number of arrows in to and out of vertex $0$ in the quiver of $\GammaI{I}$.
To state the final result of this chapter, we require the following lemma.
\begin{lemma}\label{lem:typeA_arrows_0_1_2}
	Let $\Delta = \tAn{n}$, and consider $\Delta_{\aff}$.
	Then, for any $I \subseteq (\Delta_{\aff})_{0}$, the quiver of $\Gamma_{I}$ is the double of the extended Dynkin diagram of type $\tA$$_{|I|-1}$ (the extended diagram has $|I|$ vertices), possibly with some loops.
\end{lemma}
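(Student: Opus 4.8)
The plan is to analyse the knitting algorithm directly on the type $\tAn{n}$ AR-quiver (Figure~\ref{fig:repetition_quivers_A_n}), exploiting the translational symmetry that is special to type $\tA$. Recall that in type $\tAn{n}$ the AR-quiver is an infinite strip where both the horizontal direction and the vertical direction are periodic with period $\sf{n}+1$, and all vertices carrying the same label $i$ are identified; crucially, the translation (the vertical shift) acts freely and transitively on each diagonal class of vertices. First I would observe that, because of this periodicity, applying the knitting algorithm starting from a boxed vertex labelled $j$ is insensitive to which copy of $j$ one starts at, and the propagation of values has a built-in $\Z/(\sf{n}+1)$ translation invariance. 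Concretely, the value computed at a vertex depends only on its ``distance'' (in the poset structure of the repetition quiver) from the boxed vertex, so the recursion collapses to a one-parameter family.

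Next I would run the knitting recursion explicitly for the simplest possible starting data, namely when $I = (\Delta_{\aff})_{0}$ is the \emph{whole} vertex set, so every vertex is circled. In that case step~\eqref{knitting_algorithm step_4} makes the boxed $1$ propagate to exactly the vertices adjacent to $j$ (value $1$ in the second column), and then the circled-vertices-act-as-zero rule means the $-1$ appears immediately, at the next column. Summing the circled values one sees $r_{j,i} = 1$ precisely when $i$ is one of the two neighbours of $j$ in the $\tAn{n}$ cycle, and $r_{j,i}=0$ otherwise. That is exactly the statement that $\Gamma_{(\Delta_{\aff})_0}$ is the double of the affine $\tAn{n}$ diagram (with $\sf{n}+1$ vertices), with no loops. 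For a general $I \subseteq (\Delta_{\aff})_0$ containing $0$, I would then argue that the vertices \emph{not} in $I$ are ``transparent'' to the algorithm in the following sense: uncircled vertices simply pass values through according to the linear recursion of step~\eqref{knitting_algorithm step_4}, so knitting with respect to $I$ is the same as first knitting with respect to the full vertex set but then, whenever a stretch of consecutive uncircled vertices occurs between two circled vertices $i_1, i_2 \in I$, the net effect is that the $1$ at $i_1$ travels along the chain and re-emerges as a $1$ at $i_2$ (the linear recursion on a type-$\tA$ chain with zero boundary data just shuffles a single $1$ along until the $-1$ terminates it). Hence each circled vertex of $I$ ``sees'' exactly its two nearest circled neighbours in the cyclic order induced on $I$ by the cycle $\tAn{n}$, giving one arrow each way — i.e.\ the double of affine $\tAn{|I|-1}$. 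The ``possibly with some loops'' clause covers the degenerate small cases: when $|I| = 1$ the two cyclic neighbours of the unique vertex coincide with the vertex itself, producing a loop (or two), and when $|I| = 2$ the two neighbours coincide, so one may get doubled arrows / loop-like behaviour; I would dispose of these by direct inspection of the knitting calculation, noting that affine $\tAn{0}$ is by convention a single vertex with a loop and affine $\tAn{1}$ is two vertices joined by a double edge, which is precisely what the algorithm yields.

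The main obstacle I anticipate is making rigorous the ``transparency'' claim for uncircled vertices — that is, showing cleanly that the linear recursion of step~\eqref{knitting_algorithm step_4} on a maximal run of consecutive uncircled vertices in the repetition quiver transports a single unit value from one end to the other and produces the terminating $-1$ at the right place, independently of the geometry of the rest of $I$. I would handle this by an induction on the length of the uncircled run, using that in type $\tAn{n}$ the relevant local piece of the AR-quiver is just a segment of type-$\tA$ ``ladder'', where the knitting recursion is the standard one and its behaviour (a travelling $1$ followed by a $-1$, everything else $0$) is already implicit in the $\tAn{n}$ calculations in \S\ref{sec:knitting_alg}. The symmetry statement $r_{i,j}=r_{j,i}$ — which is the content of Theorem~\ref{thm:number_arrows_0_to_i_equals_reverse} and is visible here as reflection in the diagonal line through the boxed vertex — then confirms that the resulting quiver is genuinely the \emph{double} of affine $\tAn{|I|-1}$ rather than some oriented version of it.
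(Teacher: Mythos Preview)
Your proposal is correct and reaches the same conclusion as the paper, but the route differs slightly in structure. The paper's proof is more direct: it fixes an arbitrary $i_s \in I$, writes $I = \{i_0 < i_1 < \cdots < i_m\}$ cyclically, and runs the knitting algorithm once from the boxed $1$ at $i_s$. From the explicit shape of the type $\teAn{n}$ translation quiver (the square grid) one sees that the $1$ propagates along the two ``arms'' (rightward and downward) as a string of uncircled $1$'s until each arm meets its nearest circled vertex, namely $i_{s-1}$ and $i_{s+1}$ (indices mod $m+1$); after that everything collapses to $0$ or $-1$. This single picture immediately gives $r_{i_s,i_{s\pm 1}}=1$ (or $2$ when $i_{s-1}=i_{s+1}$) and $r_{i_s,i_q}=0$ otherwise, plus a possible loop at $i_s$ coming from the circled value $b \in \{-1,0,1\}$ at the diagonal position.

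Your two-step plan (full-$I$ case first, then a ``transparency'' reduction for uncircled runs) encodes exactly the same phenomenon, and your proposed induction on the length of an uncircled run is precisely the observation that on a type $\tA$ ladder segment the recursion of step~\eqref{knitting_algorithm step_4} transports a single $1$ unchanged. So nothing is wrong, but the detour through the full case and the separate transparency lemma is unnecessary: the paper extracts all of this from one diagram for a general $i_s$, which also makes the degenerate $|I|=2$ case (where $i_{s-1}=i_{s+1}$, giving the double edge of $\teAn{1}$) fall out without a separate inspection.
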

\begin{proof}
	Write $I = \{ i_{0}, i_{1}, \hdots, i_{s}, \hdots, i_{m} \}$ where $i_{0} < i_{1} < \cdots < i_{s} < \cdots < i_{m}$.
	In this calculation, we wish to determine the number of arrows in to and out of vertex $i_{s} \in I$.
	Below we will consider subscripts mod $m+1$, so $i_{m+1} = i_{0}$, etc.
	For ease of reading, we omit all arrows in the translation quiver.
	The result of Steps~\eqref{knitting_algorithm step_1}--\eqref{knitting_algorithm step_5} of the knitting algorithm is as follows:
	\vspace{-.85cm}\begin{figure}[H]
		\centering
		\begin{tikzpicture}[scale=0.85]
		\node at ( 7,0) [] {\footnotesize{$i_{s-1}$}};
		\node at ( -1.8,-2) [] {\footnotesize{$i_{s+1}$}};
		
		\draw[color=red, densely dotted] ( 4.3,.3) to [out=80,in=90] ( 6.8,.3);
		\draw[color=red, densely dotted] ( -.3,-4.3) to [out=190,in=190] ( -2.2,-2);
		
		\node at ( 0,0) [rectangle, draw] {\footnotesize{$1$}};
		\node at ( 0,-1) [] {\footnotesize{$1$}};
		\node at ( 0,-2) [] {\footnotesize{$1$}};
		\node at ( 0,-3) [] {\footnotesize{\vdots}};
		\node at ( 0,-4) [circle,draw] {\footnotesize{$1$}};
		\node at ( 0,-5) [] {\footnotesize{$0$}};
		
		\node at ( 1,0) [] {\footnotesize{$1$}};
		\node at ( 2,0) [] {\footnotesize{$1$}};
		\node at ( 3,0) [] {\footnotesize{$\cdots$}};
		\node at ( 4,0) [circle, draw] {\footnotesize{$1$}};
		\node at ( 5,0) [] {\footnotesize{$0$}};
		
		\node at ( 1,-1) [circle, draw] {\footnotesize{$b$}};
		\node at ( 2,-1) [] {\footnotesize{$0$}};
		\node at ( 3,-1) [] {\footnotesize{$\cdots$}};
		\node at ( 4,-1) [] {\footnotesize{$-1$}};
		
		\node at ( 1,-2) [] {\footnotesize{$0$}};
		\node at ( 1,-3) [] {\footnotesize{$\vdots$}};
		\node at ( 1,-4) [] {\footnotesize{$-1$}};
		
		\node at ( 2,-2) [] {\footnotesize{$0$}};
		
		\draw[-,color=blue] ( 4.5,0.5) -- ( 4.5,-0.5);
		\draw[-,color=blue] ( 1.5,-0.5) -- ( 4.5,-0.5);
		\draw[-,color=blue] ( 1.5,-1.5) -- ( 1.5,-0.5);
		\draw[-,color=blue] ( 0.5,-1.5) -- ( 1.5,-1.5);
		\draw[-,color=blue] ( 0.5,-4.5) -- ( 0.5,-1.5);
		\draw[-,color=blue] ( -0.5,-4.5) -- ( 0.5, -4.5);
	\end{tikzpicture}
	\end{figure}
	\vspace{-.45cm}\noindent where $-1 \le b \le 1$.
	Indeed, regardless of which vertices are in $I$, the first time a circled value of $1$ will occur will be at vertex $i_{s-1}$ or vertex $i_{s+1}$ (where the subscripts are taken mod $m+1$).
	The values at all vertices from there on become $0$ or $-1$, as indicated by the blue lines above.
	In fact, if $|I| > 3$ then the only vertices which end up potentially having arrows coming from vertex $i_{s}$ in the quiver of $\GammaI{I}$ are $i_{s-1}$, $i_{s}$, and $i_{s+1}$.
	Hence, for any $i_{q} \in I$ where $i_{q}$ is distinct from $i_{s-1}, i_{s},$ and $i_{s+1}$, there are no arrows from vertex $i_{s}$ to vertex $i_{q}$ in the quiver of $\GammaI{I}$.
	Thus we need only consider the vertices $i_{s-1}, i_{s},$ and $i_{s+1}$.
	
	By inspection, if $i_{s-1} = i_{s+1}$ (which can happen if $s=0$ or $s=m$ since the subscripts are mod $m+1$) then there are two arrows from vertex $i_{s}$ to vertex $i_{s-1} = i_{s+1}$ in the quiver of $\GammaI{I}$ and potentially one loop at vertex $i_{s}$ in the quiver of $\GammaI{I}$.
	While, if $i_{s-1} \neq i_{s+1}$ then, in the quiver of $\GammaI{I}$, there is one arrow from vertex $i_{s}$ to vertex $i_{s-1}$, one arrow from vertex $i_{s}$ to vertex $i_{s+1}$, and potentially one loop at vertex $i_{s}$.
	Hence, the quiver is as stated.
\end{proof}

This result is best understood through an example.
\begin{examples}
	In this example we look at the two situations mentioned in the proof.
	The first is when $i_{s-1} = i_{s+1}$ and the second is when $i_{s-1} \neq i_{s+1}$.
	\begin{enumerate}
		\item To see what occurs when $i_{s-1} = i_{s+1}$ consider the AR-quiver $\teAn{4}$.
		Suppose $I_{1} = \{ 0, 1 \}$ and $I_{2} = \{ 0, 2 \}$, where in both cases $i_{s} = 0$, and $i_{s-1} = i_{s+1}$ is either $1$ or $2$ respectively.
		After running the knitting algorithm, we obtain the following.
		\begin{figure}[H]
			\centering
			\begin{subfigure}[b]{.4\textwidth}
				\centering
				\begin{tikzpicture}[->, shorten >=9pt, shorten <=9pt, scale=.95]
				\node at ( 0,0) [rectangle, draw] {\footnotesize{$1$}};
				\node at ( 0,-1) [] {\footnotesize{$1$}};
				\node at ( 0,-2) [] {\footnotesize{$1$}};
				\node at ( 0,-3) [circle,draw] {\footnotesize{$1$}};
				
				\node at ( 1,0) [circle, draw] {\footnotesize{$1$}};
				\node at ( 2,0) [] {\footnotesize{$0$}};
				\node at ( 3,0) [] {\footnotesize{$0$}};
				
				\node at ( 1,-1) [circle, draw] {\footnotesize{$0$}};
				\node at ( 2,-1) [circle, draw] {\footnotesize{$0$}};
				\node at ( 3,-1) [] {\footnotesize{$0$}};
				
				\node at ( 1,-2) [] {\footnotesize{$0$}};
				\node at ( 1,-3) [] {\footnotesize{$-1$}};
				\node at ( 2,-2) [] {\footnotesize{$0$}};
				
				\draw[->, -{Stealth[]}, color=black] ( 0,0) -- ( 0,-1);
				\draw[->, -{Stealth[]}, color=black] ( 0,-1) -- ( 0,-2);
				\draw[->, -{Stealth[]}, color=black] ( 0,-2) -- ( 0,-3);
				\draw[->, -{Stealth[]}, color=black] ( 1,0) -- ( 1,-1);
				\draw[->, -{Stealth[]}, color=black] ( 1,-1) -- ( 1,-2);
				\draw[->, -{Stealth[]}, color=black] ( 1,-2) -- ( 1,-3);
				\draw[->, -{Stealth[]}, color=black] ( 2,0) -- ( 2,-1);
				\draw[->, -{Stealth[]}, color=black] ( 2,-1) -- ( 2,-2);
				\draw[->, -{Stealth[]}, color=black] ( 3,0) -- ( 3,-1);
				
				\draw[->, -{Stealth[]}, color=black] ( 0,0) -- ( 1,0);
				\draw[->, -{Stealth[]}, color=black] ( 1,0) -- ( 2,0);
				\draw[->, -{Stealth[]}, color=black] ( 2,0) -- ( 3,0);
				\draw[->, -{Stealth[]}, color=black] ( 0,-1) -- ( 1,-1);
				\draw[->, -{Stealth[]}, color=black] ( 1,-1) -- ( 2,-1);
				\draw[->, -{Stealth[]}, color=black] ( 2,-1) -- ( 3,-1);
				\draw[->, -{Stealth[]}, color=black] ( 0,-2) -- ( 1,-2);
				\draw[->, -{Stealth[]}, color=black] ( 1,-2) -- ( 2,-2);
				\draw[->, -{Stealth[]}, color=black] ( 0,-3) -- ( 1,-3);
				\end{tikzpicture}\subcaption{$I_{1} = \{ 0, 1 \}$}\label{fig:circled_values_A_4_01_example}
			\end{subfigure}
			\begin{subfigure}[b]{.4\textwidth}
				\centering
				\begin{tikzpicture}[->, shorten >=9pt, shorten <=9pt, scale=.95]
				\node at ( 0,0) [rectangle, draw] {\footnotesize{$1$}};
				\node at ( 0,-1) [] {\footnotesize{$1$}};
				\node at ( 0,-2) [] {\footnotesize{$1$}};
				\node at ( 0,-3) [circle,draw] {\footnotesize{$1$}};
				
				\node at ( 1,0) [] {\footnotesize{$1$}};
				\node at ( 2,0) [circle, draw] {\footnotesize{$1$}};
				\node at ( 3,0) [] {\footnotesize{$0$}};
				
				\node at ( 1,-1) [circle, draw] {\footnotesize{$1$}};
				\node at ( 2,-1) [] {\footnotesize{$-1$}};
				
				\node at ( 1,-2) [] {\footnotesize{$0$}};
				\node at ( 1,-3) [] {\footnotesize{$-1$}};
				\node at ( 2,-2) [] {\footnotesize{$0$}};
				
				\draw[->, -{Stealth[]}, color=black] ( 0,0) -- ( 0,-1);
				\draw[->, -{Stealth[]}, color=black] ( 0,-1) -- ( 0,-2);
				\draw[->, -{Stealth[]}, color=black] ( 0,-2) -- ( 0,-3);
				\draw[->, -{Stealth[]}, color=black] ( 1,0) -- ( 1,-1);
				\draw[->, -{Stealth[]}, color=black] ( 1,-1) -- ( 1,-2);
				\draw[->, -{Stealth[]}, color=black] ( 1,-2) -- ( 1,-3);
				\draw[->, -{Stealth[]}, color=black] ( 2,0) -- ( 2,-1);
				\draw[->, -{Stealth[]}, color=black] ( 2,-1) -- ( 2,-2);
				
				\draw[->, -{Stealth[]}, color=black] ( 0,0) -- ( 1,0);
				\draw[->, -{Stealth[]}, color=black] ( 1,0) -- ( 2,0);
				\draw[->, -{Stealth[]}, color=black] ( 2,0) -- ( 3,0);
				\draw[->, -{Stealth[]}, color=black] ( 0,-1) -- ( 1,-1);
				\draw[->, -{Stealth[]}, color=black] ( 1,-1) -- ( 2,-1);
				\draw[->, -{Stealth[]}, color=black] ( 0,-2) -- ( 1,-2);
				\draw[->, -{Stealth[]}, color=black] ( 1,-2) -- ( 2,-2);
				\draw[->, -{Stealth[]}, color=black] ( 0,-3) -- ( 1,-3);
				\end{tikzpicture}\subcaption{$I_{2} = \{ 0, 2 \}$}\label{fig:circled_values_A_4_02_example}
			\end{subfigure}\caption{Circled values for varying $I$ in $A_{4}$}\label{fig:circled_values_A4_example}
		\end{figure}
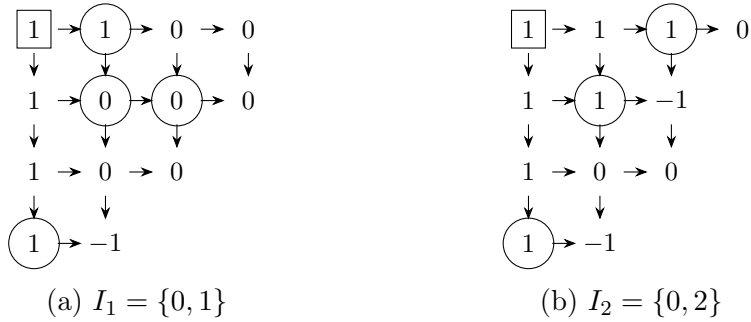
		As demonstrated by Figure~\ref{fig:circled_values_A_4_01_example}, there are two arrows from vertex $0$ to vertex $1$ in the quiver of $\GammaI{I_{1}}$ and no loops.
		As for the quiver of $\GammaI{I_{2}}$, from Figure~\ref{fig:circled_values_A_4_02_example}, there are two arrows from vertex $0$ to vertex $2$ in the quiver of $\GammaI{I_{2}}$ and one loop at vertex $0$.
		
		\item To see what occurs when $i_{s-1} \neq i_{s+1}$ consider the AR-quiver $\teAn{5}$.
		Suppose $I_{3} = \{ 0, 1, 4 \}$ and $I_{4} = \{ 0, 2, 4 \}$.
		In both cases $i_{s} = 0$ and $i_{s+1} = 4$, and $i_{s-1}$ is either $1$ or $2$ respectively.
		Again, the knitting algorithm gives the following.
		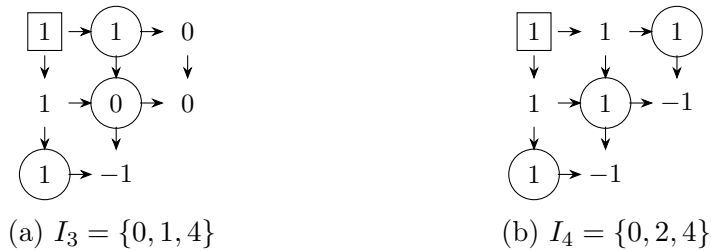
\begin{figure}[H]
			\centering
			\begin{subfigure}[b]{.4\textwidth}
				\centering
				\begin{tikzpicture}[->, shorten >=9pt, shorten <=9pt, scale=.95]
				\node at ( 0,0) [rectangle, draw] {\footnotesize{$1$}};
				\node at ( 0,-1) [] {\footnotesize{$1$}};
				\node at ( 0,-2) [circle,draw] {\footnotesize{$1$}};
				
				\node at ( 1,0) [circle, draw] {\footnotesize{$1$}};
				\node at ( 2,0) [] {\footnotesize{$0$}};
				
				\node at ( 1,-1) [circle, draw] {\footnotesize{$0$}};
				\node at ( 2,-1) [] {\footnotesize{$0$}};
				
				\node at ( 1,-2) [] {\footnotesize{$-1$}};
				
				\draw[->, -{Stealth[]}, color=black] ( 0,0) -- ( 0,-1);
				\draw[->, -{Stealth[]}, color=black] ( 0,-1) -- ( 0,-2);
				\draw[->, -{Stealth[]}, color=black] ( 1,0) -- ( 1,-1);
				\draw[->, -{Stealth[]}, color=black] ( 1,-1) -- ( 1,-2);
				\draw[->, -{Stealth[]}, color=black] ( 2,0) -- ( 2,-1);
				
				\draw[->, -{Stealth[]}, color=black] ( 0,0) -- ( 1,0);
				\draw[->, -{Stealth[]}, color=black] ( 1,0) -- ( 2,0);
				\draw[->, -{Stealth[]}, color=black] ( 0,-1) -- ( 1,-1);
				\draw[->, -{Stealth[]}, color=black] ( 1,-1) -- ( 2,-1);
				\draw[->, -{Stealth[]}, color=black] ( 0,-2) -- ( 1,-2);
				\end{tikzpicture}\subcaption{$I_{3} = \{ 0, 1, 4 \}$}\label{fig:circled_values_A_5_014_example}
			\end{subfigure}
			\begin{subfigure}[b]{.4\textwidth}
				\centering
				\begin{tikzpicture}[->, shorten >=9pt, shorten <=9pt, scale=.95]
				\node at ( 0,0) [rectangle, draw] {\footnotesize{$1$}};
				\node at ( 0,-1) [] {\footnotesize{$1$}};
				\node at ( 0,-2) [circle,draw] {\footnotesize{$1$}};
				
				\node at ( 1,0) [] {\footnotesize{$1$}};
				\node at ( 2,0) [circle, draw] {\footnotesize{$1$}};
				
				\node at ( 1,-1) [circle, draw] {\footnotesize{$1$}};
				\node at ( 2,-1) [] {\footnotesize{$-1$}};
				
				\node at ( 1,-2) [] {\footnotesize{$-1$}};
				
				\draw[->, -{Stealth[]}, color=black] ( 0,0) -- ( 0,-1);
				\draw[->, -{Stealth[]}, color=black] ( 0,-1) -- ( 0,-2);
				\draw[->, -{Stealth[]}, color=black] ( 1,0) -- ( 1,-1);
				\draw[->, -{Stealth[]}, color=black] ( 1,-1) -- ( 1,-2);
				\draw[->, -{Stealth[]}, color=black] ( 2,0) -- ( 2,-1);
				
				\draw[->, -{Stealth[]}, color=black] ( 0,0) -- ( 1,0);
				\draw[->, -{Stealth[]}, color=black] ( 1,0) -- ( 2,0);
				\draw[->, -{Stealth[]}, color=black] ( 0,-1) -- ( 1,-1);
				\draw[->, -{Stealth[]}, color=black] ( 1,-1) -- ( 2,-1);
				\draw[->, -{Stealth[]}, color=black] ( 0,-2) -- ( 1,-2);
				\end{tikzpicture}\subcaption{$I_{4} = \{ 0, 2, 4 \}$}\label{fig:circled_values_A_5_024_example}
			\end{subfigure}\caption{Circled values for varying $I$ in $A_{5}$}\label{fig:circled_values_A5_example}
		\end{figure}
		From Figure~\ref{fig:circled_values_A_5_014_example}, there is one arrow from vertex $0$ to vertex $1$ and one arrow from vertex $0$ to vertex $4$ in the quiver of $\GammaI{I_{3}}$ and there are no loops.
		As for the quiver of $\GammaI{I_{4}}$, from Figure~\ref{fig:circled_values_A_5_024_example}, there is one arrow from vertex $0$ to vertex $2$, one arrow from vertex $0$ to vertex $4$, and one loop at vertex $0$.
	\end{enumerate}
\end{examples}

Now, we state the final result.
Recall that extended Dynkin diagrams are McKay quivers, and are often drawn with the dimension $\updelta_{j}$ of the corresponding irreducible representation (see \S\ref{sec:intro:mckay_correspondence}).
\begin{theorem}\label{thm:number_arrows_0_1_2}
	Let $I \subseteq (\Delta_{\aff})_{0}$ and suppose that $j \in (\Delta_{\aff})_{0}$.
	Then the number of arrows $r_{0, j}$ from vertex $0$ to vertex $j$ in the quiver of $\GammaI{I}$ is either $0$, $1$, or $2$. 
	Furthermore, $r_{0,j} = 2$ if and only if $I = \{ 0, j \}$ and $\updelta_{j} = 1$.
\end{theorem}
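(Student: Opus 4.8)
The plan is to extract everything from the knitting algorithm of Knitting Algorithm~\ref{knitting_algorithm}, run with the boxed $1$ placed at the leftmost occurrence of vertex $0$, exploiting throughout that vertex $0$ is an extremal vertex of $\Delta_{\aff}$: the end of a cycle when $\Delta_{\aff}$ has type $\teAn{n}$, and a leaf in types $\teDn{n}$, $\teEn{6}$, $\teEn{7}$, $\teEn{8}$. First a trivial reduction: if $j\notin I$ there is no vertex $j$ in the quiver of $\GammaI{I}$, so $r_{0,j}=0$ and there is nothing to prove; hence I assume $0,j\in I$ from now on, and deal with the (essentially trivial) case $j=0$ at the very end.

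When $\Delta_{\aff}$ has type $\teAn{n}$, Lemma~\ref{lem:typeA_arrows_0_1_2} already identifies the quiver of $\GammaI{I}$ as the double of the extended Dynkin diagram $\teAn{|I|-1}$ (which has $|I|$ vertices), possibly with some loops. Reading off the arrows out of vertex $0$ for $j\neq 0$: $r_{0,j}$ is the number of edges joining $0$ and $j$ in that doubled cyclic quiver, so $r_{0,j}=0$ unless $j$ is one of the (at most two) cyclic neighbours of $0$. When $|I|\geq 3$ those two neighbours are distinct, whence $r_{0,j}\leq 1$; when $|I|=2$ the quiver is the double of $\teAn{1}$, which carries a doubled edge, so $r_{0,j}=2$ for the unique other vertex $j$, and then $I=\{0,j\}$ and $\updelta_j=1$ since every $\updelta_i=1$ in type $\tA$. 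Thus the theorem holds in type $\tA$, and loops at vertex $0$ are irrelevant here for $j\neq 0$.

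For types $\teDn{n}$ and $\teEn{n}$ I would knit directly on the repetition quivers of Figure~\ref{fig:repetition_quivers}, using the following structural features. Vertex $0$ is a leaf with a unique neighbour $v^{*}$ in $\Delta_{\aff}$, and $\updelta_{v^{*}}=2$; consequently column one of the tableau has a boxed $1$ at vertex $0$ and $0$ elsewhere, column two has a single $1$, at $v^{*}$, and the value at vertex $0$ in column three is $1-1=0$. If $v^{*}\in I$, this value is circled and, because the $1$ at $v^{*}$ is then suppressed, a $-1$ appears immediately; the only circled positive value produced is the $1$ at $v^{*}$, so $r_{0,v^{*}}\leq 1$ and $r_{0,j}=0$ for every other $j$. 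If $v^{*}\notin I$ the knitting continues, and here one uses $\updelta_{0}=1$: every later occurrence of vertex $0$ receives value $0$ or $1$, which forces a $-1$ to appear after the tableau has ``wrapped around'' at most twice. In that bounded stretch each vertex of $I$ is visited at most twice with positive value, each such value being at most $1$, and two consecutive visits of $j$ both carrying a $1$ occur exactly when $I=\{0,j\}$ and $j$ is itself a leaf, i.e.\ $\updelta_{j}=1$; in every other subcase at least one visit of $j$ contributes $0$, or $j$ is visited only once, with value $\leq 1$. This gives $r_{0,j}\in\{0,1,2\}$ with $r_{0,j}=2$ only in the stated case, and Theorem~\ref{thm:number_arrows_0_to_i_equals_reverse} guarantees the same count for arrows $j\to 0$, so nothing is lost by having fixed the starting vertex. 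Finally, for $j=0$ the same bookkeeping shows that once $|I|\geq 2$ the only circled value ever recorded at a non-boxed occurrence of vertex $0$ is at most $1$, so $r_{0,0}\leq 1$; this is consistent with the equality case $I=\{0,j\}$ forcing $I=\{0\}$ and hence $|I|=1$.

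The main obstacle I foresee is the $\teDn{n}/\teEn{n}$ analysis: turning the heuristic ``the tableau wraps around at most twice and each $i\in I$ is visited at most twice with value $\leq 1$'' into a genuine argument, uniformly for the family $\teDn{n}$ (in the style of the proof of Lemma~\ref{lem:typeA_arrows_0_1_2}) and for the three exceptional quivers $\teEn{6},\teEn{7},\teEn{8}$, and in particular pinning down precisely which visits of $j$ carry a $1$ so as to obtain the exact characterisation $r_{0,j}=2\iff I=\{0,j\}$ and $\updelta_j=1$. A cleaner conceptual route would be to interpret $r_{0,j}$ geometrically as an intersection number of exceptional curves in the partial resolution attached to $\End_{R}(\bigoplus_{i\in I}V_{i})$ and to bound it using the rank ($=\updelta$) of the module $V_j$; this would sidestep the case check, but requires input on rank-one reflexive modules over Kleinian singularities not developed in this excerpt.
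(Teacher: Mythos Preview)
Your type $\tA$ argument matches the paper's exactly, via Lemma~\ref{lem:typeA_arrows_0_1_2}.

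For types $\tD$ and $\tE$ your approach diverges substantially from the paper's. The paper makes no attempt at a uniform structural argument: it simply runs the knitting algorithm explicitly for each Dynkin type and each relevant subset $I$, after first cutting the number of cases using two reductions (symmetries of leaf vertices, and the observation that circling a vertex close to $0$ forces all vertices further away to contribute only zeros thereafter). Each type yields a table of values $r_{0,j}$, and both the bound $r_{0,j}\in\{0,1,2\}$ and the characterisation of equality are read off by inspection. For $\teDn{n}$ with ${\sf n}\geq 7$ the pattern stabilises and the table is written down in closed form; for $\teEn{6},\teEn{7},\teEn{8}$ it is a finite enumeration with accompanying figures.

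The obstacle you flag is precisely the gap in your proposal. Your key heuristics---``every later occurrence of vertex $0$ receives value $0$ or $1$'', ``the tableau wraps around at most twice'', and ``each vertex of $I$ is visited at most twice with positive value, each such value $\leq 1$''---are empirically correct in every one of the paper's computations, but you give no proof, and the paper does not supply one either: it simply never needs such a statement. Making these rigorous would require either a representation-theoretic interpretation of the knitting values (e.g.\ as multiplicities in radical layers, bounded via $\updelta_0=1$) or a direct combinatorial induction on the shape of the repetition quiver, and neither is immediate. Your proposed intersection-theoretic alternative is also not pursued in the paper. So what you have is a plausible outline whose hard step the paper bypasses entirely by brute-force enumeration; if you want to complete your route, the burden is on establishing those boundedness claims, and the paper offers no help there.
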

\begin{proof}
We prove this via case-by-case analysis, where Theorem~\ref{thm:number_arrows_0_to_i_equals_reverse} halves the amount of work necessary, allowing us to determine only the number of arrows out of vertex $0$ in the quiver of $\GammaI{I}$.
In addition, in each of the following cases several vertices are interchangeable, which, again, lessens the amount of work required.
We will note where this occurs.
In the knitting computations below, formally, we say that vertices below vertex $i$ are `cut off' when the algorithm produces $0$'s in all rows below vertex $i$.

\begin{enumerate}
\vspace{-.5cm}\item 
Consider the AR-quiver $\teAn{n}$ as in Figure~\ref{fig:repetition_quivers_A_n}.
This case is covered by Lemma~\ref{lem:typeA_arrows_0_1_2}.

\item Consider the AR-quiver $\teDn{4}$ as in Figure~\ref{fig:repetition_quivers_D_4}.
We consider all subsets $I \subseteq (\Delta_{\aff})_{0} = \{ 0, 1, 2, 3, 4 \}$ which contain $0$.
Notice that vertices $1$, $3$, and $4$ are effectively interchangeable so that $I = \{ 0, 1 \}$ gives the same circled values in the knitting calculation as $I = \{ 0, 3 \}$ and $I = \{ 0, 4 \}$.
Furthermore, $I = \{ 0, 1, 3 \}$ gives the same circled values in the knitting calculation as $I = \{ 0, 1, 4 \}$.
In addition, if $2 \in I$ then all vertices other than vertex $0$ in the knitting calculation are cut off, and so we get the same circled values for $I = \{ 0, 2, \bullet \}$ where $\bullet$ is any list of numbers $1, 3,$ and $4$.\\
\vspace{-.3cm}
\begin{figure}[H]
	\centering
	\begin{minipage}[b]{.4\textwidth}
		\centering
		\begin{tikzpicture}[->,shorten >=9pt, shorten <=9pt, scale=0.9]
		\node at ( 0,0) [rectangle, draw] {\footnotesize{$1$}};
		\node at ( 0,-1) [circle, draw] {\footnotesize{$0$}};
		\node at ( 0,-2) [] {\footnotesize{$0$}};
		\node at ( 0,-3) [] {\footnotesize{$0$}};
		\node at ( 1,-1.5) [] {\footnotesize{$1$}};
		\draw[->, -{Stealth[]}, color=black] ( 0,0) -- ( 1,-1.5);
		\draw[->, -{Stealth[]}, color=black] ( 0,-1) -- ( 1,-1.5);
		\draw[->, -{Stealth[]}, color=black] ( 0,-2) -- ( 1,-1.5);
		\draw[->, -{Stealth[]}, color=black] ( 0,-3) -- ( 1,-1.5);
		\draw[->, -{Stealth[]}, color=black] ( 1,-1.5) -- ( 2,0);
		\draw[->, -{Stealth[]}, color=black] ( 1,-1.5) -- ( 2,-1);
		\draw[->, -{Stealth[]}, color=black] ( 1,-1.5) -- ( 2,-2);
		\draw[->, -{Stealth[]}, color=black] ( 1,-1.5) -- ( 2,-3);
		\node at ( 2,0) [circle, draw] {\footnotesize{$0$}};
		\node at ( 2,-1) [circle, draw] {\footnotesize{$1$}};
		\node at ( 2,-2) [] {\footnotesize{$1$}};
		\node at ( 2,-3) [] {\footnotesize{$1$}};
		\node at ( 3,-1.5) [] {\footnotesize{$1$}};
		\draw[->, -{Stealth[]}, color=black] ( 2,0) -- ( 3,-1.5);
		\draw[->, -{Stealth[]}, color=black] ( 2,-1) -- ( 3,-1.5);
		\draw[->, -{Stealth[]}, color=black] ( 2,-2) -- ( 3,-1.5);
		\draw[->, -{Stealth[]}, color=black] ( 2,-3) -- ( 3,-1.5); 
		\draw[->, -{Stealth[]}, color=black] ( 3,-1.5) -- ( 4,0);
		\draw[->, -{Stealth[]}, color=black] ( 3,-1.5) -- ( 4,-1);
		\draw[->, -{Stealth[]}, color=black] ( 3,-1.5) -- ( 4,-2);
		\draw[->, -{Stealth[]}, color=black] ( 3,-1.5) -- ( 4,-3);
		\node at ( 4,0) [circle, draw] {\footnotesize{$1$}};
		\node at ( 4,-1) [circle, draw] {\footnotesize{$1$}};
		\node at ( 4,-2) [] {\footnotesize{$0$}};
		\node at ( 4,-3) [] {\footnotesize{$0$}};
		\node at ( 5,-1.5) [] {\footnotesize{$-1$}};
		\draw[->, -{Stealth[]}, color=black] ( 4,0) -- ( 5,-1.5);
		\draw[->, -{Stealth[]}, color=black] ( 4,-1) -- ( 5,-1.5);
		\draw[->, -{Stealth[]}, color=black] ( 4,-2) -- ( 5,-1.5);
		\draw[->, -{Stealth[]}, color=black] ( 4,-3) -- ( 5,-1.5); 
		\end{tikzpicture}\subcaption{$I=\{ 0, 1 \}$}\label{subfig:D_4_0_1}
	\end{minipage}
	\begin{minipage}[b]{.4\textwidth}\vspace{1cm}
		\centering
		\begin{tikzpicture}[->,shorten >=9pt, shorten <=9pt, scale=0.9]
			\node at ( 0,0) [rectangle, draw] {\footnotesize{$1$}};
			\node at ( 0,-1) [] {\footnotesize{$0$}};
			\node at ( 0,-2) [] {\footnotesize{$0$}};
			\node at ( 0,-3) [] {\footnotesize{$0$}};
			\node at ( 1,-1.5) [circle, draw] {\footnotesize{$1$}};
			\draw[->, -{Stealth[]}, color=black] ( 0,0) -- ( 1,-1.5);
			\draw[->, -{Stealth[]}, color=black] ( 0,-1) -- ( 1,-1.5);
			\draw[->, -{Stealth[]}, color=black] ( 0,-2) -- ( 1,-1.5);
			\draw[->, -{Stealth[]}, color=black] ( 0,-3) -- ( 1,-1.5);
			\draw[->, -{Stealth[]}, color=black] ( 1,-1.5) -- ( 2,0);
			\draw[->, -{Stealth[]}, color=black] ( 1,-1.5) -- ( 2,-1);
			\draw[->, -{Stealth[]}, color=black] ( 1,-1.5) -- ( 2,-2);
			\draw[->, -{Stealth[]}, color=black] ( 1,-1.5) -- ( 2,-3);
			\node at ( 2,0) [] {\footnotesize{$-1$}};
			\node at ( 2,-1) [] {\footnotesize{$0$}};
			\node at ( 2,-2) [] {\footnotesize{$0$}};
			\node at ( 2,-3) [] {\footnotesize{$0$}};
		\end{tikzpicture}\subcaption{$I = \{ 0, 2, \bullet \}$}\label{subfig:D_4_0_2}
	\end{minipage}
\end{figure}
\begin{figure}[H]\ContinuedFloat
	\centering
	\begin{minipage}[b]{.4\textwidth}
		\centering
		\begin{tikzpicture}[->,shorten >=9pt, shorten <=9pt, scale=0.9]
			\node at ( 0,0) [rectangle, draw] {\footnotesize{$1$}};
			\node at ( 0,-1) [circle, draw] {\footnotesize{$0$}};
			\node at ( 0,-2) [circle, draw] {\footnotesize{$0$}};
			\node at ( 0,-3) [] {\footnotesize{$0$}};
			\node at ( 1,-1.5) [] {\footnotesize{$1$}};
			\draw[->, -{Stealth[]}, color=black] ( 0,0) -- ( 1,-1.5);
			\draw[->, -{Stealth[]}, color=black] ( 0,-1) -- ( 1,-1.5);
			\draw[->, -{Stealth[]}, color=black] ( 0,-2) -- ( 1,-1.5);
			\draw[->, -{Stealth[]}, color=black] ( 0,-3) -- ( 1,-1.5);
			\draw[->, -{Stealth[]}, color=black] ( 1,-1.5) -- ( 2,0);
			\draw[->, -{Stealth[]}, color=black] ( 1,-1.5) -- ( 2,-1);
			\draw[->, -{Stealth[]}, color=black] ( 1,-1.5) -- ( 2,-2);
			\draw[->, -{Stealth[]}, color=black] ( 1,-1.5) -- ( 2,-3);
			\node at ( 2,0) [circle, draw] {\footnotesize{$0$}};
			\node at ( 2,-1) [circle, draw] {\footnotesize{$1$}};
			\node at ( 2,-2) [circle, draw] {\footnotesize{$1$}};
			\node at ( 2,-3) [] {\footnotesize{$1$}};
			\node at ( 3,-1.5) [] {\footnotesize{$0$}};
			\draw[->, -{Stealth[]}, color=black] ( 2,0) -- ( 3,-1.5);
			\draw[->, -{Stealth[]}, color=black] ( 2,-1) -- ( 3,-1.5);
			\draw[->, -{Stealth[]}, color=black] ( 2,-2) -- ( 3,-1.5);
			\draw[->, -{Stealth[]}, color=black] ( 2,-3) -- ( 3,-1.5); 
			\draw[->, -{Stealth[]}, color=black] ( 3,-1.5) -- ( 4,0);
			\draw[->, -{Stealth[]}, color=black] ( 3,-1.5) -- ( 4,-1);
			\draw[->, -{Stealth[]}, color=black] ( 3,-1.5) -- ( 4,-2);
			\draw[->, -{Stealth[]}, color=black] ( 3,-1.5) -- ( 4,-3);
			\node at ( 4,0) [circle, draw] {\footnotesize{$0$}};
			\node at ( 4,-1) [circle, draw] {\footnotesize{$0$}};
			\node at ( 4,-2) [circle, draw] {\footnotesize{$0$}};
			\node at ( 4,-3) [] {\footnotesize{$-1$}};
		\end{tikzpicture}\subcaption{$I=\{ 0, 1, 3 \}$}\label{subfig:D_4_0_1_3}
	\end{minipage}
	\begin{minipage}[b]{.4\textwidth}
		\centering
		\begin{tikzpicture}[->,shorten >=9pt, shorten <=9pt, scale=0.9]
			\node at ( 0,0) [rectangle, draw] {\footnotesize{$1$}};
			\node at ( 0,-1) [circle, draw] {\footnotesize{$0$}};
			\node at ( 0,-2) [circle, draw] {\footnotesize{$0$}};
			\node at ( 0,-3) [circle, draw] {\footnotesize{$0$}};
			\node at ( 1,-1.5) [] {\footnotesize{$1$}};
			\draw[->, -{Stealth[]}, color=black] ( 0,0) -- ( 1,-1.5);
			\draw[->, -{Stealth[]}, color=black] ( 0,-1) -- ( 1,-1.5);
			\draw[->, -{Stealth[]}, color=black] ( 0,-2) -- ( 1,-1.5);
			\draw[->, -{Stealth[]}, color=black] ( 0,-3) -- ( 1,-1.5);
			\draw[->, -{Stealth[]}, color=black] ( 1,-1.5) -- ( 2,0);
			\draw[->, -{Stealth[]}, color=black] ( 1,-1.5) -- ( 2,-1);
			\draw[->, -{Stealth[]}, color=black] ( 1,-1.5) -- ( 2,-2);
			\draw[->, -{Stealth[]}, color=black] ( 1,-1.5) -- ( 2,-3);
			\node at ( 2,0) [circle, draw] {\footnotesize{$0$}};
			\node at ( 2,-1) [circle, draw] {\footnotesize{$1$}};
			\node at ( 2,-2) [circle, draw] {\footnotesize{$1$}};
			\node at ( 2,-3) [circle, draw] {\footnotesize{$1$}};
			\node at ( 3,-1.5) [] {\footnotesize{$-1$}};
			\draw[->, -{Stealth[]}, color=black] ( 2,0) -- ( 3,-1.5);
			\draw[->, -{Stealth[]}, color=black] ( 2,-1) -- ( 3,-1.5);
			\draw[->, -{Stealth[]}, color=black] ( 2,-2) -- ( 3,-1.5);
			\draw[->, -{Stealth[]}, color=black] ( 2,-3) -- ( 3,-1.5); 
		\end{tikzpicture}\subcaption{$I=\{ 0, 1, 3, 4 \}$}\label{subfig:D_4_0_1_3_4}
	\end{minipage}\caption{Circled values for varying $I$ for $\teDn{4}$}\label{fig:circled_values_D_4}
\end{figure}
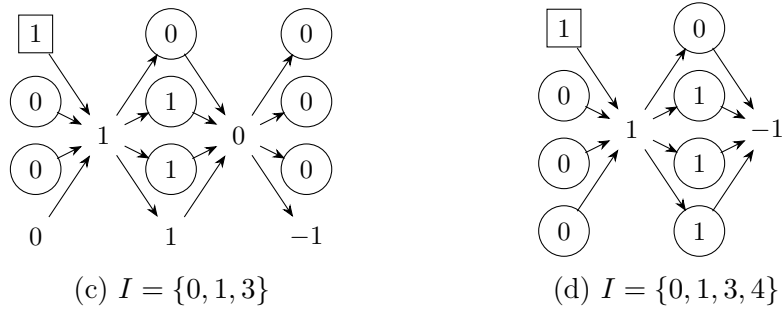

From Figure~\ref{subfig:D_4_0_1} it follows that for $I = \{0, 1 \}$ there are two arrows from vertex $0$ to vertex $1$ in the quiver of $\GammaI{I}$ (i.e. $r_{0, 1} = 2$) and one arrow from vertex $0$ to itself.

We collate this information in the table below, where the first column corresponds to the subset $I \subseteq (\Delta_{\aff})_{0}$ and the first row to vertices $j \in (\Delta_{\aff})_{0}$.
The values in the table are the number of arrows $r_{0, j}$ from vertex $0$ to vertex $j$ in the quiver of $\GammaI{I}$.
In particular, a value of zero has three different implications.
The first, denoted $0$, means $j \in I$ and $r_{0, j} = 0$.
The second, denoted $\redzero$, means $j \notin I$ so that $r_{0, j} = 0$.
The third, denoted $\greenzero$, means $j$ is possibly in $I$ and, if $j \in I$, then $r_{0, j} = 0$.
\begin{table}[H]
	\centering
	\begin{tabular}{|c|c|c|c|c|c|}
		\hline
		& $0$ & $1$ & $2$ & $3$ & $4$ \\
		\hline
		$\{ 0,1 \}$ & 1 & 2 & $\redzero$ & $\redzero$ & $\redzero$\\
		$\{ 0,2, \bullet \}$ & 0 & $\greenzero$ & 1 & $\greenzero$ & $\greenzero$ \\
		$\{ 0,1,3 \}$ & 0 & 1 & $\redzero$ & 1 & $\redzero$ \\
		$\{ 0,1,3,4 \}$ & 0 & 1 & $\redzero$ & 1 & 1 \\
		\hline
	\end{tabular}
\end{table}
Hence, the number of arrows from vertex $0$ to any vertex $j \in (\Delta_{\aff})_{0}$ in the quiver of $\GammaI{I}$ is always $0, 1,$ or $2$.

\item Consider the AR-quiver $\teDn{5}$ with vertices labelled as in Figure~\ref{fig:repetition_quivers_D_n_odd}.
We consider all subsets $I \subseteq (\Delta_{\aff})_{0} = \{ 0, 1, 2, 3, 4, 5 \}$ which contain $0$.
If $2 \in I$ then all vertices other than vertex $0$ in the knitting calculation are cut off and so we get the same circled values in the knitting calculation for $I = \{ 0, 2, \bullet \}$ where $\bullet$ is any list of numbers $1, 3, 4,$ and $5$.
If $3 \in I$ then vertices $4$ and $5$ in the knitting calculation are cut off.
Thus, the circled values for $I = \{ 0, 3, \bullet \}$ where $\bullet$ is any list of numbers $4$ and $5$ in the knitting calculation are the same.
Similarly, the circled values for $I = \{ 0, 1, 3, \bullet \}$ where $\bullet$ is any list of 4 and 5 are the same.
It follows that we need only consider the specific cases shown in Figure~\ref{fig:circled_values_D_5}.
\newpage
\begin{figure}[H]
	\centering
	\qquad \begin{subfigure}[b]{.4\textwidth}
		\centering

\end{table}
Hence, the number of arrows from vertex $0$ to any vertex $j \in (\Delta_{\aff})_{0}$ in the quiver of $\GammaI{I}$ is always $0, 1,$ or $2$.

\item Consider the AR-quiver $\teDn{6}$ with vertices labelled as in Figure~\ref{fig:repetition_quivers_D_n_even}.
We consider all subsets $I \subseteq (\Delta_{\aff})_{0} = \{ 0, 1, 2, 3, 4, 5, 6 \}$ which contain $0$.
Notice that vertices $5$ and $6$ in the knitting calculation are interchangeable so that, in the knitting calculation, $I = \{ 0, 5 \}$ gives the same circled values as $I = \{ 0, 6 \}$, and $I = \{ 0, 1, 5 \}$ gives the same circled values as $I = \{ 0, 1, 6 \}$.
In addition, if $2 \in I$ then all vertices other than vertex $0$ in the knitting calculation are cut off and so we get the same circled values in the knitting calculation for $I = \{ 0, 2, \bullet \}$ where $\bullet$ is any list of numbers $1, 3, 4, 5,$ and $6$.
We continue on in this way where, if $3 \in I$, then vertices $4$, $5$ and $6$ in the knitting calculation are cut off.
Hence, the circled values are the same for $I = \{ 0, 3, \bullet \}$ where $\bullet$ is any list of numbers $4, 5$ and $6$.
If $4 \in I$ then vertices $5$ and $6$ are cut off, giving the same circled values for $I = \{ 0, 4, \bullet \}$ where $\bullet$ is any list of numbers $5$ and $6$.
Therefore we need only consider the specific cases illustrated in Figure~\ref{fig:circled_values_D_6}, where we have used similar shortened notation to before.
\begin{figure}[H]	
	\begin{subfigure}[b]{\textwidth}
		\centering

\end{table}

\item Consider the AR-quiver $\teEn{6}$ with vertices labelled as in Figure~\ref{fig:repetition_quivers_E_6}.
We consider all subsets $I \subseteq (\Delta_{\aff})_{0} = \{ 0, 1, 2, 3, 4, 5, 6 \}$ which contain $0$.
If $1 \in I$, then all vertices other than vertex $0$ in the knitting calculation are cut off and so we get the same circled values for $I = \{ 0, 1, \bullet \}$ where $\bullet$ is any list of numbers $2, 3, 4, 5$, and $6$.
If $3 \in I$, then vertex $2$ is cut off so that the circled values of $\{ 0, 2, 3 \}$ are the same as those of $\{ 0, 3 \}$.
We continue on in this way.
Thus, the circled values for $I = \{ 0, 4, \bullet \}$ where $\bullet$ is any list of numbers $2, 3, 5,$ and $6$ are the same.
Finally, if $5 \in I$, then vertex $6$ in the knitting calculation is cut off.
Hence we only need to consider the subsets $\{ 0, 1, \bullet \}, \{ 0, 2 \}, \{ 0, 3, \bullet \}, \{ 0, 4, \bullet \}, \{ 0, 5, \bullet \}, \{ 0, 6 \}, \{ 0, 2, 5, \bullet \}, \{ 0, 2, 6 \}$, $\{ 0, 3, 5, \bullet \},$ and $\{ 0, 3, 6, \bullet \}$.
These are shown in Figure~\ref{fig:circled_values_E_6} where we have used similar shortened notation as above.
\begin{figure}[H]
	\begin{subfigure}[b]{.4\textwidth}
		\centering

\end{table}
Hence, the number of arrows from vertex $0$ to any vertex $j \in (\Delta_{\aff})_{0}$ in the quiver of $\GammaI{I}$ is always $0, 1,$ or $2$.

\item Consider the AR-quiver $\teEn{7}$ with vertices labelled as in Figure~\ref{fig:repetition_quivers_E_7}.
We consider all subsets $I \subseteq (\Delta_{\aff})_{0} = \{ 0, 1, 2, 3, 4, 5, 6, 7 \}$ which contain $0$.
If $1 \in I$ then all vertices other than vertex $0$ in the knitting calculation are cut off and so we get the same circled values for $I = \{ 0, 1, \bullet \}$ where $\bullet$ is any list of numbers $2, 3, 4, 5, 6$, and $7$.
We continue on in this way.
Hence, the circled values for $I = \{ 0, 2, \bullet \}$ where $\bullet$ is any list of numbers $3, 4, 5, 6,$ and $7$ are the same.
If $3 \in I$, then the circled values for $I = \{ 0, 3, \bullet \}$ where $\bullet$ is any list of numbers $4, 5, 6,$ and $7$ are the same.
If $4 \in I$, then the circled values for $I = \{ 0, 4, \bullet \}$ where $\bullet$ is any list of numbers $5$ and $6$ are the same.
If $5 \in I$, then vertex $6$ in the knitting calculation is cut off.
Hence we only need to consider the subsets $\{ 0, 1, \bullet \}, \{ 0, 2, \bullet \}, \{ 0, 3, \bullet \}, \{ 0, 4, \bullet \}, \{ 0, 5, \bullet \}, \{ 0, 6 \}, \{ 0, 7 \}, \{ 0, 4, 7, \bullet \}, \{ 0, 5, 7, \bullet \},$ and $\{ 0, 6, 7 \}$.
These are shown in Figure~\ref{fig:circled_values_E_7} where we have used similar shortened notation as above.
\begin{figure}[H]
\centering
	\begin{subfigure}[b]{.27\textwidth}
		\centering

\end{table}
Hence, the number of arrows from vertex $0$ to any vertex $j \in (\Delta_{\aff})_{0}$ in the quiver of $\GammaI{I}$ is always $0$, $1$, or $2$.

\item Consider the AR-quiver $\teEn{8}$ with vertices labelled as in Figure~\ref{fig:repetition_quivers_E_8}.
We consider all subsets $I \subseteq (\Delta_{\aff})_{0} = \{ 0, 1, 2, 3, 4, 5, 6, 7, 8 \}$ which contain $0$.
If $1 \in I$, then all vertices other than vertex $0$ in the knitting calculation are cut off and so we get the same circled values for $I = \{ 0, 1, \bullet \}$ where $\bullet$ is any list of numbers $2, 3, 4, 5, 6, 7,$ and $8$.
We continue on in this way, where if $2 \in I$, then all vertices other than $0$ and $1$ are cut off in the knitting calculation.
If $3 \in I$, then all vertices except $0$, $1$, and $2$ are cut off.
If $4 \in I$, then vertices $5, 6, 7,$ and $8$ are cut off.
If $5 \in I$, then vertices $6, 7,$ and $8$ are cut off and if $6 \in I$ then vertex $7$ is cut off.
Hence, we need only consider the subsets $\{ 0, 1, \bullet \}, \{ 0, 2, \bullet \}, \{ 0, 3, \bullet \}, \{ 0, 4, \bullet \}, \{ 0, 5, \bullet \}, \{ 0, 6, \bullet \}, \{ 0, 7 \}, \{ 0, 8 \}, \{ 0, 6, 8, \bullet \},$ and $\newline \{ 0, 7, 8 \}$.
These are shown in Figure~\ref{fig:circled_values_E_8} where we have used similar shortened notation as above.
\begin{figure}[H]
	\centering
	\quad\begin{subfigure}[b]{.25\textwidth}
		\centering

\end{table}
Hence, the number of arrows from vertex $0$ to any vertex $j \in (\Delta_{\aff})_{0}$ in the quiver of $\GammaI{I}$ is always $0$ or $1$.
\end{enumerate}
Using case-by-case analysis, the statement follows.
\end{proof}

In the following chapter, we use these results to calculate the K-theory of cDV singularities.

\chapter{Compound Du Val singularities}\label{ch:cDVs}
In this chapter, we study local cDV singularities and investigate their class groups and Grothendieck groups.
Our focus here is on the local; considering the category of all non-local cDV singularities would, in particular, include the category of all smooth affine $3$-folds, and it is not reasonable to expect to be able to compute even the class group in such generality.
We remark that local cDVs are automatically hypersurfaces; see \S\ref{sec:intro:cDVs}.
Therefore a singularity being cDV implies it is Gorenstein and hence CM.

\section{Conjecture and generalities}
The main results of this chapter consist of proofs of both \eqref{eq:Groth=Z+Cl_general} and the following conjecture in various general situations.
\begin{conj}\label{conj:K_0_uCMR=Cl(R)}
	Let $R$ be a local cDV singularity.
	Then
	\[ 
	\Kroth(\uCM R) \cong \Cl(R).
	\]
\end{conj}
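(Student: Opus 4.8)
The plan is to reduce the statement to a question in commutative K-theory and then attack that question with the tools assembled earlier. Since a local cDV singularity $R$ is a hypersurface, hence Gorenstein, Buchweitz's theorem gives a triangle equivalence $\uCM R \simeq \Dsg(R) = \Db(R)/\perf R$. I would then apply the localization sequence for this Verdier quotient, obtaining the right-exact sequence $\Kroth(\perf R) \to \Kroth(\Db(R)) \to \Kroth(\Dsg(R)) \to 0$. Here $\Kroth(\Db(R)) = \Groth(R)$ because $\modCat R$ is abelian, and since $R$ is local $\Kroth(\perf R) = \Kroth(\proj R) = \Z\cdot[R]$, so the image of the first map is exactly $\Z\cdot[R]$; combining this with the rank decomposition \eqref{eq:Groth=Z+oGroth} gives
\[
\Kroth(\uCM R) \cong \Krothsg(R) \cong \Groth(R)/\Z\cdot[R] = \oGroth(R).
\]
Proposition~\ref{prop:huneke_oGroth(R)_to_Cl(R)} then supplies a canonical isomorphism $\oGroth(R)/H \cong \Cl(R)$ with $H = \langle\, [R/\mfp] \mid \height\mfp \ge 2 \,\rangle$. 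So the conjecture is \emph{equivalent} to the vanishing $H=0$ in $\oGroth(R)$ — equivalently $[R/\mfp]=0$ in $\Groth(R)$ for every prime $\mfp$ of height $\ge 2$ — and also, via $\Groth(R) = \Z[R]\oplus\oGroth(R)$, to the isomorphism \eqref{eq:Groth=Z+Cl_general} for $R$. It therefore suffices to establish either of these.

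For an isolated cDV singularity $R$ admitting an NCCR $\End_R(M)$ with $M = R\oplus M_1\oplus\cdots\oplus M_t$, the main route I would take is to compute $\Groth(R)$ outright rather than check $H=0$ by hand. Navkal's Theorem~\ref{thm:NavkalG_0(R)} identifies $\Groth(R) \cong \Cok\Upomega$ for the higher AR-matrix $\Upomega\colon\Z^t\to\Z^{t+1}$, and the point is that the knitting machinery of Chapter~\ref{ch:knitting} makes this cokernel tractable: using the correspondence between the summands $M_i$ and the vertices of the quiver of the MMA (\S\ref{subsec:homMMP}) one reads the higher AR-sequences off that quiver, and the symmetry statement Theorem~\ref{thm:number_arrows_0_to_i_equals_reverse} together with the explicit count Theorem~\ref{thm:number_arrows_0_1_2} pins down enough of $\Upomega$ to identify $\Cok\Upomega$. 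I would then verify this agrees with $\Z\oplus\Cl(R)$, computing $\Cl(R)$ separately — in type $\ctAn{n}$ by reducing to Viehweg's setting (\S\ref{sec:intro:viehwegs_setting}) and invoking Theorem~\ref{thm:Cl(A)_global}, and in general by Nagata's Theorem~\ref{thm:nagata}.

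For cases outside the reach of Navkal's theorem — type $\ctAn{n}$ singularities that need not be isolated, and non-isolated cDVs generally — I would revert to proving $[R/\mfp]=0$ for $\height\mfp\ge 2$ directly from the defining equation. In type $\tA$, $R$ has the explicit form $\C[[u,v,x,y]]/(uv-f)$, and the argument of Theorem~\ref{thm:G_0(R)=Z+Cl(R)_dim2} and Proposition~\ref{prop:[A/m]=0} should adapt: for each prime $\mfp$ with $\height\mfp\ge 2$, produce a prime $\mfq\subsetneq\mfp$ and an element $x\in R\setminus\mfq$ with $\mfp=\mfq+xR$, so that the short exact sequence $0\to R/\mfq\xrightarrow{x}R/\mfq\to R/(\mfq+xR)\to 0$ of Lemma~\ref{lem:R/(p+xR)=0} forces $[R/\mfp]=0$; combined with the class-group computation $\Cl(R)\cong\Z^{\oplus t}/(a_1,\dots,a_t)$ from Viehweg's setting, this closes those cases.

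The hard part will be precisely this last step in full generality. In dimension two the only primes of height $\ge 2$ are the maximal ideals, which are always of the form $\mfq+xR$; in dimension three the height-two primes — the one-dimensional quotients of a cDV — are much less rigid, and there is no uniform reason for them all to be ``prime plus principal'', so the filtration trick cannot be expected to work for an arbitrary local cDV. Finding a conceptual mechanism forcing $H=0$ for \emph{every} local cDV singularity (or exhibiting a counterexample) is the real obstacle, which is why I expect the conjecture to be provable here only in the structured cases above, where either Navkal's formula applies or the defining equation is sufficiently explicit.
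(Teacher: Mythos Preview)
This is a conjecture, not a theorem, and the paper does not prove it in full generality either; it establishes it only in the special cases summarised in Theorem~\ref{thm:conclusion}. Your reduction of the conjecture to the vanishing $H=0$, equivalently to \eqref{eq:Groth=Z+Cl_general}, is correct and is exactly the content of the paper's Lemma~\ref{lem:K_0(uCM R)=Cl(R)iff} (the paper obtains it from the Auslander--Reiten exact sequence of Lemma~\ref{lem:local_ring_Groth_exact_seq_localID_isomorphism} rather than Buchweitz plus localisation, but these are equivalent routes).

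Your NCCR strategy is close to the paper's \S\ref{sec:isolated_cdv_with_NCCR}: both use Navkal's Theorem~\ref{thm:NavkalG_0(R)} and the quiver symmetry from Chapter~\ref{ch:knitting}. The paper sharpens this by first showing, via $3$-CY duality, that $\Upomega_{lj} = \dim\Ext^1_\Lambda(S_l,S_j) - \dim\Ext^1_\Lambda(S_j,S_l)$, so the symmetry of Theorem~\ref{thm:complete_local_cdv_quiver_symmetric} forces $\Upomega=0$ outright and $\Groth(R)\cong\Z^{t+1}$. For $\Cl(R)$ the paper does \emph{not} use Nagata (which would require an explicit equation and a UFD localisation, unavailable in types $\tD,\tE$); instead Lemma~\ref{lem:Cl(isolated_cDV)} pushes down along the minimal model $X\to\Spec R$ to get $\Cl(R)\cong\Cl(X)\cong\mathrm{Pic}(X)\cong\Z^t$.

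The main divergence is in the type $\tA$ case. Your plan to verify $[R/\mfp]=0$ for all height-$\ge 2$ primes by the ``prime plus principal'' trick faces exactly the obstacle you name: in dimension three the height-two primes need not have this form. The paper avoids this entirely. In \S\ref{sec:arbitrary_type_A_cDV} it passes via Kn\"orrer periodicity $\uCM\eR\simeq\uCM\eS$ to the one-dimensional ring $\eS=\C[[x,y]]/(f)$, computes $\Kroth(\uCM\eS)\cong\Z^{\oplus t}/(a_1,\dots,a_t)$ directly from a length/filtration argument (Lemmas~\ref{lem:G_0(S)_S_not_ID}--\ref{lem:K_0(uCM(S))_2dim_description}), and then matches this with $\Cl(\eR)$. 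This bypasses any analysis of height-two primes in $\eR$ and handles the non-isolated case uniformly --- something your direct approach would struggle with.
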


The following lemma will be useful.
\begin{lemma}\label{lem:local_ring_Groth_exact_seq_localID_isomorphism}
	For any local ring $R$ there is an exact sequence of groups
	\begin{equation}\label{seq:Groth_sequence_splits_for_local_ID_2}
	0 \to \langle [R] \rangle \to \Kroth(\CM R) \to \Kroth(\uCM R) \to 0.
	\end{equation}
	Furthermore, this sequence splits if $R$ is a local integral domain.
	Hence, in that case, $\Kroth(\CM R) \cong \langle [R] \rangle \oplus \Kroth(\uCM R)$ where $\langle [R] \rangle \cong \Z$.
\end{lemma}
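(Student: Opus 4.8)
The plan is to construct the exact sequence \eqref{seq:Groth_sequence_splits_for_local_ID_2} directly from the definition of the stable category $\uCM R$, and then to produce a splitting using the rank map when $R$ is a domain.

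First I would recall that $\uCM R$ has the same objects as $\CM R$, but morphisms are taken modulo those factoring through $\add R$. The natural functor $\CM R \to \uCM R$ is the identity on objects, and it induces a surjection $\Kroth(\CM R) \twoheadrightarrow \Kroth(\uCM R)$ since every generator $[M]$ on the right is hit. To identify the kernel, I would argue that the relations imposed in $\Kroth(\uCM R)$ but not in $\Kroth(\CM R)$ are precisely those that force $[R] = 0$: in the stable category $R$ (and hence every free module $R^{\oplus n}$) becomes a zero object, so $[R]$ maps to $0$; conversely, one shows $\Kroth(\uCM R) \cong \Kroth(\CM R)/\langle [R]\rangle$, for instance by checking that $\Kroth(\CM R)/\langle[R]\rangle$ satisfies the universal property of $\Kroth(\uCM R)$ — any additive function on $\CM R$ killing $[R]$ descends to $\uCM R$ because a map factoring through $\add R$ between CM modules fits into short exact sequences whose remaining terms are summands of free modules, hence are killed once $[R]$ is. This gives exactness of \eqref{seq:Groth_sequence_splits_for_local_ID_2}, with the caveat that one must verify the left-hand map $\langle[R]\rangle \to \Kroth(\CM R)$ is injective, i.e. that $[R]$ has infinite order in $\Kroth(\CM R)$; this is where the domain hypothesis (or at least something) is needed, and it is handled by the splitting below.

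Now suppose $R$ is a local integral domain. Then the rank map $\rk_R\colon \CM R \to \Z$, $M \mapsto \dim_{Q(R)}(M\otimes_R Q(R))$ from \S\ref{sec:prelims:bass_eagon} is an additive function (localisation is exact), so it induces a group homomorphism $\rk_R\colon \Kroth(\CM R) \to \Z$ with $\rk_R([R]) = 1$. Restricting to $\langle[R]\rangle$ shows the inclusion $\langle[R]\rangle \hookrightarrow \Kroth(\CM R)$ is split injective with retraction $\rk_R$, and in particular $\langle[R]\rangle \cong \Z$. Hence the short exact sequence \eqref{seq:Groth_sequence_splits_for_local_ID_2} splits, giving $\Kroth(\CM R) \cong \langle[R]\rangle \oplus \Kroth(\uCM R) \cong \Z \oplus \Kroth(\uCM R)$, as claimed.

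The main obstacle I expect is the careful identification of $\ker\big(\Kroth(\CM R)\to\Kroth(\uCM R)\big)$ with $\langle[R]\rangle$: one direction (that $[R]$ lies in the kernel) is immediate, but showing the kernel is no larger requires unwinding the universal property of the Grothendieck group of the exact category $\CM R$ together with the precise form of the relations added when passing to the stable category — specifically that a morphism factoring through $\add R$ does not introduce new relations beyond those generated by $[R]=0$, which uses that summands of $R^{\oplus n}$ are themselves free (or that $\CM R$ is Krull–Schmidt and $R$ indecomposable, valid since $R$ is local). Everything else — the surjectivity, the rank map being additive, and the splitting — is routine.
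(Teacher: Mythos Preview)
Your proposal is correct and follows the same line as the paper, which simply cites \cite[Chapter~3]{AR86} for the exact sequence and then invokes the rank function to show $\langle[R]\rangle\cong\Z$ and obtain the splitting in the domain case. Your sketch unpacks precisely what is in that reference, and your use of the rank map $\rk_R$ to produce the retraction is exactly the paper's argument. One small clarification: in the paper's statement $\langle[R]\rangle$ denotes the cyclic subgroup of $\Kroth(\CM R)$ generated by $[R]$, so the left-hand map is an inclusion and injectivity is tautological; the issue you flag---whether $[R]$ has infinite order---is only the question of whether $\langle[R]\rangle\cong\Z$, which the statement asserts (and you prove via $\rk_R$) only under the domain hypothesis.
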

\begin{proof}
	For details on this, see \cite[Chapter~3]{AR86}.
	We have $\langle [R] \rangle \cong \Z$ since integral domains have a rank function sending the equivalence class of $R$ to $1$.
\end{proof}
It is known (see, e.g. \cite[Lemma~13.2]{yosh90}) that for a local ring $R$, 
\begin{equation}\label{eq:Groth(R)=Kroth(CMR)}
	\Groth(R) \colonequals \Kroth(\modCat R) \cong \Kroth(\CM R),
\end{equation}
so we use these interchangeably.
To prove that both \eqref{eq:Groth=Z+Cl_general} and Conjecture~\ref{conj:K_0_uCMR=Cl(R)} hold will also require the following.
\begin{lemma}\label{lem:K_0(uCM R)=Cl(R)iff}
	Let $R$ be a local integral domain.
	Then $\Kroth(\uCM R) \cong \Cl(R)$ if and only if $\Groth(R) \cong \Z \oplus \Cl(R).$
\end{lemma}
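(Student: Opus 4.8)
The plan is to combine the decomposition $\Groth(R) = \Z\cdot[R] \oplus \oGroth(R)$ from \eqref{eq:Groth=Z+oGroth} with the exact sequence \eqref{seq:Groth_sequence_splits_for_local_ID_2} and the surjection $\upgamma\colon\oGroth(R)\twoheadrightarrow\Cl(R)$ of Proposition~\ref{prop:huneke_oGroth(R)_to_Cl(R)}. The key observation is that, via \eqref{eq:Groth(R)=Kroth(CMR)} and Lemma~\ref{lem:local_ring_Groth_exact_seq_localID_isomorphism}, we have two direct sum decompositions of $\Groth(R)\cong\Kroth(\CM R)$ as a copy of $\Z$ plus another summand: one is $\Z\cdot[R]\oplus\oGroth(R)$ coming from the rank map, and the other is $\langle[R]\rangle\oplus\Kroth(\uCM R)$ coming from the stabilisation sequence. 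Since in both cases the $\Z$-summand is literally generated by the class $[R]$ (and the rank of $R$ is $1$, so the rank map and the splitting of \eqref{seq:Groth_sequence_splits_for_local_ID_2} are compatible), I would argue that $\oGroth(R)\cong\Kroth(\uCM R)$ as abelian groups.

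More precisely, first I would note that the projection $\Kroth(\CM R)\to\Kroth(\uCM R)$ has kernel $\langle[R]\rangle$, and that the reduced Grothendieck group $\oGroth(R)$ — the kernel of the rank map — maps isomorphically onto $\Kroth(\uCM R)$: indeed $[R]\notin\oGroth(R)$ since $\rk(R)=1$, so the composite $\oGroth(R)\hookrightarrow\Kroth(\CM R)\twoheadrightarrow\Kroth(\uCM R)$ is injective, and it is surjective because $\Kroth(\CM R) = \Z\cdot[R]+\oGroth(R)$ while $[R]$ dies in $\Kroth(\uCM R)$. This gives a canonical isomorphism $\oGroth(R)\cong\Kroth(\uCM R)$.

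Granting that identification, the equivalence is immediate. If $\Kroth(\uCM R)\cong\Cl(R)$, then $\oGroth(R)\cong\Cl(R)$, and $\Groth(R)\cong\Z\oplus\oGroth(R)\cong\Z\oplus\Cl(R)$ by \eqref{eq:Groth=Z+oGroth}. Conversely, if $\Groth(R)\cong\Z\oplus\Cl(R)$: here one must be a little careful, since an abstract isomorphism of $\Z\oplus\oGroth(R)$ with $\Z\oplus\Cl(R)$ does not formally force $\oGroth(R)\cong\Cl(R)$ without some finiteness or cancellation input. I would instead use Proposition~\ref{prop:huneke_oGroth(R)_to_Cl(R)}, which always provides a surjection $\upgamma\colon\oGroth(R)\twoheadrightarrow\Cl(R)$ with kernel $H$, the subgroup generated by $[R/\mfp]$ for $\height(\mfp)\ge 2$; so $\oGroth(R)\cong\Cl(R)$ iff $H=0$, and the hypothesis $\Groth(R)\cong\Z\oplus\Cl(R)$ should be leveraged (via ranks of these groups, or via the structure of $\oGroth(R)$ for the cDV rings in question, which are finitely generated) to conclude $H=0$.

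The main obstacle is exactly this converse direction: pinning down precisely what ``$\Groth(R)\cong\Z\oplus\Cl(R)$'' buys us — whether it is meant as an abstract group isomorphism or as the splitting $\Z\cdot[R]\oplus\oGroth(R)$ with $\oGroth(R)\cong\Cl(R)$ — and ensuring the argument that $H=0$ (equivalently that $\upgamma$ is an isomorphism) is airtight. For a local cDV singularity $R$, $\Groth(R)$ is finitely generated, so I expect the cleanest route is to compare ranks and torsion of $\Groth(R)$, $\oGroth(R)$, $\Cl(R)$ using the surjection $\upgamma$ and the decomposition \eqref{eq:Groth=Z+oGroth}, forcing $\ker\upgamma = H = 0$; the rest is bookkeeping with the two exact sequences.
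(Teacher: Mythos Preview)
Your forward direction is fine and matches the paper, and your canonical identification $\oGroth(R)\cong\Kroth(\uCM R)$ is a correct (and slightly sharper) observation than what the paper records.

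The gap is in the converse. The paper's argument is simply: from Lemma~\ref{lem:local_ring_Groth_exact_seq_localID_isomorphism} and \eqref{eq:Groth(R)=Kroth(CMR)} one has $\Groth(R)\cong\Z\oplus\Kroth(\uCM R)$, and the hypothesis gives $\Groth(R)\cong\Z\oplus\Cl(R)$; now cancel the $\Z$. You flag this cancellation as the obstacle and then try to bypass it via $H=0$, but that detour is both unnecessary and incomplete. The point you are missing is that $\Z$ \emph{is} cancellable in the category of all abelian groups: $\Z\oplus A\cong\Z\oplus B$ implies $A\cong B$ with no finiteness hypothesis (this is classical; see e.g.\ Walker or Cohn, and in the finitely generated case it is immediate from the structure theorem). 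So the paper's one-line ``cancelling $\Z$ factors'' is legitimate as stated, for arbitrary local integral domains.

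Your proposed alternative via Proposition~\ref{prop:huneke_oGroth(R)_to_Cl(R)} does not close the gap on its own: knowing only an abstract isomorphism $\Z\oplus\oGroth(R)\cong\Z\oplus\Cl(R)$ and a surjection $\oGroth(R)\twoheadrightarrow\Cl(R)$ does not force $H=0$ without some cancellation-type input anyway (e.g.\ a non-finitely-generated $\oGroth(R)$ could in principle surject onto $\Cl(R)$ with nontrivial kernel while still being abstractly isomorphic to it). So you end up needing exactly the fact you were trying to avoid. Drop the $H$-argument and just invoke cancellation of $\Z$.
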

\begin{proof}
	Assume $\Kroth(\uCM R) \cong \Cl(R)$.
	By Lemma~\ref{lem:local_ring_Groth_exact_seq_localID_isomorphism}, since $R$ is a local integral domain, $\Kroth(\CM R) \cong \langle [R] \rangle \oplus \Kroth(\uCM R)$.
	Under our assumption, this implies 
	\begin{equation*}
		\Kroth(\CM R) \cong \Z \oplus \Cl(R).
	\end{equation*}
	Therefore, using the equality \eqref{eq:Groth(R)=Kroth(CMR)}, $\Groth(R) \cong \Z \oplus \Cl(R)$.
	
	Now, assume $\Groth(R) \cong \Z \oplus \Cl(R)$.
	Again, by Lemma~\ref{lem:local_ring_Groth_exact_seq_localID_isomorphism}, since $R$ is a local integral domain, we also have $\Groth(R) \cong \Z \oplus \Kroth(\uCM R)$.
	Cancelling $\Z$ factors, $\Kroth(\uCM R) \cong \Cl(R)$ follows.
\end{proof}
This shows that Conjecture~\ref{conj:K_0_uCMR=Cl(R)} is equivalent to the isomorphism \eqref{eq:Groth=Z+Cl_general}.

\begin{remark}\label{remark:conjecture_fails_R_not_ID}
	As stated, Lemma~\ref{lem:K_0(uCM R)=Cl(R)iff} requires $R$ to be an integral domain.
	We will see the importance of this requirement in \S\ref{sec:arbitrary_type_A_cDV}.
\end{remark}

\section{Symmetric quivers for cDV singularities}\label{sec:symmetric_quivers_cDVs}
In this section we apply our results from Chapter~\ref{ch:knitting} to quivers of modifying algebras for cDV singularities.
We require the following technical setup.
\begin{setup}\label{setup:symmetric_quivers_cDVs}
	Let $R$ be a complete local cDV singularity, and $M = M_{0} \oplus M_{1} \oplus \cdots \oplus M_{t}$ any basic modifying module, where $M_{0} \cong R$ and the $M_{i}$ are the non-free indecomposable summands of $M$.
	Set $\Lambda \coloneqq \End_{R}(M)$.
\end{setup}

We can present $\Lambda$ as a quiver with relations, where the vertices are labelled $0, \hdots, t$ corresponding to the modules $M_{0}, \hdots, M_{t}$.
In general, the quiver of a ring $\End_{R}(M)$ where $R$ is a Gorenstein $3$-fold is not symmetric.
However in the cDV Setup~\ref{setup:symmetric_quivers_cDVs}, the main result of this section, Theorem~\ref{thm:complete_local_cdv_quiver_symmetric}, proves that the quiver of $\Lambda$ is symmetric.
To do this requires the `slicing' technique as explained in \cite{We18}.

From \S\ref{subsec:homMMP}, we know $\Lambda$ is derived equivalent to some $X$ in a crepant partial resolution $f \colon X \to \Spec R$.
Briefly, the idea is that we `slice' $X \to \Spec R$ by a generic central element $g \in R$.
By Reid's general elephant principle \cite[Theorems~1.1 and~1.14]{Re83}, this slicing gives
\begin{equation*}
\begin{tikzcd}[arrow style=tikz,>=stealth]
& X_{g} \arrow[r, hookrightarrow] \arrow[d, "\upphi"] & X \arrow[d, "\upvarphi"] \\
&\Spec(R/g) \arrow[r, hookrightarrow] & \Spec R
\end{tikzcd}
\end{equation*}
In particular, $R/g$ is an $\ADE$ singularity and $\upphi$ is a partial crepant resolution.
This slicing technique behaves very nicely; it allows us to consider both $M \in \CM R$ and $M/gM \in \CM R/g$, and for $g$ generic, $\Lambda/g$ is isomorphic to $\End_{R/g}(M/gM)$.
Furthermore, this extends to indecomposable summands of $M$ and $M/gM$; that is, the summand $M_{i}$ slices to $M_{i}/gM_{i}$ for $i = 0, \hdots, t$ \cite[\S5.3]{We18}.
By the McKay correspondence $M_{i}/gM_{i}$ is precisely one of the CM modules corresponding to a vertex in an $\ADE$ Dynkin diagram.

\begin{lemma}\label{lem:quiver_Lambda/g_symmetric}
	Let $\Lambda$ be as in Setup~\ref{setup:symmetric_quivers_cDVs}.
	Then the quiver of $\Lambda/g$ is symmetric, where $g$ is a generic central element.
\end{lemma}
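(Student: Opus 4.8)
The plan is to reduce the statement to the knitting results of Chapter~\ref{ch:knitting}, specifically Theorem~\ref{thm:number_arrows_0_to_i_equals_reverse}. Recall from Setup~\ref{setup:symmetric_quivers_cDVs} that $\Lambda = \End_R(M)$ with $M = M_0 \oplus M_1 \oplus \cdots \oplus M_t$, $M_0 \cong R$, and that $R/g$ is an $\ADE$ (Kleinian) singularity with $g$ generic. By the slicing results of \cite[\S5.3]{We18} recalled just before the lemma, $\Lambda/g \cong \End_{R/g}(M/gM)$ and each summand $M_i$ slices to $M_i/gM_i \in \CM(R/g)$. The first step is therefore to identify, via the McKay correspondence, each $M_i/gM_i$ with (a direct sum of copies of) one of the indecomposable CM modules $V_j$ over the Kleinian singularity $R/g$, i.e.\ with a vertex of the extended $\ADE$ Dynkin diagram $\Delta_{\aff}$.

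Next I would argue that $\End_{R/g}(M/gM)$ is Morita equivalent to $\GammaI{I} = e_I \Pi e_I \cong \End_{R/g}(\bigoplus_{i \in I} V_i)$ for the subset $I \subseteq (\Delta_{\aff})_0$ consisting of the vertices appearing among the $M_i/gM_i$ (including the extended vertex $0$, since $M_0 \cong R$ slices to the module corresponding to the trivial representation). Here one should be a little careful: $M/gM$ may a priori be non-basic (a single $M_i$ could slice to a sum of several copies of some $V_j$, or two distinct $M_i$ could slice to the same $V_j$). Passing to the basic module $\bigoplus_{i\in I} V_i$ only changes the endomorphism ring up to Morita equivalence, which does not affect the underlying quiver, so the quiver of $\Lambda/g$ agrees with the quiver of $\GammaI{I}$. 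Then Theorem~\ref{thm:number_arrows_0_to_i_equals_reverse} applies directly: $r_{i,j} = r_{j,i}$ for all $i,j \in I$ in the quiver of $\GammaI{I}$, so the quiver of $\Lambda/g$ is symmetric.

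I expect the main obstacle to be the bookkeeping in the step above — namely making precise that the quiver of $\End_{R/g}(M/gM)$ equals that of $\GammaI{I}$, and in particular that no collapsing or multiplicity issues spoil the identification of vertices with a subset $I$ of $(\Delta_{\aff})_0$ containing $0$. The cleanest route is to invoke the AR-theory setup of \S\ref{subsec:AR_theory}: the AR-quiver of $R/g = S^G$ coincides with the McKay quiver of $G$, the indecomposable CM $R/g$-modules are exactly the $V_j$, and the quiver of an endomorphism ring of a sum of CM modules over a Kleinian singularity is computed by the knitting algorithm (Knitting Algorithm~\ref{knitting_algorithm}), which only depends on the \emph{set} of indecomposable summands. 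Once that identification is in place the symmetry is immediate from Chapter~\ref{ch:knitting}, and the proof concludes.

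In summary, the proof structure is: (i) slice to get $\Lambda/g \cong \End_{R/g}(M/gM)$ over a Kleinian singularity; (ii) use McKay to realise the summands as CM modules $V_j$, hence as a subset $I \ni 0$ of $(\Delta_{\aff})_0$; (iii) observe (up to Morita equivalence, hence on the level of quivers) $\End_{R/g}(M/gM) \cong \GammaI{I}$; (iv) apply Theorem~\ref{thm:number_arrows_0_to_i_equals_reverse} to conclude $r_{i,j} = r_{j,i}$, i.e.\ the quiver of $\Lambda/g$ is symmetric.
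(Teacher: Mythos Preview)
Your proposal is correct and follows essentially the same route as the paper: slice to obtain $\Lambda/g \cong \End_{R/g}(M/gM)$ over a Kleinian singularity, then invoke Theorem~\ref{thm:number_arrows_0_to_i_equals_reverse}. The paper is even terser than you are and does not discuss the basicness/Morita issue, because the preamble to the lemma (citing \cite[\S5.3]{We18}) already asserts that each $M_i/gM_i$ is precisely one indecomposable CM module corresponding to a single vertex of $\Delta_{\aff}$, so $M/gM$ is basic and $\End_{R/g}(M/gM)$ is literally a $\GammaI{I}$.
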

\begin{proof}
	Write $\rad R$ for the radical of $R$.
	As explained in \cite[Lemma~5.19]{IyWe18reductionMMAs}\footnote{Their statement is for type $\tA$, but their proof is general.}, and as above, for generic $g \in \mathrm{rad} R$, $R/g$ is a Kleinian singularity. Furthermore
	\begin{equation*}\label{Lambda/g=EndR/g(M/g)}
	\Lambda/g \cong \End_{R/g}(M/g).
	\end{equation*}
	By Theorem~\ref{thm:number_arrows_0_to_i_equals_reverse}, the quiver of $\Lambda/g$ is symmetric.
\end{proof}

It is known (see, e.g. \cite[\S1]{Segal08}) that the number of arrows between two vertices is given by the dimension of the corresponding $\Ext$ group of the vertex simples.
Let $S_{i}$ denote the simple modules corresponding to vertices $i = 0, \hdots, t$.
\begin{lemma}\label{lem:g_action_trivial_dimExt}
	Let $\Lambda$ be as in Setup~\ref{setup:symmetric_quivers_cDVs} and $g$ a generic central element.
	The action of $g$ on $\Ext^{1}_{\Lambda}(S_{j}, S_{i})$ is trivial for $i \neq j$.
	Furthermore,
	\[ 
	\dim \Ext^{1}_{\Lambda/g}(S_{j}, S_{i}) = \dim \Ext^{1}_{\Lambda}(S_{j}, S_{i})
	\]
\end{lemma}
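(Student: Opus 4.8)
The plan is to establish both claims simultaneously by understanding how the generic element $g$ interacts with the bar resolution (or rather, with minimal projective resolutions) computing $\Ext^1$. First I would recall that since $R$ is complete local and $g \in \rad R$ is central, $\Lambda = \End_R(M)$ is module-finite over the complete local ring $R$, so $\Lambda$ is a semiperfect Noetherian algebra; its simple modules $S_0, \hdots, S_t$ are annihilated by $g$ (as $g$ lies in the Jacobson radical of $\Lambda$, since $\rad R \cdot \Lambda \subseteq \rad \Lambda$). Consequently every $S_i$ is naturally a $\Lambda/g$-module, and the number of arrows $i \to j$ in the quiver of $\Lambda$ is $\dim_\C \Ext^1_\Lambda(S_j, S_i)$, while the number in the quiver of $\Lambda/g$ is $\dim_\C \Ext^1_{\Lambda/g}(S_j, S_i)$.

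The key step is a change-of-rings argument. Since $g$ is a central nonzerodivisor on $\Lambda$ (because $\Lambda \in \CM R$ is a torsion-free $R$-module and $g$ is a nonzerodivisor on $R$), there is a well-behaved relationship between $\Lambda$-modules and $\Lambda/g$-modules. The standard tool is the change-of-rings spectral sequence, or more directly the exact sequence arising from $0 \to \Lambda \xrightarrow{g} \Lambda \to \Lambda/g \to 0$. For $\Lambda/g$-modules $N_1, N_2$ on which $g$ acts as zero, one gets a long exact sequence relating $\Ext^\bullet_\Lambda(N_1, N_2)$ and $\Ext^\bullet_{\Lambda/g}(N_1, N_2)$; in low degrees this reads
\begin{equation*}
0 \to \Ext^1_{\Lambda/g}(S_j, S_i) \to \Ext^1_\Lambda(S_j, S_i) \xrightarrow{\ \cdot g\ } \Hom_{\Lambda}(S_j, S_i) \to \cdots
\end{equation*}
where the map labelled $\cdot g$ is multiplication by $g$, which is the connecting-type map induced by the extension $0 \to \Lambda \xrightarrow{g}\Lambda \to \Lambda/g \to 0$. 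For $i \neq j$ we have $\Hom_\Lambda(S_j, S_i) = 0$ since the $S_i$ are pairwise non-isomorphic simples over a basic algebra. Thus the restriction map $\Ext^1_{\Lambda/g}(S_j, S_i) \to \Ext^1_\Lambda(S_j, S_i)$ is an isomorphism whenever $i \neq j$, which immediately gives the dimension equality and, a fortiori, shows the $g$-action on $\Ext^1_\Lambda(S_j, S_i)$ is trivial (it factors through a module killed by $g$, namely $S_i$, so $g$ acts as $0$; alternatively the isomorphism identifies it with an $\Ext$-group over $\Lambda/g$ on which $g$ acts trivially by definition).

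I would then note that the triviality-of-$g$-action statement can also be seen directly and more cheaply: $g$ acts on $\Ext^1_\Lambda(S_j, S_i)$ via its action on either argument, and since $gS_i = 0$, the action is zero; this part requires no genericity at all and no change-of-rings machinery — only that $g \in \rad R \subseteq \rad \Lambda$. So the substantive content is really the dimension equality, for which the genericity of $g$ enters to guarantee (via Reid's general elephant, as already invoked in Lemma~\ref{lem:quiver_Lambda/g_symmetric} and \cite[\S5.3]{We18}) that $\Lambda/g \cong \End_{R/g}(M/gM)$ with $R/g$ Kleinian, so that the right-hand side is the well-understood quiver $\GammaI{I}$ of Chapter~\ref{ch:knitting}; but for the bare $\Ext$-dimension comparison one only needs $g$ to be a nonzerodivisor on $\Lambda$, which holds since $\Lambda$ is a maximal Cohen--Macaulay $R$-module and $g$ is $R$-regular.

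\textbf{Main obstacle.} The delicate point is justifying that the connecting map $\Ext^1_\Lambda(S_j,S_i) \to \Hom_\Lambda(S_j,S_i)$ in the change-of-rings sequence is genuinely multiplication by $g$ (rather than something more complicated), and cleanly handling the fact that $S_j$ is not a $\Lambda$-module killed by $g$ on the nose unless one first checks $g \in \rad\Lambda$. I would handle this by taking a minimal projective $\Lambda$-resolution $P_\bullet \to S_j$ and observing that, because $g$ is central, multiplication by $g$ gives a chain map $P_\bullet \xrightarrow{g} P_\bullet$ lifting $S_j \xrightarrow{g} S_j = 0$, hence $g$ acts as $0$ on all of $\Ext^\bullet_\Lambda(S_j, S_i)$; then the reduction $\bar P_\bullet = P_\bullet / g P_\bullet$ is a $\Lambda/g$-complex, and a Tor-vanishing computation ($\Tor^\Lambda_{>0}(\Lambda/g, S_j)$ and the structure of $\bar P_\bullet$ in low degrees) pins down $\Ext^1_{\Lambda/g}(S_j,S_i) = \Ext^1_\Lambda(S_j,S_i)$ for $i \neq j$. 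This is essentially the argument of \cite[\S1]{Segal08} combined with the nonzerodivisor property, so I expect it to be routine once set up correctly, but it is the one place where care with the change-of-rings bookkeeping is required.
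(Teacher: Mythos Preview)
Your proof is correct but takes a different route from the paper. The paper gives a direct Yoneda-extension argument: for any extension $0 \to S_i \to B \to S_j \to 0$ over $\Lambda$, one has $gB \subseteq S_i$ (since $gS_j = 0$), so by simplicity of $S_i$ either $gB = 0$ or $gB = S_i$; in the latter case multiplication by $g$ would induce an isomorphism $S_j = B/S_i \to S_i$, contradicting $i \neq j$. Hence $gB = 0$, so every $\Lambda$-extension is already a $\Lambda/g$-extension, and the inflation map $\Ext^1_{\Lambda/g}(S_j,S_i) \to \Ext^1_\Lambda(S_j,S_i)$ is a bijection. This is precisely the elementary content hiding inside your change-of-rings sequence --- the connecting map $\Ext^1_\Lambda(S_j,S_i) \to \Hom_\Lambda(S_j,S_i)$ you describe is exactly the assignment $B \mapsto (g\cdot : S_j \to S_i)$ --- but the paper extracts it without invoking the spectral sequence or the hypothesis that $g$ is a nonzerodivisor on $\Lambda$. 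Your approach is more systematic and would generalise more readily to higher $\Ext$'s; the paper's is shorter and entirely self-contained for the case at hand.
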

\begin{proof}
	As explained in \cite[Theorem~7.35]{Rotman09}, elements of $\Ext^{1}_{\Lambda}(S_{j},S_{i})$ can be thought of as equivalence classes of short exact sequences
	\[
	0 \to S_{i} \to B \to S_{j} \to 0.
	\]
	By construction, it suffices to show that $gB = 0$ for any such module $B$.
	
	Now, write $\rad R$ for the radical of $R$, which is equal to the unique maximal ideal since $R$ is local.
	Then necessarily $g \in \rad R$, since if $g \notin \rad R$ then $g$ would be a unit and we would have $R/g = 0$.
	Furthermore, by \cite[Corollary~5.9]{Lam01}, $\rad R \subseteq \rad \Lambda$ so that $g \in \rad \Lambda$.
	It follows that $gS_{i} = 0$ and $gS_{j} = 0$.
	Since $S_{i}$ is simple, either $gB = 0$ or $gB = S_{i}$.
	If $gB = S_{i}$, then multiplication by $g$ gives an isomorphism $B/S_{i} = S_{j} \to S_{i}$.
	However, $i \neq j$, so this is a contradiction.
	Hence, $gB = 0$ and $B$ is a $\Lambda/g\Lambda$-module.
	Hence the first statement holds.

	It follows that the extension
	\[
	0 \to S_{i} \to B \to S_{j} \to 0
	\]
	of $\Lambda$-modules is automatically an extension of $\Lambda/g$-modules, so the second statement holds.
\end{proof}

\begin{remark}
	Lemma~\ref{lem:g_action_trivial_dimExt} is not true if $i=j$.
\end{remark}

Now we prove the main result of this section.
\begin{theorem}\label{thm:complete_local_cdv_quiver_symmetric}
	Let $R$ and $\Lambda$ be as in Setup~\ref{setup:symmetric_quivers_cDVs}.
	Then the quiver of $\Lambda$ is symmetric.
\end{theorem}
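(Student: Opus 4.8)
The plan is to deduce the symmetry of the quiver of $\Lambda$ from the symmetry of the quiver of $\Lambda/g$ (Lemma~\ref{lem:quiver_Lambda/g_symmetric}) by controlling the number of arrows between distinct vertices via $\mathrm{Ext}^1$ groups. The key input is the standard fact (used already in the excerpt, see \cite{Segal08}) that for a basic algebra presented as a quiver with relations, the number of arrows from vertex $j$ to vertex $i$ equals $\dim \mathrm{Ext}^1_{\Lambda}(S_j, S_i)$, where $S_0, \hdots, S_t$ are the vertex simples of $\Lambda$.

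First I would fix $i \neq j$ and apply Lemma~\ref{lem:g_action_trivial_dimExt}, which gives the equality
\[
\dim \mathrm{Ext}^1_{\Lambda/g}(S_j, S_i) = \dim \mathrm{Ext}^1_{\Lambda}(S_j, S_i).
\]
Next I would note that, since $g$ is generic, $\Lambda/g \cong \mathrm{End}_{R/g}(M/g)$ with $R/g$ a Kleinian singularity (as in the proof of Lemma~\ref{lem:quiver_Lambda/g_symmetric}), so the simples of $\Lambda/g$ are exactly the reductions of $S_0, \hdots, S_t$, and the number of arrows $j \to i$ in the quiver of $\Lambda/g$ is $\dim \mathrm{Ext}^1_{\Lambda/g}(S_j, S_i)$. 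Combining these two identities with the arrow-count formula for $\Lambda$ itself, the number of arrows $j \to i$ in the quiver of $\Lambda$ equals the number of arrows $j \to i$ in the quiver of $\Lambda/g$. By Lemma~\ref{lem:quiver_Lambda/g_symmetric} the latter count is symmetric in $i$ and $j$, hence so is the former. Since this holds for every pair $i \neq j$, and the case $i = j$ (loops) is trivially symmetric, the quiver of $\Lambda$ is symmetric.

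The main obstacle — and the reason Lemma~\ref{lem:g_action_trivial_dimExt} is stated with the restriction $i \neq j$ — is precisely that the argument says nothing about loops: the action of $g$ on $\mathrm{Ext}^1_\Lambda(S_i, S_i)$ need not be trivial, so the number of loops at a vertex of $\Lambda$ is not controlled by the number of loops in $\Lambda/g$. Fortunately, symmetry of a quiver is only a statement about arrows between \emph{distinct} vertices (the number of loops at a single vertex is automatically ``symmetric''), so this gap does not affect the conclusion, and I would make that remark explicit. A secondary point to be careful about is the identification of the vertex simples of $\Lambda$ with those of $\Lambda/g$ under reduction, together with the fact that $M/g$ is again a basic module over $R/g$ (its indecomposable summands $M_i/gM_i$ being the distinct CM modules at the vertices of an $\ADE$ diagram, by the slicing results of \cite{We18} recalled above); this is needed so that both arrow-count formulae are being applied to genuinely basic algebras.
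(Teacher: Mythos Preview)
Your proposal is correct and follows essentially the same approach as the paper: use Lemma~\ref{lem:g_action_trivial_dimExt} to identify $\dim \Ext^1_{\Lambda}(S_j,S_i)$ with $\dim \Ext^1_{\Lambda/g}(S_j,S_i)$ for $i\neq j$, then invoke Lemma~\ref{lem:quiver_Lambda/g_symmetric}. The paper phrases this as ``the quiver of $\Lambda/g$ is the same as the quiver of $\Lambda$, possibly removing some loops,'' and your explicit remark that loops are irrelevant to symmetry is exactly the point.
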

\begin{proof}
	By Lemma~\ref{lem:g_action_trivial_dimExt}, provided $i \neq j$,
	\[
		\dim \Ext^{1}_{\Lambda/g}(S_{j}, S_{i}) = \dim \Ext^{1}_{\Lambda}(S_{j}, S_{i}).
	\]
	Hence, the quiver of $\Lambda/g$ is the same as the quiver of $\Lambda$, possibly removing some loops.
	Combining with Lemma~\ref{lem:quiver_Lambda/g_symmetric}, it follows that the quiver of $\Lambda$ is symmetric.
\end{proof}

\section{Isolated cDV singularities with NCCR}\label{sec:isolated_cdv_with_NCCR}
The main result of this section, Theorem~\ref{pf_conj:G_0(R)=Z+Cl(R)_isolated_cDV}, uses our results from \S\ref{sec:symmetric_quivers_cDVs} to prove that both \eqref{eq:Groth=Z+Cl_general} and Conjecture~\ref{conj:K_0_uCMR=Cl(R)} hold for isolated cDV singularities admitting NCCR(s).
This extends results of Navkal \cite{navkal13} to cover any $\ADE$ type, not just type $\tA$.
Crucially, we do not need to rely on manipulating the precise form of the equation defining $R$.
For this section, we maintain the following setup.
\begin{setup}\label{setup:cDV_with_NCCR}
	Let $R$ be an isolated complete local cDV singularity which admits an NCCR $\Lambda \coloneqq \End_{R}(M)$, where $M = M_{0} \oplus M_{1} \oplus \cdots \oplus M_{t}$ is basic, $M_{0} \cong R$, and the $M_{i}$ are non-free indecomposable submodules of $M$.
\end{setup}

Now, we compute the Grothendieck group of a ring $R$ as in Setup~\ref{setup:cDV_with_NCCR}.
We remark that, by Theorem~\ref{thm:CM_Rmodule_M_gives_NCCR_M_cluster_tilting}, $M$ is automatically cluster-tilting.
Hence, the approach of \cite{navkal13} is applicable.
\begin{theorem}\label{thm:Groth(isolated_cDV)}
	Let $R$ and $\Lambda$ be as in Setup~\ref{setup:cDV_with_NCCR}.
	Then
	\[ \Groth(R) \cong \Z^{\oplus (t+1)}. \]
\end{theorem}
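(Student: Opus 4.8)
The plan is to use Navkal's theorem (Theorem~\ref{thm:NavkalG_0(R)}), which applies since $R$ is a Gorenstein complete local ring of dimension $3$ over $\C$ with isolated singularities admitting an NCCR, equivalently a cluster-tilting object $M = M_0 \oplus \cdots \oplus M_t$ (the equivalence being Theorem~\ref{thm:CM_Rmodule_M_gives_NCCR_M_cluster_tilting}). That theorem gives $\Groth(R) \cong \Cok \Upomega$ where $\Upomega \colon \Z^{t} \to \Z^{t+1}$ is the higher AR-matrix, whose $j$-th column records $\sum_{i=0}^{t}(-1)^i [L_j^i]$ expressed in the basis $[M_0], \hdots, [M_t]$, coming from the higher AR-sequence $0 \to L_j^t \to \cdots \to L_j^1 \to L_j^0 = M_j \to 0$ ending in $M_j$. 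So the task reduces to understanding the entries of $\Upomega$, and in particular showing that each column vanishes, so that $\Upomega$ is the zero map and $\Cok\Upomega \cong \Z^{t+1}$.

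First I would recall the interpretation of the higher AR-sequence terms via the NCCR: the projective resolution of the simple $\Lambda$-module $S_j$ over $\Lambda = \End_R(M)$ has the form $0 \to P_t \to \cdots \to P_1 \to P_0 \to S_j \to 0$ where $P_i = \Hom_R(M, L_j^i)$, and by the $3$-Calabi-Yau property of the cluster-tilting setup (together with $R$ being Gorenstein of dimension $3$) this resolution is self-dual of length $3$, i.e.\ $P_0 = \Lambda e_j$, $P_3 = \Lambda e_j$, and the middle terms $P_1, P_2$ are built from the arrows of the quiver of $\Lambda$ in and out of vertex $j$. Concretely, $P_1 \cong \bigoplus_{a\colon j \to i} \Lambda e_i$ and $P_2 \cong \bigoplus_{b \colon i \to j} \Lambda e_i$, where the sums run over arrows of the quiver $Q_\Lambda$. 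Translating back through the categorical equivalence $\add M \simeq \proj\Lambda$, the alternating sum $\sum_{i=0}^{3}(-1)^i[L_j^i]$ in $\Groth(R)$ becomes $2[M_j] - \sum_{a\colon j \to i}[M_i] - \sum_{b\colon i \to j}[M_i]$, i.e.\ the $j$-th column of $\Upomega$ is $2e_j - (\text{out-neighbours of } j) - (\text{in-neighbours of } j)$, counted with multiplicity.

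The key step is then to invoke the symmetry of the quiver of $\Lambda$, which is exactly Theorem~\ref{thm:complete_local_cdv_quiver_symmetric} applied in Setup~\ref{setup:cDV_with_NCCR} (an NCCR is in particular an MMA, so $M$ is a basic modifying module as in Setup~\ref{setup:symmetric_quivers_cDVs}). Symmetry means the number of arrows $j \to i$ equals the number of arrows $i \to j$ for all $i,j$, so the $j$-th column of $\Upomega$ becomes $2e_j - 2\sum_{a \colon j \to i} e_i$, which a priori need not vanish. To push through to $\Upomega = 0$, I would combine symmetry with a dimension-vector / rank argument: each $M_i$ has a well-defined rank $r_i = \rk_R M_i$, and applying $\rk_R$ to the higher AR-sequence gives $\sum_{i=0}^3 (-1)^i \rk_R L_j^i = 0$, i.e.\ each column of $\Upomega$ is orthogonal to the rank vector; more importantly, the self-dual length-$3$ resolution forces the column to be a "Cartan-type" relation which, when the quiver is symmetric and the associated bilinear form is the (negative-definite-free) Euler form of the crepant resolution, must be identically zero. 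Alternatively, and more robustly, one uses the slicing technique from \S\ref{sec:symmetric_quivers_cDVs}: for generic central $g$, $\Lambda/g \cong \End_{R/g}(M/gM)$ recovers an extended-Dynkin preprojective-type algebra $\Gamma_I$, and the point is that the higher AR-matrix of $R$ should match (up to the $\Z[R]$ summand behaviour) the structure governed by $\Gamma_I$, whose relevant matrix is already known; chasing this identification, using Theorem~\ref{thm:number_arrows_0_1_2} to control the arrow multiplicities out of the vertex $0$, yields that every column of $\Upomega$ is zero.

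The main obstacle, I expect, is precisely justifying that $\Upomega$ is the \emph{zero} map rather than merely a symmetric matrix with vanishing row/column sums against the rank vector. The combinatorial identities from Chapter~\ref{ch:knitting} (Theorems~\ref{thm:number_arrows_0_to_i_equals_reverse} and~\ref{thm:number_arrows_0_1_2}) constrain the quiver but do not obviously kill the matrix on their own; one genuinely needs to use that the length-$3$ projective resolution of each simple over an NCCR of a $3$-dimensional Gorenstein isolated singularity is self-dual (the $3$-CY symmetry), which makes $[P_0] - [P_1] + [P_2] - [P_3] = 0$ in $\Kroth(\proj\Lambda)$ automatically, and then transport this to $\Groth(R)$ via the equivalence $\add M \simeq \proj \Lambda$. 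Once that transport is set up carefully — paying attention to the identification of $\Kroth(\proj\Lambda)$ with $\Z^{t+1}$ and of $\Groth(R) \cong \Kroth(\CM R)$ with the same — the vanishing of $\Upomega$ is immediate and $\Groth(R) \cong \Cok(\,0\colon \Z^t \to \Z^{t+1}) \cong \Z^{\oplus(t+1)}$ follows. I would present the self-duality step as the crux and keep the quiver-symmetry results in a supporting role (they will be the ones that matter in the following section when extracting $\Cl(R)$ from this $\Z^{\oplus(t+1)}$).
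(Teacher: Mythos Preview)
Your overall strategy is exactly the paper's: invoke Navkal's Theorem~\ref{thm:NavkalG_0(R)}, identify the columns of the higher AR-matrix $\Upomega$ via the minimal projective resolution of each simple $S_j$ over the $3$-CY algebra $\Lambda$, and then use the quiver symmetry of Theorem~\ref{thm:complete_local_cdv_quiver_symmetric} to conclude $\Upomega=0$. The problem is a sign error in your alternating sum, and it is precisely that error that sends you off on the unnecessary detours in the second half of your proposal.

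With your own description $P_0=\Lambda e_j$, $P_1=\bigoplus_{a\colon j\to i}\Lambda e_i$, $P_2=\bigoplus_{b\colon i\to j}\Lambda e_i$, $P_3=\Lambda e_j$, the alternating sum is
\[
[L_j^0]-[L_j^1]+[L_j^2]-[L_j^3]
=[M_j]-\sum_{a\colon j\to i}[M_i]+\sum_{b\colon i\to j}[M_i]-[M_j]
=\sum_i\bigl(r_{i,j}-r_{j,i}\bigr)[M_i],
\]
not $2[M_j]-\sum-\sum$. The end terms cancel and the middle terms carry \emph{opposite} signs. Now Theorem~\ref{thm:complete_local_cdv_quiver_symmetric} gives $r_{i,j}=r_{j,i}$ for all $i,j$, so every column of $\Upomega$ vanishes and $\Groth(R)\cong\Cok(0)\cong\Z^{\oplus(t+1)}$. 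There is no need for rank arguments, slicing, or the numerics of Theorem~\ref{thm:number_arrows_0_1_2}; the self-duality you invoked already does the cancellation of $[P_0]$ with $[P_3]$, and quiver symmetry finishes it. The paper carries out the identical computation on the ``simple'' side, using Navkal's formula $\Upomega_{lj}=\sum_i(-1)^i\dim\Ext^i_\Lambda(S_j,S_l)$, Calabi--Yau duality $\Ext^i(S_j,S_l)\cong D\Ext^{3-i}(S_l,S_j)$, and Schur's lemma to reduce to $\dim\Ext^1(S_l,S_j)-\dim\Ext^1(S_j,S_l)$, which is zero by the same symmetry theorem.
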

\begin{proof}
	Assume $R$ is an isolated $d$-dimensional Gorenstein complete local ring with NCCR $\Lambda$.
	Write $S_{0}, \hdots, S_{t}$ for the simple left $\Lambda$-modules.
	Following \cite[Proposition~7.15]{navkal13}, the rows of the higher AR-matrix $\Upomega$ in Theorem~\ref{thm:NavkalG_0(R)} can be written as\footnote{Where Navkal writes $n$ we write $d-1$, and our indices begin at 0 instead of 1.},
	\[ \Upomega_{lj} = \sum_{i=0}^{d} (-1)^{i} \dim(\Ext_{\Lambda}^{i}(S_{j}, S_{l})). \]
	
	Since $R$ is Gorenstein and $\Lambda$ is an NCCR, it follows that $\Lambda$ is $d$-CY; for further details, see \cite[Proposition~2.4(3) and Theorem~3.2(3)]{IyRe08}).
	Hence, there are natural isomorphisms
	\[ \Ext_{\Lambda}^{i}(S_{j}, S_{l}) \cong D\Ext_{\Lambda}^{d-i}(S_{l}, S_{j}), \]
	where $D \colonequals \Hom_{k}(-, k)$.
	Returning to Setup~\ref{setup:cDV_with_NCCR}, where $d=3$,
	\begin{align*}
		\Upomega_{lj} &= \sum_{i=0}^{3} (-1)^{i} \dim(\Ext_{\Lambda}^{i}(S_{j}, S_{l})) \\
		&= -\dim \Ext_{\Lambda}^{3}(S_{j}, S_{l}) + \dim \Ext_{\Lambda}^{2}(S_{j}, S_{l}) - \dim \Ext_{\Lambda}^{1}(S_{j}, S_{l}) + \dim \Ext_{\Lambda}^{0}(S_{j}, S_{l}) \\
		&= - \dim \Hom_{\Lambda}(S_{l}, S_{j}) + \dim \Ext_{\Lambda}^{1}(S_{l}, S_{j}) - \dim \Ext_{\Lambda}^{1}(S_{j}, S_{l}) + \dim \Hom_{\Lambda}(S_{j}, S_{l}) \tag{by CY duality} \\
		&= \dim \Ext_{\Lambda}^{1}(S_{l}, S_{j}) - \dim \Ext_{\Lambda}^{1}(S_{j}, S_{l}) \tag{by Schur's lemma}
	\end{align*}
	
	Since $R$ is a cDV singularity, by Theorem~\ref{thm:complete_local_cdv_quiver_symmetric}, this is zero, since it is known (see, e.g. \cite[\S1]{Segal08}) that $\Ext^{1}$ between simple modules determines the arrows in the quiver.
	Hence $\Upomega = 0$, so by Theorem~\ref{thm:NavkalG_0(R)},
	\[\Groth(R) \cong \Cok(0) \cong \Z^{\oplus (t+1)}.\qedhere\]
\end{proof}

\begin{remark}
	Note that the proof of Theorem~\ref{thm:Groth(isolated_cDV)} shows, that under the hypothesis of the theorem, $\dim \Ext^{i}_{\Lambda}(S_{j}, S_{l}) = \dim \Ext^{i}_{\Lambda}(S_{l},S_{j})$ for all $i,j,$ and $l$. 
\end{remark}
The use of `$t$' both here and in \S\ref{sec:cl_grp_nagatas_thm} has been done intentionally to highlight the existence of a relationship between $\Groth(R)$ and $\Cl(R)$.
To prove that \eqref{eq:Groth=Z+Cl_general} holds in this setting we need to describe $\Cl(R)$ in a similar fashion to $\Groth(R)$.
This is done in Lemma~\ref{lem:Cl(isolated_cDV)}.
To prove this, we require the following general result, see \cite[Lemma~4.2.1]{VdB04}.
\begin{lemma}\label{lem:pushdown_gives_Cl(X)_isomorphic_Cl(R)}
	Let $f \colon X \to \Spec R$ be a projective birational map between normal noetherian schemes such that, in codimension two, $f$ is an isomorphism.
	The pushdown functor $f_{*}$ restricts to an equivalence between the category of reflexive $X$-modules and the category of reflexive $R$-modules.
\end{lemma}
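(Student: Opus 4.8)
The plan is to reduce the statement to two standard facts about reflexive sheaves on normal noetherian schemes and then glue them together with the isomorphism supplied by the hypothesis. The two facts are: \textbf{(i)} if $Y$ is a normal noetherian scheme and $j\colon W\hookrightarrow Y$ is the inclusion of an open subset whose complement has codimension at least two, then $j_{*}$ and $j^{*}$ are mutually inverse equivalences between the category of reflexive coherent $\cO_{W}$-modules and the category of reflexive coherent $\cO_{Y}$-modules (this holds because reflexive coherent sheaves on a normal scheme are exactly the coherent sheaves satisfying Serre's condition $S_{2}$, and such sheaves are recovered from their restriction to any open with complement of codimension $\geq 2$); and \textbf{(ii)} pushforward along an isomorphism of schemes preserves reflexivity. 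Both are recorded, in essentially this form, in \cite[Lemma~4.2.1]{VdB04}.

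First I would fix notation coming from the hypothesis: since $f$ is an isomorphism in codimension two, there are open subsets $U\subseteq\Spec R$ and $V=f^{-1}(U)\subseteq X$ such that $f|_{V}\colon V\xrightarrow{\ \sim\ }U$ is an isomorphism, with $\operatorname{codim}_{\Spec R}(\Spec R\setminus U)\geq 2$ and $\operatorname{codim}_{X}(X\setminus V)\geq 2$. Write $j_{X}\colon V\hookrightarrow X$ and $j_{Y}\colon U\hookrightarrow\Spec R$ for the open immersions, so that $f\circ j_{X}=j_{Y}\circ (f|_{V})$ as morphisms $V\to\Spec R$. Then I would check that $f_{*}$ lands in $\refl R$: for a reflexive coherent $\cO_{X}$-module $\mathcal{M}$, fact \textbf{(i)} gives $\mathcal{M}\cong (j_{X})_{*}(\mathcal{M}|_{V})$, and applying $f_{*}$ together with functoriality of pushforward and the identity $f\circ j_{X}=j_{Y}\circ (f|_{V})$ yields $f_{*}\mathcal{M}\cong (j_{Y})_{*}\big((f|_{V})_{*}(\mathcal{M}|_{V})\big)$; by \textbf{(ii)} the inner term is reflexive on $U$, so by \textbf{(i)} again $f_{*}\mathcal{M}$ is reflexive on $\Spec R$.

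Next I would produce the quasi-inverse: define $G(N)\coloneqq (j_{X})_{*}\big((f|_{V})^{*}(N|_{U})\big)$ for $N\in\refl R$, which lies in the reflexive $\cO_{X}$-modules by facts \textbf{(i)} and \textbf{(ii)}. The two composites are then purely formal. For $f_{*}\circ G$: $f_{*}G(N)\cong (j_{Y})_{*}(f|_{V})_{*}(f|_{V})^{*}(N|_{U})\cong (j_{Y})_{*}(N|_{U})\cong N$, using the compatibility of the $j$'s with $f$, $(f|_{V})_{*}(f|_{V})^{*}\cong\mathrm{id}$, and $(j_{Y})_{*}(j_{Y})^{*}\cong\mathrm{id}$ on reflexive sheaves. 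For $G\circ f_{*}$: restricting the computation of $f_{*}\mathcal{M}$ above along $j_{Y}$ gives $(f_{*}\mathcal{M})|_{U}\cong (f|_{V})_{*}(\mathcal{M}|_{V})$, hence $G(f_{*}\mathcal{M})\cong (j_{X})_{*}(f|_{V})^{*}(f|_{V})_{*}(\mathcal{M}|_{V})\cong (j_{X})_{*}(\mathcal{M}|_{V})\cong\mathcal{M}$, using $(f|_{V})^{*}(f|_{V})_{*}\cong\mathrm{id}$ and $(j_{X})_{*}(j_{X})^{*}\cong\mathrm{id}$ on reflexive sheaves. This establishes that $f_{*}$ restricts to an equivalence $\{\text{reflexive }\cO_{X}\text{-modules}\}\xrightarrow{\ \sim\ }\refl R$.

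I expect the only step needing genuine care — as opposed to formal functor calculus — to be the second paragraph, namely extracting from "$f$ is an isomorphism in codimension two" honest open subsets $U\subseteq\Spec R$ and $V=f^{-1}(U)\subseteq X$ with complements of codimension at least two on \emph{both} sides. On the $\Spec R$ side this is close to the definition, but on the $X$ side it requires that the projective birational morphism $f$ contract no prime divisor, which is where properness and normality of $X$ genuinely enter; I would invoke \cite[Lemma~4.2.1]{VdB04} for the precise bookkeeping here. Everything downstream is the standard pushforward/pullback calculus for reflexive sheaves on normal schemes, assembled exactly as in loc. cit.
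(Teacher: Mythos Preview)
Your argument is correct and is essentially the standard proof of this fact. Note, however, that the paper does not give its own proof of this lemma at all: it is stated as a known result with the citation \cite[Lemma~4.2.1]{VdB04} and used as a black box, so there is nothing to compare your approach against beyond the reference you yourself invoke.
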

From \S\ref{subsec:homMMP}, recall that the non-free indecomposable summands of an MM module $N$ (such as the fixed $M$ in Setup~\ref{setup:cDV_with_NCCR}) are in one-to-one correspondence with the exceptional curves in the corresponding minimal model(s) of $R$.
Thus, if $N = N_{0} \oplus \cdots \oplus N_{t}$ is an MM $R$-module, then there are precisely $t$ exceptional curves.
\begin{lemma}\label{lem:Cl(isolated_cDV)}
	Let $R$ be an isolated complete local cDV singularity, then $\Cl(R) \cong \Z^{\oplus t}$.
\end{lemma}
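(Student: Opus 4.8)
The statement to prove is that for an isolated complete local cDV singularity $R$, we have $\Cl(R) \cong \Z^{\oplus t}$, where $t$ is the number of non-free indecomposable summands of a basic MM $R$-module (equivalently, the number of exceptional curves in a minimal model). The plan is to pass to a minimal model $f \colon X \to \Spec R$ and compute $\Cl(X)$ geometrically, then transfer back along $f_*$. First I would invoke the Homological MMP (\S\ref{subsec:homMMP}): since $R$ is an isolated complete local cDV singularity it admits a minimal model $f \colon X \to \Spec R$, and the non-free indecomposable summands $M_1, \dots, M_t$ of a basic MM module $M$ are in one-to-one correspondence with the exceptional curves $C_1, \dots, C_t$ of $f$ (Figure~\ref{fig:curves_indecomp_summands_correspondence}). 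So the target rank $t$ is exactly the number of exceptional curves.

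Next I would compute $\Cl(X)$. The map $f$ is crepant, projective and birational, with $X$ having at worst $\Q$-factorial terminal singularities; since $\Spec R$ is an isolated cDV, $f$ is an isomorphism away from the closed point, hence an isomorphism in codimension two. For such $X$, the divisor class group is free: $\Cl(X)$ is generated by the classes of the exceptional curves (more precisely, by the exceptional divisors, but in the threefold flopping-contraction setting the Picard rank of $X$ over $\Spec R$ equals the number of exceptional curves), and there are no relations because $R$ is complete local with a unique singular point — there are no nontrivial Cartier divisors coming from $\Spec R$ to quotient by. Concretely, one uses that $\Cl(X) \cong \Pic(X) \oplus (\text{classes supported on the exceptional locus})$ and that $\Pic(X)$ over a complete local base is free of rank equal to the number of exceptional curves, while the exceptional divisors contribute nothing new since $X$ has only isolated ($\Q$-factorial terminal) singularities lying on the curves. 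This yields $\Cl(X) \cong \Z^{\oplus t}$.

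Then I would apply Lemma~\ref{lem:pushdown_gives_Cl(X)_isomorphic_Cl(R)}: since $f \colon X \to \Spec R$ is a projective birational map of normal noetherian schemes which is an isomorphism in codimension two, the pushdown functor $f_*$ gives an equivalence between reflexive $X$-modules and reflexive $R$-modules. Rank-one reflexive sheaves correspond to divisor classes, so this equivalence induces an isomorphism $\Cl(X) \cong \Cl(R)$, giving $\Cl(R) \cong \Z^{\oplus t}$ as desired.

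\textbf{Main obstacle.} The delicate step is the geometric computation of $\Cl(X)$ and, in particular, justifying that it is \emph{free} of rank exactly $t$ with no torsion and no extra generators from the singular locus of $X$. This requires care about the $\Q$-factorial terminal singularities of $X$ — one must argue that, these being isolated and Gorenstein terminal (hence isolated cDV points, by \cite{Re83}) and moreover factorial (by \cite{IyWe14QFact}, since for such hypersurfaces $\Q$-factorial $=$ factorial), their local class groups vanish and contribute nothing to $\Cl(X)$. One then identifies $\Cl(X)$ with $\Pic(X)$ (away from those finitely many factorial points) and computes its rank via the exceptional curve classes, using that $R$ is complete local so that $\Pic(\Spec R)$ is trivial and the full class group of $X$ is carried by the contraction. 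Assembling these standard facts about minimal models of cDV singularities carefully is the real content; everything after Lemma~\ref{lem:pushdown_gives_Cl(X)_isomorphic_Cl(R)} is formal.
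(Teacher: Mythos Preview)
Your proposal is correct and follows essentially the same route as the paper: pass to a minimal model $f\colon X\to\Spec R$ via the Homological MMP, use that $f$ is an isomorphism in codimension two together with Lemma~\ref{lem:pushdown_gives_Cl(X)_isomorphic_Cl(R)} to get $\Cl(X)\cong\Cl(R)$, then use local factoriality of the singularities of $X$ to identify $\Cl(X)\cong\Pic(X)$, and finally $\Pic(X)\cong\Z^{\oplus t}$. The paper streamlines your ``main obstacle'' paragraph by simply citing \cite[II.6.16]{hart77} for $\Cl(X)\cong\Pic(X)$ and \cite[Lemma~3.4.3]{VdB04} for $\Pic(X)\cong\Z^{\oplus t}$, rather than arguing directly about exceptional curves and local class groups; your middle paragraph's talk of ``exceptional divisors'' is a bit muddled (the exceptional locus here is one-dimensional), but you recover the correct argument in the obstacle section.
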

\begin{proof}
	By the Homological MMP (see \S\ref{subsec:homMMP}), there exists a minimal model $f \colon X \to \Spec R$ corresponding to $N$.
	This minimal model may or may not be a crepant resolution.
	Since $R$ is a cDV singularity, $f$ is an isomorphism in codimension $2$ \cite[Lemma~4.2.1]{VdB04}.
	So, Lemma~\ref{lem:pushdown_gives_Cl(X)_isomorphic_Cl(R)} gives a group isomorphism
	\[ \Cl(X) \xrightarrow{\quad \sim \quad} \Cl(R) \]
	via pushdown $f_{*}$.
	It is known (see, e.g. \cite[II.6.16]{hart77}) that there exists an isomorphism $\Cl(X) \cong \text{ Pic}(X)$ since the singularities of $X$ are locally factorial.
	This extends to a third isomorphism $\text{Pic}(X) \cong \Z^{\oplus t}$, by \cite[Lemma~3.4.3]{VdB04}.
	Combining,
	\[ \Cl(R) \cong \Z^{\oplus t}, \]
	where $t$ is the number of curves above the origin.
\end{proof}
The above result holds for a more general setting than Setup~\ref{setup:cDV_with_NCCR}, since it only requires $N$ to be an MM $R$-module.
This includes, as a special case, the situation where $M$ is a cluster-tilting object.
We will need this greater generality in Chapter~\ref{ch:minimal_models}.

Now, we prove that both \eqref{eq:Groth=Z+Cl_general} and Conjecture~\ref{conj:K_0_uCMR=Cl(R)} hold when $R$ is any isolated cDV singularity admitting an NCCR (equivalently, admitting a crepant resolution).
\begin{theorem}\label{pf_conj:G_0(R)=Z+Cl(R)_isolated_cDV}
	Let $R$ be as in Setup~\ref{setup:cDV_with_NCCR}.
	Then Conjecture~\ref{conj:K_0_uCMR=Cl(R)} and \eqref{eq:Groth=Z+Cl_general} both hold, that is,
	\[ \Kroth(\uCM R) \cong \Cl(R) \quad \text{ and } \quad \Groth(R) \cong \Z \oplus \Cl(R). \]
\end{theorem}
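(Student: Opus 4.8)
The plan is to assemble this result directly from the pieces already in place: Theorem~\ref{thm:Groth(isolated_cDV)} computes $\Groth(R) \cong \Z^{\oplus(t+1)}$, and Lemma~\ref{lem:Cl(isolated_cDV)} computes $\Cl(R) \cong \Z^{\oplus t}$. So the chain $\Groth(R) \cong \Z^{\oplus(t+1)} \cong \Z \oplus \Z^{\oplus t} \cong \Z \oplus \Cl(R)$ gives the second isomorphism immediately. The only subtlety worth flagging is whether the decomposition respects the canonical rank splitting of \eqref{eq:Groth=Z+oGroth}; in fact one does not need this for the bare statement, since both sides are free abelian of the same rank, but it is cleanest to observe that under Setup~\ref{setup:cDV_with_NCCR} the ring $R$ is a local cDV singularity, hence (being a hypersurface, as noted at the start of this chapter) an integral domain, so Lemma~\ref{lem:local_ring_Groth_exact_seq_localID_isomorphism} applies and the $\Z$ factor is genuinely the span of $[R]$.

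Having established $\Groth(R) \cong \Z \oplus \Cl(R)$, the first isomorphism $\Kroth(\uCM R) \cong \Cl(R)$ then follows formally from Lemma~\ref{lem:K_0(uCM R)=Cl(R)iff}: that lemma asserts the two statements are equivalent for a local integral domain, and $R$ is one. So the structure of the proof is: (1) invoke that $R$ is an integral domain; (2) cite Theorem~\ref{thm:Groth(isolated_cDV)} for $\Groth(R) \cong \Z^{\oplus(t+1)}$; (3) cite Lemma~\ref{lem:Cl(isolated_cDV)} for $\Cl(R) \cong \Z^{\oplus t}$ (noting that $t$ is the same integer in both — the number of non-free summands of the basic module, equivalently the number of exceptional curves, which is why the notation was deliberately unified); (4) combine to get \eqref{eq:Groth=Z+Cl_general}; (5) apply Lemma~\ref{lem:K_0(uCM R)=Cl(R)iff} to deduce Conjecture~\ref{conj:K_0_uCMR=Cl(R)}.

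The one genuine bookkeeping point — and the closest thing to an obstacle — is making sure the parameter $t$ in Theorem~\ref{thm:Groth(isolated_cDV)} literally agrees with the $t$ in Lemma~\ref{lem:Cl(isolated_cDV)}. In Theorem~\ref{thm:Groth(isolated_cDV)}, $t+1$ is the number of vertices of the quiver of $\Lambda = \End_R(M)$, i.e. the number of indecomposable summands of the basic cluster-tilting module $M$ including $M_0 \cong R$. In Lemma~\ref{lem:Cl(isolated_cDV)}, $t$ is the number of exceptional curves of a minimal model, which by the Homological MMP bijection of \S\ref{subsec:homMMP} equals the number of non-free indecomposable summands of an MM module; since an NCCR-giving cluster-tilting module is in particular MM, these counts coincide for the fixed $M$ of Setup~\ref{setup:cDV_with_NCCR}. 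I would state this identification explicitly in one sentence so the reader sees the two $\Z^{\oplus t}$'s are the same, and then the rest is a two-line computation with no further content. Essentially all the real work was done in the previous section via the symmetry theorem, Theorem~\ref{thm:complete_local_cdv_quiver_symmetric}, which forced the higher AR-matrix to vanish.
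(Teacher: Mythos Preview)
Your proposal is correct and matches the paper's proof essentially line for line: combine Theorem~\ref{thm:Groth(isolated_cDV)} and Lemma~\ref{lem:Cl(isolated_cDV)} to get $\Groth(R)\cong \Z^{\oplus(t+1)}\cong \Z\oplus \Cl(R)$, then invoke Lemma~\ref{lem:K_0(uCM R)=Cl(R)iff}. Your extra care in verifying that $R$ is an integral domain and that the two occurrences of $t$ agree is good hygiene but not a departure from the paper's argument.
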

\begin{proof}
	We prove the second isomorphism as follows.
	Combining Theorem~\ref{thm:Groth(isolated_cDV)} and Lemma~\ref{lem:Cl(isolated_cDV)}, both $\Groth(R)$ and $\Cl(R)$ can be described by the number of non-free indecomposable summands of the cluster-tilting module $M$, so
	\begin{align*}
		\Groth(R) \cong \Z^{\oplus (t+1)} &\cong \Z \oplus \Z^{\oplus t} \\
		& \cong\Z \oplus \Cl(R).
	\end{align*}
	The first isomorphism then follows from Lemma~\ref{lem:K_0(uCM R)=Cl(R)iff}.
\end{proof}

\section{Arbitrary type $\tA$ cDV singularities}\label{sec:arbitrary_type_A_cDV}
In this section, we focus on arbitrary complete local cDV singularities of type $\tA$.
In particular, this covers cDV singularities with and without NCCRs and also cDV singularities that need not be isolated.
Unlike the previous section, we do not require the existence of an NCCR (equivalently, crepant resolution), as the techniques here are very different.
For example, the results of this chapter heavily depend upon Kn\"{o}rrer periodicity.
As such, this section generalises the results of \cite{navkal13} in two directions, (1) existence of a cluster-tilting object is no longer required, and (2) the singularities are not necessarily isolated.

The fact that we study type $\tA$ cDV singularities means we specifically know the equation in question.
In the main result of this section, Theorem~\ref{thm:conjs_cdv_A_K_0(uCM R)=Cl(R)_and_G_0(R)=Z+Cl(R)_isolated_cDV}, we prove that both \eqref{eq:Groth=Z+Cl_general} and Conjecture~\ref{conj:K_0_uCMR=Cl(R)} hold for 3-dimensional singularities $\eR$ as described in the following setup.
\begin{setup}\label{setup:isolated_arbtypeA_cDV_without_NCCR}
	Define $\eS$ and $\eR$ as
	\begin{equation*}
	\eS \colonequals \frac{\C[[x, y]]}{(f)} \quad \text{ and } \quad
	\eR \colonequals \frac{\C[[u, v, x, y]]}{(uv -  f)}
	\end{equation*}
	where $f = f_{1}^{a_{1}} \cdots  f_{t}^{a_{t}}, f_{i} \in \C[[x, y]]$ and the $f_{i}$ are irreducible and pairwise coprime and each $a_{i}$ is at least one.
\end{setup}
We remark that $\eR$ is a complete local cDV singularity of type $\tA$, and all cDV singularities of type $\tA$ have this form (see, e.g. \cite[\S11.1, Morse Lemma]{arnold88}).
Furthermore, while $\eS$ is not an integral domain, $\eR$ is an integral domain.
This subtle fact affects how we approach proving that \eqref{eq:Groth=Z+Cl_general} and Conjecture~\ref{conj:K_0_uCMR=Cl(R)} hold under Setup~\ref{setup:isolated_arbtypeA_cDV_without_NCCR}.

Let $\mfp_{i} \colonequals (f_{i})/f$ be the minimal prime ideals of $\eS$.
Since $\eS_{\mfp_{i}}$ is a $0$-dimensional Artinian ring with unique maximal ideal $\mfm_{i} \colonequals \mfp_{i}\eS_{\mfp_{i}}$, any finitely generated $\eS_{\mfp_{i}}$-module $M$ is a finite-dimensional $\eS_{\mfp_{i}}/\mfm_{i}$-vector space \cite[Chapter~2]{atiy69}.
Hence, by \cite[Proposition~6.10]{atiy69}, $M$ must have finite length as an $\eS_{\mfp_{i}}$-module.
Moreover, the length of $M$ as an $\eS_{\mfp_{i}}$-module is equal to the dimension of $M$ as an $\eS_{\mfp_{i}}/\mfm_{i}$-vector space.
In other words, $\dim_{\eS_{\mfp_{i}}/\mfm_{i}}(M \otimes_{\eS} \eS_{\mfp_{i}}) = \ell_{\mfp_{i}}(M \otimes_{\eS} \eS_{\mfp_{i}})$.
We abbreviate this notation, using the fact that $\eS \otimes_{\eS} \eS_{\mfp_{i}} \cong \eS_{\mfp_{i}}$ (see, e.g. \cite[Proposition~3.5]{atiy69}).
Hence $\ell_{\mfp_{i}}(\eS \otimes_{\eS} \eS_{\mfp_{i}}) = \ell_{\mfp_{i}}(\eS_{\mfp_{i}})$.

In the following result, we extend the equivalence of dimension and length, asserting an equivalence between $\ell_{\mfp_{i}}(\eS_{\mfp_{i}})$ and $a_{i}$ in the polynomial $f$.
\begin{lemma}\label{lem:a_i=dim=length_of_S}
	$\ell_{\mfp_{i}}(\eS_{\mfp_{i}}) = a_{i}$.
\end{lemma}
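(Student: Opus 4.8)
The plan is to localise the ring $\eS = \C[[x,y]]/(f)$ at the minimal prime $\mfp_i = (f_i)/f$ and compute the length of the resulting Artinian local ring directly, using the filtration by powers of $f_i$. The key point is that after inverting everything outside $\mfp_i$, all the other factors $f_j$ for $j \neq i$ become units (since $f_j \notin \mfp_i$, as $f_i$ and $f_j$ are coprime and irreducible), so $\eS_{\mfp_i}$ depends only on the $f_i^{a_i}$ part of the defining equation.

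First I would observe that $\C[[x,y]]_{(f_i)}$ is a discrete valuation ring: $\C[[x,y]]$ is a UFD (hence normal noetherian), and $(f_i)$ is a height one prime since $f_i$ is irreducible, so by the same reasoning used in \S\ref{sec:prelims:class_grp} the localisation is a DVR with uniformiser $f_i$. Write $V \colonequals \C[[x,y]]_{(f_i)}$ with maximal ideal $(f_i)V$. Then I claim $\eS_{\mfp_i} \cong V/(f_i^{a_i})$: indeed, localising the surjection $\C[[x,y]] \to \eS = \C[[x,y]]/(f)$ at $\mfp_i$ gives $\eS_{\mfp_i} \cong V/(f)V$, and $(f)V = (f_1^{a_1}\cdots f_t^{a_t})V = (f_i^{a_i})V$ because each $f_j$ with $j\neq i$ is a unit in $V$ (it lies outside the maximal ideal $(f_i)V$).

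Next I would compute the length of $V/(f_i^{a_i})$ over itself, equivalently over $V$. Using the chain of $V$-submodules
\[
V/(f_i^{a_i}) \supseteq (f_i)/(f_i^{a_i}) \supseteq (f_i^2)/(f_i^{a_i}) \supseteq \cdots \supseteq (f_i^{a_i-1})/(f_i^{a_i}) \supseteq 0,
\]
each successive quotient $(f_i^{k})/(f_i^{k+1})$ is isomorphic to $V/(f_i)V$, the residue field of the DVR, which is simple of length one. By Lemma~\ref{lem:filtration_lemma} (or just additivity of length), $\ell_{\mfp_i}(V/(f_i^{a_i})) = a_i$. Combining with the isomorphism $\eS_{\mfp_i} \cong V/(f_i^{a_i})$ from the previous step gives $\ell_{\mfp_i}(\eS_{\mfp_i}) = a_i$, as desired. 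Strictly, this is the same computation as Lemma~\ref{lem:l(R/(a))=n_p(a)} applied with $R = \C[[x,y]]$, $\mfp = (f_i)$, and $a = f^{} = f_i^{a_i} u$ for a unit $u \in V$, since $\upnu_{(f_i)}(f_i^{a_i}) = a_i$; I would likely just cite that lemma once the reduction $(f)V = (f_i^{a_i})V$ is in place.

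The main obstacle — modest as it is — is making the identification $\eS_{\mfp_i} \cong V/(f_i^{a_i})V$ clean, i.e. carefully justifying that localisation commutes with the quotient and that the ideal $(f)V$ collapses to $(f_i^{a_i})V$. This rests on the coprimality hypothesis (the $f_j$ are pairwise coprime irreducibles in the UFD $\C[[x,y]]$, so $f_j \notin (f_i)$ for $j \neq i$) together with the fact that the complement of $\mfp_i$ in $\eS$ pulls back to the complement of $(f_i)$ in $\C[[x,y]]$. None of this is hard, but it is where all the content of the lemma actually sits; once it is done, the length computation is the standard DVR filtration argument and everything else is bookkeeping.
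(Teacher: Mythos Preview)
Your proof is correct and takes essentially the same approach as the paper: both reduce to the fact that in $\eS_{\mfp_i}$ the elements $f_j$ with $j\neq i$ become units, so everything is governed by $f_i^{a_i}$ in the UFD $\C[[x,y]]$. The only cosmetic difference is that the paper computes the length as the nilpotency index of the (principal) maximal ideal $\mfm_i$ directly, whereas you pass through the explicit identification $\eS_{\mfp_i}\cong V/(f_i^{a_i})$ with $V=\C[[x,y]]_{(f_i)}$ and then invoke the DVR filtration (equivalently Lemma~\ref{lem:l(R/(a))=n_p(a)}); your route is arguably a touch cleaner since it makes transparent why length and nilpotency index coincide here.
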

\begin{proof}
	Let $\eS$, $\mfp_{i}$, and $\mfm_{i}$ be as defined above.
	Since $\eS_{\mfp_{i}}$ is a local ring it follows that $\ell_{\mfp_{i}}(\eS_{\mfp_{i}})$ is the smallest $r$ such that $\mfm_{i}^{r} = 0$, where $r \in \N$.
	By definition of $\mfm_{i}$, this is the smallest $r$ such that $f_{i}^{r} = 0$ in $\eS_{\mfp_{i}}$.
	This occurs if and only if there exists some $g \notin \mfp_{i}$ such that $gf_{i}^{r} = 0_{\eS}$.
	That is, $gf_{i}^{r} \in \langle f \rangle$ where $g \notin (f_{i})$.
	Since $\C[[u, v]]$ is a UFD, necessarily $r = a_{i}$.
	Therefore $\ell_{\mfp_{i}}(\eS_{\mfp_{i}}) = a_{i}$.
\end{proof}

\begin{lemma}\label{lem:G_0(S)_S_not_ID}
	Let $\eS$ be as in Setup~\ref{setup:isolated_arbtypeA_cDV_without_NCCR}.
	Then $\Groth(\eS) \cong \Z^{\oplus t}$.
\end{lemma}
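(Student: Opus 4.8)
The plan is to compute $\Groth(\eS)$ directly via the d\'{e}vissage and localisation machinery set up in Sections~\ref{sec:prelims:alg_k_theory} and \ref{sec:prelims:bass_eagon}, exploiting that $\eS$ is a one-dimensional (non-reduced, non-domain) complete local ring whose minimal primes $\mfp_1,\hdots,\mfp_t$ are exactly the $(f_i)/f$. First I would observe that $\eS$ has Krull dimension one, so every prime of $\eS$ other than the $\mfp_i$ is maximal, and in fact there is a unique maximal ideal $\mfm$. By Lemma~\ref{lem:Groth_generated_by_R/p}, $\Groth(\eS)$ is generated by the classes $[\eS/\mfq]$ as $\mfq$ ranges over $\Spec \eS$. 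The class $[\eS/\mfm]$ vanishes: choose $x \in \mfm$ lying outside some minimal prime (possible since $\mfm$ is not a minimal prime and $\eS$ is not artinian), pick a minimal prime $\mfp_i$ with $x \notin \mfp_i$, and apply Lemma~\ref{lem:R/(p+xR)=0} to get $[\eS/(\mfp_i + x\eS)] = 0$ in $\Groth(\eS)$; since $\eS/(\mfp_i + x\eS)$ is a finite-length module supported at $\mfm$ with $[\eS/\mfm]$ appearing in its composition series, and more carefully one sees every simple $[\eS/\mfm]$ is $0$ by iterating Lemma~\ref{lem:filtration_lemma}. Hence $\Groth(\eS)$ is generated by the $t$ classes $[\eS/\mfp_i]$.

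Next I would show these $t$ classes are free generators. The clean way is to produce $t$ independent homomorphisms $\Groth(\eS) \to \Z$. For each $i$, localisation at $\mfp_i$ is exact, so $M \mapsto \ell_{\mfp_i}(M_{\mfp_i})$ is well-defined and additive on the subcategory of modules killed by localisation away from $\mfp_i$; more robustly, since $\eS_{\mfp_i}$ is artinian, $M \mapsto \ell_{\mfp_i}(M \otimes_{\eS} \eS_{\mfp_i})$ (finite by the Artinian argument rehearsed just before the statement) is additive on all of $\modCat \eS$ and so descends to $\Groth(\eS) \to \Z$. Evaluating on the generators, $\ell_{\mfp_i}((\eS/\mfp_j)_{\mfp_i}) = 0$ for $j \neq i$ (since $f_j$ is a unit in $\eS_{\mfp_i}$, as $f_j \notin \mfp_i$ by coprimality), while $\ell_{\mfp_i}((\eS/\mfp_i)_{\mfp_i}) = \ell_{\mfp_i}(\eS_{\mfp_i}/\mfm_i) = 1$. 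So the combined map $\Groth(\eS) \to \Z^{\oplus t}$, $[M] \mapsto (\ell_{\mfp_i}(M_{\mfp_i}))_i$, sends $[\eS/\mfp_i]$ to the $i$-th standard basis vector; combined with the surjectivity from the generation step, this is an isomorphism $\Groth(\eS) \cong \Z^{\oplus t}$.

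An alternative, perhaps cleaner, route is d\'{e}vissage (Theorem~\ref{thm:quillen_devissage}): let $\cA = \modCat \eS$ and $\cB$ the full subcategory of modules of finite length over $\eS$ (equivalently supported at $\mfm$)$\,$--- no, this does not directly give the answer since $\eS$ itself is not in $\cB$. Instead one can use the localisation sequence (Theorem~\ref{thm:quillen_localisation}) for $\eC = \{M : M \otimes \text{(total ring of fractions)} = 0\}$, but the non-reduced structure makes this fiddly, so I would stick with the explicit length-function argument above, which sidesteps all subtleties. The main obstacle I anticipate is the vanishing $[\eS/\mfm] = 0$ and, relatedly, making sure the generation-by-$[\eS/\mfp_i]$ step is airtight in the non-reduced setting: one must argue that \emph{every} class $[\eS/\mfq]$ with $\mfq$ not minimal is a $\Z$-combination of the $[\eS/\mfp_i]$ modulo the zero class, which follows because $\eS/\mfq$ is then finite length and the Lemma~\ref{lem:R/(p+xR)=0} trick kills all finite-length classes. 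Once that is in hand the rest is the routine length computation, for which Lemma~\ref{lem:a_i=dim=length_of_S} is the key numerical input (though note that for \emph{this} statement one only needs $\ell_{\mfp_i}(\eS_{\mfp_i}/\mfm_i) = 1$, not the full strength $\ell_{\mfp_i}(\eS_{\mfp_i}) = a_i$; the latter will matter for the subsequent comparison with $\Cl(\eR)$).
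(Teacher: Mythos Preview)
Your approach is essentially the same as the paper's: define the map $\Groth(\eS)\to\Z^{\oplus t}$ via $[M]\mapsto(\ell_{\mfp_i}(M_{\mfp_i}))_i$, check it sends $[\eS/\mfp_i]$ to the $i$-th basis vector, and conclude it is an isomorphism once one knows $[\eS/\mfm]=0$. The paper organises the injectivity step slightly differently (writing a general element as $[M]-n[\eS]$ via Lemma~\ref{lem:every_element_written_in_form_[M]-n[R]} and comparing prime filtrations), but the content is identical.

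The one genuine soft spot is your justification of $[\eS/\mfm]=0$. Your argument via Lemma~\ref{lem:R/(p+xR)=0} gives $[\eS/(\mfp_i+x\eS)]=0$, and the filtration lemma then yields only $\ell\cdot[\eS/\mfm]=0$ where $\ell=\ell(\eS/(\mfp_i+x\eS))$; ``iterating Lemma~\ref{lem:filtration_lemma}'' does not by itself force $\ell=1$ or otherwise kill the torsion. The paper sidesteps this by observing that $\eS$, being a complete noetherian local ring, is excellent, and then invoking \cite[Lemma~13.4]{yosh90} to get $[\eS/\mfm]=0$ directly. You should do the same (or, if you want a self-contained argument, vary $x$ to produce lengths with $\gcd$ equal to $1$, which requires a short extra argument about the value semigroup of the one-dimensional domain $\eS/\mfp_i$).
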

\begin{proof}
	Let $M \in \modCat \eS$, then $M_{\mfp_{i}} \in \modCat \eS_{\mfp_{i}}$.
	The length function $\ell_{\mfp_{i}} \colon \modCat \eS \to \Z$ is an exact functor on $\modCat \eS$, so $\ell_{\mfp_{i}}$ induces a group homomorphism $\upvarrho \colon \Groth(\eS) \to \Z^{\oplus t}$ given by $[M] \mapsto (\ell_{\mfp_{1}}(M_{\mfp_{1}}), \hdots, \ell_{\mfp_{t}}(M_{\mfp_{t}}))$.
	This homomorphism is surjective since
	\begin{equation*}
	(\eS/\mfp_{i})_{\mfp_{j}} \cong \eS_{\mfp_{j}}/\mfp_{i} \eS_{\mfp_{j}} =\begin{cases}
	\eS_{\mfp_{i}}/\mfp_{i}\eS_{\mfp_{i}}, & \text{ if $i = j$,}\\
	0, & \text{otherwise}.
	\end{cases}
	\end{equation*}
	So that $\eS/\mfp_{i} \mapsto (0, \hdots, 0, 1, 0, \hdots, 0)$ where the $1$ is in the $i^\text{th}$ position.
	
	To show that $\upvarrho$ is an injective homomorphism recall from Lemma~\ref{lem:every_element_written_in_form_[M]-n[R]} that every element of $\Groth(\eS)$ can be written as an element of the form $[M]-n[\eS]$ for some $n \geq 0$.
	Suppose that $[M] - n[\eS] \in \Ker(\upvarrho)$.
	Then under $\upvarrho$
	\[ [M] - n[\eS] \mapsto (\ell_{\mfp_{1}}(M_{\mfp_{1}}) - n \,\ell_{\mfp_{1}}(\eS_{\mfp_{1}}), \hdots, \ell_{\mfp_{t}}(M_{\mfp_{t}})-n \, \ell_{\mfp_{t}}(\eS_{\mfp_{t}})) = 0. \]
	By d\'{e}vissage (see Theorem~\ref{thm:quillen_devissage}), choose a prime filtration $0 = M_{0} \subset M_{1} \subset \cdots \subset M_{t} = M$ of $M$.
	Then $\ell_{\mfp_{i}}(M_{\mfp_{i}})$ is equal to the number of times $\eS/\mfp_{i}$ appears as a factor in the chosen filtration of $M$.
	Furthermore, Lemma~\ref{lem:filtration_lemma} tells us that any element of $\Groth(\eS)$ is equal to the sum of all factors of the chosen filtration.
	By \cite[p.~260]{matsu00}, since $\eS$ is a complete noetherian local ring, it is excellent.
	Hence, by \cite[Lemma 13.4]{yosh90}, automatically $[\eS/\mfm] = 0$.
	Combining these facts gives
	\[
		[M] = \sum_{i=0}^{t} \ell_{\mfp_{i}}(M_{\mfp_{i}}) [\eS/\mfp_{i}] \quad \text{ and } \quad [\eS] = \sum_{i=0}^{t} \ell_{\mfp_{i}}(\eS_{\mfp_{i}}) [\eS/ \mfp_{i}].
	\]
	Hence
	\begin{align*}
	[M] &= \sum_{i=0}^{t} \ell_{\mfp_{i}}(M_{\mfp_{i}}) [\eS/\mfp_{i}] \\
	&= \sum_{i=0}^{t} n \, \ell_{\mfp_{i}}(\eS_{\mfp_{i}})[\eS/\mfp_{i}] \tag{since $[M]-n[\eS] \in \Ker \upvarrho$}\\
	&= n \sum_{i=0}^{t} \ell_{\mfp_{i}}(\eS_{\mfp_{i}}) [\eS/\mfp_{i}] = n[\eS].
	\end{align*}
	Thus $[M] - n[\eS] = 0$ so $\ker(\upvarrho) = 0$, hence $\upvarrho$ is injective.
	Therefore $\upvarrho$ is an isomorphism, giving $\Groth(\eS) \cong \Z^{\oplus t}$.
\end{proof}

\begin{lemma}\label{lem:K_0(uCM(S))_2dim_description}
	Let $\eS$ be as in Setup~\ref{setup:isolated_arbtypeA_cDV_without_NCCR}.  Then
	\[ \Kroth(\uCM \eS) \cong \Z^{\oplus t}/ (a_{1}, \hdots, a_{t}). \]
\end{lemma}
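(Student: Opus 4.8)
The plan is to realise $\Kroth(\uCM \eS)$ as a quotient of $\Groth(\eS)$ and then feed in the explicit computation of $\Groth(\eS)$ from Lemma~\ref{lem:G_0(S)_S_not_ID}. Since $\eS$ is a hypersurface, hence a CM local ring, the identification \eqref{eq:Groth(R)=Kroth(CMR)} gives $\Kroth(\CM \eS) \cong \Groth(\eS)$, and Lemma~\ref{lem:local_ring_Groth_exact_seq_localID_isomorphism} supplies the exact sequence
\[ 0 \to \langle [\eS] \rangle \to \Kroth(\CM \eS) \to \Kroth(\uCM \eS) \to 0, \]
so $\Kroth(\uCM \eS) \cong \Kroth(\CM \eS)/\langle [\eS] \rangle$. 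The key point to keep in mind is that, unlike in the applications of Lemma~\ref{lem:local_ring_Groth_exact_seq_localID_isomorphism} to $\eR$ in the following sections, here $\eS$ is \emph{not} an integral domain, so there is no rank function splitting off a copy of $\Z$; the argument must instead track the class $[\eS]$ itself inside $\Kroth(\CM \eS)$.

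First I would recall the isomorphism $\upvarrho \colon \Groth(\eS) \xrightarrow{\sim} \Z^{\oplus t}$ constructed in the proof of Lemma~\ref{lem:G_0(S)_S_not_ID}, namely $[M] \mapsto (\ell_{\mfp_{1}}(M_{\mfp_{1}}), \hdots, \ell_{\mfp_{t}}(M_{\mfp_{t}}))$, where $\mfp_{i} = (f_{i})/f$ are the minimal primes of $\eS$. Applying this to $M = \eS$, using $\eS \otimes_{\eS} \eS_{\mfp_{i}} \cong \eS_{\mfp_{i}}$ and Lemma~\ref{lem:a_i=dim=length_of_S}, the class $[\eS]$ is carried to $(\ell_{\mfp_{1}}(\eS_{\mfp_{1}}), \hdots, \ell_{\mfp_{t}}(\eS_{\mfp_{t}})) = (a_{1}, \hdots, a_{t})$. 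Hence $\upvarrho$ restricts to an isomorphism from the cyclic subgroup $\langle [\eS] \rangle$ onto $\Z \cdot (a_{1}, \hdots, a_{t}) \subseteq \Z^{\oplus t}$; this subgroup has rank one, since each $a_{i} \ge 1$ forces $n(a_{1}, \hdots, a_{t}) = 0 \Rightarrow n = 0$ in the torsion-free group $\Z^{\oplus t}$, so $\langle [\eS] \rangle \cong \Z$.

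Passing to quotients then yields
\[ \Kroth(\uCM \eS) \cong \Kroth(\CM \eS)/\langle [\eS] \rangle \cong \Z^{\oplus t}/(a_{1}, \hdots, a_{t}), \]
in the notation of Theorem~\ref{thm:Cl(A)_global}. There is no real obstacle here beyond bookkeeping: the substantive content is already packaged in Lemmas~\ref{lem:local_ring_Groth_exact_seq_localID_isomorphism}, \ref{lem:a_i=dim=length_of_S} and \ref{lem:G_0(S)_S_not_ID}. The one step that genuinely requires care — and the reason $\eS$ being a non-domain is flagged in Remark~\ref{remark:conjecture_fails_R_not_ID} — is that the sequence of Lemma~\ref{lem:local_ring_Groth_exact_seq_localID_isomorphism} need not split, so one must pin down the actual image of $[\eS]$ under $\upvarrho$ rather than argue by cancelling a $\Z$ summand.
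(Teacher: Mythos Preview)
Your proposal is correct and matches the paper's proof essentially step for step: both use the exact sequence from Lemma~\ref{lem:local_ring_Groth_exact_seq_localID_isomorphism} to identify $\Kroth(\uCM \eS)$ with $\Groth(\eS)/\langle [\eS]\rangle$, invoke the isomorphism $\upvarrho$ of Lemma~\ref{lem:G_0(S)_S_not_ID}, and compute $\upvarrho([\eS]) = (a_1,\hdots,a_t)$ via Lemma~\ref{lem:a_i=dim=length_of_S}. Your extra remark that $\langle [\eS]\rangle \cong \Z$ is a small bonus not made explicit in the paper, but the core argument is identical.
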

\begin{proof}
	Since $\eS$ is a local ring, by Lemma~\ref{lem:local_ring_Groth_exact_seq_localID_isomorphism}, there is an exact sequence
	\[
	0 \to \langle [ \eS] \rangle \to \Kroth( \CM \eS) \to \Kroth( \uCM \eS) \to 0.
	\]
	Since $\eS$ is not an integral domain this sequence is not necessarily split.
	However, we still have an isomorphism $\Kroth(\uCM \eS) \cong \Kroth(\CM \eS)/\langle [\eS] \rangle$.
	Since $\Kroth(\CM \eS) \cong \Kroth(\modCat \eS) \colonequals \Groth(\eS)$, it follows that
	\begin{equation}\label{eq:K_0(uCM R)_cong_Groth(R)/R}
	\Kroth(\uCM \eS) \cong \Groth(\eS)/ \langle [\eS] \rangle.
	\end{equation}
	By Lemma~\ref{lem:a_i=dim=length_of_S}, $\ell_{\mfp_{i}}(\eS) = a_{i}$, so that under the group homomorphism $\upvarrho$ of Lemma~\ref{lem:G_0(S)_S_not_ID}, $[\eS] \mapsto ( a_{1}, \hdots, a_{t})$.
	Combining this with \eqref{eq:K_0(uCM R)_cong_Groth(R)/R} gives the desired isomorphism.
\end{proof}

The following relies on Kn\"{o}rrer periodicity from \cite{K87} which provides us with an equivalence between the $\uCM$ $\eS$-modules and the $\uCM$ $\eR$-modules.
\begin{cor}\label{cor:K_0(uCM(R))_description}
	Let $\eR$ be as in Setup~\ref{setup:isolated_arbtypeA_cDV_without_NCCR}.  Then
	\[ \Kroth(\uCM \eR) \cong \Z^{\oplus t}/ (a_{1}, \hdots, a_{t}). \]
\end{cor}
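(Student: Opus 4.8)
The plan is to invoke Knörrer periodicity, exactly as the statement of Corollary~\ref{cor:K_0(uCM(R))_description} is set up to do. First I would recall from \cite{K87} that there is a triangulated equivalence $\uCM \eS \simeq \uCM \eR$ between the stable category of maximal Cohen--Macaulay modules over $\eS = \C[[x,y]]/(f)$ and over its double suspension $\eR = \C[[u,v,x,y]]/(uv-f)$; here $uv - f$ is obtained from $f$ by adding the rank-two quadratic form $uv$ in two new variables, which is precisely the situation Knörrer periodicity addresses. One should note that $\eR$ is local, Gorenstein (being a hypersurface), and hence the category $\CM \eR$, and thus $\uCM \eR$, makes sense, and that $\eS$ is likewise a hypersurface so $\uCM \eS$ is the relevant Knörrer partner.

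The second step is to pass to $\mathrm K_0$. A triangulated equivalence induces an isomorphism on Grothendieck groups of the triangulated categories, so
\[
\Kroth(\uCM \eR) \cong \Kroth(\uCM \eS).
\]
Then Lemma~\ref{lem:K_0(uCM(S))_2dim_description} identifies the right-hand side with $\Z^{\oplus t}/(a_1,\hdots,a_t)$, and chaining the two isomorphisms gives the claim. In writing this up I would be slightly careful about the distinction between the Grothendieck group of $\uCM \eS$ as a triangulated category (generated by objects modulo triangles) and the group $\Kroth(\uCM \eS)$ as it has been used earlier in the excerpt: in Lemma~\ref{lem:K_0(uCM(S))_2dim_description} it appears as the quotient $\Kroth(\CM \eS)/\langle[\eS]\rangle$ of the exact-category $\mathrm K_0$. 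These coincide because in $\uCM$ over a Gorenstein local ring the exact structure and the triangulated structure have the same short exact sequences/triangles up to the split ones generated by $[\eS]=0$, so the identification is harmless, but it is worth a sentence.

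The main obstacle, such as it is, is bookkeeping rather than mathematics: one must make sure the hypotheses of Knörrer periodicity genuinely apply to $\eS$ and $\eR$ as in Setup~\ref{setup:isolated_arbtypeA_cDV_without_NCCR} (complete local hypersurface over $\C$, the extra variables forming a nondegenerate quadratic form), and one must confirm that the equivalence is exact/triangulated so that it descends to $\mathrm K_0$. Since $\eS$ need not be an integral domain, there is no rank splitting available on the $\eR$-side either, but that does not matter here because we are computing $\Kroth(\uCM \eR)$ directly through the equivalence, not through a splitting. Once these points are checked the proof is a two-line chain of isomorphisms, so I would keep the exposition short and simply cite \cite{K87} for the equivalence and Lemma~\ref{lem:K_0(uCM(S))_2dim_description} for the final identification.
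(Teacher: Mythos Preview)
Your proposal is correct and follows essentially the same approach as the paper: the paper's proof is a two-line argument citing Kn\"{o}rrer periodicity \cite[Theorem~3.1]{K87} for the equivalence $\uCM \eR \cong \uCM \eS$ and then invoking Lemma~\ref{lem:K_0(uCM(S))_2dim_description}. Your additional remarks about reconciling the triangulated and exact-category versions of $\Kroth(\uCM \eS)$ are sound but go beyond what the paper spells out.
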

\begin{proof}
	By Kn\"{o}rrer periodicity $\uCM \eR \cong \uCM \eS$  \cite[Theorem~3.1]{K87}, so the result follows immediately from Lemma~\ref{lem:K_0(uCM(S))_2dim_description}.
\end{proof}

Notice that under the isomorphism $\upvarrho$ of Lemma~\ref{lem:G_0(S)_S_not_ID}, it is not true that $\Groth(\eS) \cong \Z \oplus \Kroth(\uCM \eS)$.
That is, \eqref{eq:Groth=Z+Cl_general} does not hold for $\eS$, verifying Remark~\ref{remark:conjecture_fails_R_not_ID}.
However, by Lemma~\ref{lem:A_is_normal_ID} $\eR$ is an integral domain, so we can compute $\Groth(\eR)$ using $\Kroth(\uCM \eS) \cong \Kroth(\uCM \eR)$.

\begin{theorem}\label{thm:conjs_cdv_A_K_0(uCM R)=Cl(R)_and_G_0(R)=Z+Cl(R)_isolated_cDV}
	Let $\eR$ be as in Setup~\ref{setup:isolated_arbtypeA_cDV_without_NCCR}.
	Then Conjecture~\ref{conj:K_0_uCMR=Cl(R)} and \eqref{eq:Groth=Z+Cl_general} both hold, that is,
	\[ \Kroth(\uCM \eR) \cong \Cl(\eR) \quad \text{ and } \quad \Groth(\eR) \cong \Z \oplus \Cl(\eR). \]
\end{theorem}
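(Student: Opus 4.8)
The plan is to combine the K-theory computation of Corollary~\ref{cor:K_0(uCM(R))_description} with the class group computation of Theorem~\ref{thm:Cl(A)_global}, bridging the local completion $\eR$ and the global ring $A$ via a faithfully flat descent (or direct inspection) argument. First I would observe that $\eR$ is a complete local integral domain by Lemma~\ref{lem:A_is_normal_ID} (the proof there is insensitive to completion, or alternatively one checks normality of $\eR$ directly via the Jacobian/Serre criterion and then uses that a complete local normal ring with connected spectrum is a domain). Since $\eR$ is a local integral domain, Lemma~\ref{lem:K_0(uCM R)=Cl(R)iff} tells us the two desired isomorphisms are equivalent, so it suffices to prove $\Kroth(\uCM \eR) \cong \Cl(\eR)$.

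Next I would compute $\Cl(\eR)$. By Corollary~\ref{cor:K_0(uCM(R))_description} we already know $\Kroth(\uCM \eR) \cong \Z^{\oplus t}/(a_1,\hdots,a_t)$, so the whole theorem reduces to the single claim
\[
\Cl(\eR) \cong \Z^{\oplus t}/(a_1,\hdots,a_t).
\]
The cleanest route is to run the Nagata's Theorem argument of \S\ref{sec:cl_grp_nagatas_thm} verbatim in the complete local setting: the element $u \in \eR$ is nonzero, $\eR[u^{-1}] \cong \C[[x,y]][u^{\pm 1}]$ is a localisation of a UFD hence a UFD (so its class group vanishes by Proposition~\ref{prop:ufd=>cl=0}), the units of $\eR$ are $\C^\times$ (by the same graded-type argument, or because $\eR/(u,x-c_1,y-c_2)$ kills any power of $u$), and the height one primes appearing in $\div(u)$ are exactly $\mfp_i = (u,f_i)$ with $\upnu_{\mfp_i}(u) = a_i$ — all the lemmas \ref{lem:A[u]* = C*x<u>}--\ref{lem:Anbi=ai} go through with $\C[x_1,\hdots,x_n]$ replaced by $\C[[x,y]]$ since the only facts used are that $\C[[x,y]]$ is a UFD, the $f_i$ are irreducible and pairwise coprime, and dimension theory for finitely generated (here, complete local) $\C$-algebras. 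Feeding this into the exact sequence \eqref{seq:nagata} yields $\Cl(\eR) \cong \Z^{\oplus t}/(a_1,\hdots,a_t)$ exactly as in Theorem~\ref{thm:Cl(A)_global}.

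Combining the two displays gives $\Kroth(\uCM \eR) \cong \Cl(\eR)$, and then Lemma~\ref{lem:K_0(uCM R)=Cl(R)iff} upgrades this to $\Groth(\eR) \cong \Z \oplus \Cl(\eR)$, completing the proof.

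The main obstacle I anticipate is justifying that the Nagata-type computation of \S\ref{sec:cl_grp_nagatas_thm} transfers to the complete local setting without circularity — in particular, verifying that $\eR$ is a normal noetherian domain so that all of Theorem~\ref{thm:valuation}, Theorem~\ref{thm:htbijection}, and Nagata's Theorem~\ref{thm:nagata} apply, and that $\eR[u^{-1}]$ really is a UFD with trivial class group. An alternative that sidesteps re-deriving everything would be to invoke a completion-invariance statement for class groups of the relevant rings (e.g. that $\Cl(A) \to \Cl(\widehat{A_{\mfm}})$ is an isomorphism in this situation, since $A$ is a normal excellent domain and the relevant localisation is analytically normal), but spelling out the precise hypotheses for that is itself a bit delicate, so I would favour the direct re-run of the Nagata argument, flagging explicitly which lemmas are being reused and why their proofs are insensitive to the polynomial-versus-power-series distinction.
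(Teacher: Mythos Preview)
Your overall strategy matches the paper's exactly: compute $\Kroth(\uCM\eR)$ via Corollary~\ref{cor:K_0(uCM(R))_description}, compute $\Cl(\eR)\cong\Z^{\oplus t}/(a_1,\dots,a_t)$ separately, and connect the two with Lemma~\ref{lem:K_0(uCM R)=Cl(R)iff}. The paper does not redo the Nagata computation in the power-series setting; it simply cites \cite[Proposition~4.3]{DaoHu13} for the complete-local class group, remarking that ``similar techniques'' to those of Theorem~\ref{thm:Cl(A)_global} work.

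Your direct re-run, however, contains two claims that are false in the power-series setting and do not follow from the polynomial arguments you invoke. First, $\eR[u^{-1}]\not\cong\C[[x,y]][u^{\pm1}]$: the right-hand side contains only Laurent \emph{polynomials} in $u$, whereas $\eR$ (hence $\eR[u^{-1}]$) already contains genuine power series in $u$ such as $\sum_{j\ge0}u^j/j!$. Second, $\eR^{\times}\neq\C^{\times}$: since $\eR$ is complete local, every element with nonzero constant term (e.g.\ $1+x$) is a unit, so neither the graded argument of Lemma~\ref{lem:R^x = k^x} nor the substitution trick of Lemma~\ref{lem:A*=C*} applies. What the Nagata sequence~\eqref{seq:nagata} actually requires is that $\Cl(\eR[u^{-1}])=0$ and that $\eR[u^{-1}]^{\times}/\eR^{\times}$ is generated by $u$; both statements are true, but neither follows from the identifications you wrote down, and establishing them in the complete-local case is precisely the content the paper outsources to the cited reference.
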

\begin{proof}
	By Corollary~\ref{cor:K_0(uCM(R))_description}
	\[ \Kroth(\uCM \eR) \cong \Z^{\oplus t}/(a_{1}, \hdots, a_{t}). \]
	Then, similar techniques to ours in Theorem~\ref{thm:Cl(A)_global} shows that our class group result also holds in the complete local case (see, e.g. \cite[Proposition~4.3]{DaoHu13}), so that $\Cl(\eR) \cong \Z^{\oplus t}/ (a_{1}, \hdots, a_{t})$.
	Hence there is an isomorphism $\Kroth(\uCM \eR) \cong \Cl(\eR)$.
	Furthermore, by Lemma~\ref{lem:K_0(uCM R)=Cl(R)iff}, since $\eR$ is an integral domain, $\Groth(\eR) \cong \Z \oplus \Cl(\eR)$ follows.
\end{proof}

\subsection{Failure of isomorphism \eqref{eq:Groth=Z+Cl_general}  in higher dimensions}\label{subsec:failure_of_conj_in_higher_dim}
Viehweg's setting is motivated by deformed Kleinian singularities; as such, we would expect them to behave in a similar way to Kleinian singularities.
Our results show that the existence of the isomorphisms \eqref{eq:Groth=Z+Cl_general} is a two and three dimensional phenomena. 
Specifically, it does not hold for the general Viehweg setting, as the following counterexample demonstrates.
\begin{example}
	Let \begin{equation*}
	\eB \colonequals \frac{\C[x,y,z]}{xy - z^{2}} \quad \text{ and } \quad \eC \colonequals \frac{\C[u,v,x,y,z]}{uv = xy - z^{2}}.
	\end{equation*}
	Both $\eB$ and $\eC$ are integral domains (see Lemma~\ref{lem:A_is_normal_ID}) so we do not need to worry about the issues of Remark~\ref{remark:conjecture_fails_R_not_ID}.
	By Theorem~\ref{thm:G_0(R)=Z+Cl(R)_dim2}, $\Groth(\eB) \cong \Z \oplus \Cl(\eB)$ and $\Kroth(\uCM \eB) \cong \Cl(\eB)$ which is isomorphic to $\Z/2\Z$.
	Using Kn\"{o}rrer periodicity \cite[Theorem~3.1]{K87}, we have $\Kroth(\uCM \eB) \cong \Kroth(\uCM \eC)$.
	Hence, 
	\begin{align*}
	\Groth(\eC) &\cong \Z \oplus \Kroth(\uCM \eC) \\
	&\cong \Z \oplus \Kroth(\uCM \eB) \\
	&\cong \Z \oplus \Cl(\eB) \\
	&\cong \Z \oplus \Z / 2\Z.
	\end{align*}
	But, by Theorem~\ref{thm:Cl(A)_global}, $\Cl(\eC) \cong 0$, so clearly $\Groth(\eC) \not\cong \Z \oplus \Cl(\eC)$.
\end{example}

\section{Polyhedral singularities}\label{sec:nonisolated_cDV}
In this section, we apply the results of Brown and Lorenz \cite{BrownLorenz96LinearActions} to show that \eqref{eq:Groth=Z+Cl_general} holds in the setting of polyhedral quotient singularities $\Z_{n}$ for all $n$, $D_{2n}$ for $n \le 100$, $\mathbb{T}$, $\mathbb{O}$, and $\mathbb{I}$.
These are the subgroups of $\SL(3, \C)$ as given in \S\ref{sec:intro:cDVs}.
In particular, this gives examples of non-isolated cDV singularities of a variety of $\ADE$ types where \eqref{eq:Groth=Z+Cl_general} holds.
Although these quotient singularities admit an NCCR, they are all non-isolated, and so the techniques of \S\ref{sec:isolated_cdv_with_NCCR} do not apply.

Let $G$ be a finite subgroup of $\SL(3, \C)$, $S = \C[X_{1}, X_{2}, X_{3}]$, and write $R \coloneqq S^{G}$.
The main result of \cite[\S0]{BrownLorenz96LinearActions} gives a description of the Grothendieck group $\Groth(R)$ of finitely generated $R$-modules.
As noted at the end of \cite{BrownLorenz96LinearActions}, Lorenz implemented a computer programme, using GAP3, which computes the Grothendieck group of invariant rings.
Following his approach, we used \texttt{MAGMA} to implement such a programme (see Appendix~\ref{app:magma_code}).
Through our implementation, we have gathered evidence of the existence of a relationship between $\Groth(R)$ and $\Cl(R)$ in numerous settings, including many not described in the previous chapters.
We parse through the relevant data in the cDV setting now, with some other settings being described in \S\ref{sec:further_results}.

An element $g \in G$ is called a \emph{pseudoreflection} on $V$ if $\dim_{\C}(V^{g}) = n - 1$ \cite[Chapter~2, \S2.6]{ben93}.
Let $J$ be the normal subgroup of $G$ generated by the pseudoreflections on $V$.
Then there exist isomorphisms
\[ \Cl(R) \cong H^{1}(G/J, (S^{J})^{\times}) \cong \Hom(G/J, \C^{\times}). \]
For details, see \cite[\S4.4]{BrownLorenz96LinearActions}.
In our case, $J$ is trivial since $G$ is a subgroup of $\SL(3, \C)$.

These isomorphisms give a straightforward way of computing $\Cl(R)$.
Let $G^{\ab} \coloneqq G/[G, G]$ denote the abelianisation of $G$, where $[G,G]$ denotes the commutator subgroup.
Then it is well known that $\Hom(G/J, \C^{\times})$, the Pontryagin dual of $G/J$, is isomorphic to $G^{\ab}$.
Hence, $\Cl(R) \cong G^{\ab}$.

\begin{prop}\label{prop:Groth(polyhedral_grp)}
	Consider $R \coloneqq S^{G}$, where $G$ is either a polyhedral quotient singularity $\Z_{n}$ for all $n$, $D_{2n}$ for $n \le 100$, $\mathbb{T}$, $\mathbb{O}$, or $\mathbb{I}$ as given in \S\ref{sec:intro:cDVs}.
	Then $R$ is a non-isolated cDV singularity where \eqref{eq:Groth=Z+Cl_general} holds.
	That is, $\Groth(R) \cong \Z \oplus \Cl(R)$.
\end{prop}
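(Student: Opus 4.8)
The plan is to handle the three assertions of the statement separately: that $R$ is cDV, that $R$ is non-isolated, and that the isomorphism \eqref{eq:Groth=Z+Cl_general} holds. The last is the substantive one, and I would reduce it --- via the identification $\Cl(R)\cong G^{\ab}$ recorded just before the statement and the description of $\Groth(R)$ from Brown--Lorenz --- to a finite (or parametrised) computation, most of which can be carried out by the \texttt{MAGMA} implementation in Appendix~\ref{app:magma_code}.

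First I would record the geometric facts. Each of the listed groups $G$ --- the cyclic $\Z_{n} = \frac{1}{n}(1,n-1,0)$, the dihedral $D_{2n}$, and the three exceptional groups $\mathbb{T},\mathbb{O},\mathbb{I}$ --- is one of the finite subgroups of $\SL(3,\C)$ displayed in \ref{cDV_3dim_A}--\ref{cDV_3dim_E_8}, so by the reference cited there (e.g. \cite{NollaSekiya11FlopsAM}) the invariant ring $R = S^{G}$ is a cDV singularity, and it is Gorenstein and rational by \cite{Wat74,Vie77} as noted in \S\ref{sec:intro:ksings}. For non-isolatedness: each of these $G$ contains a non-identity element fixing a line in $\C^{3}$ pointwise (the cyclic generator fixes the $x_{3}$-axis; the dihedral and exceptional groups likewise contain such elements by inspection of the matrices in \ref{cDV_3dim_D}--\ref{cDV_3dim_E_8}), so the singular locus of $\Spec R$ is positive-dimensional and $R$ is not an isolated singularity. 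Hence the techniques of \S\ref{sec:isolated_cdv_with_NCCR} genuinely do not apply, even though an NCCR exists by Example~\ref{example:End_R(S)=skewgroupring_an_NCCR}.

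Next I would pin down both sides of \eqref{eq:Groth=Z+Cl_general}. Since $G\le\SL(3,\C)$ contains no pseudoreflections, the subgroup $J$ is trivial, and the isomorphisms $\Cl(R)\cong H^{1}(G,\,S^{\times})\cong\Hom(G,\C^{\times})\cong G^{\ab}$ recalled before the statement give $\Cl(R)\cong G^{\ab}$; this is then computed directly from the presentations of the groups ($\Z_{n}^{\ab}=\Z_{n}$; $D_{2n}^{\ab}\cong\Z_{2}$ or $\Z_{2}\oplus\Z_{2}$ according to the parity of $n$; $\mathbb{T}^{\ab}\cong\Z_{3}$, $\mathbb{O}^{\ab}\cong\Z_{2}$, $\mathbb{I}^{\ab}=0$). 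For $\Groth(R)$, I would invoke the Brown--Lorenz description from \cite[\S0]{BrownLorenz96LinearActions}; using the decomposition $\Groth(R)=\Z\cdot[R]\oplus\oGroth(R)$ of \eqref{eq:Groth=Z+oGroth} via the rank map, it then suffices to show $\oGroth(R)\cong G^{\ab}$. This is exactly what Lorenz's algorithm --- reimplemented here in \texttt{MAGMA} --- computes from the character table of $G$ together with the eigenvalues of a fixed $g\in G$ acting on $V$; running it over the ranges specified ($\Z_{n}$ for all $n$ via the evident parametric pattern, $D_{2n}$ for $n\le 100$, and the three exceptional cases individually) produces in each case $\oGroth(R)\cong G^{\ab}\cong\Cl(R)$, whence $\Groth(R)\cong\Z\oplus\Cl(R)$.

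The main obstacle is that the cyclic family $\Z_{n}$ is genuinely infinite, so one cannot simply run a finite search: I would either exhibit and verify the pattern $\oGroth(S^{\Z_{n}})\cong\Z_{n}$ by hand from the Brown--Lorenz formula (the cyclic case is essentially a torus computation and should be tractable), or argue that the \texttt{MAGMA} output for a prefix of values determines the answer for all $n$ by the periodicity inherent in the formula. A secondary, purely expository, difficulty is simply marshalling the references so that ``cDV'' and ``non-isolated'' are unambiguously justified for the exceptional groups; that is routine. Everything else is bookkeeping on the data tabulated by the code in Appendix~\ref{app:magma_code}.
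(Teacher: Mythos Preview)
Your proposal is essentially correct and follows the same overall strategy as the paper: compute $\Cl(R)\cong G^{\ab}$ from the absence of pseudoreflections, compute $\Groth(R)$ via the Brown--Lorenz description implemented in \texttt{MAGMA}, and compare. The treatment of $D_{2n}$ (finite search up to $n\le 100$) and of $\mathbb{T},\mathbb{O},\mathbb{I}$ (individual computation) matches the paper exactly.

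The one substantive difference is how you handle the infinite cyclic family $\Z_n$. You correctly identify this as the obstacle and propose either a direct hand-computation from the Brown--Lorenz formula or a periodicity argument. The paper takes a cleaner route: it observes that for $G=\tfrac{1}{n}(1,n-1,0)$ the invariant ring is explicitly $R\cong\C[u,v,x,y]/(uv-x^n)$, which is precisely a ring of the form already treated in \S\ref{sec:cl_grp_nagatas_thm} and \S\ref{sec:arbitrary_type_A_cDV}. Thus Theorem~\ref{thm:Cl(A)_global} gives $\Cl(R)\cong\Z/n\Z$ and Theorem~\ref{thm:conjs_cdv_A_K_0(uCM R)=Cl(R)_and_G_0(R)=Z+Cl(R)_isolated_cDV} gives $\Groth(R)\cong\Z\oplus\Z/n\Z$, for all $n$ at once. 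This avoids any parametric computation inside Brown--Lorenz. Your approach would likely work too, but it re-proves something the thesis has already established theoretically.

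A minor point: the paper does not spell out the ``cDV'' and ``non-isolated'' justifications at all (it simply asserts them, relying on \cite{NollaSekiya11FlopsAM}), and for the exceptional groups it offers an alternative computation of $\Cl(R)$ via counting one-dimensional representations in the McKay quiver from \cite{NollaSekiya11FlopsAM}, in addition to the $G^{\ab}$ argument.
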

\begin{proof}
For the cyclic group given in \ref{cDV_3dim_A}, $R \cong \C[u,v,x,y]/(uv-x^{n})$.
This is a special case of \S\ref{sec:cl_grp_nagatas_thm} and \S\ref{sec:arbitrary_type_A_cDV} so that, by Theorem~\ref{thm:Cl(A)_global}, $\Cl(R) \cong \Z/n\Z$.
This is also clear from the fact that $\Cl(R) \cong G^{\ab}$.
Furthermore, by Theorem~\ref{thm:conjs_cdv_A_K_0(uCM R)=Cl(R)_and_G_0(R)=Z+Cl(R)_isolated_cDV}, $\Groth(R) \cong \Z \oplus \Z/n\Z$.

For the subgroup $D_{2n}$ given in \ref{cDV_3dim_D}, for all even $n$, $\Cl(R) \cong G^{\ab} \cong \Z/2\Z \oplus \Z/2\Z$ and for all odd $n$, $\Cl(R) \cong G^{\ab} \cong \Z/2\Z$.
For $D_{2n}$ where $n \le 100$ we use \texttt{MAGMA} to compute the Grothendieck group of $R$.
In all such cases, for even $n$, $\Groth(R) \cong \Z \oplus \Z/2\Z \oplus \Z/2\Z$ and for odd $n$, $\Groth(R) \cong \Z \oplus \Z/2\Z$.

While the subgroups given in \ref{cDV_3dim_A} and \ref{cDV_3dim_D} correspond to infinite families of subgroups, the subgroups given in \ref{cDV_3dim_E_6}, \ref{cDV_3dim_E_7}, and \ref{cDV_3dim_E_8} ($\mathbb{T}$, $\mathbb{O}$, and $\mathbb{I}$, respectively) are three specific groups.
So, we can use \texttt{MAGMA} to compute the Grothendieck group and class group of $R$ when $G$ is one of $\mathbb{T}$, $\mathbb{O}$, or $\mathbb{I}$.
In fact, the calculation of $\Cl(R)$ for such rings is also clear from the McKay quiver using the number of one-dimensional representations calculated in \cite[\S2]{NollaSekiya11FlopsAM}.
In the McKay quiver of $\mathbb{T}$ there are 3 one-dimensional representations, and the only group of order 3 is $\Z/3\Z$, hence the class group is $\Z/3\Z$.
Similarly, in the McKay quiver of $\mathbb{O}$ there are 2 one-dimensional representations and the only group of order 2 is $\Z/2\Z$.
Finally, in the McKay quiver of $\mathbb{I}$ there is 1 one-dimensional representation and the only group of order 1 is the trivial group.

We use the generators defined for each of the subgroups $\mathbb{T}$, $\mathbb{O}$, and $\mathbb{I}$ as input for the programme of Appendix~\ref{app:magma_code}.
The output data from this code gives $\Groth(R)$.
We collate this data in Table~\ref{table:G_0(R)_3dim_cDV}.
\newpage
\begin{table}[H]
	\centering
	\begin{tabular}{| c | c | c | c |}
		\hline
		Group $G$ & Order & $\Groth(R)$ & $\Cl(R)$ \\
		\hline
		Cyclic & $n$ & $\Z \oplus \Z/n\Z$ & $\Z/n\Z$ \\
		Dihedral $D_{2n}$ & $2n$, $n \le 100$ even & $\Z \oplus \Z/2\Z \oplus \Z/2\Z$ & $\Z/2\Z \oplus \Z/2\Z$  \\
		Dihedral $D_{2n}$ & $2n$, $n < 100$ odd & $\Z \oplus \Z/2\Z $ & $\Z/2\Z$  \\
		Trihedral $\mathbb{T}$ & 12 & $\Z \oplus \Z/3\Z$ & $\Z/3\Z$ \\
		Octahedral $\mathbb{O}$ & 24 & $\Z \oplus \Z/2\Z$ & $\Z/2\Z$ \\
		Icosahedral $\mathbb{I}$ & 60 & $\Z$  & 0 \\
		\hline
	\end{tabular}
	\caption{Data for finite subgroups of SL$(3, \C)$ given in \S\ref{sec:intro:cDVs}\ref{cDV_3dim_A} -- \ref{cDV_3dim_E_8}}
	\label{table:G_0(R)_3dim_cDV}
\end{table}
Comparing the data of the third column against the data of the fourth column shows that, for each $G$ considered, $\Groth(R) \cong \Z \oplus \Cl(R)$.
\end{proof}
\chapter{Minimal models}\label{ch:minimal_models}
In this chapter, we study (noncommutative) minimal models and investigate their class groups and Grothendieck groups.
Our focus here is on proving that \eqref{eq:Groth=Z+Cl_general} holds for a ring $R$ as in the following setup.
\begin{setup}\label{setup:type_A_min_model}
	Let $f \colon X \rightarrow \Spec R$ be a minimal model where $R$ is an isolated complete local cDV singularity and $X$ has only factorial (equivalently, $\Q$-factorial) $\ctAn{n}$ singularities.
\end{setup}
This setup includes all isolated cDV singularities that admit an NCCR (\S\ref{sec:isolated_cdv_with_NCCR}), and also arbitrary isolated $\ctAn{n}$ singularities.
To prove that \eqref{eq:Groth=Z+Cl_general} holds in this setting, we require the language of minimal models and MM modules of \S\ref{sec:minimal_models_prelims} as well as the Homological MMP notions discussed in \S\ref{subsec:homMMP}.
In this language, $f$ is a minimal model with only type $\tA$ singularities.

We set some notation.
Under Setup~\ref{setup:type_A_min_model}, by the bijection given in \S\ref{subsec:homMMP} we automatically have a corresponding basic MM $R$-module $M = M_{0} \oplus M_{1} \oplus \cdots \oplus M_{t}$, with $M_{0} \cong R$ and $M_{1}, \hdots, M_{t}$ the non-free indecomposable summands of $M$, such that $X$ is derived equivalent to $\Lambda \coloneqq \End_{R}(M)$. 
Let $\Db(\coh X)$ and $\perf X$ be as in \S\ref{sec:intro:notation_conventions}.
From this derived equivalence, it follows that $\Db(\coh X)$ is equivalent to $\Db(\modCat \Lambda)$.
Write $\Dsg(X) \colonequals \Db(\coh X) / \perf X$ and define $\Krothsg(X)$ as the Grothendieck group of the idempotent completion of $\Dsg(X)$; for full details, see \cite[Property~0]{pavicEvgen18ktheory}.

The first result of this chapter is an application of Lemma~\ref{lem:Cl(isolated_cDV)} to Setup~\ref{setup:type_A_min_model}, giving a description of the class group of $R$ based on the number of curves above the origin.
\begin{lemma}\label{lem:Cl(type_A_min_model)}
	Let $X$ and $R$ be as in Setup~\ref{setup:type_A_min_model}.
	Then
	\[
	\Cl(R) \cong \Z^{\oplus t}.
	\]
\end{lemma}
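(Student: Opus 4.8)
The plan is to deduce Lemma~\ref{lem:Cl(type_A_min_model)} directly from Lemma~\ref{lem:Cl(isolated_cDV)} by observing that Setup~\ref{setup:type_A_min_model} is a special case of the hypotheses needed there. Concretely, under Setup~\ref{setup:type_A_min_model} the Homological MMP (see \S\ref{subsec:homMMP}) furnishes a basic MM $R$-module $M = M_{0} \oplus M_{1} \oplus \cdots \oplus M_{t}$ with $M_{0} \cong R$, such that the non-free indecomposable summands $M_{1}, \hdots, M_{t}$ are in one-to-one correspondence with the exceptional curves of the minimal model $f \colon X \to \Spec R$. The point is that the proof of Lemma~\ref{lem:Cl(isolated_cDV)} does not actually use the existence of an NCCR or that $X$ is smooth --- as noted in the remark following that lemma, it only requires that $N$ be an MM $R$-module and that the corresponding $f \colon X \to \Spec R$ be a minimal model which is an isomorphism in codimension two.

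First I would invoke the bijection of \S\ref{subsec:homMMP} to fix the MM module $M$ and the associated minimal model $f \colon X \to \Spec R$, with $t$ the number of exceptional curves above the origin. Next, since $R$ is a cDV singularity, $f$ is an isomorphism in codimension $2$ (this is \cite[Lemma~4.2.1]{VdB04}, already cited in the proof of Lemma~\ref{lem:Cl(isolated_cDV)}), so Lemma~\ref{lem:pushdown_gives_Cl(X)_isomorphic_Cl(R)} applies and gives a group isomorphism $\Cl(X) \xrightarrow{\sim} \Cl(R)$ via $f_{*}$. Then, because $X$ has only $\Q$-factorial (equivalently factorial, by Setup~\ref{setup:type_A_min_model}) terminal singularities, $X$ is locally factorial, so $\Cl(X) \cong \mathrm{Pic}(X)$, and $\mathrm{Pic}(X) \cong \Z^{\oplus t}$ by \cite[Lemma~3.4.3]{VdB04}. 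Splicing these three isomorphisms yields $\Cl(R) \cong \Z^{\oplus t}$.

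There is essentially no novel obstacle here; the lemma is a bookkeeping consequence of machinery already assembled in \S\ref{sec:isolated_cdv_with_NCCR}. The one point that warrants care is making sure the hypotheses of Lemma~\ref{lem:pushdown_gives_Cl(X)_isomorphic_Cl(R)} and of \cite[Lemma~3.4.3]{VdB04} are genuinely met in the mildly-singular $X$ of Setup~\ref{setup:type_A_min_model} rather than only in the smooth case of Setup~\ref{setup:cDV_with_NCCR} --- i.e. that ``normal noetherian schemes, isomorphism in codimension two'' and the identification $\mathrm{Pic}(X) \cong \Z^{\oplus t}$ hold for minimal models with $\Q$-factorial terminal $\ctAn{n}$ singularities. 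Since $\Q$-factorial terminal threefold singularities are normal and locally factorial in the required sense, and the remark after Lemma~\ref{lem:Cl(isolated_cDV)} explicitly flags that the argument works in this greater generality (``since it only requires $N$ to be an MM $R$-module''), the proof is short: state that Setup~\ref{setup:type_A_min_model} is exactly such a case and re-run the chain of isomorphisms from the proof of Lemma~\ref{lem:Cl(isolated_cDV)}.
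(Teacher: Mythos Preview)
Your proposal is correct and matches the paper's approach exactly: the paper's proof is the single sentence ``This follows immediately from Lemma~\ref{lem:Cl(isolated_cDV)},'' relying on precisely the remark you cite that the argument there only needs an MM module rather than a cluster-tilting object. Your elaboration of why the hypotheses of Lemma~\ref{lem:pushdown_gives_Cl(X)_isomorphic_Cl(R)} and \cite[Lemma~3.4.3]{VdB04} are met is accurate but more than the paper itself spells out.
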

\begin{proof}
	This follows immediately from Lemma~\ref{lem:Cl(isolated_cDV)}.
\end{proof}

As $X$ and $\Lambda$ are derived equivalent, it will be particularly useful to have a description of $\Kroth(\proj \Lambda)$ which is similar to that of $\Cl(R)$ given above.
\begin{lemma}\label{lem:Kroth(Lambda)_cong_Z^t+1}
	Let $\Lambda \coloneqq \End_{R}(M)$ as above.
	Then
	\[
		\Kroth(\proj \Lambda) \cong \Z^{\oplus (t+1)}.
	\]
	In particular, $\Kroth(\proj \Lambda)$ is torsion-free.
\end{lemma}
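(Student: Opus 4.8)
The goal is to show that $\Kroth(\proj \Lambda) \cong \Z^{\oplus(t+1)}$, where $\Lambda = \End_R(M)$ for the basic MM module $M = M_0 \oplus M_1 \oplus \cdots \oplus M_t$ attached to the minimal model $f\colon X \to \Spec R$ in Setup~\ref{setup:type_A_min_model}. The starting observation is that $\proj \Lambda$ is a split exact category, so its $\Kroth$-group is simply the free abelian group on isomorphism classes of indecomposable projective $\Lambda$-modules. The indecomposable projectives of $\Lambda = \End_R(M)$ are exactly the summands $\Hom_R(M, M_i)$ for $i = 0, \dots, t$, which correspond to the vertices of the quiver of $\Lambda$; since $M$ is basic these are pairwise non-isomorphic. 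Hence the plan is to argue that $\Kroth(\proj \Lambda)$ is free abelian of rank exactly the number of indecomposable summands of $M$, namely $t+1$.

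The main step I would carry out carefully is justifying that distinct indecomposable projectives give distinct, $\Z$-independent classes, i.e. that the Krull--Schmidt/Jordan--H\"older formalism applies so that $\Kroth(\proj \Lambda)$ is genuinely \emph{free} on the $t+1$ generators $[\Lambda e_i] = [\Hom_R(M, M_i)]$. For this one uses that $R$ is complete local, so $\Lambda$ is a module-finite algebra over a complete local ring and therefore semiperfect; consequently $\Lambda$ is a Krull--Schmidt ring, the decomposition $\Lambda = \bigoplus_{i=0}^t \Lambda e_i$ into indecomposable projectives is unique, and the rank of $\Kroth(\proj \Lambda)$ equals the number of isomorphism classes of simple $\Lambda$-modules, which is $t+1$. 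Equivalently, one can invoke that for a semiperfect ring the natural map $\bigoplus_{i} \Z[\Lambda e_i] \to \Kroth(\proj \Lambda)$ is an isomorphism (every finitely generated projective splits as a finite direct sum of the $\Lambda e_i$ in an essentially unique way, and there are no further relations in a split exact category). The ``in particular'' clause is then immediate: a free abelian group is torsion-free.

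The step I expect to be the mild obstacle — though it is not deep — is pinning down precisely that there are exactly $t+1$ indecomposable projectives and no collapsing: one must observe that $M_0 \cong R$ together with the $t$ non-free indecomposable summands $M_1, \dots, M_t$ really are mutually non-isomorphic (this is the content of $M$ being \emph{basic}, as set up in Setup~\ref{setup:type_A_min_model} via the bijection of \S\ref{subsec:homMMP}), and hence the corresponding projective $\Lambda$-modules $\Hom_R(M, M_i)$ are mutually non-isomorphic. Given that, the count of generators is $t+1$, matching the quiver description of $\Lambda$ discussed in \S\ref{sec:deformed_preproj_deformed_ksings} and in the paragraph preceding the statement. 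No finite global dimension or Calabi--Yau input is needed here; unlike the computation of $\Groth(R)$ in Theorem~\ref{thm:Groth(isolated_cDV)}, this is purely a statement about $\proj \Lambda$ and follows from the Krull--Schmidt property of module-finite algebras over complete local rings. I would close by remarking that the isomorphism is canonical on generators $[\Lambda e_i] \mapsto e_i$, so that it lines up index-by-index with the description $\Cl(R) \cong \Z^{\oplus t}$ of Lemma~\ref{lem:Cl(type_A_min_model)}, which is the reason for recording the result in this form.
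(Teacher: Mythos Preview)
Your proposal is correct and follows essentially the same approach as the paper: both argue that because $R$ is (complete) local, $\Lambda$ has the Krull--Schmidt property, so $\Kroth(\proj \Lambda)$ is free abelian on the classes of the $t+1$ indecomposable projectives $\Lambda e_i \cong \Hom_R(M,M_i)$. The paper's proof is simply a terser version of yours, citing \cite{fullerShutters75projModules} for the fact that $\Kroth(\proj \Lambda)$ is based by the indecomposable projectives, and then observing that these are in bijection with the $t+1$ summands of the basic module $M$.
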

\begin{proof}
	Since $R$ is a local ring, $\Kroth(\proj \Lambda)$ is based by the classes of finitely generated projective $\Lambda$-modules \cite{fullerShutters75projModules}.
	Furthermore, $M$ is a basic MM module with $t+1$ indecomposable summands and these summands are in bijection with the indecomposable projective $\Lambda$-modules.
	Hence $\Kroth(\proj \Lambda) \cong \Z^{\oplus (t+1)}$.
\end{proof}

In the next result, we determine $\Krothsg(X)$.
Later, we will see that this result allows us to prove that there exists an isomorphism $\Kroth(X) \cong \Groth(X)$, even though $X$ is not smooth.
\begin{lemma}\label{lem:Krothsg=oplusKroth(uCM(sheaf))}
	Let $X$ be as in Setup~\ref{setup:type_A_min_model}, then 
	\[
	\Krothsg(X) \cong \bigoplus_{j=1}^{n} \Kroth(\uCM \widehat{\cO}_{X, p_{j}}),
	\]
	where $\{p_{1}, \hdots, p_{n} \}$ are the singular points of $X$.
	Furthermore, $\Krothsg(X) = 0$.
\end{lemma}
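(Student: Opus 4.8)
The plan is to prove Lemma~\ref{lem:Krothsg=oplusKroth(uCM(sheaf))} in two stages: first establish the direct-sum decomposition of $\Krothsg(X)$ over the singular points, and then show that each local summand vanishes.

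\textbf{Step 1: the decomposition.} For the first statement I would invoke the general principle that the singularity category only sees the singular locus. Since $X$ has only finitely many singular points $p_1, \hdots, p_n$ (it has isolated $\ctAn{n}$ singularities), the singularity category $\Dsg(X)$ decomposes as a direct sum (orthogonal decomposition) indexed by these points, with the $j$-th factor equivalent to $\Dsg(\widehat{\cO}_{X,p_j})$ after passing to idempotent completions. This is a standard fact (Orlov's localisation for singularity categories, together with the fact that $\Dsg$ of a regular scheme is trivial); I would cite the appropriate reference (e.g. Orlov, or the references in \cite{pavicEvgen18ktheory}). Taking Grothendieck groups of idempotent completions then gives $\Krothsg(X) \cong \bigoplus_{j=1}^n \Krothsg(\widehat{\cO}_{X,p_j})$. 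Finally, for a complete local hypersurface (in particular Gorenstein, isolated), the singularity category is equivalent to $\uCM$ of the ring, so $\Krothsg(\widehat{\cO}_{X,p_j}) \cong \Kroth(\uCM \widehat{\cO}_{X,p_j})$. This uses Buchweitz's theorem and the fact that $\uCM$ of an isolated hypersurface singularity is already idempotent complete (it is Krull--Schmidt).

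\textbf{Step 2: vanishing.} Each $\widehat{\cO}_{X,p_j}$ is, by Setup~\ref{setup:type_A_min_model}, a \emph{factorial} complete local $\ctAn{n}$ singularity. Here I would combine two earlier results. First, being type $\tA$, each $\widehat{\cO}_{X,p_j}$ is of the form $\eR$ in Setup~\ref{setup:isolated_arbtypeA_cDV_without_NCCR}, so by Corollary~\ref{cor:K_0(uCM(R))_description} we have $\Kroth(\uCM \widehat{\cO}_{X,p_j}) \cong \Z^{\oplus t_j}/(a_{1,j}, \hdots, a_{t_j,j})$ for the relevant exponents in the defining equation; equivalently, by Theorem~\ref{thm:conjs_cdv_A_K_0(uCM R)=Cl(R)_and_G_0(R)=Z+Cl(R)_isolated_cDV}, $\Kroth(\uCM \widehat{\cO}_{X,p_j}) \cong \Cl(\widehat{\cO}_{X,p_j})$. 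Second, factoriality means $\Cl(\widehat{\cO}_{X,p_j}) = 0$ by Proposition~\ref{prop:ufd=>cl=0} (a normal noetherian domain is a UFD iff its class group vanishes, and these local hypersurfaces are normal domains). Hence each local summand is zero, and therefore $\Krothsg(X) = 0$.

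\textbf{Main obstacle.} The routine analytic content (hypersurfaces are normal, type $\tA$ completions have the stated form) is straightforward given the earlier results. The step requiring the most care is Step~1: getting the orthogonal decomposition of $\Dsg(X)$ over the singular points at the level of \emph{idempotent-completed} Grothendieck groups, and correctly identifying $\Krothsg$ of a complete local isolated hypersurface with $\Kroth(\uCM R)$ without idempotent-completion subtleties. I would want to be careful to cite the precise form of Orlov's theorem being used and to note that $\uCM$ of an isolated complete local Gorenstein singularity is Krull--Schmidt, hence already idempotent complete, so no extra completion is needed on the local side. This is where I expect to spend most of the write-up, as opposed to the vanishing, which is then immediate from the cited propositions.
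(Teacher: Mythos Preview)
Your approach is essentially the same as the paper's: decompose $\Dsg(X)$ over the singular points and then use the type $\tA$ computation (via Corollary~\ref{cor:K_0(uCM(R))_description} with $t=1$, $a_1=1$, or equivalently via $\Cl=0$) to kill each local summand. Your flagged obstacle about idempotent completion is handled in one line in the paper by citing \cite[Lemma~3.1]{BK12}: since $R$ is complete local, $\Dsg(X)$ is already idempotent complete, and then \cite[Theorem~3.2(2)]{IyWe14QFact} gives the decomposition $\Dsg(X)\cong\bigoplus_j\uCM\widehat{\cO}_{X,p_j}$ directly.
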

\begin{proof}
	Since $R$ is complete local, $\Dsg(X)$ is automatically idempotent complete \cite[Lemma~3.1]{BK12}.
	This and the fact that $X$ has only isolated singularities implies that $\Dsg(X) \cong \bigoplus_{j=1}^{n} \uCM \widehat{\cO}_{X, p_{j}}$ by \cite[Theorem~3.2(2)]{IyWe14QFact}.
	Therefore
	\begin{equation}\label{eq:Ksg=oplusK(uCM O_X,x)}
	\Krothsg(X) \cong \bigoplus_{j=1}^{n} \Kroth(\uCM \widehat{\cO}_{X, p_{j}}).
	\end{equation}
	Recall that $X$ has only factorial $\ctAn{n}$ singularities.
	This implies that all the local rings $\widehat{\cO}_{X, p_{j}}$ are isolated cDV singularities of type $\ctAn{n}$ which are UFDs.
	Furthermore, these are precisely the rings $\eR$ of Setup~\ref{setup:isolated_arbtypeA_cDV_without_NCCR} with $t=1$ and $a_{1}=1$.
	Hence, by Lemma~\ref{lem:K_0(uCM(S))_2dim_description}, for each $j = 1, \hdots, n$ we have $\Kroth(\CM \widehat{\cO}_{X, p_{j}}) \cong \Z$ and so $\Kroth(\uCM \widehat{\cO}_{X, p_{j}}) = 0$ for each $j = 1, \hdots, n$.
	Combining with \eqref{eq:Ksg=oplusK(uCM O_X,x)}, $\Krothsg(X) = 0$.
\end{proof}

\begin{remark}
	This result shows that Setup~\ref{setup:type_A_min_model} provides a suitable setting for $X$ to behave as if it is smooth.
	As mentioned in Conjecture~\ref{conj:motivation}, this is expected to occur more generally.
\end{remark}

Now, recall the language of contraction algebras from \S\ref{subsec:maximal_modifying_modules}.
Let $e$ be the idempotent in $\Lambda$ corresponding to the summand $R$ of $M$.
Then $e\Lambda e \cong \End_{R}(R) \cong R$ and $\Lambda/\Lambda e \Lambda \cong \CL$.
By \cite[Example~2.9]{psarVit14}, there is a recollement of categories
\begin{equation}\label{seq:recollement}
\modCat \CL \xrightarrow{\inc} \modCat \Lambda \xrightarrow{e(-)} \modCat R,
\end{equation}
where $\inc$ is the inclusion functor and $e(-) \coloneqq \Hom_{\Lambda}(e\Lambda, -)$; both are exact.
In fact, by \cite[Proposition~2.7]{psarVit14}, $\modCat \CL$ is a full exact subcategory of $\modCat \Lambda$ and $e(-)$ is naturally equivalent to the quotient functor $\modCat \Lambda \rightarrow \modCat \Lambda/\modCat \CL$.
Hence, $\modCat R \cong \modCat \Lambda/\modCat \CL$.
By Theorem~\ref{thm:quillen_localisation}, it follows that \eqref{seq:recollement} lifts to K-theory giving a long exact sequence ending with
\begin{equation}\label{seq:recollement_ktheory}
\hdots \rightarrow \text{K}_{1}(\modCat R)\rightarrow \Kroth(\modCat \CL) \rightarrow \Kroth(\modCat \Lambda)
\rightarrow \Groth(R) \rightarrow 0.
\end{equation}
In particular, $\Kroth(\modCat \Lambda) \rightarrow \Groth(R)$ is surjective.

\begin{theorem}\label{thm:Groth(type_A_min_model)}
	There is a composition of surjections $\uptheta \colon \Kroth(\proj \Lambda) \rightarrow \Kroth(\modCat \Lambda) \rightarrow \Groth(R)$.
	Furthermore, $\uptheta$ is an isomorphism.
\end{theorem}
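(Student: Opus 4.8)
The plan is to show that both maps in the composition $\uptheta$ are isomorphisms, and then conclude. First I would address the map $\Kroth(\proj \Lambda) \to \Kroth(\modCat \Lambda)$. Since $\Lambda \cong \End_R(M)$ is an NCCR-type algebra (a maximal modification algebra), it has finite global dimension by the standard theory of MMAs (indeed, being an MMA over a $3$-dimensional ring, $\gldim \Lambda = 3$); hence by the resolution theorem of \cite{quill73} (as recorded in Remark~\ref{remark:canonical_iso_modR_projR}) the canonical map $\Kroth(\proj \Lambda) \to \Kroth(\modCat \Lambda)$ is an isomorphism. Wait — here $\Lambda$ need not literally be an NCCR since $X$ is only $\Q$-factorial terminal, not smooth; but it is still an MMA, and $\gldim \Lambda$ need not be finite in general. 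So instead I would argue more carefully: the point is that $\Kroth(\proj \Lambda)$ is a free abelian group of rank $t+1$ by Lemma~\ref{lem:Kroth(Lambda)_cong_Z^t+1}, and I want to show the composite to $\Groth(R)$ is an isomorphism directly rather than factoring the argument through each map separately.

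The cleaner route is via the recollement long exact sequence \eqref{seq:recollement_ktheory}. From that sequence, the cokernel of $\Kroth(\modCat \CL) \to \Kroth(\modCat \Lambda)$ is $\Groth(R)$, and the kernel of the surjection $\Kroth(\modCat \Lambda) \to \Groth(R)$ is the image of $\Kroth(\modCat \CL)$. Now $\CL$ is the contraction algebra, which for a minimal model of an isolated cDV singularity is finite-dimensional (as noted after Definition~\ref{def:contraction_algebras}). So the key input I would want is that the image of $\Kroth(\modCat \CL)$ in $\Kroth(\modCat \Lambda)$ is trivial — equivalently, that every class $[S]$ of a finite-dimensional $\CL$-module maps to $0$ in $\Kroth(\modCat \Lambda) \cong \Kroth(\proj \Lambda) \cong \Z^{\oplus(t+1)}$. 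Since $\Kroth(\modCat \CL)$ is generated by the classes of the simple $\CL$-modules $S_1, \dots, S_t$ (these being the vertex simples at the non-$R$ vertices), and each $S_i$ is a finite-length $\Lambda$-module supported only at the closed point, it suffices to show $[S_i] = 0$ in $\Kroth(\modCat \Lambda)$ for each $i$. A finite-length $\Lambda$-module is built up by extensions from the vertex simples $S_0, \dots, S_t$, so I would reduce to showing each $[S_i] = 0$; and this follows from the fact that each $S_i$ has a finite projective resolution over $\Lambda$ with alternating sum of projective terms equal to zero — precisely the content of the symmetry of the quiver of $\Lambda$ established in Theorem~\ref{thm:complete_local_cdv_quiver_symmetric} (via the syzygy/AR-type computation, exactly as in the proof of Theorem~\ref{thm:Groth(isolated_cDV)}: the $d$-CY property plus quiver symmetry forces the higher AR-matrix, and hence the class of each simple, to vanish). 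Combined with $\Kroth(\modCat \CL) = \langle [S_1], \dots, [S_t]\rangle$, this shows $\Kroth(\modCat \CL) \to \Kroth(\modCat \Lambda)$ is the zero map, so $\Kroth(\modCat \Lambda) \to \Groth(R)$ is an isomorphism, and precomposing with the isomorphism $\Kroth(\proj \Lambda) \xrightarrow{\sim} \Kroth(\modCat \Lambda)$ gives that $\uptheta$ is an isomorphism.

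I expect the main obstacle to be justifying the two facts that make the argument go: first, that $\Kroth(\proj\Lambda) \to \Kroth(\modCat \Lambda)$ is an isomorphism without $\Lambda$ having finite global dimension — here one should instead invoke that finite-length $\Lambda$-modules have finite projective dimension (since away from the closed point $\Lambda$ agrees with $X$, which has only $\Q$-factorial terminal singularities and hence is derived-equivalent to a nice object, and at the closed point the $d$-CY property gives finite projective resolutions of the vertex simples), so the resolution theorem applies to the subcategory of finite-length modules; and second, the vanishing $[S_i]=0$, which is where Theorem~\ref{thm:complete_local_cdv_quiver_symmetric} and the $3$-Calabi--Yau duality are essential, exactly mirroring the computation $\Upomega = 0$ in the proof of Theorem~\ref{thm:Groth(isolated_cDV)}. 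A subtlety worth flagging is that $\Lambda$ here is a maximal modification algebra but not necessarily an NCCR, so the $d$-CY statement should be cited in the appropriate generality (or the symmetry of the quiver of $\Lambda$, already proven in full generality in Theorem~\ref{thm:complete_local_cdv_quiver_symmetric}, should be used as the black box in place of direct $d$-CY manipulations). Once $\uptheta$ is an isomorphism, comparing with Lemma~\ref{lem:Kroth(Lambda)_cong_Z^t+1} gives $\Groth(R) \cong \Z^{\oplus(t+1)}$, which together with Lemma~\ref{lem:Cl(type_A_min_model)} will yield the desired $\Groth(R) \cong \Z \oplus \Cl(R)$ in the corollary that presumably follows.
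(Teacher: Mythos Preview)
Your proposal has a genuine gap: the $3$-CY duality you want to invoke to show $[S_i]=0$ in $\Kroth(\modCat\Lambda)$ is exactly what fails here. In Setup~\ref{setup:type_A_min_model}, $X$ is only $\Q$-factorial terminal, not smooth, so $\Lambda$ is an MMA that is \emph{not} an NCCR, and the isomorphism $\Ext^i_\Lambda(S_j,S_l)\cong D\Ext^{3-i}_\Lambda(S_l,S_j)$ used in the proof of Theorem~\ref{thm:Groth(isolated_cDV)} is not available. You flag this but then appeal to ``the appropriate generality'' or to the quiver symmetry of Theorem~\ref{thm:complete_local_cdv_quiver_symmetric} as a black box; however, Theorem~\ref{thm:complete_local_cdv_quiver_symmetric} only controls $\Ext^1$, not the full alternating sum, and there is no established substitute for $3$-CY duality for general MMAs that gives what you need. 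The same issue undermines your claim that $\Kroth(\proj\Lambda)\to\Kroth(\modCat\Lambda)$ is an isomorphism: without finite global dimension, the resolution theorem does not apply, and ``finite-length modules have finite projective dimension'' is not justified.

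A telling symptom: your argument never uses the hypothesis that $X$ has only $\ctAn{n}$ singularities. That hypothesis is the entire content of Setup~\ref{setup:type_A_min_model}, and without it the conclusion is open (it is Conjecture~\ref{conj:motivation}). The paper uses it via Lemma~\ref{lem:Krothsg=oplusKroth(uCM(sheaf))}: the type $\tA$ factoriality forces $\Krothsg(X)=0$, which (through the derived equivalence and the sequence \eqref{seq:exact_seq_singularity_cat}) gives surjectivity of $\Kroth(\proj\Lambda)\to\Kroth(\modCat\Lambda)$. The recollement gives surjectivity of the second map. Injectivity of the composite $\uptheta$ then comes from a rank comparison, not from computing $[S_i]$: the domain is free of rank $t+1$, while Proposition~\ref{prop:bass_Groth(R)_to_Cl(R)} and Lemma~\ref{lem:Cl(type_A_min_model)} force $\Groth(R)$ to surject onto $\Z^{\oplus(t+1)}$, so the surjection $\uptheta$ must be an isomorphism. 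This rank argument is the missing idea in your approach.
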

\begin{proof}
	By \cite[Lemma~1.10]{pavicEvgen18ktheory}, there is an exact sequence of K-theories ending with
	\begin{equation}\label{seq:exact_seq_singularity_cat}
	\Kroth(X) \xrightarrow{\text{PD}} \Groth(X) \rightarrow \Krothsg(X) \rightarrow 0,
	\end{equation}
	where PD is the canonical morphism induced by $\perf X \subset \Db(X)$.
	Since $X$ is derived equivalent to $\Lambda$, by Lemma~\ref{lem:Kroth(Lambda)_cong_Z^t+1} and Lemma~\ref{lem:Krothsg=oplusKroth(uCM(sheaf))} it follows that there exists a surjection
	\begin{equation*}
	\Z^{\oplus(t+1)} \cong \Kroth(\proj \Lambda) \twoheadrightarrow \Kroth(\mod \Lambda),
	\end{equation*}
	where $\Kroth(\proj \Lambda)$ is torsion-free.
	In addition, from \eqref{seq:recollement_ktheory}, there automatically exists a surjection
	\begin{equation*}
	\Kroth(\mod \Lambda) \twoheadrightarrow \Groth(R).
	\end{equation*}
	Composing these surjections gives a surjection
	\begin{equation*}
	\uptheta \colon \Z^{\oplus(t+1)} \twoheadrightarrow \Groth(R).
	\end{equation*}
	
	Now, suppose $\uptheta$ is not an isomorphism.
	Since $\Ker \uptheta$ is a subgroup of $\Z^{\oplus(t+1)}$, it cannot have torsion.
	Thus, $\Ker \uptheta$ has rank at least one, so that, from the short exact sequence
	\begin{equation*}
	0 \rightarrow \Ker \uptheta \rightarrow \Z^{\oplus(t+1)} \rightarrow \Groth(R) \rightarrow 0,
	\end{equation*}
	the rank of $\Groth(R)$ must be strictly less than $t+1$.

	But, by Proposition~\ref{prop:bass_Groth(R)_to_Cl(R)}, $\Z \oplus \Cl(R)$ is always a quotient of $\Groth(R)$.
	By Lemma~\ref{lem:Cl(type_A_min_model)}, $\Cl(R) \cong \Z^{\oplus t}$, therefore $\Groth(R)$ has $\Z^{\oplus (t+1)}$ as a quotient.
	In particular, the rank of $\Groth(R)$ has to be at least $t+1$, a contradiction.
	Hence, $\uptheta$ must be an isomorphism, giving $\Groth(R) \cong \Z^{\oplus(t+1)}$.
\end{proof}

Combining Lemma~\ref{lem:Cl(type_A_min_model)} and Theorem~\ref{thm:Groth(type_A_min_model)}, the main result of this chapter is an immediate corollary.
\begin{cor}\label{cor:G_0(R)=Z+Cl(R)_minimal_model}
	Let $X$ and $R$ be as in Setup~\ref{setup:type_A_min_model}.
	Then \eqref{eq:Groth=Z+Cl_general} holds, that is,
	\[
	\Groth(R) \cong \Z \oplus \Cl(R).
	\]
\end{cor}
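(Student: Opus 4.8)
The plan is to assemble the two structural computations established earlier in this chapter, at which point the statement is essentially immediate. First I would invoke Lemma~\ref{lem:Cl(type_A_min_model)}, which identifies $\Cl(R) \cong \Z^{\oplus t}$, where $t$ is the number of exceptional curves above the origin, equivalently the number of non-free indecomposable summands of the basic MM module $M$ attached to $f$ by the Homological MMP bijection of \S\ref{subsec:homMMP}. Next I would invoke Theorem~\ref{thm:Groth(type_A_min_model)}, which gives $\Groth(R) \cong \Z^{\oplus(t+1)}$ via the composite surjection $\uptheta\colon \Kroth(\proj \Lambda) \to \Kroth(\modCat \Lambda) \to \Groth(R)$, this being the place where the genuine input (the vanishing of $\Krothsg(X)$, torsion-freeness of $\Kroth(\proj \Lambda)$, and the rank bound coming from Proposition~\ref{prop:bass_Groth(R)_to_Cl(R)}) is used.

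Granting both results, the argument is just the chain of isomorphisms $\Groth(R) \cong \Z^{\oplus(t+1)} \cong \Z \oplus \Z^{\oplus t} \cong \Z \oplus \Cl(R)$, which is exactly \eqref{eq:Groth=Z+Cl_general}. There is therefore no real obstacle at this stage; the only point requiring a word of care is that the integer $t$ appearing in Lemma~\ref{lem:Cl(type_A_min_model)} and the one appearing in Theorem~\ref{thm:Groth(type_A_min_model)} are the same — both are defined relative to the fixed basic MM module $M$ of Setup~\ref{setup:type_A_min_model}, so this identification is automatic and need not be argued separately.

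If one wants the decomposition to respect the rank splitting, I would additionally observe that under \eqref{eq:Groth=Z+oGroth} the free rank-one factor is $\Z\cdot[R]$ and the complement is $\oGroth(R)$; since $\oGroth(R)$ surjects onto $\Cl(R)\cong\Z^{\oplus t}$ (Proposition~\ref{prop:bass_Groth(R)_to_Cl(R)}) and has rank at most $t$ by Theorem~\ref{thm:Groth(type_A_min_model)}, that surjection is forced to be an isomorphism. This refinement is not needed for the bare statement of the corollary, so I would keep the proof to the one-line combination of the two cited results.
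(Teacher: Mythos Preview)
Your proposal is correct and matches the paper's approach exactly: the corollary is stated as immediate from combining Lemma~\ref{lem:Cl(type_A_min_model)} and Theorem~\ref{thm:Groth(type_A_min_model)}, which is precisely the one-line chain of isomorphisms you describe. Your additional remarks about the common value of $t$ and the optional refinement via $\oGroth(R)$ are sound but go beyond what the paper records.
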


Additionally, the next two corollaries follow immediately from Theorem~\ref{thm:Groth(type_A_min_model)}.
\begin{cor}
	Let $\Lambda$ be as above, then $\Kroth(\proj \Lambda) \cong \Kroth(\modCat \Lambda)$.
	In particular, $\Kroth(\modCat \Lambda)$ is based by projectives and is finitely generated.
\end{cor}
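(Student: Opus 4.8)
The plan is to read this off directly from Theorem~\ref{thm:Groth(type_A_min_model)} together with an elementary observation about composites of homomorphisms. Recall that in the proof of that theorem the isomorphism $\uptheta$ is realised as a composite
\[
\Kroth(\proj \Lambda) \xrightarrow{\upvarphi} \Kroth(\modCat \Lambda) \xrightarrow{\uppsi} \Groth(R),
\]
where $\upvarphi$ is the canonical map induced by the inclusion $\proj \Lambda \subseteq \modCat \Lambda$ and $\uppsi$ is the surjection coming from the K-theoretic recollement sequence \eqref{seq:recollement_ktheory}; both $\upvarphi$ and $\uppsi$ are surjective, and $\uptheta = \uppsi \circ \upvarphi$ is an isomorphism.

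First I would invoke the elementary fact that if a composite $g \circ f$ of abelian group homomorphisms is injective, then $f$ is injective. Applying this with $f = \upvarphi$ and $g = \uppsi$ shows that $\upvarphi$ is injective; since $\upvarphi$ is already known to be surjective, it is an isomorphism, which is precisely the asserted isomorphism $\Kroth(\proj \Lambda) \cong \Kroth(\modCat \Lambda)$. As a by-product one also gets that $\uppsi$ is an isomorphism, so $\Kroth(\modCat \Lambda) \cong \Groth(R) \cong \Z^{\oplus(t+1)}$, though this is not needed for the statement.

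For the final clause I would recall, exactly as in the proof of Lemma~\ref{lem:Kroth(Lambda)_cong_Z^t+1}, that $R$ is complete local, so $\Kroth(\proj \Lambda)$ is free abelian on the classes $[\Lambda e_{0}], \hdots, [\Lambda e_{t}]$ of the indecomposable projective $\Lambda$-modules; in particular it is finitely generated of rank $t+1$. Transporting this basis forward along the isomorphism $\upvarphi$ shows that $\Kroth(\modCat \Lambda)$ is based by the classes of these (projective) modules and hence is finitely generated.

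I do not expect a genuine obstacle here: essentially all the work is already contained in Theorem~\ref{thm:Groth(type_A_min_model)}. The only points needing care are bookkeeping ones — confirming that the map named in the corollary is precisely the first factor $\upvarphi$ of $\uptheta$, and making explicit that "based by projectives" means that the images of $[\Lambda e_{0}], \hdots, [\Lambda e_{t}]$ form a $\Z$-basis of $\Kroth(\modCat \Lambda)$.
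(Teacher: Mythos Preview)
Your proposal is correct and is precisely the intended argument: the paper records this corollary as following immediately from Theorem~\ref{thm:Groth(type_A_min_model)}, and the only content is the elementary observation you give, that since the composite $\uptheta$ of the two surjections is an isomorphism, each factor must itself be an isomorphism.
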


\begin{cor}\label{cor:KrothX=GrothX_minimal_model}
	Let $X$ and $R$ be as in Setup~\ref{setup:type_A_min_model}.
	Then the canonical morphism PD of \eqref{seq:exact_seq_singularity_cat} is an isomorphism.
	That is,
	\[
	\Kroth(X) \cong \Groth(X).
	\]
\end{cor}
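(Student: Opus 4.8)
The plan is to transport the morphism PD across the derived equivalence $\Db(\coh X) \simeq \Db(\modCat \Lambda)$ supplied by the Homological MMP (\S\ref{subsec:homMMP}), where $\Lambda = \End_R(M)$ is the MMA associated to $X$, and then to read off the statement from Theorem~\ref{thm:Groth(type_A_min_model)}. First I would note that this equivalence, being induced by a tilting bundle, restricts to a triangle equivalence $\perf X \simeq \perf \Lambda$ and intertwines the inclusions $\perf X \subset \Db(\coh X)$ and $\perf \Lambda \subset \Db(\modCat\Lambda)$; the restriction to perfect complexes is automatic, since these are the compact objects and any triangle equivalence preserves compactness. Passing to Grothendieck groups and using the standard identifications $\Kroth(X) = \Kroth(\perf X)$, $\Kroth(\perf\Lambda) = \Kroth(\Kb(\proj\Lambda)) \cong \Kroth(\proj\Lambda)$, $\Groth(X) = \Kroth(\Db(\coh X))$ and $\Kroth(\Db(\modCat\Lambda)) \cong \Kroth(\modCat\Lambda)$, one obtains a commutative square in which $\mathrm{PD}\colon \Kroth(X) \to \Groth(X)$ is identified with the canonical map $\Kroth(\proj\Lambda) \to \Kroth(\modCat\Lambda)$ induced by $\proj\Lambda \subset \modCat\Lambda$.

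It then suffices to show that this canonical map is an isomorphism. But it is exactly the first factor of the composition $\uptheta \colon \Kroth(\proj\Lambda) \to \Kroth(\modCat\Lambda) \to \Groth(R)$ appearing in Theorem~\ref{thm:Groth(type_A_min_model)}: both factors are surjective and $\uptheta$ is an isomorphism, so the first factor is injective, hence an isomorphism. Transporting this back through the derived equivalence shows PD is an isomorphism, i.e.\ $\Kroth(X) \cong \Groth(X)$, which is the assertion.

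The only point requiring care is the compatibility of PD with the derived equivalence, namely that the equivalence carries $\perf X$ onto $\perf\Lambda$ and respects the inclusions into the bounded derived categories; this is a standard feature of the tilting equivalences underlying the Homological MMP (cf.\ \cite{IyWe14ARdual}), so I do not expect it to be a serious obstacle. As a fallback one could instead argue directly from \eqref{seq:exact_seq_singularity_cat}: surjectivity of PD is immediate from $\Krothsg(X) = 0$ (Lemma~\ref{lem:Krothsg=oplusKroth(uCM(sheaf))}), but injectivity would then require the vanishing of the boundary map into $\Kroth(X)$ coming from the preceding term of the long exact sequence of \cite{pavicEvgen18ktheory}, which is less transparent than the route above. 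For this reason I would present the derived-equivalence argument, combining it with Theorem~\ref{thm:Groth(type_A_min_model)} and \eqref{seq:exact_seq_singularity_cat} to close out the corollary.
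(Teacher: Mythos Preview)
Your proposal is correct and is exactly the intended argument: the paper states only that the corollary ``follows immediately from Theorem~\ref{thm:Groth(type_A_min_model)}'', and your unpacking---identifying $\mathrm{PD}$ with the first factor of $\uptheta$ via the derived equivalence and then noting that an isomorphism which factors as two surjections forces each factor to be an isomorphism---is precisely how that immediacy is meant to be read.
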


\begin{remark}
	If $\Lambda$ has finite global dimension (equivalently, X is smooth) then $\Kroth(X) \cong \Groth(X)$.
	Corollary~\ref{cor:KrothX=GrothX_minimal_model} is interesting precisely because $\Kroth(X) \cong \Groth(X)$ even when $X$ has infinite global dimension.
\end{remark}

\chapter{Conclusion}\label{ch:conclusion}

We conclude this thesis with a summary of the proven results.
Superficially, the results of this thesis can be viewed as computing class groups and Grothendieck groups of Kleinian and compound Du Val (cDV) singularities.
On a deeper level, we do a number of things.

\begin{enumerate}[label=(\arabic*)]
	\item We extend (locally) known results in dimension 2 to a global setting.
	\item We give results in dimension 3 for both isolated and non-isolated cDV singularities.
	\item We prove fundamental and independently interesting results for surface singularities and their quivers.
	\item Finally, in Chapter~\ref{ch:minimal_models}, we show that MMAs behave precisely as they were designed to behave.
	That is, they behave as if they are smooth.
\end{enumerate}

\section{Summary Theorem}
The following theorem simply summarises all the results of this thesis relevant to the isomorphism \eqref{eq:Groth=Z+Cl_general}.
\begin{theorem}\label{thm:conclusion}
	There exists an isomorphism $\Groth(R) \cong \Z \oplus \Cl(R)$ when
	\begin{enumerate}[label=(\arabic*)]
		\item\label{thm:conclusion:1} $\dim R = 2$ and $R$ has only type $\tA$ Kleinian singularities,
		\item\label{thm:conclusion:2} $\dim R = 3$ and $R$ is an isolated complete local cDV singularity admitting NCCR(s),
		\item\label{thm:conclusion:3} $\dim R = 3$ and $R$ is an arbitrary type $\tA$ complete local cDV singularity,
		\item\label{thm:conclusion:4} $\dim R = 3$ and $R$ is an isolated complete local cDV such that there exists a minimal model $f \colon X \rightarrow \Spec R$ where $X$ has only factorial $\ctAn{n}$ singularities.
		\item\label{thm:conclusion:5} $R = S^{G}$, where $G$ is a polyhedral quotient singularity $\Z_{n}$ for all $n$, $D_{2n}$ for $n \le 100$, $\mathbb{T}$, $\mathbb{O}$, or $\mathbb{I}$ as given in \S\ref{sec:intro:cDVs}.
	\end{enumerate}
\end{theorem}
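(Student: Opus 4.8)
The plan is to assemble Theorem~\ref{thm:conclusion} as a purely organisational statement: each of its five parts is literally one of the main results already proved in the body, so the ``proof'' is a bookkeeping exercise that cites the relevant theorem and records the dimensional/setting hypotheses under which it applies. First I would handle part \ref{thm:conclusion:1}: this is exactly Theorem~\ref{thm:G_0(R)=Z+Cl(R)_dim2} applied to the ring $A = \C[u,v,x]/(uv - f_1^{a_1}\cdots f_t^{a_t})$ of Krull dimension $2$, which is the global form of a type $\tA$ Kleinian singularity; the class group is computed in Theorem~\ref{thm:Cl(A)_global}, and the isomorphism $\Groth(A)\cong\Z\oplus\Cl(A)$ is the content of Theorem~\ref{thm:G_0(R)=Z+Cl(R)_dim2}.

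Next, part \ref{thm:conclusion:2} is Theorem~\ref{pf_conj:G_0(R)=Z+Cl(R)_isolated_cDV}, which states precisely that for an isolated complete local cDV singularity admitting an NCCR one has $\Groth(R)\cong\Z\oplus\Cl(R)$ (and moreover $\Kroth(\uCM R)\cong\Cl(R)$); one should note here that by the Homological MMP / Theorem~\ref{thm:CM_Rmodule_M_gives_NCCR_M_cluster_tilting} admitting an NCCR is equivalent to admitting a crepant resolution, so this is Setup~\ref{setup:cDV_with_NCCR}. Part \ref{thm:conclusion:3} is Theorem~\ref{thm:conjs_cdv_A_K_0(uCM R)=Cl(R)_and_G_0(R)=Z+Cl(R)_isolated_cDV}: every type $\tA$ complete local cDV singularity has the form $\eR$ of Setup~\ref{setup:isolated_arbtypeA_cDV_without_NCCR} (by the Morse lemma argument cited there), so the isomorphism $\Groth(\eR)\cong\Z\oplus\Cl(\eR)$ applies with no isolatedness or NCCR hypothesis. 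Part \ref{thm:conclusion:4} is Corollary~\ref{cor:G_0(R)=Z+Cl(R)_minimal_model}, which is exactly Setup~\ref{setup:type_A_min_model}, and part \ref{thm:conclusion:5} is Proposition~\ref{prop:Groth(polyhedral_grp)}, giving the non-isolated polyhedral examples.

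The only genuine ``mathematical'' content in writing this up is to verify that the hypotheses in each clause of Theorem~\ref{thm:conclusion} match exactly the hypotheses of the theorem being invoked — for instance that ``$R$ has only type $\tA$ Kleinian singularities'' in \ref{thm:conclusion:1} really does reduce to the ring $A$ of \eqref{eq:the_ring_A} with $n=1$, and that ``arbitrary type $\tA$ complete local cDV'' in \ref{thm:conclusion:3} really is the full family $\eR$. These are all remarked upon at the relevant points in the text, so the write-up is short. I would therefore write: \emph{Parts \ref{thm:conclusion:1}--\ref{thm:conclusion:5} follow respectively from Theorem~\ref{thm:G_0(R)=Z+Cl(R)_dim2}, Theorem~\ref{pf_conj:G_0(R)=Z+Cl(R)_isolated_cDV}, Theorem~\ref{thm:conjs_cdv_A_K_0(uCM R)=Cl(R)_and_G_0(R)=Z+Cl(R)_isolated_cDV}, Corollary~\ref{cor:G_0(R)=Z+Cl(R)_minimal_model}, and Proposition~\ref{prop:Groth(polyhedral_grp)}.}

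The ``main obstacle'' is essentially nil, since Theorem~\ref{thm:conclusion} is advertised in the abstract and introduction as a \emph{summary} theorem rather than a new result; the substantive work all lives in the earlier chapters. If anything, the only point requiring a sentence of care is making sure the reader sees that parts \ref{thm:conclusion:2} and \ref{thm:conclusion:4} overlap (isolated cDV with NCCR is a special case of Setup~\ref{setup:type_A_min_model}, as noted at the start of Chapter~\ref{ch:minimal_models}) but are listed separately because they were proved by different methods — this is worth a remark but needs no argument. Thus the proof is a single paragraph of citations.
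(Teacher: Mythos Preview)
Your proposal is correct and matches the paper's own proof essentially verbatim: the paper's proof of Theorem~\ref{thm:conclusion} is precisely the one-line citation list you wrote, namely that parts \ref{thm:conclusion:1}--\ref{thm:conclusion:5} are Theorem~\ref{thm:G_0(R)=Z+Cl(R)_dim2}, Theorem~\ref{pf_conj:G_0(R)=Z+Cl(R)_isolated_cDV}, Theorem~\ref{thm:conjs_cdv_A_K_0(uCM R)=Cl(R)_and_G_0(R)=Z+Cl(R)_isolated_cDV}, Corollary~\ref{cor:G_0(R)=Z+Cl(R)_minimal_model}, and Proposition~\ref{prop:Groth(polyhedral_grp)} respectively. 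Your additional remarks about hypothesis-matching and the overlap between \ref{thm:conclusion:2} and \ref{thm:conclusion:4} are accurate but not needed in the write-up.
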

\begin{proof}
	(1) is Theorem~\ref{thm:G_0(R)=Z+Cl(R)_dim2}, (2) is Theorem~\ref{pf_conj:G_0(R)=Z+Cl(R)_isolated_cDV}, (3) is Theorem~\ref{thm:conjs_cdv_A_K_0(uCM R)=Cl(R)_and_G_0(R)=Z+Cl(R)_isolated_cDV}, (4) is Corollary~\ref{cor:G_0(R)=Z+Cl(R)_minimal_model}, and (5) is Proposition~\ref{prop:Groth(polyhedral_grp)}.
\end{proof}

\section{Speculations}\label{sec:further_results}
Another interesting class of quotient singularities comes from a finite group acting symplectically on a symplectic vector space.
Such quotient singularities play an important role in representation theory e.g., in the theory of rational Cherednik algebras introduced by \cite{etinGinz01}.
Specifically, let $G$ be a subgroup of $\GL(\mathfrak{h})$ generated by complex reflections.
Then $G$ acts on $\mathfrak{h}$, so it also acts on the complex vector space $V = \mathfrak{h} \oplus \mathfrak{h}^{*}$.
Such complex reflection groups $G$ were classified by Shephard-Todd in \cite{shephardTodd54}.
A special case is the symmetric group $S_{n}$, which can be thought of as a real reflection group acting on its reflection representation $\mathfrak{h}$.

Using the \texttt{MAGMA} code in Appendix~\ref{app:magma_code} we calculate the Grothendieck group, reduced Grothendieck group, and class group of $R = \C[V]^{S_{n}}$ for suitable values of $n$.
As we have seen throughout this thesis, the Grothendieck group retains interesting representation theoretic and geometric interpretations.
\begin{longtable}{| p{1em} | p{15.5em} | p{14.5em} | p{2.5em} |}
	\hline
	$n$ & $\Groth(R)$ & $\oGroth(R)$ & $\Cl(R)$ \\ \hline
	\endhead
	\hline
	\endfoot
	\endlastfoot
	$2$ & $\Z \oplus \Z/2\Z$ & $\Z/2\Z$ & $\Z/2\Z$ \\
	$3$ & $\Z \oplus \Z/6\Z$ & $\Z/6\Z$ & $\Z/2\Z$ \\
	$4$ & $\Z \oplus \Z/2\Z \oplus \Z/12\Z$ & $\Z/2\Z \oplus \Z/12\Z$ & $\Z/2\Z$ \\
	$5$ & $\Z \oplus \Z/2\Z \oplus \Z/60\Z$ & $\Z/2\Z \oplus \Z/60\Z$ & $\Z/2\Z$ \\
	$6$ & $\Z \oplus \Z/2\Z \oplus \Z/6\Z \oplus \Z/60\Z$ & $\Z/2\Z \oplus \Z/6\Z \oplus \Z/60\Z$ & $\Z/2\Z$ \\
	$7$ & $\Z \oplus \Z/2\Z \oplus \Z/6\Z \oplus \Z/420\Z$ & $\Z/2\Z \oplus \Z/6\Z \oplus \Z/420\Z$ & $\Z/2\Z$ \\
	$8$ & $\Z \oplus \Z/2\Z \oplus \Z/2\Z \oplus \Z/12\Z \oplus \Z/840\Z$ & $\Z/2\Z \oplus \Z/2\Z \oplus \Z/12\Z \oplus \Z/840\Z$ & $\Z/2\Z$ \\
	$9$ & $\Z \oplus \Z/2\Z \oplus \Z/6\Z \oplus \Z/12\Z \oplus \Z/2520\Z$ & $\Z/2\Z \oplus \Z/6\Z \oplus \Z/12\Z \oplus \Z/2520\Z$ & $\Z/2\Z$ \\
	$10$ & $\Z \oplus \Z/2\Z \oplus \Z/2\Z \oplus \Z/6\Z \oplus \Z/60\Z \oplus \Z/2520\Z$ & $\Z/2\Z \oplus \Z/2\Z \oplus \Z/6\Z \oplus \Z/60\Z \oplus \Z/2520\Z$ & $\Z/2\Z$ \\
	\hline
	\caption{Data for $R \coloneqq \C[V]^{S_{n}}$ where $S_{n}$ is the symmetric group on $n$ symbols}\label{table:symmetric_group_output}
\end{longtable}
The data in Table~\ref{table:symmetric_group_output} are examples where \eqref{eq:Groth=Z+Cl_general} clearly is false.
Nonetheless, the data contains intriguing information.
We give the following conjecture, where $V = \mathfrak{h} \oplus \mathfrak{h}^{*}$.
\begin{conj}
	The reduced Grothendieck group of $\C[V]^{S_{n}}$ has order $n!$.
\end{conj}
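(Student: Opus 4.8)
The plan is to identify $\oGroth(\C[V]^{S_n})$ via the Brown--Lorenz machinery and then compute its order directly. Recall that $\C[V]^{S_n}$ is the invariant ring of the symmetric group acting on $V = \mathfrak{h}\oplus\mathfrak{h}^*$, where $\mathfrak{h}$ is the $(n-1)$-dimensional reflection representation (or one may work with the $n$-dimensional permutation representation and its dual, which changes $R$ only by a polynomial variable and hence leaves $\Groth$ unchanged up to the extra $\Z$ summand). The key input is that, by the decomposition $\Groth(R) = \Z\cdot[R]\oplus\oGroth(R)$ of \eqref{eq:Groth=Z+oGroth}, the conjecture is equivalent to the assertion that $\oGroth(R)$ is a finite group of order exactly $n!$. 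First I would set up the Brown--Lorenz description of $\Groth(R)$ for $R = S^G$, $S = \C[V]$, $G = S_n$ acting linearly; their main theorem expresses $\Groth(R)$ in terms of the representation theory of $G$ and the stabiliser subgroups of points of $V$, together with the twisting data coming from $H^1$ of these stabilisers acting on the relevant units.

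The central step is to show that, in this symplectic-type situation, the reduced Grothendieck group has order equal to $|G| = n!$. I expect the cleanest route is the following: the Brown--Lorenz formula realises $\oGroth(R)$ as a cokernel of an explicit integer matrix built from the character table of $G$ and the fixed-point data, and one can recognise this cokernel as (a quotient related to) the Grothendieck group $\Groth(\Db(\C[V]\#G))$, which by the categorical McKay correspondence / the fact that $\C[V]\#G$ has finite global dimension is free of rank $|\mathrm{Irr}(G)|$. The torsion of $\oGroth(R)$ then measures the ``defect'' between $\Groth(S\#G)$ and $\Groth(R)$, localised at the non-free locus. For $G = S_n$ acting on $\mathfrak{h}\oplus\mathfrak{h}^*$, every element of $S_n$ acts symplectically, so the relevant local contributions at each point $v\in V$ are governed by the stabiliser $G_v$, which is a Young subgroup $S_{\lambda}$, and the twisting class in $H^1(G_v, (S^{G_v})^\times)$ is trivial because $G_v\le S_n\subset \mathrm{SL}$. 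I would then argue that the order of the torsion subgroup of $\oGroth(R)$ is the product, over conjugacy classes of ``essential'' stabilisers, of the orders of certain local cokernels, and that this product telescopes to $n!$. Concretely, I would try to prove the order is multiplicative along the stratification of $V/G$ by stabiliser type and then run an induction on $n$, with the base case $n=2$ giving $|\oGroth| = 2 = 2!$, exactly as in Table~\ref{table:symmetric_group_output}.

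An alternative, and perhaps more robust, approach is via the divisor-theoretic/K-theoretic exact sequences already developed in the thesis. One has the surjection $c\colon \Groth(R)\twoheadrightarrow \Cl(R)$ of Proposition~\ref{prop:bass_Groth(R)_to_Cl(R)}, and $\Cl(\C[V]^{S_n})\cong (S_n)^{\mathrm{ab}}\cong \Z/2\Z$ by the Brown--Lorenz isomorphism $\Cl(R)\cong\Hom(G/J,\C^\times)$ (here $J$ is trivial since $S_n\subset\mathrm{SL}(V)$, noting $\mathfrak h\oplus\mathfrak h^*$ is unimodular). So $\oGroth(R)$ surjects onto $\Z/2\Z$ with kernel the subgroup $H$ generated by $[R/\mfp]$ with $\mathrm{ht}(\mfp)\ge 2$, by Proposition~\ref{prop:huneke_oGroth(R)_to_Cl(R)}. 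The plan would be to show $|H| = n!/2$ by analysing the $\D\'evissage$ filtration of the abelian category $\eC$ of torsion modules, stratified by the codimension-$\ge 2$ components of the singular locus of $\Spec R$, and computing each graded piece's contribution to $\Kroth(\eC)$ via the local structure $\widehat{\cO}_{\Spec R, \mfp}$, which is an invariant ring of a proper Young subgroup. Quillen localisation (Theorem~\ref{thm:quillen_localisation}) then assembles these into an exact sequence whose alternating product of orders gives $n!$.

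The hard part will be making the ``order is multiplicative along the stratification and telescopes to $n!$'' step rigorous: the Brown--Lorenz cokernel is a genuine cokernel of a non-square integer matrix, and extracting its order requires understanding the Smith normal form, or equivalently controlling how the contributions at nested strata (Young subgroups $S_\lambda \le S_\mu \le S_n$) interact. A priori these could contribute extra factors or unexpected cancellations — the appearance of factors like $\Z/420\Z$, $\Z/840\Z$, $\Z/2520\Z$ in Table~\ref{table:symmetric_group_output} shows the group structure is genuinely intricate even though the \emph{order} is conjecturally the clean value $n!$. I would hope that the product formula $\prod_{\lambda}$ over partitions, weighted by Möbius-type coefficients coming from the inclusion poset of Young subgroups, collapses by a combinatorial identity (something in the spirit of $\sum_{\lambda\vdash n} \binom{n}{\lambda}\prod_i (\text{local order at }S_{\lambda_i}) = n!$), but verifying such an identity — and first pinning down the exact local orders — is where the real work lies. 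A useful sanity check throughout would be to re-derive the full list in Table~\ref{table:symmetric_group_output} for $2\le n\le 10$ from whatever closed form the argument produces, ensuring the orders match $2,6,24,120,720,5040,40320,362880,3628800$.
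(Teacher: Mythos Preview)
The paper does not prove this statement: it is stated as a \emph{conjecture} in the Speculations section (\S\ref{sec:further_results}), supported only by the \texttt{MAGMA} computations in Table~\ref{table:symmetric_group_output} for $2\le n\le 10$. There is no proof to compare against.

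Your proposal is a plan, not a proof, and you correctly identify its gap yourself. Both routes you sketch --- the Brown--Lorenz cokernel computation and the stratification-by-Young-subgroups induction --- bottom out in the same unproved assertion: that the local contributions from the stabiliser strata assemble multiplicatively and telescope to $n!$. You say you ``would hope'' for a combinatorial identity over partitions that collapses the product, but you neither state the identity precisely nor indicate why it should hold. The Smith normal form of the Brown--Lorenz matrix is genuinely delicate here (as the factors $420$, $840$, $2520$ in the table show), and there is no a priori reason the order of a cokernel of a non-square integer matrix should admit such a clean closed form. The step ``I would argue that the order\ldots is the product over conjugacy classes of essential stabilisers of certain local cokernels'' is exactly the conjecture restated in different language, not a reduction of it. So what you have written is a reasonable outline of where a proof \emph{might} live, but it does not advance beyond the paper, which leaves the statement open.
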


In Table~\ref{table:shephard_todd_output}, we include data for the exceptional irreducible complex reflection groups (that is, the Shephard-Todd groups) where $m$ is the Shephard-Todd number and $-$ denotes no data; for further details, see, e.g. \cite{cohen76}, \cite{shephardTodd54}.
The data seem to exhibit interesting phenomena as well.
\newpage
\begin{table}[]
	\begin{tabular}{| p{1em} | p{14.5em} | p{14.5em} | p{6.1em} |}
		\hline
		$m$ & \multicolumn{1}{l|}{$\Groth(R)$} & $\oGroth(R)$ & $\Cl(R)$ \\ \hline
		4   & $\Z \oplus \Z/24\Z$ & $\Z/24\Z$ & $\Z/3\Z$ \\
		5   & $\Z \oplus \Z/3\Z \oplus \Z/3\Z \oplus \Z/24\Z$ & $\Z/3\Z \oplus \Z/3\Z \oplus \Z/24\Z$ & $\Z/3\Z \oplus \Z/3\Z$  \\
		6   & $\Z \oplus \Z/2\Z \oplus \Z/24\Z$ & $\Z/2\Z \oplus \Z/24\Z$ & $\Z/6\Z$ \\
		7   & $\Z \oplus \Z/3\Z \oplus \Z/6\Z \oplus \Z/24\Z$ & $\Z/3\Z \oplus \Z/6\Z \oplus \Z/24\Z$ & $\Z/3\Z \oplus \Z/6\Z$ \\
		8   & $\Z \oplus \Z/4\Z \oplus \Z/24\Z$ & $\Z/4\Z \oplus \Z/24\Z$ & $\Z/4\Z$ \\
		9   & $\Z \oplus \Z/2\Z \oplus \Z/4\Z \oplus \Z/48\Z$ & $\Z/2\Z \oplus \Z/4\Z \oplus \Z/48\Z$ & $\Z/2\Z \oplus \Z/4\Z$ \\
		10  & $\Z \oplus \Z/12\Z \oplus \Z/24\Z$ & $\Z/12\Z \oplus \Z/24\Z$ & $\Z/12\Z$ \\
		11  & $\Z \oplus \Z/2\Z \oplus \Z/12\Z \oplus \Z/48\Z$ & $\Z/2\Z \oplus \Z/12\Z \oplus \Z/48\Z$ & $\Z/2\Z \oplus \Z/12\Z$ \\
		12  & $\Z \oplus \Z/2\Z \oplus \Z/24\Z$ & $\Z/2\Z \oplus \Z/24\Z$ & $\Z/2\Z$ \\
		13  & $\Z \oplus \Z/2\Z \oplus \Z/2\Z \oplus \Z/48\Z$& $\Z/2\Z \oplus \Z/2\Z \oplus \Z/48\Z$ & $\Z/2\Z \oplus \Z/2\Z$  \\
		14  & $\Z \oplus \Z/6\Z \oplus \Z/24\Z$ & $\Z/6\Z \oplus \Z/24\Z$ & $\Z/6\Z$ \\
		15  & $\Z \oplus \Z/2\Z \oplus \Z/6\Z \oplus \Z/48\Z$ & $\Z/2\Z \oplus \Z/6\Z \oplus \Z/48\Z$ & $\Z/2\Z \oplus \Z/6\Z$  \\
		16  & $\Z \oplus \Z/5\Z \oplus \Z/120\Z$ & $\Z/5\Z \oplus \Z/120\Z$ & $\Z/5\Z$ \\
		17  & $\Z \oplus \Z/10\Z \oplus \Z/120\Z$ & $\Z/10\Z \oplus \Z/120\Z$ & $\Z/10\Z$ \\
		18  & $\Z \oplus \Z/15\Z \oplus \Z/120\Z$ & $\Z/15\Z \oplus \Z/120\Z$ & $\Z/15\Z$ \\
		19  & $-$ & $-$ & $-$ \\
		20  & $\Z \oplus \Z/3\Z \oplus \Z/120\Z$ & $\Z/3\Z \oplus \Z/120\Z$ & $\Z/3\Z$ \\
		21  & $\Z \oplus \Z/6\Z \oplus \Z/120\Z$ & $\Z/6\Z \oplus \Z/120\Z$ & $\Z/6\Z$ \\
		22  & $\Z \oplus \Z/2\Z \oplus \Z/120\Z$ & $\Z/2\Z \oplus \Z/120\Z$ & $\Z/2\Z$ \\
		23  & $\Z \oplus \Z/2\Z \oplus \Z/2\Z \oplus \Z/30\Z$ & $\Z/2\Z \oplus \Z/2\Z \oplus \Z/30\Z$ & $\Z/2\Z$ \\
		24  & $\Z \oplus \Z/2\Z \oplus \Z/2\Z \oplus \Z/84\Z$ & $\Z/2\Z \oplus \Z/2\Z \oplus \Z/84\Z$ & $\Z/2\Z$ \\
		25  & $\Z \oplus \Z/3\Z \oplus \Z/3\Z \oplus \Z/72\Z$ & $\Z/3\Z \oplus \Z/3\Z \oplus \Z/72\Z$ & $\Z/3\Z$ \\
		26  & $\Z \oplus \Z/6\Z \oplus \Z/6\Z \oplus \Z/72\Z$ & $\Z/6\Z \oplus \Z/6\Z \oplus \Z/72\Z$ & $\Z/6\Z$ \\
		27  & $\Z \oplus \Z/6\Z \oplus \Z/6\Z \oplus \Z/180\Z$ & $\Z/6\Z \oplus \Z/6\Z \oplus \Z/180\Z$ & $\Z/2\Z$ \\
		28  & $\Z \oplus \Z/2\Z \oplus \Z/2\Z \oplus \Z/2\Z \oplus \Z/2\Z \oplus \Z/6\Z \oplus \Z/12\Z \oplus \Z/24\Z$ & $\Z/2\Z \oplus \Z/2\Z \oplus \Z/2\Z \oplus \Z/2\Z \oplus \Z/6\Z \oplus \Z/12\Z \oplus \Z/24\Z$ & $\Z/2\Z \oplus \Z/2\Z$  \\
		29  & $\Z \oplus \Z/2\Z \oplus \Z/2\Z \oplus \Z/2\Z \oplus \Z/2\Z \oplus \Z/4\Z \oplus \Z/16\Z \oplus \Z/240\Z$  & $\Z/2\Z \oplus \Z/2\Z \oplus \Z/2\Z \oplus \Z/2\Z \oplus \Z/4\Z \oplus \Z/16\Z \oplus \Z/240\Z$  & $\Z/2\Z$ \\
		30  & $\Z \oplus \Z/2\Z \oplus \Z/2\Z \oplus \Z/60\Z \oplus \Z/240\Z$ & $\Z/2\Z \oplus \Z/2\Z \oplus \Z/60\Z \oplus \Z/240\Z$ & $\Z/2\Z$ \\
		31  & $-$ & $-$ & $-$ \\
		32  & $-$ & $-$ & $-$ \\
		33  & $\Z \oplus \Z/2\Z \oplus \Z/2\Z \oplus \Z/2\Z \oplus \Z/36\Z \oplus \Z/360\Z$ & $\Z/2\Z \oplus \Z/2\Z \oplus \Z/2\Z \oplus \Z/36\Z \oplus \Z/360\Z$ & $\Z/2\Z$ \\
		34  & $-$ & $-$ & $-$ \\
		35  & $\Z \oplus \Z/2\Z \oplus \Z/6\Z \oplus \Z/12\Z \oplus \Z/360\Z$ & $\Z/2\Z \oplus \Z/6\Z \oplus \Z/12\Z \oplus \Z/360\Z$ & $\Z/2\Z$ \\
		36  & $-$ & $-$ & $-$ \\
		37  & $-$ & $-$ & $-$ \\ \hline
	\end{tabular}\caption{Data for Shephard-Todd groups}\label{table:shephard_todd_output}
\end{table}
\appendix
\chapter[Centre of the deformed preprojective algebra]{The centre of the deformed preprojective algebra}\label{app:centre_deform_preproj}
Let $Q$ be a quiver and $\bar{Q}$ be the double of $Q$.
From \S\ref{sec:deformed_preproj_deformed_ksings}, recall that the deformed preprojective algebra is defined to be 
\[
\Pi^{\uplambda}(Q) = k\bar{Q} / \left( \sum_{c \in {\sf{Q_{1}}}} [d, c] - \sum_{i \in {\sf{Q_{0}}}} \uplambda_{i}e_{i} \right),
\]
where $d = c^{*}$ is the double of the arrow $c$, ${\sf{Q_{0}}}$ is the set of all vertices, and ${\sf{Q_{1}}}$ is the set of all arrows.
We refer to $\uplambda = (\uplambda_{i})_{i \in {\sf{Q_{0}}}}$ as the parameters.
In this appendix, we only consider quivers with underlying extended Dynkin type $\teAn{n}$, which have $\sf{n}+1$ vertices labelled $i= 0, ..., \sf{n}$, and arrows $c_{i}: i \to (i+1)$ and $d_{i}: (i+1) \to i$.
This quiver is shown in Figure \ref{fig:dynDiaExAn}.
\begin{figure}[H]
	\centering
	\begin{tikzcd}
	& 1 \arrow[r, "c_{1}", swap] \arrow[dl, "d_{0}", swap, bend right = 30] & 2 \arrow[r, "c_{2}", swap] \arrow[l, "d_{1}", swap, bend right = 30] & \arrow[l, "d_{2}", swap, bend right = 30] ... \arrow[dr, "c_{i-1}", swap] & \\
	0 \arrow[ur, "c_{0}", swap] \arrow[dr, "d_{\sf{n}}", swap, bend right = 30] & & & & i \arrow[ul, "d_{i-1}", swap, bend right = 30] \arrow[dl, "c_{i}", swap] \\
	& \sf{n} \arrow[ul, "c_{\sf{n}}", swap] \arrow[r, "d_{\sf{n}-1}", swap, bend right = 30] & \sf{n}-1 \arrow[l, "c_{\sf{n}-1}", swap] \arrow[r, "d_{\sf{n}-2}", swap, bend right = 30] & \arrow[l, "c_{\sf{n}-2}", swap] ... \arrow[ur, "d_{i}", swap, bend right = 30] &
	\end{tikzcd}
	\caption{Dynkin diagram $\teAn{n}$}\label{fig:dynDiaExAn}
\end{figure}
In this case the defining relations are $d_{i-1}c_{i-1} - c_{i}d_{i} = \uplambda_{i}e_{i}$ for all $i \in {\sf{Q_{0}}}$.
In addition, throughout this appendix we will assume that $\sum_{i \in {\sf{Q_{0}}}} \uplambda_{i} = 0$.

In this appendix we consider the spherical subalgebra $\ePie$ and give an explicit description of $\ePie$ as a ring of the form $A$ defined in Chapter~\ref{ch:divisor_cl_grps}.
By Theorem~\ref{thm:S^lam_Morita_equiv_Pi^lam}, $\ePie \cong \cO^{\uplambda}$.
\begin{prop}\label{prop:app:central_elements_deformed}
	The elements
	\begin{align*}\label{eq:cntrele}
		u &= c_{0}c_{1} \cdots c_{\sf{n}-1}c_{\sf{n}}, \\
		v &= d_{\sf{n}}d_{\sf{n}-1} \cdots d_{1}d_{0}, \text{ and} \\
		x &= c_{0}d_{0}
	\end{align*}
	generate $\ePie$.
\end{prop}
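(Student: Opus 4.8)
The plan is to first understand $\ePie$ as a $\kk$-vector space, then identify the algebra generated by $u$, $v$, $x$ inside it. The key structural fact about the deformed preprojective algebra of an affine type $\teAn{n}$ quiver is that $\ePie = e_0 \Pi^{\uplambda} e_0$ consists of (linear combinations of) paths in $\bar{Q}$ which start and end at vertex $0$. Since the underlying graph of $\teAn{n}$ is a cycle, any such path, after using the preprojective relations $d_{i-1}c_{i-1} - c_i d_i = \uplambda_i e_i$ to straighten out backtracking, can be rewritten as a combination of paths of the form (go all the way around the cycle clockwise some number of times) followed by (go all the way around counterclockwise some number of times) followed by (stay at vertex $0$, i.e. powers of $x = c_0 d_0$). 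In other words, I would argue that $\ePie$ is spanned by the elements $u^a$, $v^b$, $x^c$ and their products, which is exactly the claim. Concretely, I would show every element $e_0 p e_0$ for $p$ a path reduces modulo the ideal of relations to a $\kk$-linear combination of monomials in $u$, $v$, $x$.

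\textbf{Key steps, in order.} First, I would set up the combinatorics of paths at vertex $0$: a path $p$ with $t(p) = h(p) = 0$ has a well-defined "winding number" (net number of clockwise minus counterclockwise traversals of the cycle), and I would stratify paths by length. Second, I would prove a straightening lemma: using the relation $c_i d_i = d_{i-1} c_{i-1} - \uplambda_i e_i$ at each interior vertex, any occurrence in $p$ of a "turn-around" subword $c_i d_i$ or $d_{i-1} c_{i-1}$ can be traded, at the cost of lower-order terms, for a word that is closer to being "pure clockwise then pure counterclockwise"; at vertex $0$ specifically, $c_0 d_0 = x$ and $d_{\sf{n}} c_{\sf{n}} = x + \uplambda_0 e_0$ (using $\sum \uplambda_i = 0$ as needed), so the two ways of turning around at vertex $0$ differ by a scalar. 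Third, I would do induction on path length: the base cases $e_0$, $x$, $u$, $v$ are clear, and the inductive step uses the straightening lemma to express any longer path at vertex $0$ in terms of $u \cdot (\text{shorter})$, $v \cdot (\text{shorter})$, $x \cdot (\text{shorter})$, or a genuinely shorter path. Finally, I would note $vu = d_{\sf{n}}\cdots d_0 c_0 \cdots c_{\sf{n}}$ reduces (by repeated straightening through all vertices) to a polynomial in $x$, which both closes the induction and anticipates the relation $uv = f(x)$ needed in the next proposition; but for the present statement it suffices that the span is closed under multiplication, which follows once every product of generators reduces back into the span.

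\textbf{Main obstacle.} The hard part will be making the straightening/reduction argument genuinely rigorous rather than hand-wavy — i.e. proving that the reduction terminates and that the lower-order "error" terms introduced by each application of a preprojective relation are themselves controlled (they live at vertex $0$, they are strictly shorter or lie in a smaller winding stratum). A clean way to organize this is to pick a monomial order on paths (say by length, then by how "unsorted" the clockwise/counterclockwise pattern is) and show each relation-application strictly decreases the order, so the rewriting system is confluent and terminating; then the normal forms are exactly the $u^a v^b x^c$ (with, say, $a = 0$ or $b = 0$ to account for the forthcoming relation $uv = f(x)$, though one need not invoke that relation yet). Alternatively, one can cite the known basis of $\Pi^{\uplambda}(\teAn{n})$ from \cite{cbh98} and simply read off $e_0 \Pi^{\uplambda} e_0$, which would shorten the argument considerably. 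I would present the combinatorial straightening argument as the primary route, keeping the bookkeeping of error terms as the step requiring the most care.
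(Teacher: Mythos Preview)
Your combinatorial straightening approach is correct in principle and would work, but the paper takes a genuinely different and much shorter route. Instead of rewriting paths directly, the paper uses the length filtration on $\Pi^{\uplambda}$ (arrows in degree $1$, idempotents in degree $0$): by Lemma~\ref{lem:grS^lam_cong_skew_grp_O^lam_cong_coord_ring} (from \cite{cbh98}) the associated graded of $\ePie$ is the \emph{undeformed} $e_0\Pi^0 e_0 \cong \C[u,v,x]/(uv-x^{{\sf n}+1})$, where it is evident that the symbols of $u,v,x$ generate. Letting $B$ be the subalgebra generated by $u,v,x$ with the induced filtration, one gets that $\mathrm{gr}\,B \hookrightarrow \mathrm{gr}\,\ePie$ is surjective, hence an isomorphism, and a standard filtered-algebra lemma \cite[Corollary~7.6.14]{mcconRob01} then forces $B=\ePie$.

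The trade-off: the paper's argument is two paragraphs long and completely avoids the termination/confluence bookkeeping you correctly flag as the hard part of your approach, at the cost of invoking the associated-graded machinery and the undeformed case as a black box. Your direct rewriting argument is more self-contained and would yield explicit normal forms (essentially the monomials $u^a x^c$ and $v^b x^c$), but the induction you outline is exactly the kind of argument the filtration trick is designed to package once and for all. Note also that your alternative suggestion of citing a basis of $\Pi^{\uplambda}$ from \cite{cbh98} is close in spirit to what the paper does, but the paper extracts only the associated-graded statement rather than a full PBW basis.
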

\begin{proof}
	First, notice that $u$, $v$, and $x$ each start and end at vertex $0$, so they are contained in $\ePie$.
	As explained on page 611 of \cite{cbh98}, there is a filtration $\{F_k \mid \text{degree } k \ge 0 \}$ of the deformed preprojective algebra $\Pi^{\uplambda}$ which is uniquely defined by putting the $e_i$ in degree zero and the arrows in degree one.
	This restricts to a filtration of $\ePie$.
	Furthermore, by Lemma~\ref{lem:grS^lam_cong_skew_grp_O^lam_cong_coord_ring} it follows that the associated graded algebra of $\ePie$ is $e_0 \Pi^{0} e_0$ which is isomorphic to the type $\teAn{n}$ Kleinian singularity $\C[u,v,x]/(uv - x^{{\sf{n}}+1})$ \cite[Theorem~0.1]{cbh98}.
	By inspection, we know that the images of $u$, $v$, and $x$ generate $e_0 \Pi^{0} e_0$.
	
	Let $B$ be the subalgebra of $\ePie$ generated by $u,v,$ and $x$.
	We need to show that $B = \ePie$.
	Define a filtration ${ G_{k} }$ on $B$ by restriction, that is, a filtration $G_{k} = F_{k} \cap B$.
	Then the associated graded map $\mathrm{gr }  B \to \mathrm{gr }  \ePie \cong e_0 \Pi^{0} e_0$ is an embedding.
	It is also surjective by assumption.
	Hence $\mathrm{gr } B \to e_0 \Pi^{0} e_0$ is an isomorphism, and so $B \hookrightarrow \ePie$ is an isomorphism using \cite[Corollary~7.6.14]{mcconRob01}.
\end{proof}

To give a presentation of the ring $\ePie$ requires the following technical result.
\begin{lemma}\label{lem:app:1euve}
	If $j \geq 1$, then
	\[
	c_{0}c_{1} \cdots c_{j}d_{j} \cdots d_{1} d_{0}= (c_{0}d_{0}) \cdot \prod\limits_{k=1}^{j} (c_{0}d_{0} - (\uplambda_{1} + ... + \uplambda_{k})e_{0}).
	\]
\end{lemma}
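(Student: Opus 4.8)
The proof is an induction on $j$ whose inductive step is fed by an auxiliary identity proved by a second, separate induction; a single naive induction (substitute a defining relation and telescope) collapses into a tautology, and overcoming this is the main point. Throughout, note first that $c_0c_1\cdots c_jd_j\cdots d_1d_0$ is a path from $0$ to $0$, hence an element of $\ePie$, and that $\ePie$ is \emph{commutative} in our situation $\sum_i\uplambda_i=0$ (by \S\ref{sec:deformed_preproj_deformed_ksings} and Theorem~\ref{thm:S^lam_Morita_equiv_Pi^lam}). I would freely use the path-algebra identities $e_ic_i=c_i=c_ie_{i+1}$ and $e_{i+1}d_i=d_i=d_ie_i$ together with the defining relations $c_id_i=d_{i-1}c_{i-1}-\uplambda_ie_i$. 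Write $Q_m\coloneqq c_0c_1\cdots c_md_m\cdots d_1d_0$ for $m\ge1$, put $Q_0\coloneqq c_0d_0$, and $s_k\coloneqq\uplambda_1+\cdots+\uplambda_k$ (so $s_0=0$); the lemma then says exactly that $Q_j=(c_0d_0)\prod_{k=1}^{j}(c_0d_0-s_ke_0)$ for $j\ge1$.

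I would obtain this from the recursion $Q_j=(c_0d_0-s_je_0)\,Q_{j-1}$ for $j\ge1$, after which a one-line induction on $j$ finishes: the base case $j=1$ is the direct computation $Q_1=c_0(d_0c_0-\uplambda_1e_1)d_0=(c_0d_0)^2-\uplambda_1\,c_0d_0$, and for $j\ge2$ one combines the recursion with the induction hypothesis, using commutativity of $\ePie$ to move the outermost $c_0d_0$ to the front of the product. To prove the recursion, apply the vertex-$j$ relation to the central factor $c_jd_j$ of $Q_j$; the term involving $e_j$ equals $-\uplambda_jQ_{j-1}$, so $Q_j=T_j-\uplambda_jQ_{j-1}$ with $T_j\coloneqq c_0\cdots c_{j-1}\,d_{j-1}c_{j-1}\,d_{j-1}d_{j-2}\cdots d_0$. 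Hence it suffices to prove $T_j=(c_0d_0-s_{j-1}e_0)\,Q_{j-1}$, as then $Q_j=(c_0d_0-s_{j-1}e_0)Q_{j-1}-\uplambda_jQ_{j-1}=(c_0d_0-s_je_0)Q_{j-1}$ because $s_{j-1}+\uplambda_j=s_j$.

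For the identity $T_j=(c_0d_0-s_{j-1}e_0)Q_{j-1}$ I would prove the more flexible auxiliary claim: for every $0\le a\le j-1$,
\[ c_0c_1\cdots c_{j-1}\,d_{j-1}\cdots d_a\,(c_ad_a)\,d_{a-1}\cdots d_0=(c_0d_0-s_ae_0)\,Q_{j-1}, \]
by induction on $a$ with $j$ fixed; the case $a=j-1$ is exactly $T_j=(c_0d_0-s_{j-1}e_0)Q_{j-1}$. The base case $a=0$ reads $Q_{j-1}(c_0d_0)=(c_0d_0)Q_{j-1}$, which is commutativity of $\ePie$. For the step ($1\le a\le j-1$), substitute $c_ad_a=d_{a-1}c_{a-1}-\uplambda_ae_a$ into the bracketed factor: the $e_a$-term contributes $-\uplambda_aQ_{j-1}$ (using $d_ae_a=d_a$ and merging the down-arrows back into $Q_{j-1}$), while the $d_{a-1}c_{a-1}$-term, after the merge $d_{j-1}\cdots d_ad_{a-1}=d_{j-1}\cdots d_{a-1}$, is precisely the left-hand side of the auxiliary claim at index $a-1$, so by the induction hypothesis it equals $(c_0d_0-s_{a-1}e_0)Q_{j-1}$; since $s_{a-1}+\uplambda_a=s_a$, the step closes. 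The hard part --- the reason a direct induction on $j$ alone runs into a tautology --- is isolating this auxiliary claim with the free parameter $a$: only in that form does applying the vertex-$a$ relation genuinely lower the depth of the $c_ad_a$-insertion rather than reproduce the term one started with.
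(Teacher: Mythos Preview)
Your proof is correct but takes a different route from the paper. The paper uses a \emph{single} induction on $j$: it writes $c_0c_1\cdots c_{j+1}d_{j+1}\cdots d_1d_0=c_0(c_1\cdots c_{j+1}d_{j+1}\cdots d_1)d_0$, applies the induction hypothesis \emph{at vertex $1$} (valid by the cyclic symmetry of $\teAn{n}$) to the inner bracket to obtain a product of factors in $c_1d_1$ and $e_1$, substitutes $c_1d_1=d_0c_0-\uplambda_1e_1$, and pulls $d_0$ through using $e_1d_0=d_0e_0$ and $(d_0c_0)d_0=d_0(c_0d_0)$. No auxiliary induction is needed, and commutativity of $\ePie$ is never invoked. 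Your approach instead applies the relation at the \emph{innermost} vertex $j$ and then pushes the resulting bubble $c_ad_a$ down via a second induction on $a$, using commutativity of $\ePie$ at the base $a=0$; this avoids the relabel-to-vertex-$1$ trick at the cost of more bookkeeping. Your claim that a single naive induction ``collapses into a tautology'' holds only for the inner-vertex substitution you tried first; the paper shows a single induction works if one peels from the outside instead.
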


\begin{proof}
	We begin with the base case.
	Let $j=1$, then we have
	$$\begin{array}{rl}
	c_{0}c_{1}d_{1}d_{0} &= (c_{0}d_{0})(c_{0}d_{0} - \uplambda_{1}e_{0})\\
	&= (c_{0}d_{0})\prod\limits_{k=1}^{j}(c_{0}d_{0} - (\uplambda_{1} + \hdots + \uplambda_{k})e_{0}).
	\end{array}$$
	So, the base case holds.
	Now, assume that the following holds for some $j \geq 1$, namely
	\[
	c_{0}c_{1} \cdots c_{j}d_{j} \cdots d_{1} d_{0}= (c_{0}d_{0}) \cdot \prod\limits_{k=1}^{j} (c_{0}d_{0} - (\uplambda_{1} + ... + \uplambda_{k})e_{0}).
	\]
	We want to show that it holds for $j+1$.
	Observe, that
	\[
	c_{0}c_{1} \cdots c_{j+1} d_{j+1} \cdots d_{1}d_{0} = c_{0} (c_{1} \cdots c_{j+1} d_{j+1} \cdots d_{1}) d_{0}.
	\]
	By the induction hypothesis applied to the vertex 1, we rewrite this as
	\[
	c_{0} \left[ (c_{1}d_{1}) \cdot \prod\limits_{k=1}^{j}(c_{1}d_{1} - (\uplambda_{2} +... +  \uplambda_{k+1})e_{1}) \right] d_{0}.
	\]
Using the relation $d_{0}c_{0} - c_{1}d_{1} = \uplambda_{1}e_{1}$, we substitute each $c_{1}d_{1}$ with $d_{0}c_{0} - \uplambda_{1}e_{1}$.
	Therefore,
	$$\begin{array}{rl}
	&c_{0}\left[ (c_{1}d_{1}) \cdot \prod\limits_{k=1}^{j}(c_{1}d_{1} - (\uplambda_{2} +... +  \uplambda_{k+1})e_{1})\right] d_{0} \\
	&\hspace{2cm} = c_{0} \left[ (d_{0}c_{0} - \uplambda_{1}e_{1}) \cdot \prod\limits_{k=1}^{j}(d_{0}c_{0} - \uplambda_{1}e_{1} - (\uplambda_{2} +... +  \uplambda_{k+1})e_{1}) \right] d_{0}.
	\end{array}$$
	Since $e_{1}d_{0} = d_{0}e_{0}$, the above is
	$$\begin{array}{rl}
	&\hspace{2cm} = (c_{0}d_{0})(c_{0}d_{0} - \uplambda_{1}e_{0}) \cdot \prod\limits_{k=1}^{j}(c_{0}d_{0} - \uplambda_{1}e_{0} - (\uplambda_{2} +... +  \uplambda_{k+1})e_{0})\\
	&\hspace{2cm} = (c_{0}d_{0}) \cdot  \prod\limits_{k=1}^{j+1}(c_{0}d_{0} - (\uplambda_{1} +... +  \uplambda_{k})e_{0}).
	\end{array}$$
	So, our claim holds for $j+1$.
\end{proof}

We are most interested in the $j=n$ special case of Lemma~\ref{lem:app:1euve}, which asserts that
$$\begin{array}{rl}
	uv &= c_{0}c_{1} \cdots c_{\sf{n}}d_{\sf{n}} \cdots d_{1}d_{0} \\
	&= (c_{0}d_{0}) \cdot \prod\limits_{k=1}^{\sf{n}} (c_{0}d_{0} - (\uplambda_{1} + ... + \uplambda_{k}))\\
	&= \prod\limits_{k=1}^{\sf{n}+1}(x - (\uplambda_{1} + \hdots + \uplambda_{k})).
\end{array}$$
It follows that there is an induced map $R \to \ePie$, where
\[
	R \coloneqq \frac{\C[u,v,x]}{\left( uv-\prod\limits_{k=1}^{\sf{n}+1}(x - (\uplambda_{1} + \hdots + \uplambda_{k})) \right)}.
\]
The following is the main result of this appendix.
\begin{theorem}\label{thm:app:preproj_type_A_isomorphic_ring_A}
	The map $R \to \ePie$ is an isomorphism.
\end{theorem}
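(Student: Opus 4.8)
The plan is to show that the surjection $\upvarphi\colon R\to\ePie$ constructed just above the statement (sending $u\mapsto u$, $v\mapsto v$, $x\mapsto x$) is an isomorphism, by comparing associated graded rings, in exactly the same spirit as the proof of Proposition~\ref{prop:app:central_elements_deformed}. First I would recall that $\upvarphi$ is already known to be well defined (by the $j=\sf{n}$ case of Lemma~\ref{lem:app:1euve}, which gives $uv=c_0c_1\cdots c_{\sf{n}}d_{\sf{n}}\cdots d_1d_0=(c_0d_0)\prod_{k=1}^{\sf{n}}\bigl(c_0d_0-(\uplambda_1+\cdots+\uplambda_k)e_0\bigr)$ in $\ePie$, which by $\sum_i\uplambda_i=0$ equals $\prod_{k=1}^{\sf{n}+1}\bigl(x-(\uplambda_1+\cdots+\uplambda_k)\bigr)$), and surjective (by Proposition~\ref{prop:app:central_elements_deformed}, since $u,v,x$ generate $\ePie$). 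It remains only to prove injectivity.

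The core step is the identification of the two associated graded rings. Equip $\C[u,v,x]$ with the positive grading in which $\deg u=\deg v=\sf{n}+1$ and $\deg x=2$, and give $R$ the induced filtration (exhaustive, concentrated in non-negative degrees). Since $\C[u,v,x]$ is a domain, the leading form of the relator $uv-\prod_{k=1}^{\sf{n}+1}(x-(\uplambda_1+\cdots+\uplambda_k))$ for this grading is $uv-x^{\sf{n}+1}$ — the two top-degree monomials $uv$ and $x^{\sf{n}+1}$ both have degree $2\sf{n}+2$ and do not cancel, while every other monomial appearing has strictly smaller degree — so the ideal of leading forms of a principal ideal is generated by this leading form and $\mathrm{gr}\,R\cong\C[u,v,x]/(uv-x^{\sf{n}+1})$ as graded rings. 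On the other side, give $\ePie$ the restriction of the path-length filtration on $\Pi^{\uplambda}$ (arrows in degree $1$); as in the proof of Proposition~\ref{prop:app:central_elements_deformed}, $\mathrm{gr}\,\ePie\cong e_0\Pi^0e_0$, which by Lemma~\ref{lem:grS^lam_cong_skew_grp_O^lam_cong_coord_ring} together with \cite[Theorem~0.1]{cbh98} is $\C[u,v,x]/(uv-x^{\sf{n}+1})$. Because the paths $u$ and $v$ have length $\sf{n}+1$ and $x=c_0d_0$ has length $2$, the generators $u,v,x$ carry the same weights $\sf{n}+1,\sf{n}+1,2$ under both identifications; hence $\mathrm{gr}\,R$ and $\mathrm{gr}\,\ePie$ are isomorphic as graded rings, and in particular have homogeneous components of the same (finite) dimension in every degree.

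The weight bookkeeping just recorded also shows $\upvarphi$ is filtered: it sends the degree-$\le k$ part of $R$ into the degree-$\le k$ part of $\ePie$. It therefore induces a graded homomorphism $\mathrm{gr}\,\upvarphi\colon\mathrm{gr}\,R\to\mathrm{gr}\,\ePie$, which is surjective because $\upvarphi$ is. Reading this off degree by degree, $\mathrm{gr}\,\upvarphi$ is a surjective linear map between vector spaces of equal finite dimension in each degree, hence an isomorphism. Finally, a filtered homomorphism of filtered rings with exhaustive, non-negatively-indexed filtrations whose associated graded map is an isomorphism is itself an isomorphism \cite[Corollary~7.6.14]{mcconRob01}; therefore $\upvarphi\colon R\to\ePie$ is an isomorphism.

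I do not expect a serious obstacle: this essentially re-runs the filtration argument of Proposition~\ref{prop:app:central_elements_deformed}, now fed by the explicit product identity of Lemma~\ref{lem:app:1euve}. The only points needing care are (i) confirming that the path-length filtration on $\ePie$ is compatible with the chosen grading on $R$, i.e.\ that $u,v$ sit in filtration degree $\sf{n}+1$ and $x$ in degree $2$, and (ii) checking that the two associated graded rings agree as \emph{graded} rings (not merely abstractly) so that the degreewise dimension comparison is legitimate; both follow immediately from counting arrows in the relevant paths.
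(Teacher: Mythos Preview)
Your proof is correct, but it takes a different route from the paper's. The paper argues purely by Krull dimension: $R$ is a $2$-dimensional domain (Lemma~\ref{lem:A_is_normal_ID}), $\ePie$ is a $2$-dimensional domain (by \cite[\S0]{cbh98}), the map is surjective (Proposition~\ref{prop:app:central_elements_deformed}), so the kernel is a prime of height $0$ in a domain, hence zero. No filtrations appear at all. Your approach instead re-runs the graded/filtered comparison from Proposition~\ref{prop:app:central_elements_deformed}, now on $R$ rather than on the subalgebra $B$, and appeals to \cite[Corollary~7.6.14]{mcconRob01} after matching graded dimensions. Both arguments work; the paper's is shorter and uses less machinery, while yours is more self-contained in that it does not need to import the fact from \cite{cbh98} that $\ePie$ has Krull dimension exactly $2$.

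One small point: the step ``$\mathrm{gr}\,\upvarphi$ is surjective because $\upvarphi$ is'' is not valid as stated---surjectivity of a filtered map does not in general force surjectivity on associated gradeds when the target carries its own filtration rather than the quotient one. The correct justification, which you have ready access to, is that $\mathrm{gr}\,\upvarphi$ hits the symbols of $u,v,x$, and these generate $\mathrm{gr}\,\ePie\cong e_0\Pi^0e_0$ (this is exactly what the proof of Proposition~\ref{prop:app:central_elements_deformed} establishes). With that one-line fix your argument is complete.
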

\begin{proof}
	The ring $R$ is a 2-dimensional ring of the form $A$ defined in Chapter~\ref{ch:divisor_cl_grps}, so by Lemma~\ref{lem:A_is_normal_ID}, it is an integral domain.
	By \cite[\S0]{cbh98}, $\ePie$ is also a 2-dimensional integral domain.
	
	Proposition~\ref{prop:app:central_elements_deformed} shows that the map $R \to \ePie$ is surjective.
	Let $\mathfrak{a}$ be the kernel of this map.
	Since $\ePie$ is a domain, $\mathfrak{a}$ is a prime ideal.
	If $\height(\mathfrak{a}) = m$ then, by Theorem~\ref{thm:height+dim}, $\dim \ePie = \dim R - m$.
	In other words, $2 = 2 - m$, thus $\height(\mathfrak{a}) = 0$.
	However, $R$ is a domain, so the only height zero prime ideal is $0$.
	Hence, $R \cong \ePie$.
\end{proof}
\chapter{Computing the Grothendieck group using \texttt{MAGMA}}\label{app:magma_code}
The \texttt{MAGMA} code below computes the Grothendieck group (and divisor class group) of rings $R=S^{G}$, such as those discussed in this thesis.
Below, the input variable \texttt{G} is always a subgroup of $\GL(n, \C) = \GL(\mathfrak{h})$.
The function \texttt{Groth} can compute the Grothendieck group and class group in two settings:
\begin{enumerate}
	\item (\texttt{G}, \texttt{1}), then \texttt{Groth} computes the Grothendieck group of the invariant ring $\C[\mathfrak{h}]^{G}$.
	\item (\texttt{G}, \texttt{2}), then \texttt{Groth} computes the Grothendieck group of the invariant ring $\C[\mathfrak{h} \oplus \mathfrak{h}^{*}]^{G}$, where $\mathfrak{h}^{*}$ is the dual of $\mathfrak{h}$.
	This code is useful where $G$ is a complex reflection group.
\end{enumerate}
\begin{lstlisting}[language=magma]
NaturalRep:=function(G, ans);
	CR:=CharacterRing(G);
	if ans eq 1 then
		trCl:=[Trace(Classes(G)[i][3]) : i in [1..#Classes(G)]];
		V:= CR ! trCl;
	elif ans eq 2 then
		trCl:=[Trace(Classes(G)[i][3]) : i in [1..#Classes(G)]];
		trClInv:=[Trace(Transpose(Classes(G)[i][3]^(-1))) : i in [1..#Classes(G)]];
		h:= CR ! trCl;
		hdual:= CR ! trClInv;
		V:= h + hdual;
	end if;
return V;
end function;
//----------------------------------------------------------------
Groth:=function(G,ans);
	V:=NaturalRep(G,ans);	
	g:=[**];
	
	for i in [1..Ngens(G)] do
		Append(~g, G.i);
	end for;
	cl:=SubgroupClasses(G);
	table:=CharacterTable(G);
	
	l:=#cl;
	repSubgrp:=[**];
	norm:=[**];
	h:=[**];
	nh:=[**];
	trivH:=[**];
	
	repSubgrp:=[h`subgroup : h in cl | Order(h`subgroup) ne 1];
	l:=#repSubgrp;
	
	for i in [1..l] do
		norm[i]:=Normalizer(G,repSubgrp[i]);
		h[i]:=CharacterTable(repSubgrp[i]);
		nh[i]:=CharacterTable(norm[i]);
	end for;
	
	k:=[];
	sirr:=[];
	vh:=[];
	extpow:=[];
	aHsum:=[];
	aH:=[];
	aHG:=[];
	induced:=[];
	ps:=[];
	
	for i in [1..l] do
		sirr:=[char: char in nh[i] | InnerProduct(PrincipalCharacter( repSubgrp[i]), Restriction(char, repSubgrp[i])) eq 0 ];
	
		W:=Restriction(V, norm[i]);
	
		blh:=[char : char in nh[i] | (InnerProduct(char, W) ne 0) and (InnerProduct(Restriction( char, repSubgrp[i] ), PrincipalCharacter(repSubgrp[i])) eq 0)];
	
		blhchar:=[InnerProduct(char, W)*char : char in blh];
	
		vh:=&+ blhchar;
	
		if vh[1] le 1 then
			t:=[ x : x in Generators(repSubgrp[i]) ];
			Append(~ps, t[1]);
		end if;
	
		p:=[[0]] cat [[1 : k in [1..j]] : j in [1..Degree(vh)]];
	
		aH:=&+[(-1)^(j-1)*Symmetrization(vh, p[j]) : j in [1..Degree(vh)+1]];
	
		aHG:=[aH * char : char in sirr];
	
		for elt in aHG do
			Append(~induced, Induction(elt, G));
		end for;
	
		list:=[InnerProduct(induced[k], table[j]) : j in [1..#table], k in [1..#induced]];
		A:=Matrix(Integers(), #induced, #table, list);
	end for;
	
	groth := [0 : j in [1..(#table - #ElementaryDivisors(A))]] cat ElementaryDivisors(A);
	
	abGroth:=AbelianGroup(groth);
	F:=TorsionSubgroup(abGroth);
	
	sg:=sub< G | [G ! p : p in ps] >;
	pseudorefl:=NormalClosure(G, sg);		
	classGrp:=AbelianQuotient(G/pseudorefl);
	
return groth, F, classGrp;
end function;
\end{lstlisting}

\backmatter
\bibliographystyle{alpha}
\bibliography{phdCitations}
\end{document}